\documentclass[11pt,a4paper,oneside,titlepage]{book}
\usepackage{fancyhdr,amsfonts,amssymb}
\usepackage{amsmath}
\usepackage{amsthm}
\usepackage{algorithmic}
\usepackage{ifthen}
\usepackage{longtable}
\usepackage{array}
\usepackage{url}
\usepackage{multirow}
\usepackage{tikz}
\usetikzlibrary{decorations.pathreplacing, matrix,arrows,decorations.pathmorphing}
\usepackage{graphicx}
\pagestyle{fancy}
\addtolength{\headwidth}{\marginparsep}
\addtolength{\headwidth}{\marginparwidth}

\renewcommand{\sectionmark}[1]{}
 
\fancyhf{}
\fancyhead[LE,RO]{\bfseries\thepage}
\fancyhead[LO]{\bfseries\rightmark}
\fancyhead[RE]{\bfseries\leftmark}
\fancypagestyle{plain}

\usepackage{hyperref}

\newtheorem{definition}{Definition}
\newtheorem{theorem}[definition]{Theorem}
\newtheorem{lemma}[definition]{Lemma}
\newtheorem{corollary}[definition]{Corollary}
\newtheorem{proposition}[definition]{Proposition}

\newtheorem{example}[definition]{Example}
\newtheorem{hypothesis}[definition]{Hypothesis}

\newtheorem{remark}[definition]{Remark}
\numberwithin{definition}{section}
\numberwithin{figure}{chapter}
\makeatletter\def\blfootnote{\xdef\@thefnmark{}\@footnotetext}\makeatother

\begin{document}
\thispagestyle{empty}
\begin{titlepage}
\begin{center}

\vspace{0.3cm}
{\large Dipartimento di Matematica e Informatica} \\
\vspace{0.3cm}
{\large\bf Scuola di Dottorato ISIMR - Matematica e Informatica}\\

\vspace{0.15cm}

{\large\bf in co-tutela con Technische Universit\"at Graz} \\

\vspace{0.3cm}

{\sc XXVII ciclo}

\rule[0.1cm]{\textwidth}{0.1mm}
{ S.S.D. MAT/05 -- ANALISI MATEMATICA}

\vspace{2.3cm}

{\sc Tesi di Dottorato}\\

\vspace {0.9cm}

{\LARGE\bf
Low-discrepancy sequences:\\
Theory and Applications\\
}

\vspace{2.cm}
{\Large Maria Rita Iac\`{o}}

\vspace{2.4cm}
\begin{tabular}{ccc}
{\large \bf Supervisore} & \hspace{3.1cm} & {\large \bf Direttore} \vspace{.25cm} \\
{\large Dott. Ingrid Carbone} & & {\large Prof. Nicola Leone} \vspace{.7cm}\\
{\large \bf Supervisore} & &\\
{\large Prof. Robert Tichy} \vspace{.7cm}& &\\
{\large \bf Supervisore} & &\\
{\large Prof. Aljo\v{s}a Vol\v{c}i\v{c}} & &\\

\end{tabular}

\vspace{1.8cm}
\rule[0.1cm]{\textwidth}{0.1mm}

{ A.A. 2013 -- 2014}
\end{center}
\end{titlepage}

\thispagestyle{empty}
\frontmatter
\chapter*{Abstract}

The main topic of this present thesis is the study of the asymptotic behaviour of sequences modulo 1.\\
In particular, by using ergodic and dynamical methods, a new insight to problems concerning the asymptotic behaviour of multidimensional sequences can be given, and a criterion to construct new multidimensional uniformly distributed sequences is provided.

More precisely, one considers a uniformly distributed sequence (u.d.) $(x_n)_{n\in\mathbb{N}}$ in the unit interval, i.e. a sequence satisfying the relation
\begin{equation*}
  \lim_{N \rightarrow \infty} \frac{1}{N} \sum_{n = 1}^N f(x_n) =
\int_0^1 f(x) dx\ ,
\end{equation*}
for every continuous function $f$ defined on $[0,1]$.\\
This relation suggests the possibility of a numerical approximation of the integral on the right-hand side by means of u.d. sequences, even if it does not give any information on the quality of the estimator.\\
The following quantity 
\begin{equation*}
 D_N^* = D_N^*(x_1, \ldots, x_N) = \sup_{a \in[0,1[} \left| \frac{\sum_{n=1}^N \mathbf{1}_{[0,a[}
(x_n)}{N} - \lambda([0,a[)\right|\ ,
\end{equation*}
called the \emph{star discrepancy}, has been introduced in order to have a quantitative insight on the rate of convergence.\\
One of the most important results about the integration error of this approximation technique is given by the Koksma-Hlawka inequality which states that the error can be bounded by the product of the variation of $f$ (in the
sense of Hardy and Krause), denoted by $V(f)$, and the star-discrepancy $D_N^*$ of the point sequence $(x_n)_{n \in
\mathbb{N}}$:
\begin{equation*}
 \left| \frac{1}{N} \sum_{n = 1}^N f(x_n) - \int_0^1
f(x) dx \right| \leq V(f) D_N^*(x_n)\ .
\end{equation*}
Thus in order to minimize the integration error we have to use point sequences
with small discrepancy, that is, sequences which
achieve a star-discrepancy of order $\mathcal{O}(N^{-1} (\log N))$. These sequences are called
low-discrepancy sequences and they turn out to be very useful especially for the approximation of multidimensional integrals. In this context, the error in the approximation is smaller than the probabilistic one of the standard Monte Carlo method, where a sequence of random points instead of deterministic points, is used. Methods using low-discrepancy sequences, often called quasi-random sequences, are called Quasi-Monte Carlo methods (QMC).\\

However, to construct low-discrepancy sequences, especially multidimensional ones, and to compute the discrepancy of a given sequence are in general not easy tasks.\\

The aim of this thesis is to provide a full description of these problems as well as the methods used to handle them.
The idea was to consider tools from ergodic theory in order to produce new low-discrepancy sequences. The starting point to do this, is to look at the orbit, i.e. the sequence of iterates, of a continuous transformation $T$ defined on the unit interval which has the property of being uniquely ergodic.\\
The unique ergodicity of the transformation has the following consequence:\\

If $T:[0,1]\rightarrow [0,1]$ is uniquely ergodic, then for every $f\in \mathcal{L}^{1}(X)$ 
\begin{equation*}\label{birkhoff}
\lim_{N\to\infty}\frac{1}{N}\sum_{n=0}^{N-1}f(T^{n}x)=\int_Xf(x)d\mu(x)\ ,
\end{equation*}
for every $x\in X$.\\
So the orbit of $x$ under $T$ is a uniformly distributed sequence.\\

In this respect, we devoted the first chapter entirely on classical topics in uniform distribution theory and ergodic theory. \\
This provides the basic requirements for a complete understanding of the following chapters, even to a reader who is not familiar with the subject.\\

Chapter 2 deals with a countable family of low-discrepancy sequences, namely the $LS$-sequences of points. In particular, one of these sequences will be considered in full detail in Chapter 3.\\

The content of Chapters 3, 4 and 5 is based on three published papers that I co-authored.\\

In Chapter 3, the method used to construct the transformation $T$ is the so-called
\lq\lq cutting-stacking\rq\rq\ technique. In particular, we were able to prove the ergodicity of $T$ (a weaker property than unique ergodicity), and that the orbit of the origin under this map coincides with an $LS$-sequence which turns out to be a low-discrepancy one.\\

In Chapter 4, another approach from ergodic theory is used. This approach is based on the study of dynamical systems arising from numeration systems defined by linear recurrences. In this way we could not only prove that the transformation $T$ defined in Chapter 3 is uniquely ergodic, but we could also construct multidimensional uniformly distributed sequences.\\

Finally, we go back to the problem of finding an approximation for integrals. In Chapter 5 we tried to find bounds for integrals of two-dimensional, piecewise constant functions with respect to copulas. Copulas are functions that can be viewed as asymptotic distribution functions with uniform margins. To solve this problem, we had to draw a connection to linear assignment
problems, which can be solved efficiently in polynomial time. The approximation technique was applied to problems in financial mathematics and uniform distribution theory.

\tableofcontents
\mainmatter
\chapter{Preliminary topics}
 \thispagestyle{empty}
In this chapter we collect classical results about uniform distribution, discrepancy
and ergodic theory, which are the essential background for the following chapters.

\section{Uniform distribution and discrepancy of sequences}
In this first section we recall definitions and basic properties of uniform distribution and discrepancy of sequences. For a full treatment of the subject and for proofs we refer to \cite{Drmota_Tichy,kuipers_niederreiter,Niederreiter,Porubski_Strauch}. \\ The structure of the section is the following: a first part is devoted to the definition of uniformly distributed (u.d.) sequences of points in the unit interval. \\
Furthermore we provide some historical examples constructed in $[0,1[$, such as Kronecker and van der Corput sequences.\\
Then we introduce the notion of uniform distribution and discrepancy for sequences of partitions defined on $[0,1[$, showing how to get associated uniformly distributed sequences of points from them.\\
Finally we extend the definitions and the results to the multidimensional unit cube $[0,1[^s$ and we see how this theory can be applied to Quasi-Monte Carlo methods to find numerical approximation of integrals. 
\subsection{Preliminary definitions and results in $[0,1[$}
The starting point of uniform distribution is given by the following definition introduced by H. Weyl \cite{weyl2, weyl1}.
\begin{definition}
A sequence $(x_n)_{n\in\mathbb{N}}$ of points in $[0,1[$ is said to be uniformly distributed (u.d.) if for every interval $[a, b[\ \subseteq [0,1[$ we have
\begin{equation}\label{1.1}
 \lim_{N \rightarrow \infty} \frac{1}{N}\sum_{n = 1}^N \mathbf{1}_{[a, b[} (x_n) = b-a\ ,
\end{equation}
where $\mathbf{1}_{[a, b[}$ is the characteristic function of $[a,b[$.
\end{definition}
Observing that $\lambda([a,b[)=\int_0^1\mathbf{1}_{[a, b[} (x)\ dx$, \eqref{1.1} can be written in the equivalent form
\begin{equation}
\lim_{N \rightarrow \infty} \frac{1}{N}\sum_{n = 1}^N \mathbf{1}_{[a, b[} (x_n) =\int_0^1\mathbf{1}_{[a, b[} (x)\ dx\ .
\end{equation}
This remark together with a classical approximation process of continuous functions by means of step functions led to the following criterion that can be also found in \cite{weyl2, weyl1}.
\begin{theorem}[Weyl's Theorem]\label{weyl_thm}
A sequence $(x_n)_{n\in\mathbb{N}}$ of points in $[0, 1[$ is u.d. if and only if for every real-valued continuous function $f$ defined on $[0, 1]$
\begin{equation}\label{1.3}
\lim_{N \rightarrow \infty} \frac{1}{N}\sum_{n = 1}^N f(x_n) =\int_0^1f(x)\ dx
\end{equation}
holds.
\end{theorem}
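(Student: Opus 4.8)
The plan is to prove the two implications separately, both resting on a single idea: sandwich the function of interest between two functions for which the desired limit is already available, choosing the sandwich so that the two bounding integrals differ by less than a prescribed $\varepsilon$.

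First I would treat the direction in which uniform distribution implies \eqref{1.3}. By the very definition of u.d., relation \eqref{1.1} holds for every indicator $\mathbf{1}_{[a,b[}$, and by linearity of both the averaging operator and the integral it then holds for every step function, i.e. every finite linear combination of such indicators. Given a continuous $f$ on $[0,1]$ and $\varepsilon>0$, uniform continuity lets me partition $[0,1]$ finely enough that the lower and upper step functions $g_1\le f\le g_2$ built from the infimum and supremum of $f$ on each subinterval satisfy $\int_0^1(g_2-g_1)\,dx<\varepsilon$. Monotonicity of the averages then gives
\[
\frac{1}{N}\sum_{n=1}^N g_1(x_n)\ \le\ \frac{1}{N}\sum_{n=1}^N f(x_n)\ \le\ \frac{1}{N}\sum_{n=1}^N g_2(x_n)\ ,
\]
and letting $N\to\infty$ traps the limit of the middle term between $\int_0^1 g_1\,dx$ and $\int_0^1 g_2\,dx$, hence within $\varepsilon$ of $\int_0^1 f\,dx$. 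As $\varepsilon$ is arbitrary, \eqref{1.3} follows.

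For the converse I would assume \eqref{1.3} for every continuous $f$ and recover \eqref{1.1}. Fix $[a,b[\,\subseteq[0,1[$ and $\varepsilon>0$, and write $A_N=\frac{1}{N}\sum_{n=1}^N\mathbf{1}_{[a,b[}(x_n)$. The obstacle is that $\mathbf{1}_{[a,b[}$ is not continuous, so the hypothesis does not apply directly; instead I construct two continuous, piecewise-linear (trapezoidal) functions $f_1\le\mathbf{1}_{[a,b[}\le f_2$ that coincide with the indicator except on short ramps of total length less than $\varepsilon$, so that $\int_0^1 f_1\,dx>(b-a)-\varepsilon$ and $\int_0^1 f_2\,dx<(b-a)+\varepsilon$. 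Sandwiching and passing to $\liminf$ and $\limsup$ gives
\[
(b-a)-\varepsilon\ \le\ \int_0^1 f_1\,dx\ \le\ \liminf_{N\to\infty}A_N\ \le\ \limsup_{N\to\infty}A_N\ \le\ \int_0^1 f_2\,dx\ \le\ (b-a)+\varepsilon\ ,
\]
and letting $\varepsilon\to0$ forces the limit to exist and equal $b-a$.

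The linearity reduction and the limit passages are routine; the only step demanding care is the explicit construction of the approximating functions in each direction, ensuring simultaneously that they bracket the target pointwise and that the gap between their integrals is arbitrarily small. I expect this approximation to be the main, though mild, obstacle, since everything else follows mechanically once the sandwich is in place.
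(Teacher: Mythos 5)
Your proposal is correct and follows exactly the classical route the paper itself indicates for this theorem: the thesis omits the detailed proof, noting only that the criterion follows from ``a classical approximation process of continuous functions by means of step functions,'' which is precisely your forward direction, while your converse via continuous trapezoidal functions bracketing $\mathbf{1}_{[a,b[}$ is the standard companion argument found in the cited references. Both sandwich constructions are sound, so there is nothing to add.
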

As it will appear clear at the end of the chapter, Theorem \ref{weyl_thm} suggests a numerical approximation of integrals by means of averages over uniformly distributed sequences.\\

Two more general results can be found as corollaries of the previous theorem. The first one is a generalization to Riemann-integrable functions, while the second one to complex-valued continuous functions.
\begin{corollary}\label{riemann}
A sequence $(x_n)_{n\in\mathbb{N}}$ of points in $[0, 1[$ is u.d. if and only if for every Riemann-integrable function $f$ defined on $[0, 1]$ equation \eqref{1.3} holds.
\end{corollary}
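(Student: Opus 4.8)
The plan is to prove the two implications separately, observing that the forward implication is essentially a tautology while the converse carries all the content.

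For the direction asserting that \eqref{1.3} for all Riemann-integrable $f$ forces uniform distribution, I would simply specialise $f$ to the indicator $\mathbf{1}_{[a,b[}$ of an arbitrary subinterval $[a,b[\,\subseteq[0,1[$. Such an indicator is Riemann-integrable, so \eqref{1.3} gives
\begin{equation*}
\lim_{N\to\infty}\frac{1}{N}\sum_{n=1}^N \mathbf{1}_{[a,b[}(x_n)=\int_0^1 \mathbf{1}_{[a,b[}(x)\,dx=b-a,
\end{equation*}
which is exactly the defining relation \eqref{1.1}.

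For the converse I assume $(x_n)_{n\in\mathbb{N}}$ is u.d.\ and fix a Riemann-integrable $f$ on $[0,1]$ together with $\varepsilon>0$. The key step is to invoke the definition of Riemann integrability itself: the equality of the upper and lower Darboux integrals produces a partition of $[0,1]$ whose associated lower and upper step functions $g_1,g_2$ satisfy $g_1\le f\le g_2$ pointwise and $\int_0^1(g_2-g_1)(x)\,dx<\varepsilon$, whence $\int_0^1 f\,dx-\varepsilon<\int_0^1 g_1\,dx$ and $\int_0^1 g_2\,dx<\int_0^1 f\,dx+\varepsilon$. Next I note that \eqref{1.3} already holds for each $g_i$: being a finite linear combination of indicators of subintervals, and \eqref{1.3} holding for each such indicator by the definition of u.d.\ (as computed above), linearity of both the averaging operator and the integral extends it to $g_1$ and $g_2$. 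Finally I would run a squeeze: from $g_1\le f\le g_2$ one has, for every $N$,
\begin{equation*}
\frac{1}{N}\sum_{n=1}^N g_1(x_n)\le\frac{1}{N}\sum_{n=1}^N f(x_n)\le\frac{1}{N}\sum_{n=1}^N g_2(x_n),
\end{equation*}
so passing to $\liminf$ on the left and $\limsup$ on the right, and using that the two outer averages converge to $\int_0^1 g_1\,dx$ and $\int_0^1 g_2\,dx$ respectively, yields
\begin{equation*}
\int_0^1 f\,dx-\varepsilon<\liminf_{N\to\infty}\frac{1}{N}\sum_{n=1}^N f(x_n)\le\limsup_{N\to\infty}\frac{1}{N}\sum_{n=1}^N f(x_n)<\int_0^1 f\,dx+\varepsilon.
\end{equation*}
Since $\varepsilon>0$ is arbitrary, the $\liminf$ and $\limsup$ both coincide with $\int_0^1 f\,dx$, giving \eqref{1.3}.

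The only genuinely delicate point is the reduction to step functions at the start of the converse: this is exactly where Riemann integrability (as opposed to mere boundedness) enters, through the coincidence of the upper and lower Darboux integrals. Everything after that is linearity together with the sandwich inequality, so I expect no further difficulty.
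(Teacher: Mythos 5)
Your proof is correct. The paper itself states this corollary without proof, deferring to its standard references; your argument --- specialising $f$ to indicators $\mathbf{1}_{[a,b[}$ for one implication, and for the converse sandwiching $f$ between Darboux step functions $g_1\le f\le g_2$ with $\int_0^1(g_2-g_1)\,dx<\varepsilon$, extending \eqref{1.3} to step functions by linearity, and squeezing --- is precisely the classical proof found in the cited literature (e.g.\ Kuipers--Niederreiter), so it matches the intended argument. One cosmetic point worth fixing in a final write-up: define the step functions to be constant on the half-open subintervals $[t_i,t_{i+1}[$ of the partition, so that they are literal finite linear combinations of indicators of the form $\mathbf{1}_{[a,b[}$ to which the u.d.\ definition applies, and note that the value of $f$ at the endpoint $1$ affects neither the averages (since every $x_n\in[0,1[$) nor the integral.
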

\begin{corollary}
A sequence $(x_n)_{n\in\mathbb{N}}$ of points in $[0, 1[$ is u.d. if and only if for every complex-valued continuous function $f$ defined on $\mathbb{R}$ with period 1 equation \eqref{1.3} holds.
\end{corollary}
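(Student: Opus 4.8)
The plan is to derive both implications from Weyl's Theorem (Theorem \ref{weyl_thm}), which already characterises uniform distribution through \eqref{1.3} for \emph{real-valued} continuous test functions on $[0,1]$. The whole point is thus to pass between the real and complex settings and to handle the fact that a $1$-periodic function is a rather special continuous function on $[0,1]$.

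First I would dispose of the easy implication. Assume $(x_n)$ is u.d. A complex-valued continuous $1$-periodic $f$ splits as $f=u+iv$ with $u=\operatorname{Re}f$ and $v=\operatorname{Im}f$ real-valued, continuous and $1$-periodic, hence in particular continuous on $[0,1]$. Applying Theorem \ref{weyl_thm} to $u$ and to $v$ separately and using linearity of both the averages and the integral gives
\begin{equation*}
\frac{1}{N}\sum_{n=1}^N f(x_n)=\frac{1}{N}\sum_{n=1}^N u(x_n)+i\,\frac{1}{N}\sum_{n=1}^N v(x_n)\longrightarrow \int_0^1 u(x)\,dx+i\int_0^1 v(x)\,dx=\int_0^1 f(x)\,dx,
\end{equation*}
which is \eqref{1.3}.

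The converse is where the real work lies. Assuming \eqref{1.3} for every complex-valued continuous $1$-periodic $f$, taking real and imaginary parts reduces us to real-valued continuous $1$-periodic test functions, and the goal is to upgrade this to \emph{all} real-valued continuous $f$ on $[0,1]$ so that Theorem \ref{weyl_thm} applies. The obstruction is that a $1$-periodic function restricted to $[0,1]$ is precisely a continuous function with $f(0)=f(1)$, so the periodic test functions form a proper subclass and a plain sup-norm density argument is unavailable. My plan is an approximation-and-sandwich argument: given real continuous $f$ and $\delta>0$, I would replace $f$ on the small interval $[1-\delta,1]$ by the linear interpolation between $f(1-\delta)$ and $f(0)$, obtaining a continuous $g$ with $g(0)=g(1)$, hence $1$-periodic, that agrees with $f$ outside $[1-\delta,1]$ and satisfies $\int_0^1|f-g|\,dx\le 2\|f\|_\infty\,\delta$.

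Then I would estimate, via the triangle inequality,
\begin{equation*}
\left|\frac{1}{N}\sum_{n=1}^N f(x_n)-\int_0^1 f\,dx\right|\le \frac{1}{N}\sum_{n=1}^N|f-g|(x_n)+\left|\frac{1}{N}\sum_{n=1}^N g(x_n)-\int_0^1 g\,dx\right|+\int_0^1|f-g|\,dx.
\end{equation*}
The middle term tends to $0$ by the hypothesis applied to the periodic $g$, and the last term is at most $2\|f\|_\infty\,\delta$. The delicate point, which I expect to be the crux, is the first term: since $f-g$ is supported in $[1-\delta,1]$ and bounded by $2\|f\|_\infty$, it is controlled by $2\|f\|_\infty$ times the empirical frequency of $\{x_n\}$ in $[1-\delta,1]$, which I cannot bound directly because u.d.\ is exactly what I am trying to prove. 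I would circumvent this by choosing a continuous $1$-periodic bump $\varphi$ with $\mathbf{1}_{[1-\delta,1]}\le\varphi\le 1$ and $\int_0^1\varphi\,dx\le 2\delta$; applying the hypothesis to $\varphi$ gives $\limsup_N \frac{1}{N}\#\{n\le N:x_n\in[1-\delta,1]\}\le 2\delta$. Combining the three estimates, letting first $N\to\infty$ and then $\delta\to 0$, yields \eqref{1.3} for $f$, and Theorem \ref{weyl_thm} then delivers uniform distribution.
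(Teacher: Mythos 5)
Your proof is correct. Note first that the paper itself states this corollary without proof, deferring to the cited literature (Kuipers--Niederreiter), so the comparison is with the standard argument there rather than with an in-paper proof. That classical argument handles the sufficiency direction by sandwiching the indicator $\mathbf{1}_{[a,b[}$ of an \emph{arbitrary} subinterval between two continuous $1$-periodic trapezoidal functions $g_1\leq \mathbf{1}_{[a,b[}\leq g_2$ with $\int_0^1 (g_2-g_1)\,dx\leq \varepsilon$, and verifies the definition \eqref{1.1} of uniform distribution directly; it never re-enters Theorem \ref{weyl_thm}. You instead upgrade the class of test functions: periodize a general continuous $f$ on $[0,1]$ by linear interpolation on $[1-\delta,1]$, control the error term $\frac{1}{N}\sum_{n\leq N}|f-g|(x_n)$ via a periodic bump $\varphi\geq \mathbf{1}_{[1-\delta,1]}$ with $\int_0^1\varphi\,dx\leq 2\delta$, and then invoke Theorem \ref{weyl_thm} as a black box. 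The two proofs share their one essential idea --- majorizing an indicator by a continuous periodic trapezoid to bound empirical frequencies, which is unavoidable since u.d.\ is the conclusion --- but they deploy it differently: the classical route applies it at both endpoints of every interval and is self-contained (in effect reproving the relevant half of Weyl's theorem), while yours applies it once, at the wrap-around point $1$, and genuinely uses the corollary structure by reducing to the already-established theorem. Your version is slightly more economical in that respect; the classical one avoids the auxiliary interpolant $g$ and the three-term triangle-inequality bookkeeping. All your estimates check out, including the bound $|f-g|\leq 2\|f\|_\infty\mathbf{1}_{[1-\delta,1]}$ and the existence of $\varphi$ (which needs $\varphi(0)=\varphi(1)\geq 1$, accommodated by letting $\varphi$ decay on $[0,\delta]$), so the limsup bound $6\|f\|_\infty\delta$ with $\delta\to 0$ closes the argument.
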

This second corollary is particularly important since funtions of the type $f(x)=e^{2\pi ihx}$, where $h$ is a non-zero integer, satisfy its assumptions. Hence, these functions give a criterion to determine if a sequence of points is u.d..
\begin{theorem}[Weyl's Criterion]
A sequence $(x_n)_{n\in\mathbb{N}}$ of points in $[0,1[$ is u.d. if and only if 
\begin{equation}\label{1.4}
\lim_{N \rightarrow \infty} \frac{1}{N}\sum_{n = 1}^Ne^{2\pi ihx_n}=0
\end{equation}
for every integer $h \neq 0$.
\end{theorem}
The first proof of this theorem has been given by Weyl \cite{weyl2, weyl1}, but several proofs can be found in literature. 
As we will see later, Weyl applied this theorem to show that Kronecker's sequence $(\lbrace n\theta\rbrace)_{n\in\mathbb{N}}$, where $\theta$ is irrational and $\lbrace x\rbrace$ is the fractional part of $x$, is u.d. in $[0,1[$. This criterion is of particular interest since it emphasizes the connection between uniform distribution theory and the estimation of exponential sums.\\

Now we would like to introduce a quantity, the discrepancy, which measures the maximal deviation between the empirical distribution of a sequence and the uniform distribution. This term has been introduced by van der Corput and the first intensive study of discrepancy is due to van der Corput and Pisot \cite{vdc_pisot}.\\ For a detailed survey about discrepancy of sequences and applications we refer to \cite{Drmota_Tichy}.
\begin{definition}[Discrepancy]
Let $\omega = \{x_1,\dots,x_N\}$ be a finite set of real numbers in $[0,1[$. The quantity
\begin{equation}\label{discrepancy}
D_N(\omega)=\sup_{0\leq a< b\leq 1} \left|\frac{1}{N} \sum_{n=1}^N \mathbf{1}_{[a,b[}(x_n)-(b-a)\right|
\end{equation}
is called the discrepancy of the given set $\omega$.
\end{definition}
If $(x_n)_{n\in\mathbb{N}}$ is an infinite sequence of points in $[0,1[$, we associate to it the sequence of positive real numbers $D_N (\{x_1, x_2, \dots, x_N \})$. So, the notation $D_N(x_n)$ denotes the discrepancy of the initial segment $\{x_1, x_2, \dots, x_N\}$ of the infinite sequence.\\
The following result shows how discrepancy is related to uniform distribution.
\begin{theorem}\label{thm3}
A sequence $(x_n)_{n\in\mathbb{N}}$ of points in $[0, 1[$ is u.d. if and only if 
\begin{equation}\label{1.6}
\lim_{N\to\infty} D_N(x_n)=0\ .
\end{equation}
\end{theorem}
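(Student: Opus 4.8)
The plan is to prove the two implications separately, since the reverse implication is essentially immediate while the forward one carries all the content, requiring a passage from pointwise to uniform control.

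First I would dispatch the \emph{if} direction. Assume $\lim_{N\to\infty} D_N(x_n)=0$. For any fixed interval $[a,b[\,\subseteq[0,1[$, the defining supremum in \eqref{discrepancy} dominates the single term attached to that interval, so
\begin{equation*}
\left|\frac{1}{N}\sum_{n=1}^N \mathbf{1}_{[a,b[}(x_n)-(b-a)\right|\leq D_N(x_n).
\end{equation*}
Letting $N\to\infty$ forces the left-hand side to $0$, which is exactly \eqref{1.1}, so the sequence is u.d.

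For the \emph{only if} direction, assume $(x_n)$ is u.d. I would introduce the counting function $A(\alpha;N)=\sum_{n=1}^N \mathbf{1}_{[0,\alpha[}(x_n)$ and exploit that it is nondecreasing in $\alpha$. The whole difficulty is that $D_N$ is a supremum over infinitely many intervals, whereas u.d. supplies convergence $A(\alpha;N)/N\to\alpha$ only for each \emph{fixed} $\alpha$. The decisive step is to trade pointwise convergence for uniform convergence by discretising the parameter. Fix an integer $m$ and the grid points $j/m$, $0\leq j\leq m$. For $\alpha\in[(j-1)/m,\,j/m]$, monotonicity gives $A((j-1)/m;N)\leq A(\alpha;N)\leq A(j/m;N)$, and combining this with $(j-1)/m\leq\alpha\leq j/m$ yields
\begin{equation*}
\left|\frac{A(\alpha;N)}{N}-\alpha\right|\leq \max_{0\leq j\leq m}\left|\frac{A(j/m;N)}{N}-\frac{j}{m}\right|+\frac{1}{m}.
\end{equation*}

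The main obstacle, namely uniformity over the continuum of intervals, is now resolved: the maximum on the right runs over only the finitely many grid points $j/m$, and for each of them u.d. gives $A(j/m;N)/N\to j/m$. Hence there is an $N_0$, depending on $m$, beyond which every one of these finitely many terms is below $1/m$, so $\sup_{0\leq\alpha\leq 1}\left|A(\alpha;N)/N-\alpha\right|\leq 2/m$ for all $N\geq N_0$. Finally, writing an arbitrary interval count as $\frac{1}{N}\sum_{n=1}^N\mathbf{1}_{[a,b[}(x_n)-(b-a)=\bigl(A(b;N)/N-b\bigr)-\bigl(A(a;N)/N-a\bigr)$ and taking the supremum over $a,b$ shows $D_N(x_n)\leq 4/m$ for $N\geq N_0$. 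Since $m$ was arbitrary, $D_N(x_n)\to 0$. The crux of the whole argument is precisely this monotonicity trick, which replaces the supremum over a continuum of intervals by a maximum over a finite grid at the controlled cost of an error of size $1/m$.
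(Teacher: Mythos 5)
Your proof is correct: both implications are sound, and the discretisation of the counting function $A(\alpha;N)$ over the grid $j/m$, exploiting monotonicity to pass from pointwise to uniform convergence, is exactly the classical argument. The paper itself omits the proof of this theorem (deferring to its references, e.g.\ Kuipers--Niederreiter, where this is the standard proof), so your attempt matches the canonical approach the paper relies on.
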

Sometimes it is useful to restrict the family of intervals considered in the definition of discrepancy to intervals of the form $[0, a[$ with $0 < a \leq 1$. This leads to the following definition of star-discrepancy.
\begin{definition}[Star discrepancy]
Let $\omega = \{x_1,\dots,x_N\}$ be a finite set of real numbers in $[0,1[$. We define star discrepancy of $\omega$ the quantity
\begin{equation}
D_N^*(\omega)=\sup_{0< a\leq 1} \left|\frac{1}{N} \sum_{n=1}^N \mathbf{1}_{[0,a[}(x_n)-a\right|\ .
\end{equation}
\end{definition}
This definition extends to an infinite sequence in the same way as it has been done for the discrepancy. \\
Moreover, discrepancy and star discrepancy are related by the following relation.
\begin{theorem}
For any sequence $(x_n)_{n\in\mathbb{N}}$ of points in $[0,1[$ we have that
\begin{equation}
D_N^*(x_n)\leq D_N(x_n)\leq 2D_N^*(x_n)\ .
\end{equation}
\end{theorem}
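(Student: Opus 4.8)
The plan is to prove the two inequalities separately, each as an elementary consequence of the definitions of $D_N$ and $D_N^*$.

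For the left inequality $D_N^*(x_n) \leq D_N(x_n)$, I would simply observe that every interval of the form $[0,a[$ with $0 < a \leq 1$ is in particular an interval $[a',b'[\ \subseteq [0,1[$ (taking $a' = 0$ and $b' = a$). Hence the family of intervals over which the supremum defining $D_N^*$ is taken is a subfamily of the one defining $D_N$, and the supremum of the same quantity over a smaller family can only be smaller. This gives $D_N^* \leq D_N$ at once, with the degenerate endpoint causing no difficulty.

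For the right inequality $D_N(x_n) \leq 2D_N^*(x_n)$, the key idea is to write an arbitrary interval as a difference of two intervals anchored at the origin. I would fix an arbitrary $[a,b[\ \subseteq [0,1[$ and use the pointwise identity $\mathbf{1}_{[a,b[} = \mathbf{1}_{[0,b[} - \mathbf{1}_{[0,a[}$ together with the splitting $b-a = b - a$ to obtain
\[
\frac{1}{N}\sum_{n=1}^N \mathbf{1}_{[a,b[}(x_n) - (b-a) = \left(\frac{1}{N}\sum_{n=1}^N \mathbf{1}_{[0,b[}(x_n) - b\right) - \left(\frac{1}{N}\sum_{n=1}^N \mathbf{1}_{[0,a[}(x_n) - a\right).
\]
I would then apply the triangle inequality to bound the absolute value of the left-hand side by the sum of the absolute values of the two terms on the right, each of which is at most $D_N^*$ by definition (using $b > 0$ for the first term and, for the second, either $a > 0$ or the trivial vanishing when $a = 0$). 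This gives the bound $2D_N^*$ for each fixed interval, and passing to the supremum over all admissible $[a,b[$ yields $D_N \leq 2D_N^*$.

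There is no serious obstacle here; the only points requiring a little care are the treatment of the degenerate endpoint ($a = 0$, where $[0,a[$ is empty and its star-discrepancy contribution vanishes) and the observation that the factor $2$ arises precisely from the two anchored intervals produced by the decomposition. Combining the two estimates yields the full chain $D_N^*(x_n) \leq D_N(x_n) \leq 2D_N^*(x_n)$.
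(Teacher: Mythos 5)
Your proof is correct: the inclusion of the anchored intervals among all intervals gives $D_N^*\leq D_N$, and the decomposition $\mathbf{1}_{[a,b[}=\mathbf{1}_{[0,b[}-\mathbf{1}_{[0,a[}$ with the triangle inequality gives $D_N\leq 2D_N^*$, with the degenerate case $a=0$ handled properly. The paper itself states this theorem without proof, deferring to the standard references; your argument is exactly the classical one found there (e.g.\ in Kuipers--Niederreiter), so there is nothing to add.
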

As a consequence, it follows that Theorem \ref{thm3} holds also for star discrepancy, i.e. $\lim_{N\to\infty}D_N^*(x_n)=0$ if and only if $(x_n)_{n\in\mathbb{N}}$ is u.d.. \\

The most well-known problem in the theory of irregularities of distributions is to determine the optimal lower bound for the discrepancy $D_N(x_n)$. Discrepancy bounds from below are essentially quantitative measures for the irregularity of point distributions.\\
A very easy observation gives the following trivial lower bound.
\begin{proposition}\label{TrivBound}
For any finite point set $\omega=\{x_1,\dots,x_N\}$ in $[0,1[$ we have that
\begin{equation*}
\frac{1}{N}\leq D_N(\omega)\leq 1 \ . 
\end{equation*} 
\end{proposition}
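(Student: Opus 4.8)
The statement has two parts, and the upper bound is essentially immediate. The plan for $D_N(\omega) \le 1$ is to observe that for every admissible interval $[a,b[\ \subseteq [0,1[$ the empirical frequency $\frac{1}{N}\sum_{n=1}^N \mathbf{1}_{[a,b[}(x_n)$ lies in $[0,1]$ (being a count of at most $N$ points divided by $N$), while the length $b-a$ also lies in $[0,1]$; the absolute difference of two numbers in $[0,1]$ cannot exceed $1$, and taking the supremum over all such intervals preserves this bound.

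The substance is the lower bound $D_N(\omega) \ge 1/N$, and the plan here is to exhibit a single interval on which the deviation is forced to be close to $1/N$. First I would fix any one of the points, say $x_1$, and for small $\epsilon > 0$ consider the test interval $I_\epsilon = [x_1, x_1 + \epsilon[$; since $x_1 \in [0,1[$, this interval lies in $[0,1[$ once $\epsilon$ is small enough. The key observation is that $x_1 \in I_\epsilon$, so $\sum_{n=1}^N \mathbf{1}_{I_\epsilon}(x_n) \ge 1$, whereas $\lambda(I_\epsilon) = \epsilon$ can be made arbitrarily small. Hence the signed deviation $\frac{1}{N}\sum_{n=1}^N \mathbf{1}_{I_\epsilon}(x_n) - \epsilon$ is at least $\frac{1}{N} - \epsilon$, which is positive for $\epsilon < 1/N$.

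Then I would feed this interval into the definition of $D_N(\omega)$ as a supremum: since $I_\epsilon$ is one admissible interval, $D_N(\omega) \ge |\frac{1}{N}\sum_{n=1}^N \mathbf{1}_{I_\epsilon}(x_n) - \epsilon| \ge \frac{1}{N} - \epsilon$ for every sufficiently small $\epsilon$. Letting $\epsilon \to 0^{+}$ yields $D_N(\omega) \ge 1/N$, which combined with the first part completes the argument.

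The proof has no serious obstacle; the only point requiring a little care is the passage to the limit, namely recognizing that the bound $\frac{1}{N} - \epsilon$ holds for all small $\epsilon$ and therefore survives in the supremum, rather than attempting to attain the exact value $1/N$ on a single fixed interval (which is in general impossible, since a nondegenerate interval always has positive length). One could equivalently phrase this via the jump behaviour of the counting function $a \mapsto \frac{1}{N}\sum_{n=1}^N \mathbf{1}_{[0,a[}(x_n)$, which increases by at least $1/N$ at each data point while $a$ varies continuously, but the explicit small-interval construction is the cleanest route.
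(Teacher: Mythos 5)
Your proof is correct. The paper states this proposition without proof (introducing it only as ``a very easy observation'' and deferring all proofs in this section to the standard references such as Kuipers--Niederreiter), and your argument --- bounding the absolute difference of two quantities in $[0,1]$ for the upper bound, and shrinking a test interval $[x_1, x_1+\epsilon[$ around a data point with the limit $\epsilon \to 0^{+}$ for the lower bound --- is precisely the standard argument the paper implicitly relies on, including the correct observation that one should pass to the supremum via arbitrarily small $\epsilon$ rather than attempt to attain the value $1/N$ on a single fixed nondegenerate interval.
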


The finite sequence $(x_n)=\left(\frac{n}{N}\right)$ with $n=0,\dots,N-1$ satisfies $D_N(x_n)=\frac{1}{N}$. But sequences of this kind, i.e. showing a discrepancy of this order, can only exist in the one-dimensional case, as it will be clear in \S \ref{multi} as direct consequence of Roth's Theorem (Theorem \ref{roth}).\\ 
According to the previous Proposition, one could ask whether the lower bound is really attained by an infinite sequence in the unit interval. Van der Corput conjectured that such a sequence does not exist, i.e. there is no real sequence $(x_n)_{n\in\mathbb{N}}$ satisfying $D_N(x_n)=\mathcal{O}\left(\frac{1}{N}\right)$ as $N\to\infty$.\\
Although this problem was first solved by van Aardenne-Ehrenfest \cite{vAardenne_Ehrenfest1,vAardenne_Ehrenfest2}, the conjecture was completely proved from a quantitative point of view as shown in the following important result due to Schmidt \cite{Schmidt}. 
\begin{theorem}[Schmidt's Theorem]\label{SchmidtThm}
For any sequence $(x_n)_{n\in\mathbb{N}}$ in $[0,1[$ we have
\begin{equation*}
D_N(x_n)\geq C \frac{\log N}{N}
\end{equation*}
for infinitely many $N$, where $C>0$ is an absolute constant.
\end{theorem}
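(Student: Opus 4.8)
The plan is to establish Theorem \ref{SchmidtThm} by reducing the one-dimensional infinite-sequence problem to a two-dimensional finite point-set problem, and then proving the sharp lower bound for that point set by the orthogonal-function method of Roth, boosted from the $L^2$ to the $L^\infty$ norm by the Riesz-product device of Hal\'asz in order to reach the optimal order $\log N$ rather than the weaker $\sqrt{\log N}$.

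First I would perform the reduction. Given the sequence $(x_n)_{n\in\mathbb{N}}$ and a target $N$, I associate the two-dimensional point set
\begin{equation*}
P_N=\Bigl\{\bigl(\tfrac{n}{N},x_n\bigr):1\le n\le N\Bigr\}\subset[0,1[^2,
\end{equation*}
together with its discrepancy function $D(\alpha,\beta)=\#\{n:\tfrac{n}{N}<\alpha,\ x_n<\beta\}-N\alpha\beta$. A box $[0,\alpha[\,\times[0,\beta[$ simply records how many of the first $\lfloor\alpha N\rfloor$ terms of the sequence fall in $[0,\beta[$, so a short computation gives $\|D\|_\infty\le\sup_{m\le N}m\,D_m^*(x_n)+O(1)$. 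Consequently, a lower bound $\|D\|_\infty\gtrsim\log N$ forces $m\,D_m^*\gtrsim\log N\ge\log m$ for some $m\le N$, i.e. $D_m^*\gtrsim\log m/m$; letting $N\to\infty$ produces infinitely many such $m$, and since $D_m^*\le D_m$ this yields the stated bound for $D_m$. Thus the whole theorem follows once I prove that \emph{every} $N$-point set in $[0,1[^2$ has $\|D\|_\infty\gtrsim\log N$.

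Next I would set up Roth's machinery on $P_N$. Writing $2^n\approx N$, for each level $0\le k\le n$ I partition the square into congruent dyadic boxes of width $2^{-k}$ and height $2^{-(n-k)}$, each of area $\approx 1/N$, and I build a function $f_k$ as a signed sum of the two-dimensional Haar functions supported on these boxes, with signs chosen so that $\int D\,f_k$ receives a positive contribution from every box containing no point of $P_N$. Since there are $\approx N$ boxes at each level but only $N$ points in total, at each level all but a bounded fraction of the boxes are empty, giving $\int D\,f_k\gtrsim 1$ for each of the $\approx\log N$ levels. Summing and exploiting the mutual orthogonality of the $f_k$ together with Cauchy--Schwarz already yields Roth's bound $\|D\|_2\gtrsim\sqrt{\log N}$; this is the warm-up, but it falls short of the theorem.

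The decisive step, which I expect to be the main obstacle, is the passage from $L^2$ to $L^\infty$. Instead of the linear test function $\sum_k f_k$ I would use the Riesz product
\begin{equation*}
F=\prod_{k=0}^{n}\bigl(1+\gamma f_k\bigr)-1
\end{equation*}
for a small absolute constant $\gamma>0$. Two estimates are needed. For the upper bound, since the $f_k$ are uniformly bounded one chooses $\gamma$ so small that each factor $1+\gamma f_k\ge 0$; as the $f_k$ have mean zero the product integrates to $1$, whence $\|F\|_1=O(1)$. For the lower bound, expanding $F$ produces the main linear term $\gamma\sum_k\int D\,f_k\gtrsim\gamma\log N$ plus higher-order contributions $\gamma^j\sum_{k_1<\cdots<k_j}\int D\,f_{k_1}\cdots f_{k_j}$. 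The crux is that a product of Haar functions at distinct levels is, up to sign, again a single Haar-type function on a finer box, so each such pairing with $D$ can be controlled and, after summation, the entire tail is dominated by the linear term once $\gamma$ is fixed small enough. Proving that these cross terms do not cancel the main contribution is the technical heart of the argument. Granting it, $\|D\|_\infty\ge\int D\,F/\|F\|_1\gtrsim\log N$, and the reduction of the second paragraph then delivers $D_N(x_n)\ge C\,\log N/N$ for infinitely many $N$.
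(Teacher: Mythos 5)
The paper itself does not prove this theorem: it states the bound and cites Schmidt's original article, so there is no internal proof to compare against. Judged on its own, your proposal follows the now-standard alternative route --- the reduction of the one-dimensional infinite-sequence problem to the $L^\infty$ discrepancy of an $N$-point set in $[0,1[^2$, Roth's orthogonal-function setup, and Hal\'asz's Riesz-product amplification --- and this route is correct in outline. It is genuinely different from Schmidt's original argument, which is a combinatorial induction on nested families of intervals (his ``chain'' method) carried out directly in one dimension, with no Haar functions and no duality; Schmidt's method yields an explicit constant and stays elementary, while your route buys a conceptually cleaner proof ($\|D\|_\infty\geq\int DF/\|F\|_1$ with a single well-chosen test function) that generalizes to other norms and settings. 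Your reduction paragraph is sound, including the passage from $\|D\|_\infty\gtrsim\log N$ to $m\,D_m^*\gtrsim\log m$ for infinitely many $m$ and the final step $D_m\geq D_m^*$.

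Two points need repair before the sketch becomes a proof. First, a quantitative slip: with boxes of area exactly $\approx 1/N$ you cannot conclude that ``all but a bounded fraction'' are empty --- an $N$-point set can occupy every box at a given level. You must take $2^n\geq 2N$, so each level has at least $2N$ boxes of area at most $1/(2N)$ and hence at least half of them are empty; this is what makes $\int D\,f_k\gtrsim 1$ uniform in $k$. Second, the cross-term estimate that you explicitly defer (``granting it'') is the actual content of Hal\'asz's lemma and cannot be waved through by saying the products are Haar-type: one needs the quantitative bound that for $k_1<\cdots<k_j$ the pairing satisfies $\bigl|\int D\,f_{k_1}\cdots f_{k_j}\bigr|\lesssim 2^{-(k_j-k_1)}$ (the product lives on cells of area $2^{-n-(k_j-k_1)}$, and both the linear part and the counting part pair with such cells at this scale), after which
\begin{equation*}
\sum_{j\geq 2}\gamma^{j}\sum_{k_1<\cdots<k_j}\Bigl|\int D\,f_{k_1}\cdots f_{k_j}\Bigr|
\lesssim \gamma^{2}\sum_{k_1<k_j}(1+\gamma)^{k_j-k_1}\,2^{-(k_j-k_1)}\lesssim \gamma^{2}n
\end{equation*}
for $\gamma$ small, which is dominated by the main term $c\gamma n$. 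With these two insertions your argument closes; without the second, the ``technical heart'' you name remains an assertion rather than a proof.
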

Sequences having discrepancy of the order $\mathcal{O}\left(\frac{\log N}{N}\right)$ are called low discrepancy sequences or quasi random sequences. As we will see in the last section, they play an important role in applications and they are especially used in the implementation of Quasi-Monte Carlo methods.\\
In particular, when dealing with the implementation of an algorithm, one actually considers point sets instead of infinite sequences. We now state two results which turn out to be useful for estimating the discrepancy of a given point set.
\begin{theorem}\label{thm6}
If $0\leq x_1\leq x_2 \leq \dots \leq x_N < 1$, then
\begin{equation*}
D_N(x_1,\dots, x_N)=\frac{1}{N}+\max_{1\leq n\leq N}\left(\frac{n}{N}-x_n\right)-\min_{1\leq n\leq N}\left(\frac{n}{N}-x_n\right)\ .
\end{equation*}
\end{theorem}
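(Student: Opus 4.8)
The plan is to linearize the counting and thereby collapse the two-parameter supremum defining the discrepancy into the oscillation of a single one-variable function. For $t\in[0,1]$ write $A(t)=\sum_{n=1}^N\mathbf{1}_{[0,t[}(x_n)$ for the number of points strictly below $t$, and set $h(t)=\frac{A(t)}{N}-t$. Since $[a,b[\,=[0,b[\,\setminus\,[0,a[$ whenever $a\le b$, the expression inside the absolute value in \eqref{discrepancy} factors as $\frac{1}{N}\sum_n\mathbf{1}_{[a,b[}(x_n)-(b-a)=h(b)-h(a)$. Because $|h(b)-h(a)|$ is symmetric in $a$ and $b$, the ordering constraint $a<b$ is immaterial for the supremum, so $D_N=\sup_{0\le a<b\le1}|h(b)-h(a)|=\sup_t h(t)-\inf_t h(t)$ (the extrema are distinct, $h$ being non-constant, so the near-optimal arguments may be taken distinct).

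First I would describe $h$ explicitly. With the points sorted and the conventions $x_0=0$, $x_{N+1}=1$, one has $A(t)=n$ for $t\in\,]x_n,x_{n+1}]$, so on each such piece $h(t)=\frac{n}{N}-t$ is strictly decreasing with slope $-1$, and $h$ jumps upward by $1/N$ at every $x_n$. Hence on the piece $]x_n,x_{n+1}]$ the supremum is the right-hand limit $\frac{n}{N}-x_n$ while the infimum is the attained right-endpoint value $\frac{n}{N}-x_{n+1}$. Ranging over all pieces, and recording the endpoint values $h(0)=h(1)=0$, gives $\sup_t h=\max\bigl(0,\max_{1\le n\le N}(\tfrac{n}{N}-x_n)\bigr)$ and $\inf_t h=\min\bigl(0,\min_{1\le n\le N}(\tfrac{n}{N}-x_n)-\tfrac1N\bigr)$, where I have reindexed the infimum candidates $\frac{n}{N}-x_{n+1}$ as $\frac{m-1}{N}-x_m=(\frac{m}{N}-x_m)-\frac1N$.

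Finally I would eliminate the two spurious comparisons with $0$ by invoking the hypotheses on the endpoints. Writing $M^{+}=\max_{1\le n\le N}(\frac{n}{N}-x_n)$ and $M^{-}=\min_{1\le n\le N}(\frac{n}{N}-x_n)$, the estimate $M^{+}\ge\frac{N}{N}-x_N=1-x_N>0$ (this is exactly where the strict inequality $x_N<1$ enters) forces $\sup_t h=M^{+}$, while $M^{-}\le\frac1N-x_1\le\frac1N$ forces $\inf_t h=M^{-}-\frac1N$. Subtracting yields $D_N=\frac1N+M^{+}-M^{-}$, the asserted identity. The only genuinely delicate point is the bookkeeping forced by the half-open intervals, namely keeping track of which extrema are attained and which are merely approached: the suprema of $h$ live at one-sided limits just to the right of each $x_n$, whereas the infima are attained at the right endpoints. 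Coincident points need no separate treatment, since a tie $x_n=x_{n+1}$ only enlarges the corresponding jump, and the largest index at the tie still records the correct right-hand limit, leaving the maxima and minima above unchanged.
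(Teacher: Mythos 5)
Your argument is correct. Note that the thesis states this theorem without proof, deferring to the cited literature (it is the classical result of Niederreiter, Theorem 2.6 in Kuipers--Niederreiter), so there is no internal proof to compare against; your write-up is in substance the standard argument, cleanly repackaged: the telescoping identity $\frac1N\sum_n\mathbf{1}_{[a,b[}(x_n)-(b-a)=h(b)-h(a)$ with $h(t)=\frac{A(t)}{N}-t$ collapses the two-parameter supremum in \eqref{discrepancy} to the oscillation $\sup_t h-\inf_t h$, and the piecewise description of $h$ (slope $-1$ on $\left]x_n,x_{n+1}\right]$, upward jumps of $1/N$ per point) does the rest. You handle the genuinely delicate points correctly: the inventory of candidate extrema (right-hand limits $\frac{n}{N}-x_n$ for the supremum, attained right-endpoint values $\frac{n-1}{N}-x_n$ for the infimum, plus $h(0)=h(1)=0$), the use of $x_N<1$ to force $M^+>0$ and of $x_1\geq 0$ to force $M^--\frac1N\leq 0$, which eliminates the comparisons with $0$, and the observation that ties only merge jumps without changing the extremal candidates. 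One small cosmetic point: the parenthetical justification that near-optimal arguments for the supremum and infimum can be taken distinct is better supported by noting $\sup_t h-\inf_t h\geq \frac1N>0$ (immediate from your final formula, or from Proposition \ref{TrivBound}) than by the vaguer remark that $h$ is non-constant; since the infimum is attained at actual points while the supremum is only approached along $t\downarrow x_n$, a sequence realizing the supremum eventually avoids the minimizing point, so $D_N=\sup_t h-\inf_t h$ indeed holds with the strict constraint $a<b$ in the definition. As stated, the proof is complete and would serve as a self-contained replacement for the omitted one.
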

\begin{theorem}\label{ThmDiscrDecomp}
Let $\omega=\{x_1,\ldots, x_N\}$ be a finite set of $N$ points in $[0,1[$. For $1\leq j\leq r$ let $\omega_j$ be a subset of $\omega$ consisting of $N_j$ elements such that its discrepancy is $D_{N_j}(\omega_j)$, its star discrepancy is $D^*_{N_j}(\omega_j)$, with $\omega_j\cap \omega_i=\emptyset$ for all $j\neq i $ and $N= N_1+\ldots+N_r$. Then
\begin{equation*}
D_N(\omega)\leq\sum_{j=1}^{r}\frac{N_j}{N}D_{N_j}(\omega_j)
\end{equation*}
and also
\begin{equation*}
D^*_N(\omega)\leq\sum_{j=1}^{r}\frac{N_j}{N}D^*_{N_j}(\omega_j).
\end{equation*}
\end{theorem}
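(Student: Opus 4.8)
The plan is to exploit the additivity of the counting function over the disjoint pieces $\omega_1,\dots,\omega_r$ together with the normalization $\sum_{j=1}^r N_j/N = 1$, and then apply the triangle inequality. The statement is essentially a convexity (subadditivity) fact, and the definition of discrepancy as a supremum of a deviation over all intervals is perfectly suited to this kind of estimate.

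First I would fix an arbitrary interval $[a,b[\,\subseteq[0,1[$ and decompose the counting function over the pieces. Since $\omega=\omega_1\cup\cdots\cup\omega_r$ is a disjoint union, for every such interval we have
\begin{equation*}
\sum_{n=1}^N \mathbf{1}_{[a,b[}(x_n) = \sum_{j=1}^r \sum_{x\in\omega_j}\mathbf{1}_{[a,b[}(x).
\end{equation*}
Dividing by $N$ and using the identity $b-a = \sum_{j=1}^r \frac{N_j}{N}(b-a)$, which holds precisely because the weights $N_j/N$ sum to $1$, I would rewrite the deviation appearing in the definition of discrepancy as a weighted sum of the per-piece deviations:
\begin{equation*}
\frac{1}{N}\sum_{n=1}^N \mathbf{1}_{[a,b[}(x_n) - (b-a) = \sum_{j=1}^r \frac{N_j}{N}\left(\frac{1}{N_j}\sum_{x\in\omega_j}\mathbf{1}_{[a,b[}(x) - (b-a)\right).
\end{equation*}

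Next I would take absolute values and apply the triangle inequality. Each summand on the right is, up to the factor $N_j/N$, exactly the deviation of $\omega_j$ on the interval $[a,b[$, which is bounded by $D_{N_j}(\omega_j)$ since the latter is the supremum of precisely this quantity over all subintervals of $[0,1[$. Hence
\begin{equation*}
\left|\frac{1}{N}\sum_{n=1}^N \mathbf{1}_{[a,b[}(x_n) - (b-a)\right| \leq \sum_{j=1}^r \frac{N_j}{N}\,D_{N_j}(\omega_j).
\end{equation*}
Because the right-hand side no longer depends on $a$ and $b$, taking the supremum over all intervals $[a,b[\,\subseteq[0,1[$ on the left yields the first inequality. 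For the star-discrepancy statement I would run the identical argument with every interval $[a,b[$ replaced by one of the form $[0,a[$, so that the supremum on the left produces $D^*_N(\omega)$ and each per-piece bound becomes $D^*_{N_j}(\omega_j)$.

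I do not expect a genuine obstacle here: the only point requiring care is the bookkeeping in the weighted decomposition, namely verifying that the weights $N_j/N$ sum to $1$ so that the constant term $(b-a)$ distributes correctly across the pieces. Once that identity is in place, the estimate is an immediate consequence of the triangle inequality and the characterization of $D_N$ and $D^*_N$ as suprema over families of intervals.
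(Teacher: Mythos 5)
Your proof is correct: the decomposition identity holds because the $\omega_j$ are pairwise disjoint with $\sum_j N_j = N$ (so their union is all of $\omega$ and the weights $N_j/N$ sum to $1$), and the triangle inequality plus the supremum characterization of $D_N$ and $D^*_N$ finishes the argument. The paper states this theorem without proof, referring to the standard literature, and your argument is exactly the classical one found there (e.g.\ in Kuipers--Niederreiter), so nothing is missing.
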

Now we would like to show two classical examples of low discrepancy sequences.

The two sequences that we are going to present got considerable attention since their introduction at the beginning of 1900. 
\begin{example}[Kronecker sequence]
Let $\theta$ be an irrational number. Then the sequence $(\lbrace n\theta\rbrace)_{n\in\mathbb{N}}$ is known as Kronecker sequence. It is an easy application of Weyl's criterion to show that it is u.d..\\
In fact,
\begin{equation*}
\left| \frac{1}{N} \sum_{n=1} ^{N} e^{2\pi ihn\theta} \right|=\left| \frac{1}{N}\left( \sum_{n=0} ^{N} e^{2\pi ihn\theta}-1 \right)\right|=\left| \frac{1}{N}\left( \frac{1-( e^{2\pi ih\theta})^{N+1}}{1-e^{2\pi ih\theta}}-1\right)\right| \medskip
\end{equation*}
\begin{equation*}
=\frac{1}{N}\left| \frac{1-e^{2\pi ih\theta(N+1)}-1+ e^{2\pi ih\theta}}{1-e^{2\pi ih\theta}}\right| = \frac{1}{N}\left| e^{2\pi ih\theta}\right|\left| \frac{1-e^{2\pi ihN \theta}}{1-e^{2\pi ih\theta}} \right| =\medskip
\end{equation*}
\begin{equation*}
=\frac{1}{N}\left| \frac{1-e^{2\pi ihN \theta}}{1-e^{2\pi ih\theta}} \right|\ .
\end{equation*}
\medskip
Now if we let $\alpha = \pi h \theta$, we can observe that
\begin{eqnarray*}
|1-e^{2 i \alpha}|&=& |1-\cos(2 \alpha)-i \sin(2\alpha)|=\sqrt{(1-\cos(2\alpha))^{2}+\sin^{2}(2\alpha)} \\
& = & \sqrt{2(1-\cos(2\alpha))}=\sqrt{2(1-\cos^{2}\alpha+\sin^{2}\alpha)}=2|\sin \alpha|\ .
\end{eqnarray*}
This is enough to see that \eqref{1.4} holds since
\begin{equation*}
\left| \frac{1}{N} \sum_{n=1} ^{N} e^{2\pi ihn\theta} \right|= \frac{1}{N}\left|\frac{1-e^{2\pi ihN \theta}}{2\sin( \pi h \theta)} \right| \leq \frac{1}{N|\sin (\pi h \theta)|}
\end{equation*}
which tends to zero when $N$ tends to infinity for an integer $h\neq 0$.\\

This result has been found independently by Bohl \cite{Bohl}, Sierpi\'nski \cite{Sierpinski} and Weyl \cite{weyl1} in 1909-1910.\\
The sequence takes the name of Kronecker since this results refines a theorem due to Kronecker showing that the points $e^{in\theta}$ are dense in the unit circle, whenever $\theta$ is an irrational multiple of $\pi$ (Kronecker's approximation theorem).\\
The following result shows that bounds for the discrepancy of the Kronecker sequence are strictly connected with the partial quotients of the continued fraction expansion of $\theta$ and therefore with the Ostrowski expansion. The proof requires some basics about continued fractions and the related Ostrowski expansion and so we do not provide it. We refer to \cite{Niederreiter} for a proof.
\begin{theorem}
The discrepancy $D_N(x_n)$ of the Kronecker sequence satisfies
\begin{equation}
D_N(x_n)<\frac{1}{N}\sum_{\underset{c_k\neq 0}{k=0}}^m(c_k+1)\leq \frac{1}{N}\sum_{k=1}^{m+1} a_i\ ,
\end{equation}
where the $a_i$'s are the partial quotiens of the continued fraction expansion of $\theta$ and the $c_n$'s are the integers in the Ostrowski expansion of $N$.
\end{theorem}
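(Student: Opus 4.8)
The plan is to bound the discrepancy \emph{level by level}, exploiting the two numeration systems that the problem ties together: the continued fraction of $\theta$ and the Ostrowski expansion of $N$. First I would assemble the continued fraction data. Writing $\theta=[a_0;a_1,a_2,\dots]$ with convergents $p_k/q_k$, I would use the recurrence $q_{k+1}=a_{k+1}q_k+q_{k-1}$ and the best-approximation inequalities
\[
\frac{1}{q_{k+1}+q_k}\le \|q_k\theta\|\le \frac{1}{q_{k+1}}\ ,
\]
where $\|\cdot\|$ is the distance to the nearest integer, so that the quantities $\eta_k:=\|q_k\theta\|$ decrease and satisfy $q_{k+1}\eta_k<1$. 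In parallel I would recall the Ostrowski expansion $N=\sum_{k=0}^{m}c_kq_k$, the greedy representation of $N$ in the scale $(q_k)_k$, whose digits obey $0\le c_0<a_1$ and $0\le c_k\le a_{k+1}$, subject to the carry rule forcing $c_{k-1}=0$ whenever $c_k=a_{k+1}$.

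Next I would split the initial segment $\{1,\dots,N\}$ into consecutive blocks dictated by the Ostrowski digits: reading from the top index down, I take $c_m$ blocks of $q_m$ consecutive integers, then $c_{m-1}$ blocks of length $q_{m-1}$, and so on down to $c_0$ singletons, so that the lengths sum to $N$. Each block of length $q_k$ produces a translate $\{\,\{M\theta+i\theta\}: 1\le i\le q_k\,\}$ of a fundamental orbit of length $q_k$. Feeding this partition into the subadditivity estimate of Theorem~\ref{ThmDiscrDecomp} rewrites the problem as
\[
N\,D_N(x_n)\ \le\ \sum_{k}\ \big(\text{total contribution of the }c_k\text{ blocks at level }k\big)\ ,
\]
so that everything is reduced to a uniform single-level estimate.

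The heart of the argument is to show that the $c_k$ blocks sitting at level $k$ contribute at most $c_k+1$. For one fundamental orbit of length $q_k$ I would invoke the three-distance (Steinhaus) theorem: its $q_k$ points cut $[0,1[$ into arcs taking only the two lengths built from $\eta_{k-1}$ and $\eta_k$, both of order $1/q_k$; sorting the block and inserting this gap structure into the exact formula of Theorem~\ref{thm6} bounds the deviation of the point count in an arbitrary interval. Because $c_kq_k<q_{k+1}$, the successive shifts $\{M\theta\}$ of the $c_k$ blocks of a level stay within a single coarse cell, so their deviations telescope rather than add independently, and one lands on the clean bound $c_k+1$, the extra unit being the unmatched boundary arc at the two ends of the level's span. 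This telescoping is the step I expect to be the main obstacle: the delicate point is to keep the accumulated shifts under control well enough to obtain exactly $c_k+1$ rather than a cruder $2c_k$, and this is precisely where the three-gap multiplicities and the Ostrowski carry rule must be used with care.

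Summing over the levels with $c_k\neq 0$ then yields the first inequality $N\,D_N(x_n)<\sum_{k=0,\,c_k\neq 0}^{m}(c_k+1)$. The second inequality is finally arithmetic: the admissibility bounds $c_0<a_1$ and $c_k\le a_{k+1}$, together with the carry rule that supplies the slack needed to absorb the surplus $+1$'s, give $\sum_{k=0,\,c_k\neq0}^{m}(c_k+1)\le\sum_{k=1}^{m+1}a_k$, which is the desired estimate.
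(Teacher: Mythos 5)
The paper itself does not prove this theorem --- it explicitly defers to \cite{Niederreiter} --- so the benchmark is the classical argument, and your outline assembles the right ingredients: the Ostrowski decomposition of $N$ into runs of lengths $q_k$, the near-lattice structure of such runs coming from the convergents $p_k/q_k$, the subadditivity of Theorem \ref{ThmDiscrDecomp}, and the digit arithmetic for the second inequality. Your last paragraph is essentially correct as stated: $c_0+1\leq a_1$ because $c_0\leq a_1-1$, and since $c_k=a_{k+1}$ forces $c_{k-1}=0$, maximal digits are never adjacent, so each surplus $+1$ can be charged to the vacated level below.

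The genuine gap is in the central step, and it is structural rather than a missing computation. You propose to apply Theorem \ref{ThmDiscrDecomp} at the granularity of the individual $q_k$-blocks and then ``telescope'' the $c_k$ contributions at level $k$ down to $c_k+1$. In that order the plan cannot succeed: once the subadditivity inequality has been invoked block by block, the bound is a sum of discrepancies of disjoint point sets and all joint information between the blocks has been discarded --- there is nothing left to telescope. Quantitatively, Proposition \ref{TrivBound} gives $q_k D_{q_k}\geq 1$ for every single block, so this route produces a level-$k$ contribution of at least $c_k$ times the per-block constant, and the per-block constant one can actually prove is $1+q_k\|q_k\theta\|$; you land at roughly $2c_k$ and can never reach $c_k+1$ once $c_k\geq 2$. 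The repair is to invert the granularity: apply Theorem \ref{ThmDiscrDecomp} only once per nonzero digit, to the whole superblock of $c_kq_k$ consecutive indices, and bound its counting error for an arbitrary interval directly. Writing $j=uq_k+v$, the $c_k$ points with $v$ fixed form a monotone arithmetic progression of step $\eta_k=\|q_k\theta\|$ whose diameter $(c_k-1)\eta_k< a_{k+1}\eta_k<\eta_{k-1}$ stays below the minimal gap of the base orbit given by the three-distance theorem; equivalently, one compares the superblock with the rational rotation by $p_k/q_k$, under which each residue $j/q_k$ is hit exactly $c_k$ times while the accumulated drift $c_kq_k\,|\theta-p_k/q_k|<1/q_k$ remains smaller than one lattice cell. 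It is exactly this joint, signed bookkeeping --- the step your outline acknowledges but does not carry out --- that yields the strict bound $c_k+1$ per nonzero digit, after which summing over levels and your digit arithmetic finish the proof.
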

\begin{corollary}
 If $\theta$ is an irrational number such that $\sum_{k=1}^m a_k=\mathcal{O}(m)$, then 
 \begin{equation}
D_N(x_n)=\mathcal{O}\left(\frac{\log(N)}{N}\right)\qquad {\rm for\ all\ } N\geq 2
\end{equation}
\end{corollary}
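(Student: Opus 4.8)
The plan is to feed the hypothesis on the partial quotients into the upper bound supplied by the previous theorem, and then to control the length $m$ of the Ostrowski expansion of $N$ in terms of $\log N$. The starting point is the inequality already established there, namely
\[
D_N(x_n) \le \frac{1}{N}\sum_{k=1}^{m+1} a_k,
\]
where $m$ denotes the largest index occurring in the Ostrowski expansion of $N$ relative to $\theta$, and the $a_k$ are the partial quotients of the continued fraction expansion of $\theta$.

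First I would invoke the assumption $\sum_{k=1}^{m} a_k = \mathcal{O}(m)$: there exist an absolute constant $A>0$ and an index $m_0$ such that $\sum_{k=1}^{m+1} a_k \le A\,(m+1)$ whenever $m+1 \ge m_0$. Substituting this into the displayed bound gives $D_N(x_n) \le A\,(m+1)/N$, so the entire problem reduces to proving that $m = \mathcal{O}(\log N)$.

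The second step, which is the heart of the argument, is the comparison between the index $m$ and $\log N$. Here I would exploit the exponential growth of the convergent denominators $q_k$: from the recurrence $q_k = a_k q_{k-1} + q_{k-2} \ge q_{k-1} + q_{k-2}$ together with $q_0 = 1$ and $q_1 \ge 1$, one obtains $q_k \ge F_{k+1}$, the $(k+1)$-th Fibonacci number, and hence $q_k \ge c\,\varphi^{\,k}$ with $\varphi = (1+\sqrt{5})/2$ and an absolute constant $c>0$. Since the top term of the Ostrowski expansion of $N$ forces $q_m \le N$, this yields $c\,\varphi^{\,m} \le N$, that is, $m \le (\log N - \log c)/\log \varphi = \mathcal{O}(\log N)$. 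Combining the two steps then gives $D_N(x_n) \le A\,(m+1)/N = \mathcal{O}(\log N)/N$, which is exactly the claimed order $\mathcal{O}(\log N / N)$; the finitely many small values of $N$, and the passage from an asymptotic statement to one valid for all $N \ge 2$, are absorbed into the implied constant, using that $\log N > 0$ for $N \ge 2$.

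The main obstacle I anticipate is the clean derivation of $m = \mathcal{O}(\log N)$: one has to be careful about the precise convention of the Ostrowski expansion, in particular which denominators $q_k$ are admissible and what the ranges of the digits $c_k$ are, in order to be certain that the highest index $m$ genuinely satisfies $q_m \le N$ so that the Fibonacci lower bound may be applied. Once this indexing is pinned down, the remaining estimates are routine.
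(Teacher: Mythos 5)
Your proposal is correct and is precisely the intended derivation: the paper states this corollary without proof (deferring the underlying discrepancy theorem to Niederreiter), and the standard argument is exactly what you give — substitute the hypothesis $\sum_{k=1}^{m+1} a_k = \mathcal{O}(m)$ into the bound $D_N(x_n) \le \frac{1}{N}\sum_{k=1}^{m+1} a_k$, then use $N \ge q_m$ together with the Fibonacci-type growth $q_k \ge F_{k+1} \ge c\,\varphi^k$ of the convergent denominators to get $m = \mathcal{O}(\log N)$. Your care about the Ostrowski indexing convention (the top digit $c_m \neq 0$ guaranteeing $q_m \le N$) is exactly the right point to pin down, and with it the argument is complete.
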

\end{example}
\begin{example}[Van der Corput sequence]
The van der Corput sequence has been introduced by van der Corput \cite{vdc} in 1935. The main idea to provide bounds for the discrepancy of this sequence is to introduce a suitable numeration system, as the Ostrowski numeration system for the Kronecker sequence. There is one natural numeration system arising from the definition of the van der Corput sequence, namely, the classical system of numeration in base $b$, where $b\geq 2$ is apositive integer.
\begin{proposition}[$b$-ary expansion]
 Let $b\geq 2$ be a positive integer and denote by $\mathbb{Z}_b=\{0,1,\dots, b-1\}$ the least residue system mod $b$. Then every positive integer $n\geq 0$ has a unique expansion
 \begin{equation}\label{b_exp}
  n=\sum_{k=1}^ra_k(n)b^k
 \end{equation}
in base $b$, where $a_k(n)\in\mathbb{Z}_b$ for all $j\geq 0$ and $r=\lfloor \log_b n\rfloor$, where $\lfloor x \rfloor$  is the largest integer not greater than $x$.
\end{proposition}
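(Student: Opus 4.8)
The plan is to establish \eqref{b_exp} in two parts—existence and uniqueness of the digits—by repeated use of Euclidean division, and then to pin down the length $r$ by an elementary size estimate. I would begin with existence, arguing by strong induction on $n$. The case $n=0$ is the empty expansion (all digits zero), so assume $n\geq 1$. The division algorithm produces unique integers $q$ and $a_0$ with $n=qb+a_0$ and $0\leq a_0\leq b-1$, hence $a_0\in\mathbb{Z}_b$. Since $b\geq 2$ we have $0\leq q=\lfloor n/b\rfloor<n$, so the induction hypothesis applies to $q$ and furnishes an expansion $q=\sum_{k\geq 0}a_{k+1}b^{k}$ with all digits in $\mathbb{Z}_b$; multiplying by $b$ and adding $a_0$ yields the required expansion of $n$.

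For uniqueness, I would suppose $n=\sum_k a_k b^k=\sum_k a_k' b^k$ with all digits in $\mathbb{Z}_b$. Reducing modulo $b$ annihilates every term with $k\geq 1$, so $a_0\equiv n\equiv a_0'\pmod b$; since both lie in $\{0,\dots,b-1\}$ this forces $a_0=a_0'$. Subtracting this common term and dividing by $b$ leaves two expansions of the strictly smaller integer $(n-a_0)/b$, and the same induction (an infinite descent) gives equality of all remaining digits. Thus the two halves of the statement rest on the same single ingredient, namely the existence-and-uniqueness property of Euclidean division.

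To identify the length, I would observe that if $a_r$ denotes the top nonzero digit then, using $a_k\leq b-1$ for each $k$,
\[
b^{r}\leq n=\sum_{k=0}^{r}a_k b^{k}\leq (b-1)\sum_{k=0}^{r}b^{k}=b^{r+1}-1<b^{r+1}.
\]
Taking logarithms to base $b$ gives $r\leq\log_b n<r+1$, that is $r=\lfloor\log_b n\rfloor$, as claimed.

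The point requiring care here is bookkeeping rather than any genuine difficulty. The formula $r=\lfloor\log_b n\rfloor$ presupposes $n\geq 1$, so the degenerate case $n=0$ must be set aside separately, and one must fix a consistent convention for the summation index (it should in fact start at $k=0$, the units digit, so that $a_0$ from the division step appears). Once the indexing is pinned down, both the induction and the size estimate are entirely routine, and the only substantive tool invoked throughout is the division algorithm.
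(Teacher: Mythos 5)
Your proof is correct, and it is the standard argument: the paper itself offers no proof of this proposition (it is stated as classical background, with proofs deferred to the references such as Kuipers--Niederreiter), so there is nothing to diverge from. Your three ingredients --- existence by strong induction via the division algorithm, uniqueness by reduction modulo $b$ and descent, and the size estimate $b^{r}\leq n<b^{r+1}$ pinning down $r=\lfloor\log_b n\rfloor$ --- are exactly what such a proof requires, and each step checks out (in particular $q=\lfloor n/b\rfloor<n$ does use $b\geq 2$, and the geometric-sum bound $(b-1)\sum_{k=0}^{r}b^{k}=b^{r+1}-1$ is right). You were also right to flag the bookkeeping: the paper's displayed formula has the summation starting at $k=1$ and quantifies over a phantom index $j$, both evidently typos for $k=0$ and $k\geq 0$ respectively (the radical inverse function defined immediately afterwards sums from $k=0$, confirming this), and the formula $r=\lfloor\log_b n\rfloor$ indeed requires $n\geq 1$, so your separate treatment of $n=0$ is the correct convention.
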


\begin{definition}[Radical inverse function]\label{radical_inverse}
For an integer $b\geq 2$, consider the $b$-ary expansion in \eqref{b_exp} of $n\in\mathbb{N}$. The function $\varphi_b:\mathbb{N}\longrightarrow [0,1[$ defined as
\begin{equation}
\varphi_b(n) = \sum_{k=0}^r a_k(n) b^{-k-1}
\end{equation}
is called radical inverse function in base $b$.
\end{definition}
In other words, $\varphi_b(n)$ is obtained from $n$ by a symmetric reflection of the expansion \eqref{b_exp} with respect to the decimal point.
\begin{definition}
The sequence $(x_n)_{n\in\mathbb{N}}$ of general term $x_n=\varphi_2(n)$ is called van der Corput sequence.
\end{definition}
\begin{remark}
It is common use to call van der Corput sequences in base $b$, sequences $(x_n)_{n\in\mathbb{N}}$ of general term $x_n=\varphi_b(n)$, where $b$ is a fixed prime number greater than $1$.
\end{remark}
￼￼￼￼￼￼￼￼￼￼￼￼￼￼￼￼￼￼￼￼￼￼￼￼￼￼￼￼￼￼￼￼￼￼￼￼￼￼￼￼￼￼￼￼￼￼￼￼￼￼￼￼￼￼￼￼￼￼￼￼￼￼￼￼￼￼￼￼￼￼￼￼￼￼￼￼￼￼￼￼￼￼￼￼￼￼￼￼￼￼￼￼￼￼￼￼￼￼￼￼￼￼￼￼

\begin{theorem}
The discrepancy $D_N(x_n)$ of the van der Corput sequence $(\varphi_b(n))_{n\in\mathbb{N}}$ satisfies
\begin{equation*}
D_N(x_n)\leq c_b\left( \frac{\log(N+1)}{N}\right)
\end{equation*}
where $c_b>0$ is an absolute constant.
\end{theorem}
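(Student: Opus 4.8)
The plan is to exploit the self-similar, digit-reversing structure of the radical inverse function $\varphi_b$ together with the subadditivity of discrepancy under partitions of a point set, i.e. Theorem~\ref{ThmDiscrDecomp}. The conceptual core is a single structural observation: if $s$ is a multiple of $b^j$, then for $0\le n'<b^j$ the last $j$ base-$b$ digits of $s+n'$ are exactly those of $n'$, while the higher digits are those of $s$. Since $\varphi_b$ reverses the digit string about the radix point, the block $\{\varphi_b(s),\varphi_b(s+1),\dots,\varphi_b(s+b^j-1)\}$ equals $\{\,t+kb^{-j}:0\le k<b^j\,\}$ for the fixed shift $t=\sum_{k\ge j}a_k(s)b^{-k-1}\in[0,b^{-j})$. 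In other words, every block of $b^j$ consecutive terms whose starting index is divisible by $b^j$ is a translate of the regular grid of mesh $b^{-j}$, hence a perfectly equidistributed set.

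First I would write $N$ in base $b$ as $N=\sum_{j=0}^m d_j b^j$ with $d_m\neq 0$, so $m=\lfloor\log_b N\rfloor$. Reading the indices $0,1,\dots,N-1$ in order, I split them greedily into consecutive blocks: first $d_m$ blocks of length $b^m$, then $d_{m-1}$ blocks of length $b^{m-1}$, and so on down to $d_0$ singletons. By construction the starting index of every block is divisible by its own length, so the structural observation applies to each of them, and the blocks are pairwise disjoint with total cardinality $N$. Theorem~\ref{ThmDiscrDecomp} then bounds the discrepancy of the whole initial segment by the weighted sum
\begin{equation*}
D_N(x_n)\le\sum_{j=0}^m d_j\,\frac{b^j}{N}\,D_{b^j}\bigl(\text{block of length }b^j\bigr).
\end{equation*}

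It remains to estimate the discrepancy of a single translated grid $\{\,t+kb^{-j}:0\le k<b^j\,\}$ with $0\le t<b^{-j}$. Since $t<b^{-j}$ no point reaches $1$, so after sorting the $n$-th point is $x_n=t+(n-1)b^{-j}$, and the quantity $\tfrac{n}{b^j}-x_n=b^{-j}-t$ is independent of $n$. Theorem~\ref{thm6} then yields $D_{b^j}=b^{-j}$ exactly, the maximum and the minimum in its formula cancelling (this is in fact the minimal value allowed by Proposition~\ref{TrivBound}). Substituting back gives
\begin{equation*}
D_N(x_n)\le\sum_{j=0}^m d_j\,\frac{b^j}{N}\,b^{-j}=\frac{1}{N}\sum_{j=0}^m d_j\le\frac{(b-1)(m+1)}{N},
\end{equation*}
and since $m+1\le\log_b N+1\le c_b'\,\log(N+1)$ the claimed bound $D_N(x_n)\le c_b\,\log(N+1)/N$ follows with $c_b=(b-1)c_b'$.

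The only genuinely load-bearing step is the structural observation of the first paragraph: that sweeping the low-order digits of $n$ through a complete residue system modulo $b^j$ produces, under digit reversal, exactly the shifted grid of mesh $b^{-j}$. This is precisely where the arithmetic of the van der Corput construction enters, and it is the point I would verify most carefully, in particular the bijectivity of the digit-reversal map on $\{0,\dots,b^j-1\}$ and the harmless role of the translation $t$, since mishandling the shift is the easiest way to lose the sharp constant. Everything afterwards is bookkeeping with the base-$b$ expansion of $N$ and the two discrepancy estimates already furnished by Theorems~\ref{ThmDiscrDecomp} and~\ref{thm6}.
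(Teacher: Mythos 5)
Your proof is correct: the structural observation (that a block of $b^j$ consecutive indices starting at a multiple of $b^j$ maps under $\varphi_b$ to a translate $\{t+kb^{-j}: 0\le k<b^j\}$ of the mesh-$b^{-j}$ grid, with $t<b^{-j}$), the greedy decomposition of $\{0,\dots,N-1\}$ along the base-$b$ digits of $N$, the exact evaluation $D_{b^j}=b^{-j}$ via Theorem~\ref{thm6}, and the final summation via Theorem~\ref{ThmDiscrDecomp} all check out. The thesis states this theorem without proof, citing the literature, and your argument is essentially the classical one from that literature (e.g.\ Kuipers--Niederreiter), so nothing further is needed.
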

In particular, Faure \cite[Theorem 6]{Faure} established the following bounds for arbitrary $b\geq 2$:
\begin{equation*}
 \limsup_{N\to\infty}\frac{ND_N(\varphi_b(n))}{\log N}=\begin{cases}

\frac{b^2}{4(b+1)\log b} & {\rm for\ even}\ b \\
\frac{b-1}{4\log b} & {\rm for\ odd}\ b \ .
\end{cases}
\end{equation*}
An immediate consequence of this result is the fact that the van der Corput sequence which has the best asymptotic behaviour is the one in base $3$.\\

Another generalization of the van der Corput sequence, called \emph{generalized van der Corput sequence in base $b$}, which improves the asymptotic behaviour of the discrepancy, has been introduced in \cite{Faure}. This new family of sequences is defined as the sequence of general term
\begin{equation*}
 x_n=\sum_{k=0}^m \sigma(a_k(n))b^{-k-1} \qquad {\rm for\ all}\ n\geq 0 ,
\end{equation*}
where $\sigma$ is a permutation of $\mathbb{Z}_b$.\\
 In particular, if we denote by $(\varphi_b^\sigma(n))_{n\in\mathbb{N}}$ a generalized van der Corput sequence in base $b$, the following result says that the discrepancy is smaller compared to that of the classical van der Corput sequence $(\varphi_b(n))$ in base $b$.
\begin{theorem}[Corollaire 3, \cite{Faure}]
For every sequence $\varphi_b^\sigma$ we have
\begin{equation*}
D^*_N(\varphi_b^\sigma(n))\leq D_N(\varphi_b^\sigma(n))\leq D_N(\varphi_b(n))
\end{equation*}
\end{theorem}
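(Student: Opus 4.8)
The first inequality, $D^*_N(\varphi_b^\sigma)\le D_N(\varphi_b^\sigma)$, requires no work: it is the universal bound $D_N^*\le D_N$ recorded earlier in the chapter, applied to the initial segment $\{\varphi_b^\sigma(0),\dots,\varphi_b^\sigma(N-1)\}$. The entire content therefore lies in the second inequality, and my plan is to deduce it from Faure's exact representation of the discrepancy of generalized van der Corput sequences, reducing everything to a comparison of auxiliary local-discrepancy functions.

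Concretely, I would first introduce, for each threshold $h\in\{1,\dots,b-1\}$, the piecewise-linear function $\varphi_{b,h}^\sigma$ defined at the grid points by $\varphi_{b,h}^\sigma(k/b)=\#\{j:0\le j<k,\ \sigma(j)<h\}-hk/b$ for $0\le k\le b$ and extended by linear interpolation. Faure's theorem in \cite{Faure} then expresses $N D_N(\varphi_b^\sigma)$ as a combination, taken over the threshold $h$ and over the scales dictated by the base-$b$ digits of $N$, of these functions evaluated at the corresponding fractional parts of $N$. Granting this representation (which I would cite rather than reprove), the comparison of the discrepancies of two generalized van der Corput sequences collapses to a comparison of their local-discrepancy functions.

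The combinatorial engine is elementary: for \emph{every} permutation one has $\#\{j<k:\sigma(j)<h\}\le\min(k,h)$, with equality exactly for $\sigma=\mathrm{id}$, and dually $\#\{j<k:\sigma(j)<h\}\ge\max(0,k+h-b)$, with equality for the reversing permutation $\sigma^*(j)=b-1-j$. These yield the pointwise chain $\varphi_{b,h}^{\sigma^*}\le\varphi_{b,h}^\sigma\le\varphi_{b,h}^{\mathrm{id}}$ at the grid points, hence everywhere by linearity, together with the reflection identity $\varphi_{b,h}^{\sigma^*}=-\varphi_{b,b-h}^{\mathrm{id}}$ linking the two extremes. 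Substituting the upper bound into Faure's formula controls the positive one-sided deviation of $\varphi_b^\sigma$ by that of the identity, while the reflection identity controls the negative one-sided deviation by the \emph{same} quantity attached to $\varphi_b=\varphi_b^{\mathrm{id}}$.

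The step I expect to be the main obstacle is precisely the recombination of these two one-sided estimates into the single quantity $N D_N(\varphi_b)$. The extreme discrepancy couples its positive and negative parts, and these are maximized by $\mathrm{id}$ and by $\sigma^*$ respectively, not by one common permutation; a crude combination of the two bounds only gives something like $D_N(\varphi_b^\sigma)\le 2\,D_N(\varphi_b)$, which is too weak. Obtaining the stated inequality requires the sharp form of Faure's formula, in which the supremum over $h$ is taken only after summing over scales, together with the fact that for the identity every $\varphi_{b,h}^{\mathrm{id}}$ is non-negative; one must check that these features make the positive and negative contributions recombine exactly into $N D_N(\varphi_b)$ rather than doubling it. Verifying that Faure's formula indeed produces this cancellation is where the genuine care is needed, and it is the heart of \cite[Corollaire 3]{Faure}.
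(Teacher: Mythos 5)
The first inequality is indeed immediate from the general relation $D_N^*\le D_N$ stated earlier in the chapter, and your framework is the right one: the thesis itself gives no proof (it defers entirely to \cite{Faure}), and Faure's argument does run through the functions $\varphi_{b,h}^\sigma$, his exact discrepancy formulas, and elementary counting bounds of exactly the kind you state (your pointwise chain and the reflection identity $\varphi_{b,h}^{\sigma^*}=-\varphi_{b,b-h}^{\mathrm{id}}$ are both correct). However, your proposal stops precisely at the decisive step, and the repair you gesture at is not the right mechanism. In Faure's formulas the maxima over the threshold $h$ are taken \emph{at each scale separately}: writing $\psi_b^{\sigma,+}=\max_h\varphi_{b,h}^\sigma$ and $\psi_b^{\sigma,-}=\max_h\bigl(-\varphi_{b,h}^\sigma\bigr)$, one has $ND_N(\varphi_b^\sigma)=\sum_{j\ge 1}\bigl(\psi_b^{\sigma,+}+\psi_b^{\sigma,-}\bigr)(N/b^j)$, so there is no interchange of supremum and sum over scales to exploit; bounding the two one-sided parts separately, as your plan does, can only yield the factor-$2$ estimate you concede is too weak.

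The missing idea is a \emph{two-threshold} version of your counting bound, applied to the sum of the one-sided parts rather than to each part alone. Since
\begin{equation*}
\psi_b^{\sigma,+}(x)+\psi_b^{\sigma,-}(x)=\max_{h>h'}\bigl(\varphi_{b,h}^\sigma(x)-\varphi_{b,h'}^\sigma(x)\bigr),
\end{equation*}
and at a grid point $k/b$ the difference equals $\#\{j<k:\ h'\le\sigma(j)<h\}-k(h-h')/b$, the count of indices landing in an interval of $h-h'$ residues is at most $\min(k,h-h')$, whence $\varphi_{b,h}^\sigma-\varphi_{b,h'}^\sigma\le\varphi_{b,h-h'}^{\mathrm{id}}$ at the grid points, and then on all of $[0,1]$ by comparing affine pieces on each interval $[k/b,(k+1)/b]$. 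This gives the pointwise inequality $\psi_b^{\sigma,+}+\psi_b^{\sigma,-}\le\max_m\varphi_{b,m}^{\mathrm{id}}=\psi_b^{\mathrm{id},+}$, with no doubling. Combined with the observation (implicit in your own bounds) that every $\varphi_{b,m}^{\mathrm{id}}\ge 0$, so that $\psi_b^{\mathrm{id},-}=0$ and $ND_N(\varphi_b)=\sum_{j\ge1}\psi_b^{\mathrm{id},+}(N/b^j)$, summation over the scales $j$ yields $ND_N(\varphi_b^\sigma)\le ND_N(\varphi_b)$ exactly. So the positive and negative deviations of $\varphi_b^\sigma$ are not recombined after the fact; they are controlled \emph{jointly}, scale by scale, by a single difference of two thresholds — this is the step your proposal leaves open, and without it the argument as written does not reach the stated constant.
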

\end{example}
\subsection{Uniform distribution of sequences of partitions of $[0,1[$}
In this section we consider a method to obtain u.d. sequences of points in the unit interval as sequences of points associated to a special family of sequences of partitions. We present new sequences of partitions recently introduced and studied by Vol\v{c}i\v{c} \cite{volcic}, which are a generalization of the sequences introduced by Kakutani \cite{kakutani} in 1976.
\begin{definition}
Let $(\pi_n)_{n\in\mathbb{N}}$ be a sequence of partitions of $[0,1[$, with
\begin{equation*}
\pi_n=\{[t_i^n, t_{i+1}^n[\, :\, 1\leq i\leq k(n)\}\ .
\end{equation*}
Then $(\pi_n)_{n\in\mathbb{N}}$ is uniformly distributed (u.d.) if for any continuous function $f$ on $[0,1]$
\begin{equation}
\lim_{n\to\infty}\frac{1}{k(n)} \sum_{i=1} ^{k(n)} f(t_i^n)=\int_0^1 f(t)\ dt\ .
\end{equation}
\end{definition}
\begin{remark}
 Given a sequence of partitions $(\pi_n)_{n\in\mathbb{N}}$, it is possible to associate to it a sequence of measures $(\mu_n)_{n\in\mathbb{N}}$ by
 \begin{equation}\label{AssMeas}
\mu_n=\frac{1}{k(n)}\sum_{i=1}^{k(n)}\delta_{t_i^n}\ ,
\end{equation}
where we denote by $\delta_t$ the Dirac measure concentrated at $t$.\\
We note that the uniform distribution property of the sequence of partitions $(\pi_n)_{n\in\mathbb{N}}$ is equivalent to the weak convergence of $(\mu_n)_{n\in\mathbb{N}}$ to the Lebesgue measure $\lambda$ on $[0,1]$. 
\end{remark}
This observation allows us to give the following equivalent definition.
\begin{definition}
A sequence of partitions $(\pi_n)_{n\in\mathbb{N}}$ is u.d. if and only if for every interval $[a,b]\subset [0,1]$
\begin{equation}
 \lim_{n\to\infty}\frac{1}{k(n)}\sum_{i=1}^{k(n)}\mathbf{1}_{[a,b[}(t_i^n)=b-a
\end{equation}
\end{definition}
Equivalently, $(\pi_n)_{n\in\mathbb{N}}$ is u.d.\ if the sequence of discrepancies
\begin{equation}\label{discrPart}
D_n=\sup_{0\leq a<b\leq 1}\bigg|\frac{1}{k(n)}\sum_{i=1}^{k(n)}
\mathbf{1}_{[a,b[}(t_i^{(n)})- (b-a)\bigg|
\end{equation}
tends to $0$ as $n\to\infty$.\\

Let us describe Kakutani's splitting procedure \cite{kakutani}, known as Kakutani's $\alpha$-refinement, which allows to construct a whole class of u.d. sequences of partitions of $[0, 1[$.
\begin{definition}[Kakutani splitting procedure]
Let $\alpha\in\ ]0,1[$ and $\pi=$\linebreak $\{[t_i, t_{i+1}[\, :\, 1\leq i\leq k\}$ be any partition of $[0,1[$, then Kakutani's $\alpha$-refinement of $\pi$ (which will be denoted by $\alpha\pi$) is obtained by splitting only the intervals of $\pi$ having maximal length in two parts, proportionally to $\alpha$ and $1-\alpha$ respectively.
\end{definition}
If we denote by $\alpha^n\pi$ the $\alpha$-refinement of $\alpha^{n-1}\pi$ for every $n\in\mathbb{N}$, the so-called Kakutani's sequence of partitions $(\kappa_n)_{n\in\mathbb{N}} = (\alpha^n\omega)_{n\in\mathbb{N}}$ is obtained by successive $\alpha$-refinements of the trivial partition $\omega = \{[0, 1]\}$.

Observe that for $\alpha=\frac{1}{2}$ Kakutani's sequence of partitions coincides with the binary sequence of partitions.\\
Kakutani proved the following result.
\begin{theorem}
For every $\alpha\in\, ]0,1[$ the sequence of partitions $(\kappa_n)_{n\in\mathbb{N}}$ is u.d..
\end{theorem}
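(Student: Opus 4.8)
The plan is to prove that Kakutani's $\alpha$-refinement produces a u.d. sequence of partitions, i.e. that the discrepancies $D_n$ defined in \eqref{discrPart} tend to $0$. The guiding intuition is that the splitting procedure always cuts the \emph{longest} intervals, so after many refinements all interval lengths become small and roughly comparable, and the endpoints fill $[0,1[$ evenly. I would work with the equivalent formulation via weak convergence of the associated measures $\mu_n$ in \eqref{AssMeas}, since the set of probability measures on the compact space $[0,1]$ is weak-$*$ compact and this lets me extract convergent subsequences to identify the limit as $\lambda$.

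The first step is to control the mesh of the partitions. Let $\delta_n$ denote the maximal length of an interval in $\kappa_n$. I would show that $\delta_n \to 0$ as $n\to\infty$: each refinement replaces every maximal-length interval of length $\ell$ by two intervals of lengths $\alpha\ell$ and $(1-\alpha)\ell$, so no interval can survive unsplit indefinitely, and setting $\beta=\max\{\alpha,1-\alpha\}<1$ one obtains a geometric-type decay of $\delta_n$. This is essentially a combinatorial bookkeeping argument about how long an interval can avoid being maximal, but it is the technical heart of the mesh control. Once $\delta_n\to0$, the number of intervals $k(n)\to\infty$ as well.

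The second step is to identify the weak limit. Because the space of probability measures on $[0,1]$ is weak-$*$ sequentially compact, it suffices to show that \emph{every} weak-$*$ convergent subsequence of $(\mu_n)$ converges to $\lambda$; then the whole sequence converges to $\lambda$. Let $\nu$ be such a subsequential limit. The key observation is a self-similarity / invariance property: the refinement operation acts on the multiset of interval lengths in a way that, in the limit, must be a fixed point. Concretely, I would argue that for any interval $[a,b[$, as the mesh shrinks the proportion of partition endpoints lying in $[a,b[$ is asymptotically insensitive to the exact placement because the two pieces $\alpha\ell,(1-\alpha)\ell$ created at each step distribute endpoints in a self-correcting manner; passing to the limit forces $\nu([a,b[)=b-a$. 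The cleanest route to this is to exploit the structure of the length distribution: show that the empirical distribution of the (normalized) interval lengths converges, and that this limiting length distribution combined with vanishing mesh forces the endpoint-counting measure to be Lebesgue.

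The step I expect to be the main obstacle is the \emph{uniformity} in the identification of the limit, that is, turning ``the longest intervals always get split'' into a quantitative statement that the endpoints are asymptotically equidistributed rather than merely dense with vanishing mesh. A vanishing mesh alone does not imply uniform distribution of endpoints (one could in principle cluster many short intervals in one region), so the argument must genuinely use that only maximal-length intervals are split: this is what prevents the lengths from spreading out too much and keeps the partition ``balanced.'' I would make this precise by tracking, at each stage, the ratio between the longest and shortest surviving intervals and showing it stays bounded, which yields that counting endpoints in $[a,b[$ is comparable to measuring the total length of intervals meeting $[a,b[$, and the latter converges to $b-a$ by the vanishing mesh. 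Assembling these estimates gives $D_n\to0$, which by the equivalent definition means $(\kappa_n)_{n\in\mathbb{N}}$ is u.d.
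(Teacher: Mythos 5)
The paper itself states this theorem without proof (it is imported from Kakutani \cite{kakutani}, with the generalization attributed to Vol\v{c}i\v{c}'s ergodic argument), so your proposal must stand on its own terms, and there it has a genuine gap at the decisive step. Your mesh control and your bounded-ratio claim are both correct: the shortest interval at stage $n$, of length $m_n$, was created by splitting a then-maximal interval of length $m_n/\alpha$ or $m_n/(1-\alpha)$, whence $\delta_n/m_n\le 1/\min(\alpha,1-\alpha)=c$ for all $n$. But these two facts cannot identify the limit. Counting left endpoints in $[a,b[$ agrees, up to an additive $O(1)$, with the number of intervals of $\kappa_n$ contained in $[a,b[$; since every length lies in $[m_n,\delta_n]$ and $k(n)\in[\delta_n^{-1},m_n^{-1}]$, the comparability you invoke in your final paragraph delivers only $c^{-1}(b-a)\le\liminf_n\mu_n([a,b[)\le\limsup_n\mu_n([a,b[)\le c\,(b-a)$. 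That is, every subsequential weak-$*$ limit $\nu$ is a measure equivalent to $\lambda$ with density pinched between $c^{-1}$ and $c$ --- not $\lambda$ itself, which is what u.d. requires.

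This loss is not an artifact of sloppy estimating: vanishing mesh together with a uniformly bounded max/min length ratio provably does \emph{not} imply uniform distribution, as the paper's own Remark on $\rho$-refinements shows. Taking $\rho=\{[0,\frac12[,[\frac12,1[\}$ and initial partition $\pi=\{[0,\frac25[,[\frac25,1[\}$, the mesh tends to $0$ and the length ratios stay bounded, yet $(\mu_{2n})$ and $(\mu_{2n+1})$ converge to two distinct non-Lebesgue limits with piecewise constant densities. So the ``self-correcting''/fixed-point step in your second paragraph, which you leave at the level of intuition, is exactly where the theorem lives, and your outline supplies no mechanism for it. Any correct argument must genuinely use that the starting partition is the trivial one $\omega$, so that all lengths lie in $\{\alpha^i(1-\alpha)^j\}$, and must cope separately with $\log\alpha/\log(1-\alpha)$ rational versus irrational --- compare Theorem \ref{AH}, where in the rationally related case u.d.\ of $(\rho^n\pi)$ holds only when the initial lengths sit on the exponential lattice $ce^{v\Lambda}$, a condition $\omega$ satisfies automatically but your argument never invokes. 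The known routes to close this step are the ergodic-theoretic argument of Vol\v{c}i\v{c} \cite{volcic}, Kakutani's original counting of the numbers of intervals of each length $\alpha^i(1-\alpha)^j$, or the tree-evolution analysis of \cite{Drmota_Infusino}; as written, your proof does not go through.
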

The sequence $(\alpha^n\omega)_{n\in\mathbb{N}}$ and its properties have been investigated by many authors. For example, see \cite{brennan_durrett} and \cite{vanzwet} for a modification of the splitting procedure where the intervals of maximal length are split randomly. For a generalization of Kakutani's splitting procedure in higher dimensions see  Carbone and Vol\v{c}i\v{c} \cite{carbone_volcic}. Another further
generalization of Kakutani's splitting procedure, that we now describe, was introduced by Vol\v{c}i\v{c} \cite{volcic}.

\begin{definition}[$\rho$-refinement]
Let $\rho$ be a non-trivial finite partition of $[0, 1[$. The $\rho$-refinement of a partition $\pi$ of $[0,1[$ (denoted by $\rho\pi$) is the partition obtained by subdividing all intervals of $\pi$ having maximal length positively homotetically to $\rho$.
\end{definition}
\begin{remark}
If $\rho=\{[0,\alpha[, [\alpha,1[\}$ and $\pi=\omega$, then its $\rho$-refinement coincides with Kakutani's $\alpha$-refinement.\\
As in the case of Kakutani's splitting procedure, we can consider the $\rho$-refinement $\rho^2\pi$ of $\rho\pi$ and so on in order to obtain the sequence of partitions $(\rho^n\pi)_{n\in\mathbb{N}}$ defined as follows.
\end{remark}
\begin{definition}
Given a non-trivial finite partition $\rho$ of $[0,1[$, the sequence of $\rho$-refinements $(\rho^n\pi)_{n\in\mathbb{N}}$ of $\pi$ is defined as the sequence of partitions obtained by successive $\rho$-refinements of $\pi$.
\end{definition}

By using arguments from ergodic theory, Vol\v{c}i\v{c} \cite{volcic} proved that the sequence $(\rho^n\omega)_{n\in\mathbb{N}}$ is u.d. for every finite partition $\rho$. \\

A natural problem is to estimate the asymptotic behaviour of the discrepancy of $(\rho^n\pi)_{n\in\mathbb{N}}$.\\
A first result in this direction has been obtained by Carbone \cite{carbone}, who considered the so-called $LS$-sequences for $\rho$ consisting of $L$ intervals of length $\alpha$ and $S$ intervals of length $\alpha^2$, such that $\alpha L+\alpha^2 S=1$.\\
We will provide a complete description of this class of sequences in Chapter 2.\\
The first general results providing upper bounds for the discrepancy of sequences of arbitrary $\rho$-refinements of $\omega$ have been found by Drmota and Infusino \cite{Drmota_Infusino}. The authors consider a new approach based on the analysis of a tree evolution process, namely the Khodak algorithm, where the generation of successive nodes has the same structure as the $\rho$-refinement process.\\
Let us state the results found in \cite{Drmota_Infusino}.
\begin{definition}
Let $p_1,\dots, p_m$ be positive integers. We say that \linebreak $\log\left( \frac{1}{p_1} \right),\dots, \log\left( \frac{1}{p_m} \right)$ are rationally related if there exists a positive number $\Lambda$ such that $\log\left( \frac{1}{p_1} \right),\dots,\log\left( \frac{1}{p_m} \right)$ are integer multiples of $\Lambda$, that is
\begin{equation*}
\log\left( \frac{1}{p_j} \right)=n_j\Lambda\ , \quad {\rm with}\ n_j\in\mathbb{Z}\ {\rm for}\ j=1,\dots, m
\end{equation*}
Without loss of generality, we can assume that $\Lambda $ is as large as possible which is equivalent to assume that $\gcd(n_1,\ldots,n_m)=1$. \\
We say that $\log\left(\frac{1}{p_1}\right),\ldots,\log\left(\frac{1}{p_m}\right)$ are irrationally related if they are not rationally related.
\end{definition}
\begin{theorem}\label{DM1}
Suppose that the lengths of the intervals of a partition $\rho$ of $[0,1[$ are $p_1,\dots, p_m$ and that $\log\left( \frac{1}{p_j} \right)$ for $j=1,\dots, m$ are rationally related. Then there exist a real number $\eta >0$ and an integer $d\geq 0$ such that the discrepancy $D_n$ of $(\rho^n\omega)_{n\in\mathbb{N}}$ is bounded by
\begin{equation*}
D_n=\begin{cases}
\mathcal{O}\left(\frac{(\log k(n))^d}{k(n)^{\eta}}\right) & {\rm if}\ 0<\eta < 1\ , \\
\mathcal{O}\left(\frac{(\log k(n))^{d+1}}{k(n)^{\eta}}\right) & {\rm if}\ \eta = 1 \ ,\\
 \mathcal{O}\left(\frac{1}{k(n)}\right) & {\rm if}\ \eta > 1\ .
\end{cases}
\end{equation*}
\end{theorem}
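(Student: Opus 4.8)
The plan is to translate the geometry of the $\rho$-refinement into the analysis of a weighted $m$-ary tree and then to reduce the discrepancy to a fluctuation estimate amenable to renewal-theoretic and Mellin-transform techniques; this is the route taken in \cite{Drmota_Infusino}. First I would encode the refinement process as the \emph{Khodak tree}: every interval produced during the successive $\rho$-refinements corresponds to a node reached by a word $w=j_1\cdots j_\ell$ over the alphabet $\{1,\dots,m\}$, and the length of the associated interval is the monomial $P(w)=p_{j_1}\cdots p_{j_\ell}$ (since the $p_j$ are the lengths of a partition of $[0,1[$ they sum to $1$ and behave like probabilities). The rule ``subdivide the intervals of maximal length'' is exactly Khodak's rule ``expand the leaf of maximal weight'', so the partition $\rho^n\omega$ is realized as the set of leaves of the tree after $n$ expansion rounds, $k(n)$ is the number of such leaves, and the left endpoints $t_i^n$ are the partial sums of the leaf-lengths read from left to right.

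Next I would reduce the discrepancy to a partial-sum fluctuation. Since the points $t_1^n\le\cdots\le t_{k(n)}^n$ are already sorted, Theorem \ref{thm6} gives
\[
D_n=\frac{1}{k(n)}+\max_{1\le i\le k(n)}\Bigl(\frac{i}{k(n)}-t_i^n\Bigr)-\min_{1\le i\le k(n)}\Bigl(\frac{i}{k(n)}-t_i^n\Bigr),
\]
so that, up to the negligible term $1/k(n)$, bounding $D_n$ amounts to bounding $\max_i\bigl|t_i^n-i/k(n)\bigr|$. Writing $t_i^n-i/k(n)=\sum_{j<i}\bigl(|I_j^n|-1/k(n)\bigr)$ exhibits this as the maximal partial sum of the centred interval lengths, which makes clear that the decisive feature is the spatial arrangement of long and short intervals rather than the multiset of lengths itself, i.e. how clusters of similar-length leaves sit inside $[0,1[$. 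I would therefore introduce a bivariate generating function recording, for each leaf, both its length-class and its left-to-right position, so that $\max_i|t_i^n-i/k(n)|$ becomes a functional of the coefficients of this generating function.

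The analytic heart is then the asymptotic evaluation of these coefficients, and here the hypothesis that the $\log(1/p_j)=n_j\Lambda$ are rationally related is what makes the problem tractable: every leaf length is an integer power $r^{\ell}$ of $r=e^{-\Lambda}$, so leaves can be grouped by the integer level $\ell$ and their counts obey a linear recurrence whose characteristic equation is $\sum_j z^{n_j}=1$. I would analyse the corresponding renewal equation by a Mellin transform (equivalently, by singularity analysis of the generating function): the lattice condition forces the relevant poles to lie on a vertical line at equally spaced heights, which produces the periodic fluctuations of the leaf distribution and pins down the growth rate of $k(n)$. The exponent $\eta$ is then read off as the decay rate of the fluctuation relative to the ideal spacing $1/k(n)$, the three regimes corresponding to this rate being slower than, equal to, or faster than $k(n)^{-1}$; a coalescence of poles of order $d$ yields the factor $(\log k(n))^d$, and the customary gain of one logarithm in the critical case explains the exponent $d+1$ when $\eta=1$.

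The step I expect to be the main obstacle is precisely the passage from counting to geometry: the generating-function machinery controls \emph{how many} leaves of each length exist, but the discrepancy demands uniform control, over all initial segments $[0,a[$, of \emph{where} they lie, i.e. of the cumulative sums $t_i^n$. Turning the oscillating asymptotics of the level-counts into a uniform bound on $\max_i|t_i^n-i/k(n)|$ — with the exact exponent $\eta$ and the correct power of the logarithm, including the jump to $d+1$ at $\eta=1$ — is the delicate part, and is where I would lean most heavily on the precise Khodak-algorithm estimates adapted in \cite{Drmota_Infusino}.
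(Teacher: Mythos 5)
The first thing to note is that the thesis itself contains no proof of Theorem \ref{DM1}: it is stated verbatim as a result imported from Drmota and Infusino \cite{Drmota_Infusino}, with only a one-line indication of their method (the Khodak tree-evolution process). Measured against that, your sketch reconstructs the architecture of the cited argument faithfully: the identification of the $\rho$-refinement rule with Khodak's leaf-expansion rule, with leaf weights $P(w)=p_{j_1}\cdots p_{j_\ell}$; the legitimate reduction, via Theorem \ref{thm6}, of $D_n$ to the maximal centred partial sums $\max_i\bigl|t_i^n-i/k(n)\bigr|$; and the lattice analysis in the rationally related case, where all lengths are powers of $r=e^{-\Lambda}$, level counts obey a linear recurrence with characteristic equation $\sum_j z^{n_j}=1$, and $\eta$ and $d$ are governed by the modulus and multiplicity of the subdominant roots. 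On all of these points your plan is the right one.

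The genuine gap is the one you name yourself and then do not close: the passage from level-count asymptotics to a bound that is uniform over all initial segments $[0,a[$. You propose a ``bivariate generating function recording length-class and position'' but never define it or extract anything from it, and for the decisive estimate you explicitly ``lean on the precise Khodak-algorithm estimates adapted in \cite{Drmota_Infusino}'' --- which is the very paper whose theorem is to be proved, so the argument as written is circular rather than incomplete-but-repairable on its face. What a complete proof needs here is a mechanism tying position to counting: for each $a$, the quantity $\#\{i: t_i^n<a\}-k(n)\,a$ must be decomposed along the tree (e.g.\ via the $\rho$-adic expansion of $a$ into at most $m$ subtrees per level), so that the discrepancy is dominated by a sum over levels $\ell\le L\approx\log_{1/r}k(n)$ of fluctuation terms of size $O(\ell^{d}\rho_2^{\ell})$ coming from the subdominant root $\rho_2$; summing this series gives $k(n)^{-\eta}(\log k(n))^{d}$ when it is geometrically dominated, and the extra logarithm at $\eta=1$ arises because the series becomes critical and the $\sim L$ equal-sized terms must be added --- a concrete computation, not the ``customary gain of one logarithm'' you invoke. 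Without this uniform-in-$a$ decomposition, nothing in your sketch actually produces $\eta$, the power $(\log k(n))^{d}$, or the jump to $d+1$, so the three-regime conclusion does not follow from what you wrote.
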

The next theorem requires the following
\begin{definition}
 Every irrational number $x$ has an approximation constant $c(x)$ defined by
\begin{equation*}
c(x)=\liminf_{q\to \infty}q|qx-p|\ , 
\end{equation*}
where $p$ is the nearest integer to $qx$. \\
Moreover $x$ is said to be badly approximable if $c(x)>0$.
\end{definition}
\begin{theorem}\label{DM2}
Suppose that the lengths of the intervals of a partition $\rho$ of $[0,1[$ are $p$ and $q = 1 - p$. If $\frac{\log p}{\log q}\notin \mathbb{Q}$ is badly approximable, then the discrepancy $D_n$ of $(\rho^n\omega)$ is bounded by
\begin{equation*}
D_n=\mathcal{O}\left(\left(\frac{\log\log (k(n))}{\log (k(n))}\right)^{\frac{1}{4}}\right)\ ,\qquad {\rm}\ n\to \infty\ .
\end{equation*}
￼Furthermore, if $p, q$ are algebraic numbers then \begin{equation*}
￼D_n = \mathcal{O}\left(\left(\frac{\log\log (k(n))}{\log(k(n))}\right)^\kappa\right), \qquad {\rm}\ n\to \infty\\ ,
\end{equation*}
where $\kappa$ is an effectively computable positive real constant.
\end{theorem}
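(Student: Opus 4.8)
The plan is to exploit the tree structure underlying the $\rho$-refinement, following the Khodak-algorithm viewpoint already mentioned above, and to turn the qualitative uniform distribution of $(\rho^n\omega)$ into a quantitative bound. Since $\rho$ has two intervals of lengths $p$ and $q=1-p$, every interval occurring in $\rho^n\omega$ has length of the form $p^aq^b$ with $a,b\ge 0$, and the intervals of length $p^aq^b$ correspond to the tree nodes whose path uses $a$ branches of type $p$ and $b$ of type $q$, of which there are $\binom{a+b}{a}$ in the fully developed tree. The refinement rule, namely splitting every interval of maximal length, means that after $n$ steps the lengths present all lie between a threshold $\ell_n$ and $\ell_n/\min(p,q)$; consequently $k(n)\asymp \ell_n^{-1}$ and $\log k(n)\asymp -(a\log p+b\log q)$ for the typical pair $(a,b)$. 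First I would make this precise by introducing the counts $N_{a,b}^{(n)}=\#\{\text{intervals of length }p^aq^b\text{ in }\rho^n\omega\}\le\binom{a+b}{a}$, and controlling both $k(n)=\sum_{a,b}N_{a,b}^{(n)}$ and the endpoint positions $t_i^n$ in terms of partial sums of the $p^aq^b$.

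Next I would reduce the discrepancy to an exponential-sum estimate. By Theorem~\ref{thm6} the discrepancy of the ordered endpoints is governed by the maximal deviation between the counting function $i/k(n)$ and the position $t_i^n$; equivalently, for a threshold $x\in[0,1)$ one must compare the fraction of intervals contained in $[0,x)$ with its length $x$. Applying the Erd\H{o}s--Tur\'an inequality, this deviation is bounded by $\tfrac{1}{H}+\sum_{h=1}^{H}\tfrac1h\,|S_h|$, where $S_h$ is the corresponding Weyl sum of the endpoints. The crucial point is that deciding whether $p^aq^b$ exceeds a given threshold is the same as deciding the sign of $a\log p+b\log q-\log x$, i.e. of $a\theta+b-c$ after dividing by $\log q<0$, with $\theta=\tfrac{\log p}{\log q}$. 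Thus the half-plane counts over lattice points $(a,b)$, weighted by the binomial profile $\binom{a+b}{a}$, are controlled by the discrepancy of the rotation sequence $(\{m\theta\})_{m}$, and the three-distance theorem describes the resulting gap lengths.

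The hypothesis that $\theta$ is badly approximable then enters decisively: by definition $c(\theta)>0$, so $\theta$ has bounded partial quotients, which gives $D_M(\{m\theta\})=\mathcal{O}(\tfrac{\log M}{M})$ and keeps the three gap-lengths comparable, so that the weighted half-plane counts carry a uniformly controlled error. Inserting these bounds into the Erd\H{o}s--Tur\'an sum and optimizing the cut-off $H$ against the number of refinement levels, which is of order $\log k(n)$, yields the stated rate $\big(\tfrac{\log\log k(n)}{\log k(n)}\big)^{1/4}$; the exponent $\tfrac14$ emerges from this optimization and the $\log\log$ factor from the logarithmic loss in the Erd\H{o}s--Tur\'an sum. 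For the refinement in which $p,q$ are moreover algebraic, I would replace the purely qualitative badly-approximable input by an effective lower bound for the linear form $a\log p+b\log q$ supplied by Baker's theorem on linear forms in logarithms; this effective irrationality measure sharpens the error estimate and produces the effectively computable exponent $\kappa$.

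I expect the main obstacle to be the second step, namely passing rigorously from the discrepancy of the partition to a rotation problem while carrying the binomial weights $\binom{a+b}{a}$: the tree profile is not uniform, so the half-plane lattice-point count is a weighted count whose error term must be disentangled from the concentration of $\binom{a+b}{a}$ near the diagonal. Controlling this weighted count uniformly in the threshold $x$, and simultaneously in $h$ for the Weyl sums $S_h$, is where the delicate interaction between the tree profile and the equidistribution of $(\{m\theta\})$ must be handled, and it is precisely here that the quality of the Diophantine approximation of $\theta$ is consumed.
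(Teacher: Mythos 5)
First, a point of comparison: the thesis does not actually prove Theorem \ref{DM2} --- it is stated as a quoted result of Drmota and Infusino \cite{Drmota_Infusino}, with the proof explicitly omitted, so your attempt can only be measured against the cited proof. Your sketch does reproduce the correct high-level architecture of that proof: identifying the intervals of $\rho^n\omega$ with leaves of a tree evolution process (the Khodak algorithm), lengths $p^aq^b$ with multiplicities at most $\binom{a+b}{a}$, the observation that the splitting rule ``refine all intervals of maximal length'' reduces threshold questions to the sign of $a\log p+b\log q-\log v$ and hence to the rotation by $\theta=\frac{\log p}{\log q}$, the use of bounded partial quotients (equivalently $c(\theta)>0$) to control the Kronecker sequence $(\{m\theta\})_m$, and Baker's theorem on linear forms in logarithms to make the exponent $\kappa$ effective when $p,q$ are algebraic. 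These are the right ingredients, and the algebraic refinement is handled exactly as in the literature.

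There is, however, a genuine gap at the decisive quantitative step, plus one misdirection. The misdirection: you propose to bound the partition discrepancy via Theorem \ref{thm6} and the Erd\H{o}s--Tur\'an inequality applied to Weyl sums of the endpoints $t_i^n$. Theorem \ref{thm6} is an identity for point sets, not partitions, and the endpoints of $\rho^n\omega$ are sums of many monomials $p^aq^b$, so the sums $\sum_i e^{2\pi i h t_i^n}$ are intractable; the known proof never passes through exponential sums of the endpoints. Instead one compares, for each threshold $x$, the number of partition points in $[0,x)$ with $k(n)x$ by exploiting the self-similarity of the construction --- the subtree above any node replicates the whole tree --- so that the local length composition in $[0,x)$ matches the global one up to boundary contributions counted by binomially weighted lattice points near the line $a\log(1/p)+b\log(1/q)=\log(1/v)$. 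The gap proper: you assert that the exponent $\frac14$ ``emerges from the optimization'' of an Erd\H{o}s--Tur\'an cutoff $H$, but you never identify the two error sources that are actually balanced, namely the Gaussian concentration of the weights $\binom{a+b}{a}p^aq^b$ around the diagonal (an effective window of width $\asymp\sqrt{\log k(n)}$, with exponentially small tails) against the rotation discrepancy $\mathcal{O}(\log M/M)$ over only $M\asymp\sqrt{\log k(n)}$ relevant steps of $(\{m\theta\})$; it is this balance that produces $\left(\frac{\log\log k(n)}{\log k(n)}\right)^{1/4}$, and no choice of $H$ in an Erd\H{o}s--Tur\'an bound will reproduce it. Since you yourself defer precisely this weighted half-plane count --- uniform in the threshold --- as ``the main obstacle,'' what you have is a credible plan with the correct Diophantine inputs, but the stated rate is not established by it.
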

In this second case, we can observe that weaker upper
bounds for the discrepancy are obtained, since they depend heavily on Diophantine approximation properties of the ratio $\frac{\log p}{\log q}$. \\
Finally, the authors proved bounds for the elementary discrepancy
of the sequences of partitions constructed for a certain class of fractals.\\

A second interesting problem arising in this context concerns the uniform distribution of sequences of partitions, when the starting partition $\pi$ is not the trivial partition $\omega$.\\
As pointed out in \cite{volcic}, if we start from an arbitrary initial partition $\pi$, then the sequence of partitions $(\rho^n\pi)_{n\in\mathbb{N}}$ is in general not u.d.. 
\begin{remark}
Let us consider 
\begin{equation*}
 \pi=\left\{\left[0, \frac 25\right[, \left[\frac 25, 1\right[\right\} \quad {\rm and}\quad \rho=\left\{\left[0, \frac 12\right[, \left[\frac 12, 1\right[\right\}\ .
\end{equation*}
It is clear that the $\rho-$refinement operates alternatively on $\left[\frac 25, 1\right[$ and $\left[0, \frac 25\right[$. So, if we consider the sequence of measures $(\mu_n)_{n\in\mathbb{N}}$ associated to $(\rho^n\pi)_{n\in\mathbb{N}}$, then for any measurable set $E\subset[0,1[$ the subsequence $(\mu_{2n})_{n\in\mathbb{N}}$ converges to 
\begin{equation*}
 \lambda_{2n}(E)=\frac 54\cdot\lambda\left(E\cap\left[0, \frac 25\right[\right)
+\frac 56\cdot\lambda\left(E\cap\left[\frac 25, 1\right[\right)\ ,
\end{equation*}
while the subsequence $(\mu_{2n+1})_{n\in\mathbb{N}}$ converges to
\begin{equation*}
 \lambda_{2n+1}(E)=\frac 56\cdot\lambda\left(E\cap\left[0, \frac 25\right[\right)
+\frac {10}{9}\cdot\lambda\left(E\cap\left[\frac 25, 1\right[\right)\ .
\end{equation*}
Hence, $(\mu_n)_{n\in\mathbb{N}}$ does not converge and consequently $(\rho^n\pi)_{n\in\mathbb{N}}$ is not u.d..\\
It is worthwhile noticing that the same problem arises even in the simplest case of Kakutani's splitting procedure.\\ So, it is essential to find sufficient conditions on $\pi$ in order to guarantee the uniform distribution of $(\alpha^n\pi)_{n\in\mathbb{N}}$ or more in general of $(\rho^n\pi)_{n\in\mathbb{N}}$.
\end{remark}
The solution to this problem has been found by Aistleitner and Hofer \cite{aistleitner_hofer}, who gave necessary and sufficient conditions on $\pi$ and $\rho$ under which the sequence $(\rho^n\pi)_{n\in\mathbb{N}}$ is uniformly distributed.\\
The authors proved the following result, using methods introduced in \cite{Drmota_Infusino}.
\begin{theorem}\label{AH}
 Let $\rho$ be a partition of $[0, 1[$ consisting of $m \geq 2$ intervals of lengths $p_1,\dots, p_m$, and let $\pi$ be an initial partition of $[0, 1[$ consisting of $l \geq 2$ intervals of lengths $\alpha_1,\dots,\alpha_l$. Then the sequence $(\rho^n\pi)_{n\in\mathbb{N}}$ is uniformly distributed if and only if one of the following conditions is satisfied:
 \begin{enumerate}
\item the real numbers $\log\left( \frac{1}{p_1} \right),\dots, \log\left( \frac{1}{p_m} \right)$ are irrationally related
\item the real numbers $\log\left( \frac{1}{p_1} \right),\dots, \log\left( \frac{1}{p_m} \right)$ are rationally related with parameter $\Lambda$ and
\begin{equation*}
\alpha_i=ce^{v_i\Lambda},\ c\in\mathbb{R}^+,\ v_i\in\mathbb{Z}, i=1\dots l\ .
\end{equation*}
\end{enumerate}
\end{theorem}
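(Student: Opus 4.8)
The plan is to reduce the statement to a renewal-theoretic comparison of how much ``counting mass'' each of the $l$ initial intervals contributes to the refined partition, and to exploit the fact that each initial interval, after an affine rescaling to $[0,1[$, generates an ordinary $\rho$-refinement of the unit interval, to which Vol\v{c}i\v{c}'s theorem applies.

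First I would set up the tree/Khodak framework of Drmota and Infusino. Write $q_i=\log\!\left(\frac1{p_i}\right)>0$; since $\rho$ is a partition of $[0,1[$ one has $\sum_i p_i=1$, so the refinement tree is an $m$-ary tree in which the node reached by a word $w=i_1\cdots i_k$ carries weight $p_w=p_{i_1}\cdots p_{i_k}$. Each initial interval $I_j$ of length $\alpha_j$ is the root of such a subtree, and the intervals of $\rho^n\pi$ that lie inside $I_j$ at the moment the global maximal length first drops to $x=e^{-s}$ are exactly the words $w$ whose partial sums $q_{i_1}+\cdots+q_{i_t}$ first cross the level $s-\beta_j$, where $\beta_j:=\log\!\left(\frac1{\alpha_j}\right)$. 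The essential observation is that this truncated subtree, rescaled so that $I_j$ becomes $[0,1[$, is precisely a (threshold-stopped) $\rho$-refinement of the unit interval.

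Next I would record the decomposition of the associated measures. Let $A_j=A_j(s)$ be the number of subtree-$j$ leaves at scale $x$ and let $\nu_j$ be their (normalized) empirical measure, so that
\[
\mu_n=\sum_{j=1}^{l}\frac{A_j}{k(n)}\,\nu_j,\qquad \sum_{j=1}^{l}A_j=k(n).
\]
By Vol\v{c}i\v{c}'s theorem applied to each rescaled subtree, together with a sandwiching argument comparing the threshold-stopped partitions with the discrete refinement sequence $(\rho^n\omega)_{n\in\mathbb{N}}$, one gets $\nu_j\to\frac1{\alpha_j}\lambda|_{I_j}$. Hence $\mu_n\to\lambda$ if and only if $\frac{A_j}{k(n)}\to\alpha_j$ for every $j$ (using that the weights lie in $[0,1]$, any failure of convergence produces a subsequential limit with non-constant density, hence non-uniform distribution, giving the ``only if'' direction). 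Writing $A$ for the single-unit-interval leaf count, one has $A_j(s)=A(s-\beta_j)$, so everything is governed by the asymptotics of $A$.

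The dichotomy then comes from the renewal/Mellin--Tauberian analysis underlying Theorems~\ref{DM1} and~\ref{DM2}: $A(s)=e^{s}g(s)(1+o(1))$, where $g$ is constant when the $\log\!\left(\frac1{p_i}\right)$ are irrationally related (the non-lattice case) and $g$ is a genuinely non-constant $\Lambda$-periodic function when they are rationally related with parameter $\Lambda$ (the lattice case, with non-constancy forced by the maximality of $\Lambda$). Substituting $e^{-\beta_j}=\alpha_j$ gives
\[
\frac{A_j(s)}{k(n)}=\frac{\alpha_j\,g(s-\beta_j)}{\sum_{j'=1}^{l}\alpha_{j'}\,g(s-\beta_{j'})}\,(1+o(1)).
\]
In the non-lattice case $g$ is constant, so the ratio tends to $\alpha_j$ for \emph{every} choice of the $\alpha_j$, yielding condition~1. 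In the lattice case the ratio tends to $\alpha_j$ if and only if $g(s-\beta_j)$ agrees for all $j$ as $s\to\infty$, which by $\Lambda$-periodicity and non-constancy of $g$ holds exactly when the $\beta_j$ are mutually congruent modulo $\Lambda$, i.e. $-\log\alpha_j=-\log c+v_j\Lambda$ with $v_j\in\mathbb{Z}$, that is $\alpha_j=c\,e^{v_j\Lambda}$; this is condition~2 (and the Remark's example with $\rho=\{[0,\tfrac12[,[\tfrac12,1[\}$, $\alpha=\tfrac25$ is the prototypical failure). I expect the main obstacle to be the two analytic inputs feeding this last step: establishing the sharp asymptotic $A(s)=e^{s}g(s)(1+o(1))$ with the correct constant-versus-periodic alternative and a uniform error term, and rigorously transferring Vol\v{c}i\v{c}'s uniform distribution from the discrete refinement steps to the threshold-stopped snapshots used here; once these are in place, the matching of the two renewal regimes to the two stated conditions is purely algebraic.
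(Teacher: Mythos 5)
The paper states this theorem without proof, citing Aistleitner and Hofer, whose argument rests on the Khodak-tree/renewal-theoretic machinery of Drmota and Infusino, and your sketch follows essentially that same route: decomposition into the $l$ subtrees rooted at the initial intervals, identification of the refinement snapshots with threshold-stopped trees, and the lattice versus non-lattice renewal dichotomy giving $A(s)=e^{s}g(s)(1+o(1))$ with $g$ constant or genuinely $\Lambda$-periodic. Your matching of condition~2 to congruence of the $\log\left(\frac{1}{\alpha_j}\right)$ modulo $\Lambda$ is exactly how the cited proof works, and the failure case is quantified by the limsup/liminf theorem stated immediately after in the paper.
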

\begin{remark}
 Condition 2. includes the special case when the initial partition $\pi$ consists of intervals having the same length, and in particular the case when the initial partition is the trivial partition $\omega$ and the corresponding sequence of partition is a Kakutani sequence.
\end{remark}
In particular, the following corollary gives conditions under which the Kakutani sequence of partitions of a particular initial sequence $\pi$ is u.d..
\begin{corollary}
 Let $\rho=\{[0,p[,[p,1[\}$ and $\pi=\{[0,\alpha[,[\alpha,1[\}$. Then $(\rho^n\pi)_{n\in\mathbb{N}}$ is u.d. if and only if one of the following conditions is satisfied:
 \begin{enumerate}
  \item $\frac{\log(p)}{\log(1-p)}$ is irrational
  \item $\log\left(\frac{1}{p}\right)$ and $\log\left(\frac{1}{1-p}\right)$ are rationally related with parameter $\Lambda$ and $\alpha=\frac{1}{e^{k\Lambda}+1}$ for $k \in\mathbb{Z}$.
 \end{enumerate}
\end{corollary}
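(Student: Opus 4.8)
The plan is to obtain the corollary as the special case $m = l = 2$ of Theorem \ref{AH}, so the work reduces entirely to rewriting the two abstract conditions of that theorem in the explicit form stated here. I would set $p_1 = p$, $p_2 = 1-p$ for the lengths of the intervals of $\rho$, and $\alpha_1 = \alpha$, $\alpha_2 = 1-\alpha$ for those of $\pi$. With this identification Theorem \ref{AH} applies verbatim, and it remains only to check that its condition 1 becomes condition 1 of the corollary and its condition 2 becomes condition 2.

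For the first condition, I would unwind the definition of rational relatedness: $\log(1/p)$ and $\log(1/(1-p))$ are rationally related precisely when there is a $\Lambda$ with $\log(1/p) = n_1\Lambda$ and $\log(1/(1-p)) = n_2\Lambda$ for integers $n_1, n_2$, that is, exactly when
\begin{equation*}
\frac{\log(1/p)}{\log(1/(1-p))} = \frac{\log p}{\log(1-p)} \in \mathbb{Q}.
\end{equation*}
Hence the two logarithms are \emph{irrationally} related if and only if $\frac{\log p}{\log(1-p)}$ is irrational, which is exactly condition 1 of the corollary.

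For the second condition, assume rational relatedness with parameter $\Lambda$. Theorem \ref{AH} then requires $\alpha = c\,e^{v_1\Lambda}$ and $1-\alpha = c\,e^{v_2\Lambda}$ with $c \in \mathbb{R}^+$ and $v_1, v_2 \in \mathbb{Z}$, and the essential step is to eliminate the redundant parameters $c$ and one of the $v_i$. Dividing the two equations cancels $c$ and gives $\frac{\alpha}{1-\alpha} = e^{(v_1-v_2)\Lambda}$; writing $k = v_2 - v_1 \in \mathbb{Z}$ and solving for $\alpha$ yields $\alpha = \frac{1}{e^{k\Lambda}+1}$, the stated form. Conversely, given $\alpha = \frac{1}{e^{k\Lambda}+1}$, I would exhibit an admissible parametrization by taking $v_1 = 0$, $v_2 = k$ and $c = \frac{1}{e^{k\Lambda}+1} > 0$, which satisfies $\alpha = c$ and $1-\alpha = c\,e^{k\Lambda}$, together with the normalization $c\,e^{v_1\Lambda} + c\,e^{v_2\Lambda} = 1$.

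The only point requiring care — and the sole place where anything beyond bookkeeping occurs — is this reduction of the two-parameter family $(c, v_1, v_2)$ to the single integer $k$. Here one must use that the lengths of a two-interval partition sum to $1$: this normalization fixes $c$ once the $v_i$ are chosen, so the genuine freedom collapses to the difference $v_2 - v_1 = k$, and the relation $\alpha + (1-\alpha) = 1$ becomes automatically consistent. After this reduction the equivalence of the two descriptions of condition 2 is immediate, which completes the identification of the corollary with Theorem \ref{AH}.
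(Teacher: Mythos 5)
Your proposal is correct and is precisely the route the paper intends: the corollary appears there without proof, as an immediate specialization of Theorem \ref{AH} to $m=l=2$, and your verification of both equivalences (rational relatedness of $\log(1/p)$, $\log(1/(1-p))$ versus irrationality of $\log p/\log(1-p)$, and the collapse of the parameters $(c,v_1,v_2)$ to the single integer $k=v_2-v_1$ via the normalization $\alpha+(1-\alpha)=1$) fills in exactly the bookkeeping the paper leaves implicit.
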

Furthermore, in \cite{aistleitner_hofer} a description of the general asymptotic behaviour of a sequence of partitions, even in cases not covered by Theorem \ref{AH}, is given. More precisely, the authors proved the following
\begin{theorem}
Assume that neither condition 1. nor condition 2. of Theorem \ref{AH} is satisfied.
Then for any interval $A =[a, b]\subset [0, 1]$ which is completely contained in the $i$-th interval of the initial partition $\pi$ for some $i$, $1 \leq i \leq l$, we have
\begin{equation*}
\limsup_{n\to \infty}\frac{1}{k(n)}\sum_{j=1}^{k(n)}\mathbf{1}_{[a,b]}(t_j^n)=c_1(b-a)\ ,
\end{equation*}
\begin{equation*}
\liminf_{n\to \infty}\frac{1}{k(n)}\sum_{j=1}^{k(n)}\mathbf{1}_{[a,b]}(t_j^n)=c_2(b-a)\ ,
\end{equation*}
where
\begin{equation*}
c_1=\left( \sum_{j=1}^l\alpha_j{\rm exp}\left(-\Lambda \left\lbrace \frac{\log \alpha_j- \log \alpha_i}{\Lambda}\right\rbrace\right) \right)^{-1}>1\ ,
\end{equation*}
\begin{equation*}
c_2=\left( \sum_{j=1}^l\alpha_j{\rm exp}\left(\Lambda \left\lbrace \frac{\log \alpha_i - \log \alpha_j}{\Lambda}\right\rbrace\right) \right)^{-1}<1\ ,
\end{equation*}
are constants depending on $i$.
\end{theorem}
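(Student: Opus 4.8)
The plan is to exploit the self-similar structure of the $\rho$-refinement together with the renewal-type (Khodak) analysis of \cite{Drmota_Infusino}. Since neither condition of Theorem \ref{AH} holds, condition 1 in particular fails, so $\log(1/p_1),\dots,\log(1/p_m)$ are rationally related: there are $\Lambda>0$ and integers $n_1,\dots,n_m\geq 1$ with $\gcd(n_1,\dots,n_m)=1$ and $p_j=e^{-n_j\Lambda}$. Every interval produced inside the $i$-th atom of $\pi$ by the refinement has length $\alpha_i\prod_t p_{j_t}=\alpha_i e^{-\Lambda w}$, where $w=\sum_t n_{j_t}$ is its \emph{weight}; thus all lengths lie on the lattice $\alpha_i e^{-\Lambda\mathbb{Z}_{\geq 0}}$. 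This arithmetic structure is exactly what will force the oscillation expressed by $c_1>1>c_2$.

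First I would reduce the empirical count to a leaf-counting problem. At stage $n$ the partition $\rho^n\pi$ coincides with the \emph{threshold partition} $P_\theta$, obtained by cutting each descending path at the first interval of length $\le\theta$, for a threshold $\theta=\theta_n\to 0$ ranging over the discrete set of occurring lengths. Restricted to the $i$-th atom of $\pi$ (rescaled to $[0,1[$), $P_\theta$ is precisely a $\rho$-refinement of the trivial partition, which is u.d.\ by Vol\v{c}i\v{c}'s theorem \cite{volcic}; hence the left endpoints falling in $A=[a,b]$ number $L_i(\theta)\,(b-a)/\alpha_i\,(1+o(1))$, where $L_i(\theta)$ is the number of leaves of the $i$-th subtree at threshold $\theta$. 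Writing $k(n)=\sum_{i'=1}^{l}L_{i'}(\theta)$, the quantity to analyse becomes
\begin{equation*}
\frac{1}{k(n)}\sum_{j=1}^{k(n)}\mathbf{1}_{A}(t_j^n)=(b-a)\,\frac{L_i(\theta)/\alpha_i}{\sum_{i'=1}^{l}L_{i'}(\theta)}\,(1+o(1)).
\end{equation*}

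The technical heart is the asymptotics of $L_i(\theta)$. By scaling $L_i(\theta)=L(\theta/\alpha_i)$, where $L(x)=\sum_j L(x/p_j)$ for $x<1$ and $L(x)=1$ for $x\geq 1$. Passing to $u=-\log x$ turns this into a renewal equation with steps $n_j\Lambda$ on the lattice of span $\Lambda$ (the span is exactly $\Lambda$ because $\gcd(n_j)=1$); since $\sum_j p_j=1$ the relevant exponent is $1$, and the arithmetic renewal theorem gives $L(x)=\tfrac{1}{x}\,g(-\log x)+o(1/x)$ as $x\to 0$, with $g$ of period $\Lambda$, explicitly $g(u)=\kappa\, e^{-\Lambda\{u/\Lambda\}}$ for a constant $\kappa>0$. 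Substituting (the constant $\kappa$ cancels) and putting $s=-\log\theta$, the displayed ratio equals
\begin{equation*}
(b-a)\,\frac{e^{-\Lambda\{(\log\alpha_i+s)/\Lambda\}}}{\sum_{i'}\alpha_{i'}\,e^{-\Lambda\{(\log\alpha_{i'}+s)/\Lambda\}}}\,(1+o(1)).
\end{equation*}
Finally I would optimise over $s$. On each maximal $s$-interval free of threshold values the quotient $D(s)=\sum_{i'}\alpha_{i'}e^{-\Lambda(\{(\log\alpha_{i'}+s)/\Lambda\}-\{(\log\alpha_i+s)/\Lambda\})}$ is constant; it jumps \emph{down} exactly when $s$ crosses a value with $\log\alpha_i+s\in\Lambda\mathbb{Z}$ (an ``$i$-type'' threshold) and \emph{up} at every other type. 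Hence $\min D$ is attained just after an $i$-type threshold, where $\{(\log\alpha_i+s)/\Lambda\}\to 0^+$ and $\{(\log\alpha_{i'}+s)/\Lambda\}\to\{(\log\alpha_{i'}-\log\alpha_i)/\Lambda\}$, giving $\limsup=(b-a)c_1$; and $\max D$ is attained just before such a threshold, where $\{(\log\alpha_i+s)/\Lambda\}\to 1^-$, giving $\liminf=(b-a)c_2$ after rewriting $1-\{y\}=\{-y\}$. Both extreme intervals recur for infinitely many $n$, so the bounds are sharp. The inequalities $c_1>1>c_2$ follow since $\{x\}\geq 0$ forces each $\sum_j\alpha_j e^{-\Lambda\{\cdots\}}\leq 1$, strictly because the failure of condition 2 guarantees some $\log\alpha_j-\log\alpha_i\notin\Lambda\mathbb{Z}$. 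I expect the main obstacle to be the renewal step: producing the periodic factor $g$ with an error uniform in $\theta$ and negligible relative to the main term along the chosen threshold subsequences, while simultaneously controlling the boundary error in the within-atom equidistribution, so that the two approximations combine without blurring the sharp constants $c_1,c_2$.
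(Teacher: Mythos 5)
Your proposal is correct in outline, and it is essentially the argument behind this statement: the thesis states this theorem without proof, quoting it from Aistleitner and Hofer \cite{aistleitner_hofer}, whose proof proceeds exactly via the Khodak-type renewal analysis of \cite{Drmota_Infusino} that you reconstruct --- threshold partitions, within-atom equidistribution, lattice renewal asymptotics with the periodic factor $e^{-\Lambda\{u/\Lambda\}}$, and evaluation of the resulting piecewise-constant ratio at the finitely many achievable phases modulo $\Lambda$. One remark: your one-sided limits $\{(\log\alpha_i+s)/\Lambda\}\to 0^{+}$ and $\to 1^{-}$ over continuous $s$ are legitimate only because the ratio is constant between consecutive wrap points and the last threshold preceding an $i$-type threshold is itself an achievable value of $s_n$ (namely of the type $j^*$ whose phase is the circular successor of $\log\alpha_i$ modulo $\Lambda$), so both extremal values are exactly attained along infinite subsequences rather than merely approached.
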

Hence, if either condition 1. or 2. does not hold, then the sequence $(\rho^n\pi)_{n\in\mathbb{N}}$ is not u.d..\\
Now we would like to discuss how to associate to a u.d.\! sequence of partitions a u.d.\! sequence of points. \\
The problem is the following: Assume that $(\pi_n)_{n\in\mathbb{N}}$ is a u.d.\! sequence of partitions of $[0,1[$. Is it possible to rearrange the points $t_i^n$ determining the partitions $\pi_n$, for $1\leq i\leq k(n)$, in order to get a u.d.\! sequence of points?\\ Clearly, we can reorder points in several different ways and a natural restriction is that we first reorder all the points determining $\pi_1$ then those defining $\pi_2$, and so on. A reordering of this kind is called sequential reordering.\\
We introduce a result proved by Vol\v{c}i\v{c} in \cite{volcic}, where a probabilistic answer to this problem is given.

\begin{definition}
If $(\pi_n)_{n\in\mathbb{N}}$ is a u.d.\! sequence of partitions of $[0,1[$ with
\begin{equation*}
 \pi_n=\{[t_{i-1}^n, t_i^n[: 1\leq i\leq k(n)\}\ ,
\end{equation*}
the sequential random reordering of the points $t_i^n$ is a sequence $(\varphi_m)_{m\in\mathbb{N}}$ of consecutive blocks of random variables. The $n$-th block consists of $k(n)$ random variables which have the same law and represent the drawing, without replacement, from the sample space $W_n=\left\{t_1^n,\ldots,t_{k(n)}^n\right\}$ where each singleton has probability $\frac{1}{k(n)}$.
\end{definition}

Denote by $S_n$ the set of all permutations on $W_n$, endowed with the probability $P(\tau_n)=\frac{1}{k(n)!}$ with $\tau_n\in S_n$.

Any sequential random reordering of $(\pi_n)_{n\in\mathbb{N}}$ corresponds to a random selection of $\tau_n\in S_n$ for each $n\in\mathbb{N}$. The permutation $\tau_n\in S_n$ identifies the reordered $k(n)$-tuple of random variables $\varphi_i$ with $K(n-1)\leq i\leq K(n)$, where $K(n)=\sum\limits_{i=1}^nk(i)$. Therefore, the set of all sequential random reorderings can be endowed with the natural product probability on the space $S=\prod\limits_{n=1}^\infty S_n$.

\begin{theorem} 
If $(\pi_n)_{n\in\mathbb{N}}$ is a u.d.\! sequence of partitions of $[0,1[$, then the sequential random reordering of the points $t_i^n$ defining them is almost surely a u.d.\! sequence of points in $[0,1[$.
\end{theorem}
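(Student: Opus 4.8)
The plan is to verify \emph{Weyl's Criterion} almost surely. Since the test functions $x\mapsto e^{2\pi i h x}$ with $h\in\mathbb{Z}\setminus\{0\}$ form a countable family, and a countable union of null sets is still null, it suffices to fix one $h\neq 0$ and prove that almost surely $\frac{1}{N}\sum_{m=1}^{N}e^{2\pi i h\varphi_m}\to 0$. Intersecting the resulting full-measure events over all $h\neq 0$ and then applying Weyl's Criterion will give that $(\varphi_m)_{m\in\mathbb{N}}$ is u.d.\ almost surely. Write $K(n)=\sum_{j=1}^{n}k(j)$ and $T(N)=\sum_{m=1}^{N}e^{2\pi i h\varphi_m}$, and set $A_j=\frac{1}{k(j)}\sum_{i=1}^{k(j)}e^{2\pi i h t_i^j}$; since $(\pi_n)_{n\in\mathbb{N}}$ is u.d.\ and $e^{2\pi i h x}$ is continuous, $A_j\to\int_0^1 e^{2\pi i h x}\,dx=0$.

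The first observation is that at the block boundaries $N=K(n)$ the sum is \emph{deterministic} (independent of the random permutations), because the prefix of length $K(n)$ contains exactly the points of $\pi_1,\dots,\pi_n$ in some order. Hence $\frac{T(K(n))}{K(n)}=\frac{\sum_{j\le n}k(j)A_j}{\sum_{j\le n}k(j)}$, which is a weighted Cesàro average of a sequence tending to $0$ with positive weights of divergent total mass, and so tends to $0$ by the Toeplitz lemma. Thus all the randomness lives strictly inside the blocks, and the task is to show the intermediate values $N$ do not spoil this convergence.

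For a general $N=K(n-1)+r$ with $1\le r\le k(n)$, let $X_1^{(n)},\dots,X_{k(n)}^{(n)}$ be the random permutation of the points of $\pi_n$ and put $M_r=\sum_{i=1}^{r}\bigl(e^{2\pi i h X_i^{(n)}}-A_n\bigr)$, so that $M_{k(n)}=0$. A convex-combination estimate (splitting $T(N)$ into its block-boundary part and its within-block part, and using $K(n-1)/N\le 1$, $r/N\le 1$) gives
\[
\left|\frac{T(N)}{N}\right|\le\left|\frac{T(K(n-1))}{K(n-1)}\right|+|A_n|+\max_{1\le r\le k(n)}\frac{|M_r|}{K(n-1)+r}.
\]
The first term tends to $0$ by the boundary (Toeplitz) convergence applied at $n-1$, and the second by u.d.\ of the partitions. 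Everything therefore reduces to proving that almost surely $\max_{1\le r\le k(n)}\frac{|M_r|}{K(n-1)+r}\to 0$.

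This last point is the heart of the matter and the \emph{main obstacle}. The process $r\mapsto M_r$ is a partial sum under sampling \emph{without replacement} from the finite population $\{e^{2\pi i h t_i^n}-A_n\}_{i}$, whose terms are bounded by $2$ in modulus and sum to $0$. Treating real and imaginary parts separately, a maximal concentration inequality for sampling without replacement (a maximal version of the Hoeffding--Serfling bound, available for instance through Hoeffding's convex-order comparison with the with-replacement case, or through the martingale $M_r/(k(n)-r)$) yields $P\bigl(\max_{r\le R}|M_r|\ge\lambda\bigr)\le C\exp(-c\lambda^2/R)$. I would then run a dyadic peeling argument: on each range $2^{\ell-1}<r\le 2^{\ell}$ apply this with threshold $\lambda=\varepsilon\bigl(K(n-1)+2^{\ell-1}\bigr)$ and use the elementary inequality $(x+y)^2/y\ge y+2x$ to obtain a per-range probability decaying like $\exp(-c\varepsilon^2 2^{\ell-1})\exp(-c\varepsilon^2 K(n-1))$. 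The factor $\exp(-c\varepsilon^2 2^{\ell-1})$ makes the sum over $\ell$ converge to a constant depending only on $\varepsilon$, so that $P\bigl(\max_r\frac{|M_r|}{K(n-1)+r}>\varepsilon\bigr)\le C_\varepsilon\exp(-c\varepsilon^2 K(n-1))$ with \emph{no} prefactor depending on $k(n)$. Since $K(n-1)\ge n-1\to\infty$, this is summable in $n$; Borel--Cantelli, together with intersecting over $\varepsilon=1/p$ and over $h$, gives the desired almost-sure decay and closes the verification of Weyl's Criterion. The delicate point is precisely arranging the peeling so the estimate stays uniform in the growth of $k(n)$, since a sequence of partitions may increase its number of intervals arbitrarily fast, and a cruder bound would leave a $\log k(n)$ factor that need not be summable.
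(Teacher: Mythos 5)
The thesis itself contains no proof of this statement: it is quoted from Vol\v{c}i\v{c} \cite{volcic}, so there is no in-paper argument to compare against step by step. Judged on its own merits, your proof is correct and complete in outline, and it is a genuinely self-contained route. Each of your three pillars holds: (1) at block boundaries $N=K(n)$ the normalized sum $T(K(n))/K(n)$ is deterministic, being the weighted average $\sum_{j\le n}k(j)A_j/K(n)$, and tends to $0$ by Silverman--Toeplitz regularity, since $A_j\to 0$ (apply the u.d.\ property of $(\pi_n)_{n\in\mathbb{N}}$ to the continuous functions $\cos(2\pi hx)$, $\sin(2\pi hx)$) and $K(n)\ge n\to\infty$; (2) the reduction $\left|\frac{T(N)}{N}\right|\le\left|\frac{T(K(n-1))}{K(n-1)}\right|+|A_n|+\max_{1\le r\le k(n)}\frac{|M_r|}{K(n-1)+r}$ is valid because $N\ge K(n-1)$ and $r\le N$; (3) the within-block population $\{e^{2\pi i h t_i^n}-A_n\}_i$ is bounded by $2$ and sums to zero, so the maximal Hoeffding--Serfling inequality applies (this maximal form does exist, e.g.\ via Serfling's 1974 result or Doob's inequality for the martingale $M_r/(k(n)-r)$, which is a martingale precisely because the population has zero sum), and your dyadic peeling with the growing threshold $\varepsilon\,(K(n-1)+2^{\ell-1})$ together with $(x+y)^2/y\ge 2x+y$ indeed yields $P\bigl(\max_r |M_r|/(K(n-1)+r)>\varepsilon\bigr)\le C_\varepsilon e^{-c\varepsilon^2 K(n-1)}$ with no dependence on $k(n)$; summability in $n$ follows from $K(n-1)\ge n-1$, and Borel--Cantelli plus intersection over $\varepsilon=1/p$ and $h\ne 0$ closes Weyl's criterion. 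Your final remark is exactly the right diagnosis: a crude union bound over $r$ would leave a factor depending on $k(n)$, which is fatal since $k(n)$ may grow arbitrarily fast, and the peeling against the deterministic offset $K(n-1)$ is what removes it.

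Two small cautions. First, Hoeffding's convex-order comparison with sampling with replacement gives only fixed-$r$ tail bounds; for the \emph{maximal} inequality you should rely on the martingale/Serfling route, as you yourself indicate in passing--make that the primary citation. Second, the block $n=1$ (where $K(0)=0$) must simply be excluded from the Borel--Cantelli summation; this is harmless since only the tail matters. Neither point affects the validity of the argument.
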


\subsection{Multidimensional case}\label{multi}
In this section we collect results about uniform distribution and discrepancy in the multidimensional case. We will see that some of the results stated in the first subsection can be naturally extended in the $s$-dimensional unit cube $[0,1[^s$, while some others require a deeper analysis, as in the case of the estimation of the discrepancy.\\
In fact, apart from the case $s=1$, it is in general very difficult to give good quantitative estimates concerning the distribution of multidimensional sequences and several problems are still open.\\

Let $s$ be an integer with $s\geq 2$. We use the following notation: for two points $\mathbf{a},\mathbf{b} \in [0,1[^s$  we write $\mathbf{a} \leq \mathbf{b}$ and $\mathbf{a} < \mathbf{b}$ if the corresponding inequalities hold in each coordinate; furthermore, we write $[\mathbf{a}, \mathbf{b}[$ for the set $\{\mathbf{x} \in [0,1[^s: ~\mathbf{a} \leq \mathbf{x} < \mathbf{b}\}$, and we call such a set an $s$-dimensional interval. Moreover we denote by $\mathbf{1}_{J}$ the indicator function of the set $J \subseteq [0,1[^s$ and by $\lambda_s$ the $s$-dimensional Lebesgue measure (for short we write $\lambda$ instead of $\lambda_1$). Note that vectors will be written in bold fonts and we write $\mathbf{0}$ for the $s$-dimensional vector $(0,\dots,0)$.\\
\begin{definition}
A sequence $(\mathbf{x}_n)_{n \in \mathbb{N}}$ of points in $[0,1[^s$ is called uniformly distributed (u.d.) if
\begin{equation}\label{multiunif}
 \lim_{N \rightarrow \infty} \frac{1}{N}\sum_{n = 1}^N \mathbf{1}_{[\mathbf{a}, \mathbf{b}[} (\mathbf{x}_n) = \lambda_s([\mathbf{a},\mathbf{b}[)
\end{equation}
for all $s$-dimensional intervals $[\mathbf{a}, \mathbf{b}[\, \subseteq [0,1)^s$.
\end{definition}

In particular, we can introduce the following function
\begin{equation}\label{adf}
 g(x,y) = \lim_{N \rightarrow \infty} \frac{1}{N}  \sum_{n = 1}^{N} \mathbf{1}_{[0,x[ \times [0,y[}(x_n,y_n),
\end{equation}
 which will be extensively used in the last chapter. 
We call $g$ the asymptotic distribution function (a.d.f.) of a sequence $(x_n,y_n)_{n \in\mathbb{N}}$ in $[0,1[^2$ if \eqref{adf} holds for every point $(x,y)$ of continuity of $g$. Moreover, in \cite{fial} Fialov\'a and Strauch consider
\begin{equation*}
 \limsup_{N \rightarrow \infty} \frac{1}{N} \sum_{n = 1}^N f(x_n, y_n),
\end{equation*}
where $(x_n)_{n > 1}, (y_n)_{n > 1}$ are u.d.\ sequences in the unit interval and $f$ is a continuous function on $[0,1[^2$, see also \cite{Porubski_Strauch}. In this case the a.d.f.\ $g$ of $(x_n,y_n)_{n > 1}$ is called copula.\\ 

Weyl was the first who extended the uniform distribution property to the multidimensional case. His classical results \cite{weyl1,weyl2} have the following multidimensional version.
\begin{theorem}[Weyl's Theorem]\label{multiweyl}
A sequence $(\mathbf{x}_n)_{n \in \mathbb{N}}$ of points in $[0,1[^s$ is u.d.\ if and only if for every (real or complex-valued) continuous function $f$ on $[0,1[^s$ the relation
\begin{equation*}
  \lim_{N \rightarrow \infty} \frac{1}{N} \sum_{n = 1}^N f(\mathbf{x}_n) = \int_{[0,1]^s} f(\mathbf{x}) d\mathbf{x}
\end{equation*}
holds.
\end{theorem}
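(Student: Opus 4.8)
The plan is to prove both implications by the standard sandwiching/approximation technique, exactly as in the one-dimensional case (Theorem \ref{weyl_thm}), with intervals replaced by $s$-dimensional boxes. Throughout it suffices to treat real-valued $f$, since the complex case follows by splitting into real and imaginary parts.

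For the necessity direction, I would assume $(\mathbf{x}_n)_{n\in\mathbb{N}}$ is u.d. By definition the averaging relation holds whenever $f = \mathbf{1}_{[\mathbf{a},\mathbf{b}[}$ is the indicator of an $s$-dimensional interval, and hence, by linearity of both the Cesàro average and the integral, it holds for every \emph{step function}, i.e.\ every finite linear combination of such indicators. Given an arbitrary continuous $f$ on the compact cube $[0,1]^s$, I would invoke uniform continuity: partitioning $[0,1]^s$ into a grid of sufficiently small boxes, I can build a step function $g$ with $\|f - g\|_\infty < \varepsilon$. Then I would split
\[
\left| \frac{1}{N}\sum_{n=1}^N f(\mathbf{x}_n) - \int_{[0,1]^s} f \right| \leq \frac{1}{N}\sum_{n=1}^N |f - g|(\mathbf{x}_n) + \left| \frac{1}{N}\sum_{n=1}^N g(\mathbf{x}_n) - \int_{[0,1]^s} g \right| + \int_{[0,1]^s} |g - f|,
\]
where the first and third terms are each at most $\varepsilon$ and the middle term tends to $0$ by the step-function case. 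Letting $N \to \infty$ and then $\varepsilon \to 0$ yields the claim.

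For the sufficiency direction, I would assume the averaging relation for all continuous $f$ and fix an interval $J = [\mathbf{a},\mathbf{b}[$. The key point is that $\mathbf{1}_J$ can be squeezed between two continuous functions $f^-_\varepsilon \leq \mathbf{1}_J \leq f^+_\varepsilon$ whose integrals both lie within $\varepsilon$ of $\lambda_s(J)$; in $s$ dimensions I would construct these as products of one-dimensional trapezoidal (piecewise-linear) approximants, one box slightly narrower and one slightly wider than $J$. Monotonicity of the average then gives
\[
\frac{1}{N}\sum_{n=1}^N f^-_\varepsilon(\mathbf{x}_n) \leq \frac{1}{N}\sum_{n=1}^N \mathbf{1}_J(\mathbf{x}_n) \leq \frac{1}{N}\sum_{n=1}^N f^+_\varepsilon(\mathbf{x}_n),
\]
and applying the hypothesis to $f^\pm_\varepsilon$ traps both the $\liminf$ and the $\limsup$ of the middle average between $\int f^-_\varepsilon$ and $\int f^+_\varepsilon$, hence within $\varepsilon$ of $\lambda_s(J)$. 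Since $\varepsilon$ is arbitrary, the limit exists and equals $\lambda_s(J)$, which is precisely \eqref{multiunif}.

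The two $\varepsilon$-estimates are routine. The only genuinely $s$-dimensional ingredient is the explicit construction of the continuous upper and lower envelopes $f^\pm_\varepsilon$ of $\mathbf{1}_J$ with controlled integral, and dually the approximation of a continuous $f$ by step functions adapted to a box-grid. I expect this geometric approximation step — checking that the product of one-dimensional envelopes really sandwiches the box indicator and that the gap in integral stays $O(\varepsilon)$ as the box dimensions are perturbed — to be the main (though still elementary) obstacle; everything else transfers verbatim from the one-dimensional argument.
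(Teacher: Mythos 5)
Your proposal is correct, and it is essentially the classical argument the paper itself points to: the thesis states this multidimensional theorem without proof, deferring to the standard references, and its discussion of the one-dimensional case (Theorem \ref{weyl_thm}) explicitly identifies the proof mechanism as "a classical approximation process of continuous functions by means of step functions," which is exactly your two-sided scheme (step functions for necessity, continuous envelopes sandwiching box indicators for sufficiency). Your handling of the genuinely $s$-dimensional point --- products of one-dimensional trapezoidal envelopes with integral gap $O(\varepsilon)$ --- is the right and routine completion of that sketch.
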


In particular, if $g$ is a copula, then we can write
\begin{equation}\label{inte}
 \lim_{N \rightarrow \infty} \frac{1}{N} \sum_{n = 1}^N f(x_n, y_n) = \int_0^1 \int_0^1 f(x,y) dg(x,y).
\end{equation}
Let $\mathbf{x}, \mathbf{y}\in \mathbb{R}^s$ and let us denote by $\mathbf{x}\cdotp \mathbf{y}$ the usual inner product in $\mathbb{R}^s$, i.e. $\mathbf{x}\cdotp \mathbf{y}=\sum\limits_{i=1}^sx_iy_i$. Then we can give the generalization of the Weyl's Criterion presented in the first section.
\begin{theorem}[Weyl's Criterion]
The sequence $(\mathbf{x}_n)_{n \in \mathbb{N}}$ in $[0,1[^s$ is u.d.\! if and only if
\begin{equation*}
\lim_{N\rightarrow{\infty}}\frac{1}{N}\sum_{n=1}^{N}{e^{2\pi ih\cdotp \mathbf{x}_n}}=0
\end{equation*}
for all non-zero integer lattice points $\mathbf{h}\in\mathbb{Z}^s\setminus \mathbf{0}$.
\end{theorem}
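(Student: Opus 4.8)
The plan is to prove both implications, using the multidimensional Weyl Theorem (Theorem \ref{multiweyl}) as the bridge between uniform distribution and convergence of averages of continuous functions, and the density of trigonometric polynomials as the bridge between continuous functions and the exponential test functions $e^{2\pi i\mathbf{h}\cdot\mathbf{x}}$.

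For necessity, suppose $(\mathbf{x}_n)_{n\in\mathbb{N}}$ is u.d. The map $\mathbf{x}\mapsto e^{2\pi i\mathbf{h}\cdot\mathbf{x}}$ is a complex-valued continuous function on $[0,1]^s$, so Theorem \ref{multiweyl} applies and yields $\lim_N\frac1N\sum_n e^{2\pi i\mathbf{h}\cdot\mathbf{x}_n}=\int_{[0,1]^s}e^{2\pi i\mathbf{h}\cdot\mathbf{x}}\,d\mathbf{x}$. The integral factors as $\prod_{j=1}^s\int_0^1 e^{2\pi i h_j t}\,dt$; for any index $j$ with $h_j\neq 0$ the corresponding factor vanishes, and since $\mathbf{h}\neq\mathbf{0}$ at least one such index exists, so the right-hand side is $0$. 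This is exactly the claimed limit.

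For sufficiency, assume the exponential sums vanish for every $\mathbf{h}\neq\mathbf{0}$. First I would record that the limit relation $\frac1N\sum_n f(\mathbf{x}_n)\to\int f$ also holds, trivially, for $f\equiv 1$ (the case $\mathbf{h}=\mathbf{0}$). By linearity the relation then holds for every trigonometric polynomial $T(\mathbf{x})=\sum_{\mathbf{h}}c_{\mathbf{h}}\,e^{2\pi i\mathbf{h}\cdot\mathbf{x}}$ (a finite sum). By the Stone--Weierstrass theorem, trigonometric polynomials are uniformly dense in the continuous $1$-periodic functions, i.e.\ the continuous functions on the torus $(\mathbb{R}/\mathbb{Z})^s$; approximating such an $f$ uniformly by a polynomial $T$ and splitting $\frac1N\sum f(\mathbf{x}_n)-\int f$ into the three pieces $\frac1N\sum(f-T)(\mathbf{x}_n)$, $\frac1N\sum T(\mathbf{x}_n)-\int T$, and $\int(T-f)$, the two outer pieces are bounded by $\|f-T\|_\infty$ while the middle one tends to $0$, so the relation holds for every continuous periodic $f$.

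It remains to pass from continuous periodic test functions to the indicator functions $\mathbf{1}_{[\mathbf{a},\mathbf{b}[}$ appearing in the definition \eqref{multiunif} of uniform distribution. Given a box $[\mathbf{a},\mathbf{b}[$ and $\varepsilon>0$, I would sandwich its indicator between two continuous $1$-periodic functions $g_-\le\mathbf{1}_{[\mathbf{a},\mathbf{b}[}\le g_+$ that differ from the indicator only on a thin collar around the boundary of the box, chosen so that $\int(g_+-g_-)\,d\lambda_s<\varepsilon$. Applying the limit relation to $g_-$ and $g_+$ and letting $\varepsilon\to 0$ squeezes $\frac1N\sum\mathbf{1}_{[\mathbf{a},\mathbf{b}[}(\mathbf{x}_n)$ toward $\lambda_s([\mathbf{a},\mathbf{b}[)$, which is precisely \eqref{multiunif}. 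The main obstacle is exactly this last step: one must construct the sandwiching functions to be genuinely periodic (matching on opposite faces of the cube, so that the density of trigonometric polynomials can be invoked) while keeping the Lebesgue measure of the collar small, so the squeeze is effective for every box. By contrast, the earlier steps reduce to linearity together with a routine three-term approximation estimate.
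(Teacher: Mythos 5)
Your proof is correct, and it coincides with the canonical argument: the paper itself states this criterion without proof (citing Weyl's papers and the standard references), and your route --- necessity by applying Theorem \ref{multiweyl} to the continuous functions $e^{2\pi i\mathbf{h}\cdot\mathbf{x}}$ whose integral factors and vanishes, sufficiency by linearity over trigonometric polynomials, Stone--Weierstrass on the torus, and a periodic sandwich of box indicators --- is exactly the textbook proof those references supply. You also correctly flag the one delicate point, namely that the sandwiching functions $g_\pm$ must be genuinely $1$-periodic (which forces, e.g., $g_-$ to drop to $0$ at a face of the cube that the box touches), and the collar construction you describe handles this at the cost of an extra $\mathcal{O}(\delta)$ in measure, which the squeeze absorbs.
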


As in the one-dimensional case, a measure for the quality of the uniform distribution of a sequence is given by the discrepancy.
\begin{definition}[Discrepancy]
Let $\boldsymbol \omega=\{\mathbf{x}_1,\ldots,\mathbf{x}_N\}$ be a finite set of points in $[0,1[^s$. Then the discrepancy of $\boldsymbol\omega$ is defined as
\begin{equation}
 D_N(\boldsymbol \omega)=\sup_{J\in [0,1[^s}\Bigg|\frac{1}{N}\sum_{n=1}^N\mathbf{1}_{J}(\mathbf{x}_n)-\lambda_s(J)\Bigg|.
\end{equation}
\end{definition}
If $(\mathbf{x}_n)_{n \in \mathbb{N}}$ is an infinite sequence of points, we associate to it the sequence of discrepancies $D_N(\{\mathbf{x}_1, \mathbf{x}_2, \dots, \mathbf{x}_N\})$. So, the symbol $D_N(\mathbf{x}_n)$ denotes the discrepancy of the initial segment $\{\mathbf{x}_1, \mathbf{x}_2, \dots, \mathbf{x}_N\}$ of the sequence. 
\begin{theorem}
A sequence $(\mathbf{x}_n)_{n \in \mathbb{N}}$ of points in $[0,1[^s$ is u.d.\! if and only if
\begin{equation}
 \lim_{N\to\infty}D_N(\mathbf{x}_n)=0\ .
\end{equation}
\end{theorem}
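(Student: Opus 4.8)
The plan is to prove both implications of the equivalence, noting that one direction is immediate while the other requires upgrading pointwise convergence to uniform control over all boxes.

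First I would dispatch the easy direction. Suppose $\lim_{N\to\infty} D_N(\mathbf{x}_n) = 0$. For any fixed $s$-dimensional interval $J = [\mathbf{a},\mathbf{b}[$, the quantity $\left|\frac{1}{N}\sum_{n=1}^N \mathbf{1}_J(\mathbf{x}_n) - \lambda_s(J)\right|$ is one of the terms over which the supremum defining $D_N$ is taken, hence it is bounded above by $D_N(\mathbf{x}_n)$ and therefore tends to $0$. This is precisely \eqref{multiunif}, so the sequence is u.d.

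For the converse, the substantial direction, I would use a grid-sandwiching argument. Fix $\varepsilon > 0$ and an integer $m$, and partition each coordinate axis by the grid points $k/m$, $0 \le k \le m$. Given an arbitrary box $J = \prod_{i=1}^s [a_i, b_i[$, I would define an outer box $J^{+}$ by rounding each $a_i$ down and each $b_i$ up to the nearest grid point, and an inner box $J^{-}$ by rounding each $a_i$ up and each $b_i$ down, so that $J^{-} \subseteq J \subseteq J^{+}$ and both $J^{\pm}$ have all endpoints on the grid. Since the counting function $F_N(J) := \frac{1}{N}\sum_{n=1}^N \mathbf{1}_J(\mathbf{x}_n)$ and the volume $\lambda_s$ are both monotone under inclusion, I obtain $F_N(J^{-}) \le F_N(J) \le F_N(J^{+})$ together with the analogous inequalities for $\lambda_s$.

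The key quantitative estimate is that $J^{+}$ and $J^{-}$ have nearly equal volume: each coordinate length changes by at most $2/m$ under the rounding, and since every factor lies in $[0,1]$, a telescoping bound for products gives $\lambda_s(J^{+}) - \lambda_s(J^{-}) \le 2s/m$. There are only finitely many grid-aligned boxes, so applying the u.d. hypothesis \eqref{multiunif} to each of them and taking the maximum over this finite family yields an $N_0$ with $\left|F_N(J') - \lambda_s(J')\right| < \varepsilon$ for every grid box $J'$ and every $N \ge N_0$. Combining, for $N \ge N_0$ and any $J$, the chain $F_N(J) - \lambda_s(J) \le \left[F_N(J^{+}) - \lambda_s(J^{+})\right] + \left[\lambda_s(J^{+}) - \lambda_s(J^{-})\right] < \varepsilon + 2s/m$ holds, and symmetrically $F_N(J) - \lambda_s(J) > -\varepsilon - 2s/m$, so $D_N(\mathbf{x}_n) \le \varepsilon + 2s/m$. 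Choosing $m > 2s/\varepsilon$ gives $D_N(\mathbf{x}_n) < 2\varepsilon$ for all $N \ge N_0$, which proves $D_N \to 0$. I expect the main obstacle to be exactly this converse direction, namely passing from convergence on each individual box to a uniform bound on the supremum; the crux is the sandwich, which reduces the uncountable supremum to a finite family of grid boxes controlled by u.d. plus a uniformly small geometric slack controlled by the product estimate $\lambda_s(J^{+}) - \lambda_s(J^{-}) \le 2s/m$, with the monotonicity of $F_N$ and $\lambda_s$ making the sandwich close.
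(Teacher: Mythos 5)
Your proof is correct. The thesis itself states this theorem without proof (it is part of the survey of classical background, with the reader referred to the standard references such as Kuipers--Niederreiter), and your grid-sandwich argument is exactly the canonical proof found there: the easy direction by bounding a single box's deviation by the supremum, and the converse by enclosing an arbitrary box between grid-aligned boxes, using finiteness of the grid family to get a uniform $N_0$, and the telescoping product estimate $\lambda_s(J^{+})-\lambda_s(J^{-})\leq 2s/m$ (note the inner and outer roundings of each endpoint differ by at most $1/m$, which is what makes your constant $2s/m$ rather than $4s/m$ correct). The only point worth a remark is the degenerate case where some coordinate of $J$ is shorter than the grid mesh, so that $J^{-}$ is empty; this is harmless, since $F_N(\emptyset)=\lambda_s(\emptyset)=0$ and the volume estimate persists with the clamped side length.
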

If we restrict to subintervals of the form $J=[0,a_1[\times\cdots\times [0,a_s[$ with $0< a_i\leq 1$, we get the obvious definition of multidimensional star discrepancy.
\begin{definition}[Star discrepancy]
Let $\boldsymbol \omega=\{\mathbf{x}_1,\ldots,\mathbf{x}_N\}$ be a finite set of points in $[0,1[^s$. Then the star discrepancy of $\boldsymbol\omega$ is defined as
\begin{equation}
 D^*_N(\boldsymbol\omega)=\sup_{J}\Bigg|\frac{1}{N}\sum_{n=1}^N\mathbf{1}_{J}(\mathbf{x}_n)-\lambda_s(J)\Bigg|,
\end{equation}
where the supremum is taken over all subintervals $J\subset [0,1[^s$ of the form  $J=[0,a_1[\times\cdots\times [0,a_s[$ with $0< a_i\leq 1$.
\end{definition}
The relation between discrepancy $D_N$ and star discrepancy $D_N^*$ in several dimensions is very much similar to the one presented for $s=1$.
\begin{theorem}
For any sequence $(\mathbf{x}_n)_{n \in \mathbb{N}}$ of points in $[0,1[^s$ we have
\begin{equation*}
D^*_N(\mathbf{x}_n)\leq D_N(\mathbf{x}_n)\leq 2^s D_N^*(\mathbf{x}_n)\ .
\end{equation*}
\end{theorem}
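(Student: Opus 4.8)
The left-hand inequality requires no work, so the plan is to concentrate on the bound $D_N \leq 2^s D_N^*$. First I would record that the star discrepancy is the supremum of the very same deviation functional appearing in $D_N$, but taken only over the subfamily of boxes anchored at the origin, namely $[\mathbf{0},\mathbf{c}[$ with $\mathbf{c}=(a_1,\dots,a_s)$. Since enlarging the family over which one takes a supremum can only increase its value, $D_N^*(\mathbf{x}_n)\leq D_N(\mathbf{x}_n)$ follows immediately, with no computation.

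For the upper bound I would introduce, for each $\mathbf{c}\in[0,1]^s$, the \emph{local discrepancy}
\begin{equation*}
\Delta(\mathbf{c})=\frac{1}{N}\sum_{n=1}^N\mathbf{1}_{[\mathbf{0},\mathbf{c}[}(\mathbf{x}_n)-\lambda_s([\mathbf{0},\mathbf{c}[),
\end{equation*}
so that $D_N^*=\sup_{\mathbf{c}}|\Delta(\mathbf{c})|$. The key step is an inclusion–exclusion identity. Given a box $[\mathbf{a},\mathbf{b}[$, for each sign vector $\boldsymbol\epsilon\in\{0,1\}^s$ let $\mathbf{c}(\boldsymbol\epsilon)$ be the corner whose $i$-th coordinate is $b_i$ when $\epsilon_i=0$ and $a_i$ when $\epsilon_i=1$. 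Expanding the product form of the indicator, $\mathbf{1}_{[\mathbf{a},\mathbf{b}[}=\prod_{i=1}^s\big(\mathbf{1}_{[0,b_i[}-\mathbf{1}_{[0,a_i[}\big)$, coordinatewise gives
\begin{equation*}
\frac{1}{N}\sum_{n=1}^N\mathbf{1}_{[\mathbf{a},\mathbf{b}[}(\mathbf{x}_n)=\sum_{\boldsymbol\epsilon\in\{0,1\}^s}(-1)^{|\boldsymbol\epsilon|}\,\frac{1}{N}\sum_{n=1}^N\mathbf{1}_{[\mathbf{0},\mathbf{c}(\boldsymbol\epsilon)[}(\mathbf{x}_n).
\end{equation*}
Crucially, $\lambda_s$ satisfies the identical relation, because the volume also factors as a product over coordinates; subtracting the two expansions corner by corner yields
\begin{equation*}
\frac{1}{N}\sum_{n=1}^N\mathbf{1}_{[\mathbf{a},\mathbf{b}[}(\mathbf{x}_n)-\lambda_s([\mathbf{a},\mathbf{b}[)=\sum_{\boldsymbol\epsilon\in\{0,1\}^s}(-1)^{|\boldsymbol\epsilon|}\,\Delta(\mathbf{c}(\boldsymbol\epsilon)).
\end{equation*}
The triangle inequality then bounds the absolute value of the left-hand side by $\sum_{\boldsymbol\epsilon}|\Delta(\mathbf{c}(\boldsymbol\epsilon))|\leq 2^s D_N^*$, since there are $2^s$ corners and each local discrepancy is at most $D_N^*$ in modulus. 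Taking the supremum over all $[\mathbf{a},\mathbf{b}[\,\subseteq[0,1[^s$ gives $D_N\leq 2^s D_N^*$, which completes the argument.

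The step I expect to require the most care is the inclusion–exclusion bookkeeping: checking that the coordinatewise expansion of $\mathbf{1}_{[\mathbf{a},\mathbf{b}[}$ produces exactly the signed sum over the $2^s$ corners with the signs $(-1)^{|\boldsymbol\epsilon|}$, and verifying that the same expansion holds for $\lambda_s$ so that the two can be subtracted term by term. One minor edge case also deserves a line: if some coordinate $a_i$ vanishes, the corresponding corner box $[\mathbf{0},\mathbf{c}(\boldsymbol\epsilon)[$ is degenerate, whence $\Delta(\mathbf{c}(\boldsymbol\epsilon))=0$ and the estimate $|\Delta(\mathbf{c}(\boldsymbol\epsilon))|\leq D_N^*$ is preserved. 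Everything else is routine.
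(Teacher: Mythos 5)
Your argument is correct. The paper itself states this theorem without proof (it is a classical fact taken from the cited literature, e.g.\ Kuipers--Niederreiter), so there is no internal proof to compare against; your inclusion--exclusion expansion of $\mathbf{1}_{[\mathbf{a},\mathbf{b}[}=\prod_{i=1}^s\bigl(\mathbf{1}_{[0,b_i[}-\mathbf{1}_{[0,a_i[}\bigr)$ into the signed sum of $2^s$ anchored corner boxes, applied identically to the counting measure and to $\lambda_s$, is exactly the standard proof of the bound $D_N\leq 2^s D_N^*$, and your remark disposing of degenerate corners (where some coordinate of $\mathbf{c}(\boldsymbol\epsilon)$ vanishes, so the local discrepancy is $0$) closes the only edge case. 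In particular your constant $2^s$ and the trivial direction $D_N^*\leq D_N$ via restriction of the supremum both match the statement; nothing is missing.
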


In higher dimensions it is possible to define other forms of discrepancies. One of the most used to quantify the convergence in \eqref{multiunif} is the isotropic discrepancy, where the supremum is taken over all convex sets in the unit cube $[0,1[^s$ (see e.g. \cite{Chen_Travaglini} for estimates of the discrepancy of point sets with respect to closed convex polygons). The origin of this new definition is strictly related to the integration domain in the right hand side of \eqref{multiunif}.\\
We refer to \cite{Matousek} for a complete introduction to geometric discrepancy, i.e. discrepancy studied for classes of geometric figures other than the axis-parallel rectangles, such as the set of all balls, or the set of all polygons or polytopes, and so on. \\



In the sequel we will present known lower bounds for the discrepancy.
\begin{proposition}
For any finite set $\boldsymbol\omega=\{\mathbf{x}_1,\ldots,\mathbf{x}_N\}$ of points in $[0,1[^s$ we have that
\begin{equation}
 \frac{1}{N}\leq D_N(\boldsymbol\omega)\leq 1\ .
\end{equation}
\end{proposition}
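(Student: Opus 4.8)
The plan is to treat the two inequalities separately: the upper bound falls out of a one-line estimate, while the lower bound reduces to the one-dimensional Proposition~\ref{TrivBound} by testing the discrepancy against a suitable one-parameter family of intervals.

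For the upper bound $D_N(\boldsymbol\omega)\leq 1$, first observe that for every $s$-dimensional interval $J\subseteq[0,1[^s$ the empirical quantity $\frac{1}{N}\sum_{n=1}^N\mathbf{1}_J(\mathbf{x}_n)$ is the proportion of the $N$ points lying in $J$ and hence lies in $[0,1]$, while $\lambda_s(J)\in[0,1]$ as well. Their difference therefore lies in $[-1,1]$, so each summand under the supremum has absolute value at most $1$; taking the supremum over all $J$ gives $D_N(\boldsymbol\omega)\leq 1$.

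For the lower bound I would restrict the supremum defining $D_N$ to the subfamily of ``slab'' intervals $J=[a,b[\,\times[0,1[^{\,s-1}$ with $0\leq a<b\leq 1$, which are legitimate subintervals of $[0,1[^s$. A point $\mathbf{x}_n$ lies in such a $J$ precisely when its first coordinate lies in $[a,b[$, since the remaining coordinates automatically lie in $[0,1[$, and $\lambda_s(J)=b-a$. Hence the deviation associated with $J$ equals the one-dimensional deviation of the projected point set $\{x_1^{(1)},\dots,x_N^{(1)}\}\subseteq[0,1[$ relative to the interval $[a,b[$, where $x_n^{(1)}$ denotes the first coordinate of $\mathbf{x}_n$. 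Passing to the supremum over $a,b$ then yields
\begin{equation*}
D_N(\boldsymbol\omega)\geq D_N\bigl(\{x_1^{(1)},\dots,x_N^{(1)}\}\bigr)\geq\frac{1}{N},
\end{equation*}
the last step being exactly Proposition~\ref{TrivBound}.

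The only point requiring care, and thus the nearest thing to an obstacle, is verifying that the test intervals genuinely sit inside the half-open cube $[0,1[^s$ and that the $s$-dimensional indicator $\mathbf{1}_J$ collapses correctly to the one-dimensional indicator on the first coordinate. As a self-contained alternative to invoking Proposition~\ref{TrivBound}, I would instead fix any point $\mathbf{x}_n$ and take the shrinking interval $J_\delta=[\mathbf{x}_n,\mathbf{x}_n+(\delta,\dots,\delta)[$ for $\delta>0$ small enough that $J_\delta\subseteq[0,1[^s$ (possible since every coordinate of $\mathbf{x}_n$ is strictly less than $1$); then $J_\delta$ contains $\mathbf{x}_n$, so its empirical measure is at least $\frac{1}{N}$, whereas $\lambda_s(J_\delta)=\delta^s\to 0$, forcing the deviation to approach a value at least $\frac{1}{N}$ and hence $D_N(\boldsymbol\omega)\geq\frac{1}{N}$.
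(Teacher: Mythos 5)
Your proof is correct. There is, however, nothing in the paper to compare it against: the thesis states this proposition without proof, exactly as it does for its one-dimensional counterpart (Proposition~\ref{TrivBound}), which it introduces as ``a very easy observation.'' Both of your arguments are sound and are the standard ones. The upper bound is immediate as you say. For the lower bound, the slab reduction $J=[a,b[\,\times[0,1[^{\,s-1}$ does collapse the $s$-dimensional deviation to the one-dimensional discrepancy of the first-coordinate projection, and you correctly identify the one delicate point: under the paper's convention that $[\mathbf{a},\mathbf{b}[$ is formed from $\mathbf{a},\mathbf{b}\in[0,1[^s$, a slab with full cross-section $[0,1[^{\,s-1}$ is not literally an admissible test interval. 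This is easily repaired by testing against $[a,b[\,\times[0,1-\epsilon[^{\,s-1}$ and letting $\epsilon\to 0$ (the point set is finite, so the count stabilizes for small $\epsilon$ while $\lambda_s$ converges), but your self-contained alternative via the shrinking box $J_\delta=[\mathbf{x}_n,\mathbf{x}_n+(\delta,\dots,\delta)[$ sidesteps the issue entirely, since all coordinates of $\mathbf{x}_n$ are strictly below $1$: its empirical measure is at least $\frac{1}{N}$ while $\lambda_s(J_\delta)=\delta^s\to 0$, giving $D_N(\boldsymbol\omega)\geq\frac{1}{N}-\delta^s$ for every small $\delta>0$ and hence the claim. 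Either route is acceptable; the shrinking-box version has the additional merit of not presupposing the unproved one-dimensional proposition.
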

As we have already pointed out, the lower bound is exactly attained only in the one-dimensional case. In fact, the following theorem, due to Roth \cite{Roth}, illustrates that in the higher-dimensional case examples of sequences having discrepancy equal to $\frac{1}{N}$ cannot exist.
\begin{theorem}[Roth's Theorem]\label{roth}
Let $s\geq 2$. Then the discrepancy $D_N(\boldsymbol\omega)$ of the point set $\boldsymbol\omega=\{\mathbf{x}_1,\ldots,\mathbf{x}_N\}\subset [0,1[^s$ is bounded from below by
\begin{equation}\label{BoundFinSeq}
D_N(\boldsymbol\omega)\geq c_s\left(\frac{(\log N)^{\frac{s-1}{2}}}{N}\right),
\end{equation}
where $c_s>0$ is an absolute constant given by $c_s=\frac{1}{2^{4s}((s-1)\log 2)^\frac{s-1}{2}}$.
\end{theorem}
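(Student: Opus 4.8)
The plan is to deduce the $L^\infty$ bound from an $L^2$ bound on a local discrepancy function, following Roth's orthogonal function method. First I would introduce the (unnormalized) local discrepancy function on $[0,1[^s$,
\begin{equation*}
D(\mathbf{x})=\sum_{n=1}^N \mathbf{1}_{[\mathbf{0},\mathbf{x}[}(\mathbf{x}_n)-N\lambda_s([\mathbf{0},\mathbf{x}[)\ ,\qquad \mathbf{x}=(x_1,\dots,x_s)\ .
\end{equation*}
Since $N D_N^*(\boldsymbol\omega)=\sup_{\mathbf{x}}|D(\mathbf{x})|\geq \|D\|_{L^2([0,1]^s)}$ and $D_N\geq D_N^*$, it suffices to prove $\|D\|_2\gtrsim (\log N)^{(s-1)/2}$ and then divide by $N$, keeping track of the constants so as to recover the explicit $c_s$.

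Next I would build Roth's auxiliary functions. Fix the integer $n$ with $2^{n}$ just above $N$, say $2^n\geq 2N$, so that $n\asymp \log_2 N$ and, for \emph{every} choice of levels, at least half of the $2^n$ dyadic boxes are empty. For each multi-index $\mathbf{r}=(r_1,\dots,r_s)$ of non-negative integers with $|\mathbf{r}|=r_1+\dots+r_s=n$, I partition $[0,1[^s$ into $2^n$ dyadic boxes, splitting the $i$-th side into $2^{r_i}$ intervals of length $2^{-r_i}$, so that each box has volume $2^{-n}$. On every box containing \emph{no} point of $\boldsymbol\omega$ I place the product $\prod_i h_{r_i}(x_i)$ of one-dimensional Rademacher (Haar) functions of the appropriate levels, and define $f_{\mathbf{r}}$ as the sum of these products over the empty boxes.

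The heart of the argument is two properties of the $f_{\mathbf{r}}$. The first is an inner-product lower bound, $\langle D,f_{\mathbf{r}}\rangle\gtrsim 1$ uniformly in $\mathbf{r}$: the point-counting part of $D$ contributes nothing, because a product of mean-zero Rademacher functions supported on an empty box integrates to $0$ against the step function $\mathbf{1}_{[\mathbf{0},\mathbf{x}[}$ (whose only jumps inside that box would come from points $\mathbf{x}_n$, of which there are none), while the volume term $-N\prod_i x_i$ contributes a definite amount per box because $\int_0^1 x_i\,h_{r_i}(x_i)\,dx_i\neq 0$; summing over the $\gtrsim 2^n$ empty boxes with $N\asymp 2^n$ gives a constant. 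The second is orthogonality: for $\mathbf{r}\neq \mathbf{r}'$ with $|\mathbf{r}|=|\mathbf{r}'|=n$ one has $\langle f_{\mathbf{r}},f_{\mathbf{r}'}\rangle=0$, since they differ in some coordinate level and one-dimensional Rademacher functions at distinct levels are orthogonal.

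Finally I would assemble these via Cauchy--Schwarz. Setting $F=\sum_{|\mathbf{r}|=n}f_{\mathbf{r}}$, orthogonality gives $\|F\|_2^2=\sum_{|\mathbf{r}|=n}\|f_{\mathbf{r}}\|_2^2\lesssim \binom{n+s-1}{s-1}$, while the first property gives $\langle D,F\rangle=\sum_{|\mathbf{r}|=n}\langle D,f_{\mathbf{r}}\rangle\gtrsim \binom{n+s-1}{s-1}$, the number of multi-indices of total degree $n$. Hence
\begin{equation*}
\|D\|_2\ \geq\ \frac{\langle D,F\rangle}{\|F\|_2}\ \gtrsim\ \binom{n+s-1}{s-1}^{1/2}\ \gtrsim\ n^{(s-1)/2}\ \asymp\ (\log N)^{\frac{s-1}{2}}\ ,
\end{equation*}
using $\binom{n+s-1}{s-1}\sim n^{s-1}/(s-1)!$ and $n\asymp \log_2 N=\log N/\log 2$. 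The explicit factors $2^{4s}$ and $((s-1)\log 2)^{\frac{s-1}{2}}$ in $c_s$ then emerge from careful bookkeeping of the Rademacher normalizations and the change of base in the logarithm. I expect the main obstacle to be precisely the inner-product computation of the first property (\emph{Roth's lemma}): one must verify both that the point-counting contribution vanishes on empty boxes and that the surviving volume term admits a controlled, uniform lower bound. The remaining steps — counting multi-indices and applying Cauchy--Schwarz — are then routine.
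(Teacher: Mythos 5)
The thesis itself gives no proof of this theorem (it is stated with a citation to Roth), and your sketch is precisely Roth's classical orthogonal-function argument behind that citation, as reproduced e.g.\ in Kuipers--Niederreiter: local discrepancy function, Haar-type products on empty dyadic boxes, the inner-product lower bound, pairwise orthogonality of the $f_{\mathbf{r}}$ with $|\mathbf{r}|=n$, and Cauchy--Schwarz. The structure is correct and does deliver a constant of the stated shape.

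One justification --- in exactly the step you single out as the heart of the matter --- is wrong as stated, although the claim it supports is true. It is not the case that the counting part has no jumps inside an empty box: a point $\mathbf{x}_m \notin B=B_1\times\cdots\times B_s$ may still satisfy $(x_m)_i \in B_i$ for some coordinates $i$, and then $\mathbf{x} \mapsto \mathbf{1}_{[\mathbf{0},\mathbf{x}[}(\mathbf{x}_m)$ genuinely jumps inside $B$. The correct reason that $\int_B \mathbf{1}_{[\mathbf{0},\mathbf{x}[}(\mathbf{x}_m)\prod_i h_{r_i}(x_i)\,d\mathbf{x}=0$ is the tensor factorization: $\mathbf{x}_m \notin B$ forces some coordinate $j$ with $(x_m)_j \notin B_j$, so the factor $\mathbf{1}_{[0,x_j[}\bigl((x_m)_j\bigr)$ is constant on $B_j$ and the one-dimensional integral $\int_{B_j} h_{r_j}(x_j)\,dx_j = 0$ annihilates the whole product, regardless of the jumps in the other coordinates. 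Relatedly, the surviving volume term must be computed per box, not via $\int_0^1 x_i h_{r_i}(x_i)\,dx_i$: with the convention $h_{r_i}=+1$ on right halves one gets $\int_{B_i} x_i h_{r_i}(x_i)\,dx_i = 2^{-2r_i-2}>0$ for \emph{every} dyadic interval $B_i$ of length $2^{-r_i}$, so each empty box contributes $N\,4^{-s}\,2^{-2n}$ with one fixed sign and a single global sign on $f_{\mathbf{r}}$ suffices (no per-box sign choices); summing over the at least $2^{n-1}$ empty boxes, with $n$ chosen so that $2N \leq 2^n < 4N$, gives the uniform bound $\langle D, f_{\mathbf{r}}\rangle \geq \tfrac{1}{8}\,4^{-s}$. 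With these two repairs your assembly goes through, and the bookkeeping via $\binom{n+s-1}{s-1} \geq \bigl(\tfrac{n}{s-1}\bigr)^{s-1}$ and $n \geq \log N/\log 2$ produces exactly a constant of the paper's form $2^{-4s}\bigl((s-1)\log 2\bigr)^{-\frac{s-1}{2}}$.
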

This bound is the best known result for $s>3$ and for $s=3$ it has been slightly sharpened by Beck \cite{Beck}.\\
The first finite sequence of $N\geq 2$ points in $[0,1[^s$ showing asymptotic behaviour of the discrepancy of order $\mathcal{O}\left(\frac{(\log N)^{s-1}}{N}\right)$ has been the Hammersley point set \cite{Hammersley}, for which the constant depends only on the dimension $s$.\\
We will show how to construct this  point set in the next paragraph.\\
A stronger form of Roth's Theorem, but with a worse constant, is the following theorem proved by Beck and Chen \cite{Beck_Chen}.
\begin{theorem}
 Let $s\geq 2$ and $\boldsymbol\omega=\{\mathbf{x}_1,\ldots,\mathbf{x}_N\}$ an arbitrary finite sequence of $N>1$ points in $ \mathbb{R}^s$. Then there exists an $s$-dimensional cube $Q\subset [0,1[^s$ with sides parallel to the axes satisfying
\begin{equation*}
\left| \frac{1}{N}\sum_{i=1}^N \mathbf{1}(\mathbf{x}_i)-\lambda_s(Q)\right|\geq \frac{c_s(\log N)^{\frac{s-1}{2}}-d_s}{N}\ ,
\end{equation*}
where
\begin{equation*}
 c_s=\frac{(\log 2)^{\frac{2-s}{4}}2^{-\frac{7}{2}}3^{\frac{s}{2}}\pi^{-\frac{s}{4}}}{((s-1)!)^{\frac{1}{2}}s^{\frac{s}{4}}(2s+1)^{\frac{s-1}{2}}(6s+1)^{\frac{s}{2}}}
\end{equation*}
and
\begin{equation*}
 d_s=s^{\frac{1}{2}}2^{s+2}3^{\frac{s-1}{2}}\ .
\end{equation*}
\end{theorem}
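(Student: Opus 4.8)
The plan is to prove this lower bound by Roth's mean-square (orthogonal-function) method, adapted to the one-parameter family of axis-parallel cubes and reinforced with a Fourier-analytic device. First, I would stress that the bound cannot be deduced from Roth's Theorem~\ref{roth}: every cube is a box, so the cube discrepancy is a supremum over a \emph{smaller} class of test sets than the box discrepancy, and the inequality therefore runs in the wrong direction. A genuinely new argument is needed. Writing $Q(\mathbf{t},r)=\prod_{j=1}^{s}[t_j,t_j+r[$ for the cube with lower corner $\mathbf{t}$ and common side length $r$, I set
\[
\Delta(\mathbf{t},r)=\sum_{i=1}^{N}\mathbf{1}_{Q(\mathbf{t},r)}(\mathbf{x}_i)-Nr^{s}.
\]
Because $\sup_{Q}|\Delta|$ dominates every weighted quadratic average of $\Delta$, it suffices to bound from below an integral of the form $\int_{0}^{1}\!\int_{[0,1[^{s}}|\Delta(\mathbf{t},r)|^{2}\,d\mathbf{t}\,d\nu(r)$ for a cleverly chosen measure $\nu$ on the scale parameter $r$; dividing by $N$ and taking a square root then returns a pointwise bound of the required shape, with the term $-d_s$ absorbing the lower-order error.

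The core analytic step is to evaluate this integral by Fourier expansion in the translation variable $\mathbf{t}$. Periodizing and applying Parseval turns $\int_{[0,1[^{s}}|\Delta(\mathbf{t},r)|^{2}\,d\mathbf{t}$ into a sum over frequencies $\mathbf{k}\in\mathbb{Z}^{s}$ of $|\widehat{\Delta}(\mathbf{k},r)|^{2}$, where the Fourier coefficient of the cube factors as a product $\prod_{j}\frac{\sin(\pi k_j r)}{\pi k_j}$ of one-dimensional sine kernels. The constant-density part contributes the mean value, while the point-mass part encodes the obstruction: the $N$ Dirac masses cannot annihilate $|\widehat{\Delta}(\mathbf{k},r)|^{2}$ simultaneously for all admissible $\mathbf{k}$ and all scales $r$. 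I would smooth $\mathbf{1}_{Q}$ by a fixed bump (a Gaussian or a polynomial weight), which is exactly where the factors $\pi^{-s/4}$ and the moment constants $(2s+1)^{(s-1)/2}$, $(6s+1)^{s/2}$ enter, through explicit Gaussian and polynomial moment integrals of the one-dimensional kernel.

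To recover the exponent $(s-1)/2$, I would choose $\nu$ to spread mass over a dyadic range of scales $r\in[N^{-1/s},1]$, roughly $\tfrac{1}{s}\log_2 N$ of them, which is the source of the factor $(\log 2)^{(2-s)/4}$ and the power $3^{s/2}$. At each dyadic scale the sine-product structure forces a contribution of a fixed size that the points cannot suppress, and summing these essentially independent contributions over the $\sim\log N$ scales, then passing through the square root inherent in the $L^2$-to-sup comparison, yields the order $(\log N)^{(s-1)/2}$ and the stated constant $c_s$.

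The main obstacle — and the reason the cube case is strictly harder than Roth's box theorem — is precisely that all $s$ sides of a cube share the single parameter $r$. In Roth's proof for boxes one integrates over $s$ \emph{independent} side lengths and builds the auxiliary function as a tensor product of one-dimensional Haar (or Rademacher) functions, so the $(s-1)/2$ exponent falls out of $s$ independent logarithmic ranges. With cubes this tensor structure is unavailable, and the crux is to show that tying the scales together still leaves enough genuinely independent oscillation across the $\log N$ dyadic levels. I expect this frequency-lattice lower bound, uniform over all configurations of the $N$ points, to be the hard part: it is where Beck's Fourier-transform argument, and the careful choice of smoothing weight that pins down the explicit constants, must do the real work.
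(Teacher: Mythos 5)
The first thing to say is that the thesis itself contains no proof of this theorem: it is quoted from Beck and Chen \cite{Beck_Chen}, so your plan can only be measured against the cited source, where the explicit constants come out of a sharpening of Roth's auxiliary-function (orthogonal-function) machinery rather than a pure Fourier-positivity argument. Your framing is in the right family of ideas — the $L^2$ average over translates $\mathbf{t}$ and a measure on the common side length $r$, Parseval in $\mathbf{t}$, the product of sine kernels — and your opening observation that the cube bound is strictly stronger than Roth's box bound is correct. But the step you yourself flag as ``the hard part'' is the whole theorem, and as you state it — ``at each dyadic scale the sine-product structure forces a contribution of a fixed size that the points cannot suppress'' — it is false. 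Take $N=m^s$ points forming the lattice $m^{-1}\mathbb{Z}^s\cap[0,1[^s$ with $m=2^M$ and work on the torus, as your periodization requires: every cube of side $r=k/m$ contains exactly $k^s=Nr^s$ points for \emph{every} translate, so $\Delta(\mathbf{t},r)\equiv 0$ on an entire dyadic subfamily of scales. Hence no per-scale lower bound uniform in the point configuration can exist; any correct argument must extract cross-scale information, and your sketch supplies no mechanism for this. Nor does Beck's positivity device rescue it: with $\phi\geq 0$, $\widehat{\phi}\geq 0$ concentrated at scale $\rho$, the diagonal term $N\phi(0)$ beats the zero-frequency term $N^2\widehat{\phi}(\mathbf{0})$ only when $\rho\lesssim N^{-1/s}$, and those scales return merely an $O(1)$ bound — which is exactly why this route succeeds for balls (curvature gives a power of $N$) but not for cubes.

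There is also a quantitative gap that persists even if one grants the unproven per-scale claim: your bookkeeping does not produce the stated exponent for $s\geq 3$. Roughly $\log N$ dyadic scales, each contributing a bounded amount to the squared average, give $(\log N)^{1/2}$ after the square root, not $(\log N)^{(s-1)/2}$. In Roth's box argument the factor $(\log N)^{s-1}$ in the mean square comes from the $\binom{n+s-1}{s-1}\asymp n^{s-1}$ shape vectors $\mathbf{r}$ with $r_1+\cdots+r_s=n$ at a single volume level — precisely the freedom that the single cube parameter removes, as you note. The resolution in the Roth--Schmidt--Beck--Chen line is not one-contribution-per-scale bookkeeping but cross-shape amplification: the Haar-type coefficients of a cube indicator localize at the cube's corners, so a cube at one dyadic level still correlates with test functions of all $\asymp(\log N)^{s-1}$ shapes $\mathbf{r}$ whose components are bounded below by that level, and the explicit constants and the correction term $-d_s$ emerge from that combinatorics. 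Until you supply a substitute for this amplification, the proposal establishes at best a $(\log N)^{1/2}$-type bound, not the theorem. Two smaller points: the cube must lie in $[0,1[^s$ while the points range over $\mathbb{R}^s$, so the toroidal Parseval average includes wrapped cubes that are not admissible test sets and must be discarded or reassembled; and the attribution of $\pi^{-s/4}$, $(2s+1)^{(s-1)/2}$, $(6s+1)^{s/2}$ to Gaussian and polynomial moments is reverse-engineered guesswork that nothing in the argument yet supports.
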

It is clear that Roth's Theorem follows from this one and it is an interesting question whether the converse is also true. This has been proved only in the 2-dimensional case by Ruzsa \cite{Rusza}, but in general only weaker results have been found.\\

For infinite sequences in $[0,1[^s$ we only have a long-standing conjecture, stating that for every dimension $s$ there exists a constant $c_s$ such that for any infinite sequence $(\mathbf{x}_n)_{n\in\mathbb{N}}$ in $[0,1[^s$
\begin{equation}\label{BoundInfSeq}
 D_N(\mathbf{x}_n)\geq c_s \left(\frac{(\log N)^s}{N}\right)
\end{equation}
holds, for infinitely many $N$ and with a positive constant depending only on the dimension $s$.\\ It is a widely held belief that the orders of magnitude in \eqref{BoundFinSeq} and \eqref{BoundInfSeq} are best possible even if this is only known for \eqref{BoundFinSeq} in the case $s=1,2$ and for \eqref{BoundInfSeq} in the case $s=1$ (see Theorem \ref{SchmidtThm}).\\

Sequences of points in $[0,1[^s$ having discrepancy of order $\mathcal{O}\left(\frac{(\log N)^s}{N}\right)$ are called low discrepancy sequences.\\
In the last part of this section we will highlight the important role played by low-discrepancy sequences in Quasi-Monte Carlo integration and other applications to numerical analysis. \\

In the following paragraph we show some of the most well-known examples of multidimensional low discrepancy sequences and point sets, namely Kronecker sequence, Hammersley point set and Halton sequence. The first one is the obvious generalization of the one-dimensional Kronecker sequence, while the second and the third are a finite and infinite generalization of the van der Corput sequence. 

\begin{example}[Kronecker sequence]
Let $\theta_1,\dots,\theta_s$ be distinct irrational numbers. The $s$-dimensional sequence
\begin{equation*}
(\mathbf{x}_n)_{n\in\mathbb{N}}=(\{n\theta_1\},\dots,\{n\theta_s\})_{n\in\mathbb{N}}
\end{equation*}
is called Kronecker sequence.\\
It has been proved by Weyl \cite{weyl2} that this sequence is uniformly distributed if and only if $1,\theta_1,\dots,\theta_s$ are linearly independent over $\mathbb{Q}$. Indeed it is a direct application of Weyl's criterion \eqref{1.4} and the proof is exactly the same as in the one-dimensional case.\\
Concerning the problem of estimating the discrepancy of this sequence, the approach used is completely different from that in the one-dimensional case, since there is no canonical generalization of Ostrowski numeration system to higher dimensions. This is essetially due to the fact that there is no canonical generalization of the Euclidian algorithm.
To cope with the lack of a satisfactory tool replacing continued fractions, several
approaches are possible, for instance a classical one is that of best simultaneous approximations, introduced by Lagarias \cite{Lagarias1, Lagarias2} in 1982. \\
So almost every Kronecker sequence is an \lq\lq almost\rq\rq\ low-discrepancy sequence.
\end{example}
\begin{example}[Hammersley point set]
For a given $N\in\mathbb{N}$, an $s$-dimensional Hammersley point set $(\mathbf{x}_n)$ of size $N$ in $[0,1[^s$ is defined by 
\begin{equation*}
\mathbf{x}_n=\left(\frac{n}{N}, \varphi_{b_1}(n),\ldots, \varphi_{b_{s-1}}(n)\right)\qquad n=0,1,\dots,N-1\ ,
\end{equation*}
where $b_1,\ldots, b_{s-1}$ are given coprime positive integers and $\varphi_{b_i}$ is the radical inverse function in base $b_i$ defined in Definition \ref{radical_inverse}.\\

It is immediate to see that this is an $s$-dimensional generalization of the van der Corput sequence. It has been introduced by Hammersley \cite{Hammersley} in his survey paper on Monte Carlo methods in order to achieve a smaller error bound in the estimate of multidimensional integrals by means of finite sums. This problem will be discussed in detail in the next section.\\

Halton \cite{Halton} proved that the Hammersley sequence has a discrepancy of order $\mathcal{O}\left(\frac{(\log N)^{s-1}}{N}\right)$. We will see in the next paragraph that this result directly follows from the following lemma applied to the $(s-1)$-dimensional Halton sequence.\\
The following lemma turns out to be a very useful result to contruct different low-discrepancy point sets in the $s$-dimensional unit cube, as soon as we know that the sequence $(\mathbf{x}_n)_{n\in\mathbb{N}}$ is low-discrepancy in $[0,1[^{s-1}$.
\begin{lemma}\label{discr_point_set}
 For $s\geq 2$, let $\boldsymbol\omega$ be an arbitrary sequence of points $(\mathbf{x}_n)_{n\in\mathbb{N}}$ in $[0,1[^{s-1}$. For $N\geq 1$, let $P$ be the point set consisting of $\left(\frac{n}{N},\mathbf{x}_n\right)\in[0,1[^s$ for $n=0,1,\dots,N-1$. Then
\begin{equation*}
ND_N^*(P)\leq \max_{1\leq M\leq N}MD_M^*(\boldsymbol\omega)+1\ .
\end{equation*}
\end{lemma}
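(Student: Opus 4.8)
The plan is to exploit the product structure of the anchored test boxes together with the fact that the first coordinate of $P$ runs through the equispaced grid $\{0,1/N,\dots,(N-1)/N\}$. First I would fix an arbitrary test box $J=[0,a_1[\times\cdots\times[0,a_s[$ and write it as $J=[0,\alpha[\times B$, where $\alpha=a_1\in\ ]0,1]$ and $B=[0,a_2[\times\cdots\times[0,a_s[$ is an anchored box in $[0,1[^{s-1}$. A point $\left(\tfrac{n}{N},\mathbf{x}_n\right)$ of $P$ lies in $J$ precisely when $n/N<\alpha$ and $\mathbf{x}_n\in B$. Since $n$ ranges over $\{0,\dots,N-1\}$, the first condition selects exactly the indices $n=0,\dots,M-1$, where $M$ is the number of integers in $[0,N\alpha)$; I would record that $M=\lceil N\alpha\rceil$, so that $1\le M\le N$ and, crucially, $0\le M/N-\alpha<1/N$.

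With this reduction the counting average factorises as
$$\frac{1}{N}\sum_{n=0}^{N-1}\mathbf{1}_J\!\left(\tfrac{n}{N},\mathbf{x}_n\right)=\frac{M}{N}\cdot\frac{1}{M}\sum_{n=0}^{M-1}\mathbf{1}_B(\mathbf{x}_n),$$
while $\lambda_s(J)=\alpha\,\lambda_{s-1}(B)$. The next step is a two-term triangle inequality: writing $A_M(B)=\tfrac1M\sum_{n=0}^{M-1}\mathbf{1}_B(\mathbf{x}_n)$, I would split
$$\frac{M}{N}\,A_M(B)-\alpha\,\lambda_{s-1}(B)=\frac{M}{N}\bigl(A_M(B)-\lambda_{s-1}(B)\bigr)+\Bigl(\frac{M}{N}-\alpha\Bigr)\lambda_{s-1}(B).$$
Because $B$ is an anchored box in $[0,1[^{s-1}$, the first term is controlled by the $(s-1)$-dimensional star discrepancy of the initial segment $\{\mathbf{x}_0,\dots,\mathbf{x}_{M-1}\}$, giving $|A_M(B)-\lambda_{s-1}(B)|\le D_M^*(\boldsymbol\omega)$, while the second term is bounded by $|M/N-\alpha|\cdot\lambda_{s-1}(B)<\tfrac1N\cdot1$.

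Multiplying through by $N$ yields, for this particular $J$,
$$N\left|\frac1N\sum_{n=0}^{N-1}\mathbf{1}_J\!\left(\tfrac nN,\mathbf{x}_n\right)-\lambda_s(J)\right|\le M\,D_M^*(\boldsymbol\omega)+1\le\max_{1\le M\le N}M\,D_M^*(\boldsymbol\omega)+1,$$
and taking the supremum over all anchored boxes $J$ gives the claim, since the right-hand side no longer depends on $J$.

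The algebra above is routine; the step that needs care is the combinatorial bookkeeping that turns the condition ``$n/N<\alpha$'' into ``the first $M$ indices'' together with the sharp estimate $0\le M/N-\alpha<1/N$. This is precisely where the equispacing of the first coordinate is used, and it is exactly what produces the additive $+1$ in the final bound. I would also check the boundary situations $\alpha=1$ (where $M=N$) and $N\alpha\in\mathbb{Z}$ to confirm that $M=\lceil N\alpha\rceil$ and the same estimate remain valid, but these present no real difficulty.
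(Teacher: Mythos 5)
Your proof is correct, and it is essentially the classical argument for this lemma (the paper itself states it without proof; it goes back to Niederreiter's Lemma 3.7): factor the anchored box as $[0,\alpha[\times B$, observe that the equispaced first coordinate selects exactly the initial segment of length $M=\lceil N\alpha\rceil$ with $0\le M/N-\alpha<1/N$, and split via the triangle inequality into an $(s-1)$-dimensional discrepancy term $M D_M^*(\boldsymbol\omega)$ plus the grid error $<1$. Your handling of the boundary cases ($\alpha=1$, $N\alpha\in\mathbb{Z}$, and $M\ge 1$ so the average $A_M(B)$ is well defined) is exactly the bookkeeping the standard proof requires, so there is nothing to add.
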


\begin{remark}
A Hammersley point set is a finite set of size $N$ which cannot be extended to an infinite sequence. In fact, if we want to increase the number of points by one we have to compute again all the $N$ point of the form $\frac{n}{N}$, losing the previously calculated values.\\
It is evident that this is a quite tedious work, especially for applications.\\
So now we introduce the second generalization to higher dimension of the van der Corput sequence. This new construction gives rise to an infinite sequence of points in the unit cube $[0,1[^s$.
\end{remark}
\end{example}
\begin{example}[Halton sequence]
Let $b_1,\ldots, b_s\geq 2$ be pairwise coprime integers. Then the $s-$dimensional Halton sequence $(\mathbf{x}_n)_{n\in\mathbb{N}}$ in $[0,1[^s$ is the sequence of points of general term 
\begin{equation*}
 \mathbf{x}_n=\left(\varphi_{b_1}(n),\ldots, \varphi_{b_s}(n)\right)\ ,
\end{equation*}
where $\varphi_{b_i}$ is the radical inverse function in base $b_i$ defined in \ref{radical_inverse}.\\

For $s=1$ and $b_1=2$ we just get the van der Corput sequence.\\
The following result is due to Halton \cite{Halton} and shows that the Halton sequence has low-discrepancy.
\begin{theorem}\label{Halton_discr}
 Let $(\mathbf{x}_n)_{n\in\mathbb{N}}$ be the Halton sequence in the pairwise relatively prime bases $b_1,\dots,b_s$. Then
 \begin{equation*}
 D_N^*(\mathbf{x}_n)<\frac{s}{N}+\frac{1}{N}\prod_{i=1}^s\left(\frac{b_i-1}{2\log b_i}\log N+\frac{b_i+1}{2}\right)\quad {\rm for\ all\ } N\geq 1\ .
 \end{equation*}
\end{theorem}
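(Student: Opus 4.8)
The plan is to bound the star discrepancy directly from its definition, by controlling, uniformly over every box anchored at the origin $J=[0,\beta_1[\times\cdots\times[0,\beta_s[$, the counting discrepancy $\bigl|A(J;N)-N\lambda_s(J)\bigr|$, where $A(J;N)=\#\{0\le n<N:\mathbf{x}_n\in J\}$. The strategy rests on two ingredients: an exact counting statement for \emph{elementary boxes} (products of $b_i$-adic intervals), and a decomposition of an arbitrary anchored box into such elementary boxes. The pairwise coprimality of the bases enters precisely once, through the Chinese Remainder Theorem, and it is this step that produces the product structure of the bound.

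First I would establish the elementary-box lemma. Fix integers $k_1,\dots,k_s\ge 0$ and digits giving the box $E=\prod_{i=1}^s\bigl[a_ib_i^{-k_i},(a_i+1)b_i^{-k_i}\bigr[$, of volume $\lambda_s(E)=\prod_i b_i^{-k_i}$. Reading off Definition \ref{radical_inverse} together with \eqref{b_exp}, the first $k_i$ base-$b_i$ digits of $\varphi_{b_i}(n)$ are the last $k_i$ digits of $n$ read in reverse; hence $\varphi_{b_i}(n)$ lies in the $i$-th factor of $E$ if and only if $n\equiv r_i\pmod{b_i^{k_i}}$ for a residue $r_i$ determined by $a_i$. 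Since the $b_i$ are pairwise coprime, so are the moduli $b_i^{k_i}$, and the Chinese Remainder Theorem collapses the joint condition into a single congruence $n\equiv r\pmod{Q}$ with $Q=\prod_i b_i^{k_i}=1/\lambda_s(E)$. Counting the residues in $\{0,\dots,N-1\}$ then gives $\bigl|A(E;N)-N\lambda_s(E)\bigr|<1$: each elementary box receives the correct number of points up to an additive error strictly less than one.

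Next I would decompose a general anchored box. Expanding each $\beta_i$ in base $b_i$, the interval $[0,\beta_i[$ splits into a disjoint union of $b_i$-adic intervals, at most $b_i-1$ of them at each digit level. Truncating at a level of order $\log_{b_i}N$ controls the tail: levels beyond this contribute boxes so thin that either they hold no points or their volume is negligible, and the $s$ such truncations are responsible for the additive $s/N$ term. Forming products over the $s$ coordinates writes $J$ as a disjoint union of elementary boxes, and summing the per-box errors (each $<1$ from the previous step) over this union yields a total that \emph{factors}: the number of elementary boxes is the product over $i$ of the number of $b_i$-adic intervals used in the $i$-th coordinate. A careful count of the latter is what turns each factor into $\frac{b_i-1}{2\log b_i}\log N+\frac{b_i+1}{2}$, and dividing by $N$ produces the claimed estimate.

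The qualitative conclusion — discrepancy of order $(\log N)^s/N$ — drops out of the above almost immediately, so the real work, and the main obstacle, is the exact constant. This is genuinely a one-dimensional problem carried out $s$ times: one must pin down the precise number of base-$b_i$ digits and intervals needed to represent $[0,\beta_i[$ to the required precision, distinguishing the contributions that scale like $\log N$ from the bounded boundary terms, and then multiply (rather than add) the one-dimensional contributions. Keeping the truncation error, the $+1$ from the elementary-box lemma, and the half-integer constants $\frac{b_i+1}{2}$ correctly aligned across the product is the delicate bookkeeping; an alternative is to argue inductively on $s$ in the spirit of Lemma \ref{discr_point_set}, proving a sharp van der Corput bound in one dimension first and then adjoining coordinates, but in either route the coprimality/CRT step is indispensable and is where the product over $i$ originates.
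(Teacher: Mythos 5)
Your plan follows what is essentially the standard route (Halton's argument as refined in Niederreiter's and Kuipers--Niederreiter's treatments, to which the thesis itself defers for the proof, since Theorem \ref{Halton_discr} is stated there without one): elementary boxes are congruence conditions via the digit-reversal structure of $\varphi_{b_i}$, pairwise coprimality collapses them through the Chinese Remainder Theorem into a single residue class mod $Q=\prod_i b_i^{k_i}$ with counting error strictly less than $1$ per box, truncation of each coordinate at level $m_i$ with $b_i^{m_i}\geq N$ contributes the additive $s/N$ (correctly, because all first $N$ points have $i$-th coordinate a multiple of $b_i^{-m_i}$, so the thin slab between $\beta_i$ and its truncation contains no points and only its volume matters), and the total error factors as a product of per-coordinate interval counts. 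All of that is sound and is indeed where the coprimality hypothesis is used.

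The genuine gap is the constant, and you cannot reach it with the decomposition you actually describe. Splitting $[0,\beta_i[$ as a \emph{disjoint} union of $b_i$-adic intervals, ``at most $b_i-1$ of them at each digit level,'' costs up to $b_i-1$ boxes per level over $m_i\approx \log N/\log b_i$ levels, so summing the per-box errors gives each factor as $\frac{b_i-1}{\log b_i}\log N+O(1)$ --- the right order, but worse than the claimed bound by a factor of $2$ per coordinate, hence $2^s$ overall. The missing idea is the two-sided, \emph{signed} decomposition: at each digit level one may either adjoin $d_j$ intervals from below or round $\beta_i$ up one level and subtract $b_i-d_j$ intervals, choosing whichever is cheaper; combined with an averaging argument over the two adjacent truncations this caps the per-level cost at $(b_i-1)/2$ and produces the boundary constant $\frac{b_i+1}{2}$. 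Your elementary-box lemma survives this change (the error $<1$ holds for each box regardless of sign, and the product expansion of signed per-coordinate sums still factors), but you flag the constant as ``delicate bookkeeping'' without supplying the mechanism, and your stated disjoint decomposition provably cannot deliver $\frac{b_i-1}{2\log b_i}$. One further aside: inducting on $s$ via Lemma \ref{discr_point_set} would not work here, since that lemma adjoins the coordinate $n/N$ (i.e., builds Hammersley sets from a lower-dimensional sequence), not an additional Halton coordinate.
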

From this theorem and Lemma \ref{discr_point_set}, we get the following result.
\begin{corollary}
If $P$ is the $s$-dimensional Hammersley point set of size $N$ in $I^s$, then
\begin{equation*}
D_N^*(P)<\frac{s}{N}+\frac{1}{N}\prod_{i=1}^s\left(\frac{b_i-1}{2\log b_i}\log N+\frac{b_i+1}{2}\right)\ .
\end{equation*}
\end{corollary}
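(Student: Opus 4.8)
The plan is to read the corollary as the exact image of Theorem \ref{Halton_discr} under the construction of Lemma \ref{discr_point_set}. Writing $\boldsymbol\omega=(\mathbf{x}_n)_{n\in\mathbb{N}}$ for the $(s-1)$-dimensional Halton sequence $\mathbf{x}_n=(\varphi_{b_1}(n),\dots,\varphi_{b_{s-1}}(n))$ in the pairwise coprime bases $b_1,\dots,b_{s-1}$, the $s$-dimensional Hammersley point set is precisely $P=\{(n/N,\mathbf{x}_n):0\le n\le N-1\}$, which is exactly the point set to which Lemma \ref{discr_point_set} applies. Thus I expect the whole statement to follow by feeding the Halton discrepancy bound into that lemma, the only real work being to control the maximum over the initial segments and to match the additive constant.

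First I would invoke Lemma \ref{discr_point_set} to obtain
\begin{equation*}
ND_N^*(P)\le \max_{1\le M\le N}MD_M^*(\boldsymbol\omega)+1.
\end{equation*}
Next I would apply Theorem \ref{Halton_discr} to the $(s-1)$-dimensional Halton sequence $\boldsymbol\omega$, which yields
\begin{equation*}
MD_M^*(\boldsymbol\omega)<(s-1)+\prod_{i=1}^{s-1}\left(\frac{b_i-1}{2\log b_i}\log M+\frac{b_i+1}{2}\right)=:F(M).
\end{equation*}
The key observation, and the step I expect to be the main obstacle, is that $MD_M^*(\boldsymbol\omega)$ is itself \emph{not} monotone in $M$, so the maximum in the lemma cannot be read off directly; instead one notes that each factor of $F$ is increasing in $M$ (the coefficient $\frac{b_i-1}{2\log b_i}$ is positive and $\log M$ is increasing), hence $F$ is increasing and $MD_M^*(\boldsymbol\omega)<F(M)\le F(N)$ for every $M\le N$. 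This gives $\max_{1\le M\le N}MD_M^*(\boldsymbol\omega)< F(N)$, which is the genuine content of the argument.

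Combining the two displays yields $ND_N^*(P)< s+\prod_{i=1}^{s-1}\left(\frac{b_i-1}{2\log b_i}\log N+\frac{b_i+1}{2}\right)$, with the constant $s=(s-1)+1$ accounting for the Halton term $s-1$ plus the $+1$ from the lemma. To reach the stated form I would finally observe that, since $b_i\ge 2$ and $\log N\ge 0$, every factor satisfies $\frac{b_i-1}{2\log b_i}\log N+\frac{b_i+1}{2}\ge\frac{b_i+1}{2}\ge\frac32>1$; appending the factor associated with the remaining base $b_s$ of the ambient Halton system of Theorem \ref{Halton_discr} therefore only enlarges the product, so $\prod_{i=1}^{s-1}(\cdots)\le\prod_{i=1}^{s}(\cdots)$. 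Dividing by $N$ then produces exactly $D_N^*(P)<\frac{s}{N}+\frac{1}{N}\prod_{i=1}^{s}\left(\frac{b_i-1}{2\log b_i}\log N+\frac{b_i+1}{2}\right)$. Everything past the monotone-maximum step is routine bookkeeping of the constant and the harmless majorization of the sharp $(s-1)$-fold product by the stated $s$-fold one.
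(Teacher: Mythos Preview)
Your approach is exactly the one the paper indicates (``From this theorem and Lemma \ref{discr_point_set}, we get the following result''): feed the $(s-1)$-dimensional Halton bound of Theorem \ref{Halton_discr} into Lemma \ref{discr_point_set}, use monotonicity of the right-hand side in $M$ to handle the maximum, and collect the constant $(s-1)+1=s$. Up to the point where you obtain
\[
D_N^*(P)<\frac{s}{N}+\frac{1}{N}\prod_{i=1}^{s-1}\left(\frac{b_i-1}{2\log b_i}\log N+\frac{b_i+1}{2}\right),
\]
your argument is correct and complete.

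The final step, however, is not sound as written. The $s$-dimensional Hammersley point set is defined using only $s-1$ bases $b_1,\dots,b_{s-1}$; there is no base $b_s$, and hence no ``remaining base $b_s$ of the ambient Halton system'' to append. Your majorization $\prod_{i=1}^{s-1}(\cdots)\le\prod_{i=1}^{s}(\cdots)$ has no meaning without such a $b_s$. The upper index $s$ in the displayed product of the corollary is almost certainly a misprint for $s-1$ (this is the form one finds in the standard references, e.g.\ Niederreiter); the bound you actually derived, with the product running to $s-1$, is the correct statement and is precisely what follows from the two quoted results. So your proof is right, but you should stop one line earlier and note the typo rather than manufacture an undefined extra factor.
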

\end{example}
We end this paragraph with two important remarks concerning the discrepancy of the Halton sequences.
\begin{remark}
Apart from the requirement of coprimality in the choice of the bases $b_1,\dots, b_s$ in the definition of the Halton sequence, it is also possible to optimize this choice in order to get a better discrepancy bound. \\
In fact, as we have shown in Theorem \ref{Halton_discr}, an upper bound for the discrepancy of the Halton sequence is given by
\begin{equation*}
D_N^*(\mathbf{x}_n)\leq A(b_1,\dots,b_s)N^{-1}(\log N)^s+\mathcal{O}(N^{-1}(\log N)^{s-1})\ ,
\end{equation*}
where the coefficient of the main term is
\begin{equation*}
A(b_1,\dots,b_s)=\prod_{i=1}^s\frac{b_i-1}{2\log b_i}\ .
\end{equation*}
So in order to get a better upper bound, we need to minimize this coefficient. It is evident that it is minimal if we let $b_1,\dots,b_s$ be the first $s$ primes $p_1=2, p_2=3,\dots , p_s$.\\
Of course the same assumption can be made in the case of the Hammersley point set and it gives again a better bound for the discrepancy.\\
If we denote by $A_s$ the term $A(p_1,\dots,p_s)$ and consider the Prime Number Theorem, we obtain
\begin{equation*}
\lim_{s\to\infty}\frac{\log A_s}{s\log s}=1\ .
\end{equation*}
Thus $A_s$ increases superexponentially as $s\to\infty$ and this means that Halton sequences and Hammersley point sets are actually useful for applications only for small dimensions $s$.\\
This phenomenon, generically referred to as \lq \lq curse of dimensionality\rq\rq, essentially implies that when the dimension increases, the Halton sequence does not show a good asymptotic behaviour. \\
To cope with this problem, it has been introduced, first by Spanier \cite{Spanier} and later considered by several authors (see e.g. \cite{Aistleitner_Hofer2, Hellekalek, HK, HKLF, Niederreiter_Winterhof, Okten}), the notion of \emph{hybrid sequences}. Roughly speaking, they are $s$-dimensional sequences obtained by concatenating $d$-dimensional low-discrepancy sequences with $s-d$-dimensional random sequences. We will discuss the advantages of considering these new sequences in the last part of this section.

\end{remark}
\begin{remark}
As we have already pointed out, choosing the first $s$ primes instead of arbitrary coprime bases improves the bound for the discrepancy of the Halton sequence. Nevertheless, numerical calculations showed strong correlation between components. Correlations between radical inverse functions with different bases used for different dimensions reflect on poorly distributed two-dimensional projections. The first investigations on this problem are due to Braaten and Weller \cite{Braaten_Weller}.\\
For instance, if we consider the eight-dimensional Halton sequence, we know that the last two coordinates are defined by the digit espansion in base $17$ and $19$, respectively. As the following picture taken from \cite{Vandewoestyne_Ronald} shows, there is a strong correlation between the seventh and eighth coordinate.\\
\begin{figure}[h!]
\centering
\includegraphics[scale=4]{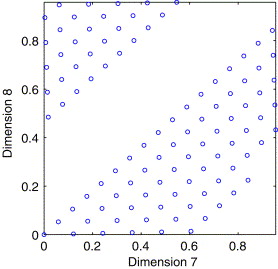}
\caption{Projection of the first 100 points of the Halton sequence onto the seventh and the eighth coordinates}
\end{figure}\\
The poor two-dimensional projections can be explained by the fact that the difference between the two primes bases $17$ and $19$ corresponding to the dimensions $7$ and $8$ is very small compared to the base size and so the first $18$ points in the two bases have the same expression.\\ One could think that one way to avoid this problem is to drop the first $20$ entries, but this is not very convenient since the same problem appears also for bigger twin primes. In order to break correlations, the most convenient solution is given by a scramble of the original Halton sequence. It is a randomized version of the Halton sequence, in the same way as it has been done to obtain the generalized van der Corput sequence in one dimension, based on digit permutations.\\
An alternative approach suggested in \cite{CMW} is to find an optimal Halton sequence within a family of scrambled sequences.\\
Other methods are considered in literature to randomize the Halton sequence and they make use of the description of the sequence by means of the von Neumann-Kakutani transformation, see e.g. \cite{Okten2, Schlier, Wang_Hickernell}.
We will analyze this transformation and its connection with u.d. sequences in the next chapter.\\
As discussed above, even if Halton sequences are low-discrepancy and therefore suitable for applications, it is sometimes better to consider point sets and sequences whose discrepancy bounds have much smaller constants.\\
In view of this consideration, point sets called $(t,m,s)$-nets and sequences called $(t,s)$-sequences have been introduced by Niederreiter \cite{Niederreiter2}.
\end{remark}

\subsection{Quasi-Monte Carlo methods}
We end this section by presenting the most interesting application of low-discrepancy sequences.\\
Roughly speaking, quasi-Monte Carlo methods are deterministic versions of Monte Carlo methods.\\ 
This last technique consists in the numerical approximation of integrals of a function $f$ defined on the $s$-dimensional unit cube $[0,1]^s$ by the average function value at the $N$ quadrature points belonging to $[0,1[^s$. More precisely, the crude Monte Carlo approximation for the integral of a function $f(\mathbf{x})$ on the unit interval $[0, 1]^s$ is
\begin{equation*}
\int_{[0,1]^s} f(\mathbf{x})d\mathbf{x}\approx \frac{1}{N}\sum_{n=1}^N f(\mathbf{x}_n)\ ,
\end{equation*}
where $\mathbf{x}_1,\dots,\mathbf{x}_N$ are random points from $[0,1[^s$ obtained by performing $N$ independent and uniformly distributed trials.\\
The first important question arising when implementing an approximation technique concernes the type of convergence and the speed of convergence.\\
For the crude Monte Carlo method, the strong law of large numbers guarantees that the numerical integration procedure converges almost surely.\\ Moreover, it follows from the central limit theorem that the integration error is
\begin{equation*}
\left|\frac{1}{N}\sum_{n=1}^N f(\mathbf{x}_n)-\int_{[0,1]^s} f(\mathbf{x})d\mathbf{x}\right|=\mathcal{O}\left(N^{-1/2}\right)\ .
\end{equation*}
It is important to note that this order of magnitude does not depend on the dimension $s$, as it happens for classical numerical integration where the error bound is of order $\mathcal{O}\left(N^{-2/s}\right)$. So the Monte Carlo method for numerical integration allows us to overcome the curse of dimensionality.\\
It is evident that for the practical implementation of the Monte Carlo method the fundamental question is how to produce a random sample. Even if there are some techniques such as tables of \lq\lq random\rq\rq\ numbers or physical devices for generating random numbers such as white noise, there are some deficiencies in the Monte Carlo method minating is usefulness. As remarked in \cite{Niederreiter}, the main deficiencies of the Monte Carlo method are:
\begin{itemize}
\item there are only probabilistic error bounds,
\item the regularity of the integrand is not reflected,
\item generating random samples is difficult.
\end{itemize} 
To cope with these three problems one should select the sample points according to a deterministic scheme that is well suited for the problem at hand. The quasi-Monte Carlo method is based on this idea.\\
Recall that Weyl's Theorem \ref{multiweyl} suggests the possibility of a numerical approximation of the integral of a continuous function $f$  defined on $[0,1]^s$ by the average function value at $N$ u.d. points in $[0,1[^s$, i.e.
\begin{equation*}
  \lim_{N \rightarrow \infty} \frac{1}{N} \sum_{n = 1}^N f(\mathbf{x}_n) = \int_{[0,1[^s} f(\mathbf{x}) d\mathbf{x}\ ,
\end{equation*}
where $\mathbf{x}_1,\dots,\mathbf{x}_N$ are u.d. in $[0,1[^s$.\\
Therefore, it is very interesting to get information on the order of this convergence.\\ Referring to this problem, a very useful estimate is provided by the Koksma-Hlawka inequality which is strictly related to the discrepancy of the sequence. \\ Before we can present this result, we need to define the variation of a function $f: [0,1]^s\to\mathbb{R}$. \\

\noindent By a partition $\pi$ of $[0,1]^s$ we mean a set of $s$ finite sequences $(\eta_i^{(0)},\ldots,\eta_i^{(m_i)})$ for $i=1,\ldots,s$ with $0=\eta_i^{(0)}\leq\eta_i^{(1)}\leq\cdots\leq\eta_i^{(m_i)}=1$. In connection with such a partition we define for each $i=1,\ldots,s$ an operator $\Delta_i$ by
\begin{eqnarray*}
\Delta_if(x_1,\ldots,x_{i-1},\eta_i^{(j)},x_{i+1},\ldots,x_s)&=&
f(x_1,\ldots,x_{i-1},\eta_i^{(j+1)},x_{i+1},\ldots,x_s)\\
&-&f(x_1,\ldots,x_{i-1},\eta_i^{(j)},x_{i+1},\ldots,x_s)
\end{eqnarray*} 
for $0\leq j<m_i$. Operators with different indices obviously commute and 
$\Delta_{i_1,\ldots,i_k}$ stands for $\Delta_{i_1}\cdots\Delta_{i_k}$. Such an operator commutes with summation over variables on which it does not act.

\begin{definition}[Function of bounded variation in the sense of Vitali]\ \\
For a function $f: [0,1]^s\to\mathbb{R}$ we set
$$V^{(s)}(f)=\sup_P\sum_{j_1=0}^{m_1-1}\cdots\sum_{j_s=0}^{m_s-1}
\left|\Delta_{1,\ldots,s}f(\eta_1^{(j_1)},\ldots,\eta_s^{(j_s)})\right|,$$
where the supremum is extended over all partitions $P$ of $[0,1]^s$. \\
If $V^{(s)}(f)$ is finite then $f$ is said to be of bounded variation on $[0,1]^s$ in the sense of Vitali.
\end{definition}

\begin{definition}[Function of bounded variation in the sense of Hardy and Krause]
Let $f: [0,1]^s\to\mathbb{R}$ and assume that $f$ is of bounded variation in the sense of Vitali.
If the restriction $f^{(F)}$ of $f$ to each face $F$ of $[0,1]^s$ of dimension $1,2,\ldots,s-1$ is of bounded variation on $F$ in the sense of Vitali, then $f$ is said to be of bounded variation on $[0,1]^s$ in the sense of Hardy and Krause.
\end{definition}

So we can state the following theorem.
\begin{theorem}[Koksma-Hlawka's Inequality]\ \\
Let $f$ be a function of bounded variation on $[0,1]^s$ in the sense of Hardy and Krause. Let $\boldsymbol\omega=(\mathbf{x}_1,\ldots, \mathbf{x}_N)$ be a finite set of points in $[0,1]^s$. Let us denote by $\boldsymbol\omega_l$ the projection of $\boldsymbol\omega$ on the $(s-l)-$dimensional face $F_l$ of $[0,1]^s$ defined by $F_l=\{(u_1,\ldots,u_s)\in [0,1]^s : u_{i_1}=\cdots$ $\cdots=u_{i_l}=1\}$. Then we have
\begin{equation}\label{KHIn}
\left|\frac{1}{N} \sum_{n=1}^N f(\mathbf{x}_n)-\int_{[0,1]^s}f(\mathbf{x})d\mathbf{x}\right|\leq
\sum_{l=0}^{s-1}\sum_{F_l}D_N^*(\mathbf{\boldsymbol\omega}_l)V^{(s-l)}(f^{(F_l)}),
\end{equation}
where the second sum is extended over all $(s-l)-$dimensional faces $F_l$ of the form $u_{i_1}=\cdots=u_{i_l}=1$. The discrepancy $D_N^*(\boldsymbol\omega_l)$ is clearly computed on the face of $[0,1]^s$ in which $\boldsymbol\omega_l$ is contained.
\end{theorem}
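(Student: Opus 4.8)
The plan is to deduce the $s$-dimensional inequality from repeated one-dimensional integration by parts, so that each summand in \eqref{KHIn} emerges as the boundary contribution of a distinct face $F_l$. It is convenient to prove the estimate first under the extra hypothesis that $f$ has continuous mixed partial derivatives, and then to recover the general Hardy--Krause case by a standard smoothing and approximation argument, which is legitimate precisely because all the Vitali variations $V^{(s-l)}(f^{(F_l)})$ are finite by assumption. The starting object is the \emph{local discrepancy function}
\begin{equation*}
R_N(\mathbf{x}) = \frac{1}{N}\sum_{n=1}^N \mathbf{1}_{[\mathbf{0},\mathbf{x}[}(\mathbf{x}_n) - \lambda_s([\mathbf{0},\mathbf{x}[),
\end{equation*}
whose supremum norm is exactly $D_N^*(\boldsymbol\omega)$. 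Writing $A_N$ for the normalized counting measure $\frac1N\sum_{n}\delta_{\mathbf{x}_n}$, I would first express the integration error as the Riemann--Stieltjes integral $\int_{[0,1]^s} f \, d(A_N-\lambda_s)$.

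Next I would integrate by parts one coordinate at a time. Each such step splits off a boundary term: the contribution at $x_i=0$ vanishes, because both the counting part and the volume part of $R_N$ (and of every partially frozen version of it) are zero as soon as a coordinate equals $0$; the contribution at $x_i=1$ amounts to freezing the $i$-th coordinate at $1$, i.e. to passing to the face $\{x_i=1\}$. Iterating over all $s$ variables therefore produces a sum indexed by the subsets $u\subseteq\{1,\dots,s\}$ of coordinates still being genuinely integrated, the complementary $l=s-|u|$ coordinates having been carried to the boundary value $1$. For smooth $f$ this yields the representation
\begin{equation*}
\frac{1}{N}\sum_{n=1}^N f(\mathbf{x}_n) - \int_{[0,1]^s} f \, d\lambda_s = \sum_{\emptyset\neq u\subseteq\{1,\dots,s\}} (-1)^{|u|}\int R_N^{(u)} \,\frac{\partial^{|u|} f}{\partial \mathbf{x}_u}\, d\mathbf{x}_u,
\end{equation*}
where $R_N^{(u)}$ is $R_N$ with the coordinates outside $u$ set equal to $1$, that is, the local discrepancy of the point set $\boldsymbol\omega_l$ projected onto the corresponding face $F_l$.

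Finally I would estimate each term by pulling out the supremum of the local discrepancy,
\begin{equation*}
\left| \int R_N^{(u)}\,\frac{\partial^{|u|} f}{\partial \mathbf{x}_u}\, d\mathbf{x}_u \right| \le \Big(\sup |R_N^{(u)}|\Big)\int \left|\frac{\partial^{|u|} f}{\partial \mathbf{x}_u}\right| d\mathbf{x}_u,
\end{equation*}
observing that the supremum equals $D_N^*(\boldsymbol\omega_l)$ and that the remaining integral is the Vitali variation $V^{(s-l)}(f^{(F_l)})$ in the smooth case. Grouping the $\binom{s}{l}$ faces of each dimension and summing over $l=0,\dots,s-1$ then reproduces \eqref{KHIn}, with the single subset $u=\{1,\dots,s\}$ supplying the top-dimensional term $l=0$. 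I expect the main difficulty to be twofold: performing the multidimensional integration by parts with correct bookkeeping of the boundary faces and signs, and justifying the Stieltjes integrals for a merely Hardy--Krause function, which is exactly where the smoothing argument together with the finiteness of every face variation is needed to pass to the limit without losing the bound.
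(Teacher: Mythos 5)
The paper itself offers no proof of this theorem: it is quoted as classical background, with the reader referred to the standard literature (Kuipers--Niederreiter, Drmota--Tichy), so there is no in-paper argument to compare yours against line by line. On its own merits, your outline is the standard modern proof: the identity you write down is the Hlawka--Zaremba formula, and your bookkeeping is right --- the boundary terms at $x_i=0$ vanish because the local discrepancy is killed when any coordinate is $0$, freezing coordinates at $1$ lands you on the faces $F_l$, $\sup\lvert R_N^{(u)}\rvert = D_N^*(\boldsymbol\omega_l)$ for the projected points, and for smooth $f$ one has $\int \lvert \partial^{|u|} f/\partial \mathbf{x}_u\rvert\, d\mathbf{x}_u = V^{(s-l)}(f^{(F_l)})$, so summing over the $\binom{s}{l}$ faces of each codimension recovers \eqref{KHIn} exactly, with $u=\{1,\dots,s\}$ giving the $l=0$ term.

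The one step you should not wave through is the reduction of the general Hardy--Krause case to the smooth case. Mollifying $f$ gives $f_\epsilon \to f$ only at points of continuity, while the sample points $\mathbf{x}_n$ may be discontinuity points of $f$ (a Hardy--Krause function need not be continuous), so $\frac{1}{N}\sum_n f_\epsilon(\mathbf{x}_n)$ need not converge to $\frac{1}{N}\sum_n f(\mathbf{x}_n)$; moreover one must check that the face variations $V^{(s-l)}(f_\epsilon^{(F_l)})$ do not exceed those of $f$ in the limit. Both points can be repaired (e.g.\ by exploiting the one-sided regularity of BV-type functions, or by passing to the measure-theoretic form of the Zaremba identity), but they are precisely where the work lies. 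The classical textbook proof sidesteps smoothing altogether: it proceeds by induction on the dimension $s$, using the one-dimensional Koksma inequality together with Abel summation and Riemann--Stieltjes integration by parts applied directly to the BV function, which handles discontinuous $f$ from the start at the price of heavier induction bookkeeping. Either route is legitimate; yours is cleaner conceptually but needs that approximation step made honest.
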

Hence, the Koksma-Hlawka's inequality assures that considering a low discrepancy $s$-dimensional sequence in the Quasi-Monte Carlo integration leads to an improvement on the Monte Carlo error bound.\\
However, in high dimensions the quasi-Monte Carlo method starts losing its effectiveness over the Monte Carlo method.\\
Since the error bound in the Monte Carlo approximation does not depend on the dimension, several authors found worthwhile to combine the advantages of Monte Carlo and Quasi-Monte Carlo methods, i.e., statistical error estimation and faster convergence. As we have already pointed out in the section about the convergence rate of the Halton sequence the basic idea is to consider the so-called hybrid sequences.\\

For a mixed $s$-dimensional sequence, whose elements are vectors obtained by concatenating $d$-dimensional vectors from a low-discrepancy sequence with $(s - d)$-dimensional random vectors, probabilistic upper bounds for its star discrepancy have been provided by several authors, e.g. \cite{Aistleitner_Hofer2, Okten, OTB, Gnewuch}.\\ The first deterministic bounds have been shown by Niederreiter \cite{Niederreiter4} and since then other results have been found, see e.g. \cite{Niederreiter_Winterhof, PHN}.

\section{Ergodic Theory}
In this section we recall basic definitions and results on the theory of dynamical systems. Our main references for this section will be \cite{CFS, Dajani_Kraaikamp, einsiedler_ward, Petersen, Walters}. \\
The structure of the section is the following: a first part deals with measure-theoretic aspects of the theory, namely measure-preserving transformations, property of mixing and ergodicity and examples. \\
A second part is concerned with topological aspects, such as invariant measures for continuous transformations, with a particular attention to dynamical systems associated to a numeration system.\\
\subsection{Preliminary definitions and results}
Before giving formal definitions of the mathematical objects in ergodic theory, it is worthwhile to understand what is ergodic theory about. Roughly speaking, it is a part of the theory of
dynamical systems. In its simplest form, a dynamical system is a function $T$ defined
on a set $X$. The aim of the theory is to describe the asymptotic behavior of the iterates of this map in a certain point $x\in\ X$, defined by induction in the following way: $x=T^0(x), T^1(x)=T(x), T^2(x)=T(T(x)),\dots, T^n=T^{n- 1}(T(x))$.\\
This sequence is called orbit of $x$ under $T$.

According to the different structure which $X$ and $T$ may have, the theory of dynamical systems splits into subfields:
\begin{itemize}
\item Differentiable dynamics deals with actions by differentiable maps on smooth
manifolds;
\item Ergodic theory deals with measure preserving actions of measurable maps on a
measure space, usually assumed to be finite;
\item Topological dynamics deals with actions of continuous maps on topological
spaces, usually compact metric spaces.
\end{itemize}

Our dissertation will focus only on the last two aspects of the theory.
\begin{definition}
A measure preserving transformation  is a transformation $T$ defined on a measure space $(X,\mathcal{A},\mu)$, such that 
\begin{enumerate}
\item $T$ is measurable: $E\in\mathcal{A}\Rightarrow T^{-1}E\in\mathcal{A}$;
\item $\mu$ is $T$-invariant: $\mu(T^{-1}E) = \mu(E)$ for all $E \in \mathcal{A}$.
\end{enumerate}
\end{definition}
A classic problem in ergodic theory is to find a suitable measure on $X$ which is preserved by $T$.\\
We now provide some examples of transformations and the corresponding preserved measures.
\paragraph{Rotations on a circle}\label{rotation}
Let $X=[0,1[$\ , let $\mathcal{B}$ be the Borel $\sigma$-algebra and $\lambda$ be the Lebesgue measure on $X$. Fix $\alpha \in \mathbb{R}$. Define $T_\alpha:X\rightarrow X$ by $T(x)=x+\alpha$ mod 1. $T_\alpha$ is called a circle rotation, because the map $R(x) = e^{2\pi ix}$ is an isomorphism between $T_\alpha$ and the rotation by the angle $2\pi\alpha$ on the unit circle $S^1$.\\
In fact, it is well-known that an alternative way to describe the circle $S^1$, that is often more convenient, is to cut and open the circle to obtain an interval. Let $I/\sim$ denote the unit interval $[0,1]$ with the endpoints identified: the symbol $\sim$ recalls that $0$ and $1$ are glued together. Then $I/\sim$ is equivalent to a circle.\\
More formally, consider $\mathbb{R}/\mathbb{Z}$, i.e. the space whose points are equivalence
classes $x+\mathbb{Z}$ of real numbers $x$ up to integers: two reals $x_1, x_2 \in \mathbb{R}$ are in the same equivalence class if and only if there exists $k \in\mathbb{Z}$ such that $x_1 = x_2 + k$. Then $\mathbb{R}/\mathbb{Z} = I/\sim$ since $[0, 1]$ contains exactly one representative for each equivalence class with the only exception of $0$ and $1$, which belong to the same equivalence class, but are identifyed.\\
All these spaces can be considered to describe a rotation on a circle.
\begin{lemma}
$T_\alpha$ is measure preserving on $X$ with respect to the Lebesgue measure $\lambda$.
\end{lemma}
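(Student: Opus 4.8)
The plan is to show that the rotation $T_\alpha(x) = x + \alpha \bmod 1$ satisfies the defining property $\lambda(T_\alpha^{-1}E) = \lambda(E)$ for every Borel set $E$. Since measurability of $T_\alpha$ is essentially immediate (it is a piecewise translation, hence Borel measurable, so preimages of Borel sets are Borel), the entire content is the invariance of Lebesgue measure. The cleanest route is to verify invariance first on a generating family of sets, namely the half-open subintervals $[a,b[\ \subseteq [0,1[$, and then extend to all Borel sets by a standard uniqueness-of-measure argument.

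First I would compute $T_\alpha^{-1}([a,b[)$ explicitly. Because $T_\alpha$ is a bijection of $[0,1[$ (addition of $\alpha$ followed by reduction mod $1$), its inverse is $x \mapsto x - \alpha \bmod 1$, so $T_\alpha^{-1}([a,b[)$ is the interval $[a,b[$ translated by $-\alpha$ and reduced mod $1$. The only subtlety is the \emph{wrap-around}: if the translated interval straddles an integer, it splits into two half-open pieces inside $[0,1[$, one near $1$ and one near $0$. In either case the preimage is a single interval of length $b-a$ or a disjoint union of two intervals whose lengths sum to $b-a$. Since Lebesgue measure is translation invariant on $\mathbb{R}$ and finitely additive, in both cases
\begin{equation*}
\lambda\bigl(T_\alpha^{-1}([a,b[)\bigr) = b - a = \lambda([a,b[).
\end{equation*}
This is the geometric heart of the argument, and the wrap-around bookkeeping is the one place requiring a little care.

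Having established invariance on all subintervals, I would invoke the fact that the half-open intervals form a $\pi$-system generating the Borel $\sigma$-algebra $\mathcal{B}$ on $[0,1[$, and that both $E \mapsto \lambda(E)$ and $E \mapsto \lambda(T_\alpha^{-1}E)$ are finite Borel measures agreeing on this $\pi$-system and on the whole space (total mass $1$). By the uniqueness part of the Carathéodory/Dynkin $\pi$--$\lambda$ theorem the two measures coincide on all of $\mathcal{B}$, which is exactly $T_\alpha$-invariance of $\lambda$. Alternatively, one can bypass measure-uniqueness entirely by using the characterization via the rotation $R(x)=e^{2\pi i x}$ mentioned in the text, noting that $R$ carries $\lambda$ to the arc-length (Haar) measure on $S^1$ and that rotation of a circle preserves arc length by rotational symmetry; I would present the interval computation as the primary argument since it is self-contained. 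The main obstacle is purely the disjoint-case analysis of the wrap-around, which is routine, so no genuine difficulty arises.
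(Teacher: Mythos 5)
Your proof is correct and complete: the interval computation with the wrap-around case analysis, followed by the $\pi$--$\lambda$ (Dynkin) extension to all Borel sets, is exactly the standard argument for this fact. The paper itself states the lemma without proof (treating it as a classical exercise, with references to standard texts), so there is no deviation to report; your write-up supplies precisely the argument the paper implicitly assumes, and your alternative remark via the isomorphism $R(x)=e^{2\pi i x}$ with Haar measure on $S^1$ matches the identification the paper makes in the surrounding discussion.
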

\paragraph{Doubling map}
Consider now $X = [0, 1]$ equipped with the Lebesgue measure $\lambda$, and define $T_2:X\rightarrow X$ by $T_2(x)=2x$ mod 1. $T_2$ is  is called the doubling map. 
 It is an easy exercise to prove the following
\begin{lemma}
$T_2$ preserves the Lebesgue measure $\lambda$ on $[0,1]$.
\end{lemma}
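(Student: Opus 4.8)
The plan is to verify the defining invariance identity $\lambda(T_2^{-1}E) = \lambda(E)$ first on a convenient generating family of sets and then to extend it to the whole Borel $\sigma$-algebra by a standard measure-theoretic argument. Measurability of $T_2$ (condition 1 in the definition) is immediate, since $T_2$ is piecewise linear, hence Borel measurable, so the only real content is the invariance of $\lambda$. Since the half-open intervals $[a,b) \subseteq [0,1)$ form a $\pi$-system that generates $\mathcal{B}$, and since both $\lambda$ and the set function $E \mapsto \lambda(T_2^{-1}E)$ are finite measures on $[0,1)$, it suffices to check equality on such intervals; the uniqueness clause of the extension theorem (equivalently, the $\pi$-$\lambda$ theorem) then forces agreement on all of $\mathcal{B}$.

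First I would compute the preimage of an arbitrary interval. Writing $T_2$ explicitly as $T_2(x) = 2x$ on $[0,\tfrac12)$ and $T_2(x) = 2x - 1$ on $[\tfrac12,1)$, the condition $T_2(x) \in [a,b)$ splits into exactly two cases according to which branch $x$ lies in, giving
\begin{equation*}
T_2^{-1}([a,b)) = \left[\frac{a}{2}, \frac{b}{2}\right) \cup \left[\frac{a+1}{2}, \frac{b+1}{2}\right).
\end{equation*}
The first interval lies in $[0,\tfrac12)$ and the second in $[\tfrac12,1)$, so they are disjoint, and each has length $(b-a)/2$. Hence
\begin{equation*}
\lambda\big(T_2^{-1}([a,b))\big) = \frac{b-a}{2} + \frac{b-a}{2} = b-a = \lambda([a,b)),
\end{equation*}
which is precisely the required identity on intervals.

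Finally I would invoke the extension argument sketched above to pass from the generating $\pi$-system to all Borel sets, obtaining $\lambda(T_2^{-1}E) = \lambda(E)$ for every $E \in \mathcal{B}$ and therefore that $T_2$ preserves $\lambda$. The only genuinely non-computational point is this last step: one must justify that agreement on the intervals propagates to the full $\sigma$-algebra, which is exactly where the uniqueness of measure extension is used. Everything else reduces to the elementary preimage computation above, which is why the statement is billed as an easy exercise.
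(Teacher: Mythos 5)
Your proof is correct and is exactly the standard argument: the paper itself gives no proof, stating only that the lemma is ``an easy exercise,'' and the intended exercise is precisely your computation that $T_2^{-1}([a,b))$ splits into the two disjoint intervals $\left[\frac{a}{2},\frac{b}{2}\right)$ and $\left[\frac{a+1}{2},\frac{b+1}{2}\right)$, each of length $\frac{b-a}{2}$, followed by the $\pi$-$\lambda$ (uniqueness of extension) step. Both the preimage computation and the appeal to agreement of two finite measures on a generating $\pi$-system are complete and correctly justified, so there is nothing to add.
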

\begin{remark}
The dynamical system associated to this map yields the binary expansion of points in $[0, 1[$ in the following way. We define the function $a_1$ by
\begin{equation*}
a_1(x)=\begin{cases}
0,\quad 0\leq x < 1/2\\
1, \quad 1/2\leq x < 1\ ,
\end{cases}
\end{equation*}
then $T_2x = 2x-a_1(x)$. Now, for $n \geq 1$ set $a_n(x) = a_1(T_2^{n-1}x)$. Fix $x \in X$,
rewriting we get $x = \frac{a_1(x)}{2} + \frac{T_2x}{2}$, where $T_2x = \frac{a_2(x)}{2} + \frac{T_2^2x}{2}$. Continuing in this manner, we see that for each $n \geq 1$,
\begin{equation*}
x=\frac{a_1(x)}{2} +\frac{a_2(x)}{2^2} +\dots+\frac{a_n(x)}{2^n} +\frac{T_2^nx}{2^n}.
\end{equation*}
Since $0 < T_2^nx < 1$, we get
\begin{equation*}
x-\sum_{i=1}^n \frac{a_i(x)}{2^i} =\frac{T_2^nx}{2^n} \to 0 \qquad {\rm as\ n}\to \infty.
\end{equation*}
Thus, $x = \sum_{i=1}^n \frac{a_i(x)}{2^i}$.
\end{remark}

We now present an example of a transformation which does not preserve the Lebesgue measure $\lambda$.

\paragraph{$\beta$-transformations}
Let $X = [0, 1[$ and $\beta >1$ be real. Define the transformation $T_\beta:X\rightarrow X$ by 
\begin{equation}
T_\beta x = \beta x\ {\rm mod\ } 1 = \begin{cases}
\beta x,\quad {\rm if\ } 0\leq x < \frac{1}{\beta}\\
\beta x-1, \quad {\rm if\ } \frac{1}{\beta}\leq x < 1\ .
\end{cases}
\end{equation}
Figure \ref{beta_map} illustrates the most well-known $\beta$-transformation, obtained with $\beta=\frac{\sqrt{5}+1}{2}$.
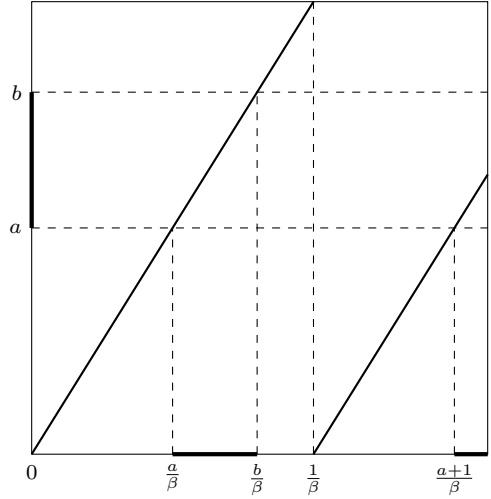
\begin{figure}[h!]
\begin{center}
\begin{tikzpicture}[scale=6]
\draw (0,0)  rectangle (1,1); 
\draw [thick] (0,0) node[below, black]{\scriptsize $0$} -- (0.618,1);
\draw [thick] (0.618,0) node[below, black]{\scriptsize $\frac{1}{\beta}$} -- (1, 0.618);
\draw [dashed] (0.618,0) -- (0.618,1);
\draw [dashed] (0, 0.5) node[left, black]{\scriptsize $a$} -- (1,0.5);
\draw [dashed] (0, 0.8) node[left, black]{\scriptsize $b$} -- (1,0.8);
\draw [dashed] (0.309,0) node[below, black]{\scriptsize $\frac{a}{\beta}$} -- (0.309,0.5);
\draw [dashed] (0.4944,0) node[below, black]{\scriptsize $\frac{b}{\beta}$} -- (0.4944,0.8);
\draw [dashed] (0.927,0) node[below, black]{\scriptsize $\frac{a+1}{\beta}$} -- (0.927,0.5);
\draw [ultra thick] (0.927,0) -- (1, 0);
\draw [ultra thick] (0, 0.5) -- (0, 0.8);
\draw [ultra thick] (0.309,0) -- (0.4944,0);
\end{tikzpicture}
\end{center}
\caption{Preimage of the interval $[a,b[$ under the $\beta$-transformation with $\beta=\frac{\sqrt{5}+1}{2}$}\label{beta_map}
\label{beta-transf}
\end{figure}

This family of transformations got a considerable interest since its introduction by R\'enyi \cite{Renyi} in 1957. He showed that for every $\beta>1$ there exists a unique normalized measure $\mu_\beta$, equivalent to the Lebesgue measure and invariant under $T_\beta$, such that for every element $E$ in the Borel $\sigma$-algebra of $[0,1[$
\begin{equation*}
\mu_\beta(E)=\int_{E}h_\beta(x)dx\ ,
\end{equation*}
whose density $h_\beta$ is a measurable function satisfying
\begin{equation*}
1-\frac{1}{\beta}\leq h_\beta(x)\leq \frac{1}{1-\frac{1}{\beta}}\ .
\end{equation*}
Few years later, Gelfond \cite{Gelfond} and Parry \cite{Parry} independently found the following explicit form for the density $h_\beta$ defining the measure $\mu_\beta$, where for convenience one defines $T_\beta^0(x)=x$ and inductively $T_\beta^n(1)=T_\beta^{n-1}(\{\beta\})$:
\begin{equation}\label{parry}
h_\beta(x) = \frac{1}{C(\beta)}\sum_{n=0}^\infty \frac{1}{\beta^n}\mathbf{1}_{[0, T_\beta^n1[}(x) \qquad {\rm for\ } x\in [0,1[\ ,
\end{equation}
 where $C(\beta)$ is the normalizing constant defined by
\begin{equation*}
C(\beta)=\int_0^1 \sum_{n=0}^\infty \frac{1}{\beta^n}\mathbf{1}_{[0, T_\beta^n1[}(x) dx\ .
\end{equation*}

\begin{remark}
By iterating $T_\beta$, one can show that every $x \in X$ has a series expansion of the form
\begin{equation*}
x= \sum_{i=1}^\infty \frac{d_i(x)}{\beta^i}\ ,
\end{equation*}
where the digits $d_i(x)$ are all elements of the set $\{0,1, \dots, \lfloor \beta - 1\rfloor\}$. In fact, if we define the function $d_1$ by
\begin{equation*}
d_1(x)=\begin{cases}
0,\quad 0\leq x < 1/\beta\\
1, \quad 1/\beta\leq x < 1\ ,
\end{cases}
\end{equation*}
then $T_\beta x = \beta x-d_1(x)$. For $n \geq 1$, set $d_n(x) = d_1(T_\beta^{n-1}x)$. Fix $x \in X$,
rewriting we get $x = \frac{d_1(x)}{\beta} + \frac{T_\beta x}{\beta}$, where $T_\beta x = \frac{d_2(x)}{\beta} + \frac{T_\beta^2x}{\beta}$. Continuing in this manner, we see that for each $n \geq 1$,
\begin{equation*}
x=\frac{d_1(x)}{\beta} +\frac{d_2(x)}{\beta^2} +\dots+\frac{d_n(x)}{\beta^n} +\frac{T_\beta^nx}{\beta^n}.
\end{equation*}
Since $0 < T_\beta^nx < 1$, we get
\begin{equation*}
x-\sum_{i=1}^n \frac{d_i(x)}{\beta^i} =\frac{T_\beta^nx}{\beta^n} \to 0 \qquad {\rm as\ n}\to \infty.
\end{equation*}
Thus, $x = \sum_{i=1}^n \frac{d_i(x)}{\beta^i}$.\\

R\'enyi \cite{Renyi} was the first to define the representation of real numbers in base $\beta$, generalising expansions in integer bases.\\
New generalizations, such as $(\alpha, \beta)$-expansions \cite{DK} and $(-\beta)$-expansion \cite{Liao_Steiner} have  been recently introduced. We will talk more about $\beta$-expansions of real numbers in the next subsection in relation to low-discrepancy sequences.
\end{remark}
\begin{remark}
A reasonable question about the density defining the $T_\beta$-invariant measure concerns the sum involved in the definition. More precisely one can ask under which conditions the sum is finite. The answer to this question has been given by Parry \cite{Parry} and is related to the $\beta$-expansion of $\beta$, i.e. if $\beta$ has a recurrent tail in its $\beta$-expansion, then $h_\beta$ is a step function with a finite number of steps.
\end{remark}

\begin{definition}
Let  $T:X\rightarrow X$ be a measure preserving transformation on a probability space $(X,\mathcal{A},\mu)$. The map $T$ is said to be ergodic if  for every $A\in\mathcal{A}$ satisfying 
$T^{-1}(A) = A$, we have $\mu(A)=0$ or $ \mu(A)=1$.
\end{definition}
We will use later an equivalent property: $T$ is ergodic if and only if for any measurable set $B$ with $\mu(B)>0$, the set $B^T=\bigcup_{i=- \infty}^{+ \infty}T^i(B)$ has measure $1$.\\
A useful characterization of ergodic transformations is given by the following lemma.
\begin{lemma}\label{lemma_ergodic}
Let $(X, \mathcal{A}, \mu)$ be a probability space and let $T : X\rightarrow X$ be a measure-preserving
transformation. Then $T$ is ergodic if and only if the only $T$-invariant measurable functions $f$, i.e. satisfying $f \circ T = f$ (up to sets of measure $0$), are constant almost everywhere.
\end{lemma}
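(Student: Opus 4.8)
The plan is to prove the two implications separately, with the forward direction (ergodicity $\Rightarrow$ invariant functions are constant) carrying essentially all the work.

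The easy direction is the converse. Assuming every $T$-invariant measurable function is constant a.e., I take any $A\in\mathcal{A}$ with $T^{-1}(A)=A$ and observe that its indicator satisfies $\mathbf{1}_A\circ T=\mathbf{1}_{T^{-1}(A)}=\mathbf{1}_A$ everywhere, so $\mathbf{1}_A$ is a $T$-invariant function. By hypothesis it is constant a.e.; a $\{0,1\}$-valued function that is a.e.\ constant forces $\mu(A)\in\{0,1\}$, which is precisely the definition of ergodicity.

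For the main direction, I assume $T$ is ergodic and take a $T$-invariant $f$, reducing to the real-valued case by treating real and imaginary parts separately. The idea is to test invariance on the super-level sets $A_c=\{x:f(x)>c\}$ for $c\in\mathbb{R}$. Since $f\circ T=f$ holds off a null set $N$, one has $T^{-1}(A_c)=\{x:f(Tx)>c\}$ agreeing with $A_c$ off $N$, hence $\mu\big(T^{-1}(A_c)\,\triangle\,A_c\big)\le\mu(N)=0$. Thus each $A_c$ is invariant only \emph{modulo null sets}, and here lies the one genuinely nontrivial point: the paper's definition of ergodicity is stated for \emph{strictly} invariant sets $T^{-1}(B)=B$. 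I therefore first record the standard reduction that, when $T$ is ergodic, any $A$ with $\mu(T^{-1}(A)\,\triangle\,A)=0$ already satisfies $\mu(A)\in\{0,1\}$: measure preservation gives $\mu(T^{-n}(A)\,\triangle\,A)=0$ for all $n$, so the set $B=\bigcap_{n\ge0}\bigcup_{k\ge n}T^{-k}(A)$ satisfies $T^{-1}(B)=B$ exactly while $\mu(A\,\triangle\,B)=0$; ergodicity forces $\mu(B)\in\{0,1\}$ and hence $\mu(A)\in\{0,1\}$.

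Applying this reduction to every $A_c$ shows that the distribution function $G(c)=\mu(\{f\le c\})=1-\mu(A_c)$ takes only the values $0$ and $1$. Since $G$ is nondecreasing with $G(c)\to0$ as $c\to-\infty$ and $G(c)\to1$ as $c\to+\infty$, it has a single jump at $c_0=\inf\{c:G(c)=1\}$, which is finite. Continuity of $\mu$ along the decreasing intersection $\{f\le c_0\}=\bigcap_{c>c_0}\{f\le c\}$ gives $\mu(\{f\le c_0\})=1$, and along the increasing union $\{f<c_0\}=\bigcup_{c<c_0}\{f\le c\}$ gives $\mu(\{f<c_0\})=0$; together these yield $\mu(\{f=c_0\})=1$, i.e.\ $f$ is constant a.e. The hard part throughout is exactly the passage from ``invariant mod null'' to ``strictly invariant,'' forced by the fact that $f\circ T=f$ is only assumed to hold almost everywhere; once that reduction is in place the level-set argument and the distribution-function conclusion are routine.
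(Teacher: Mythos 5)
Your proof is correct and follows essentially the same route as the paper's own (sketched) argument: indicator functions of invariant sets for the easy direction, and level sets of $f$ for the converse, concluding via the $\{0,1\}$-valued distribution function. The one point where you go beyond the paper — the reduction from sets invariant modulo null sets to strictly invariant sets via $B=\bigcap_{n\ge0}\bigcup_{k\ge n}T^{-k}(A)$ — is a genuine and correctly executed refinement, since the paper's sketch silently treats the a.e.-invariant level sets $\{f<\alpha\}$ as if they were strictly invariant, which the stated definition of ergodicity does not directly cover.
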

For the straightforward proof, we notice that if the condition in the lemma holds
and $A$ is an invariant set, then $\mathbf{1}_A \circ T = \mathbf{1}_A$ almost everywhere, so that $\mathbf{1}_A$ is
an a.e. constant function and so $A$ or $X\setminus A$ is of measure $0$. Conversely, if $f$ is an
invariant function, we see that $\{x: f(x) < \alpha\}$ is an invariant set for each $\alpha$ and
hence of measure $0$ or $1$. It follows that $f$ is constant almost everywhere.\\

A basic problem in ergodic theory is to determine whether two measure preserving transformations are measure theoretically isomorphic. \\To answer to this question it seems natural to introduce the unitary operator $U_T$ associated to $T$ on $L_\mu^2(X)$. \\
Given a a measure-preserving map $T$ on a function space the associated induced operator $U_T :L_\mu^2\rightarrow L_\mu^2$ is defined by
\begin{equation*}
U_T(f) = f \circ T\ .
\end{equation*}
Since $L_\mu^2$ is a Hilbert space, then for any two functions $f_1, f_2 \in L_\mu^2$
\begin{eqnarray*}
\langle U_T f_1, U_T f_2 \rangle &=& \int f_1 \circ T \cdot \overline{f_2 \circ T} d\mu\\
&=& \int f_1f_2 d\mu \\
&=& \langle f_1, f_2 \rangle\ .
\end{eqnarray*}
where the second equality holds since $\mu$ is $T$-invariant.\\
Thus $U_T$ is an isometry mapping $L_\mu^2$ into $L_\mu^2$ whenever $(X,\mathcal{B}, \mu, T)$ is a measure-preserving system.\\
If $U : \mathcal{H}_1\rightarrow \mathcal{H}_2$ is a continuous linear operator between Hilbert spaces, then the relation
\begin{equation*}
 \langle Uf, g \rangle= \langle f, U^*g \rangle
\end{equation*}
defines an associated operator $U^{*} : \mathcal{H}_2\rightarrow \mathcal{H}_1$ called the adjoint of $U$. The operator $U$ is an isometry, i.e. $|| Uh||_{\mathcal{H}_2}= ||h||$ for all $h \in \mathcal{H}_1$, if and only if
\begin{equation*}
U^{*}U={\rm Id}_{\mathcal{H}_1}\ ,
\end{equation*}
where ${\rm Id}_{\mathcal{H}_1}$ is the identity operator on $\mathcal{H}_1$ and
\begin{equation*}
UU^*=\pi_{{\rm Im} U}\ ,
\end{equation*}
where $\pi_{{\rm Im}U}$ is the projection operator onto ${\rm Im}\ U$.\\ Finally, an invertible linear operator $U$ is called unitary if $U^{-1} = U^∗$, or equivalently if $U$ is invertible and
\begin{equation*}
|\langle Uh_1,Uh_2 \rangle|= |\langle h_1,h_2 \rangle|
\end{equation*}
for all $h_1, h_2 \in \mathcal{H}_1$. If $U : \mathcal{H}_1\rightarrow \mathcal{H}_2$ satisfies this last relation then $U$ is an isometry (even
if it is not invertible).\\ Thus for any measure-preserving transformation $T$,
the associated operator $U_T$ is an isometry, and if $T$ is invertible then the
associated operator $U_T$ is a unitary operator, called the associated unitary
operator of T or Koopman operator of T.\\
Properties of $T$ that are preserved under unitary equivalence of the associated unitary operators are called
spectral properties. $T_1$ and $T_2$ are called \emph{spectrally isomorphic} if their associated
unitary operators are unitarily equivalent. \\
The following lemma shows that ergodicity is a spectral property and it is a direct corollary of Lemma \ref{lemma_ergodic}.
\begin{lemma}
A measure-preserving transformation $T$ is ergodic if and only
if $1$ is a simple eigenvalue of the associated operator $U_T$.
\end{lemma}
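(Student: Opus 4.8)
The plan is to identify the eigenspace of $U_T$ attached to the eigenvalue $1$ with the space of square-integrable $T$-invariant functions, and then to read off simplicity of this eigenvalue as the statement that every such function is constant. First I would observe that, since $U_T f = f \circ T$, the equation $U_T f = f$ holds in $L_\mu^2(X)$ precisely when $f \circ T = f$ almost everywhere; that is, the $1$-eigenspace is
\[
V = \{\, f \in L_\mu^2(X) : f \circ T = f \text{ a.e.} \,\}.
\]
Because $\mu(X) = 1$, every constant function lies in $L_\mu^2(X)$ and is trivially invariant, so $V$ always contains the one-dimensional space of constants; in particular $1$ is always an eigenvalue. Consequently the assertion that $1$ is a \emph{simple} eigenvalue is exactly the assertion that $V$ coincides with the constants, i.e.\ $\dim V = 1$.

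With this reformulation the equivalence follows from Lemma \ref{lemma_ergodic}. For the forward implication, assume $T$ is ergodic. Any $f \in V$ is in particular a measurable $T$-invariant function, so by Lemma \ref{lemma_ergodic} it is constant almost everywhere; hence $V$ reduces to the constants and $1$ is simple. For the converse, assume $1$ is simple, so that $V$ consists only of constants, and let $A \in \mathcal{A}$ satisfy $T^{-1}A = A$. Then $\mathbf{1}_A$ is bounded and therefore lies in $L_\mu^2(X)$, and $\mathbf{1}_A \circ T = \mathbf{1}_{T^{-1}A} = \mathbf{1}_A$, so $\mathbf{1}_A \in V$. By hypothesis $\mathbf{1}_A$ is constant almost everywhere; since it takes only the values $0$ and $1$, either $\mu(A) = 0$ or $\mu(A) = 1$, which is precisely ergodicity.

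The one point requiring care — and the step I would treat as the real content rather than bookkeeping — is the passage between the $L^2$ formulation of simplicity and the set-theoretic definition of ergodicity. This is bridged entirely by the finiteness of $\mu$: it guarantees on the one hand that indicator functions of invariant sets are genuine elements of $L_\mu^2(X)$ (so they actually populate $V$), and on the other that constant functions are square-integrable (so $1$ is genuinely attained as an eigenvalue). On an infinite measure space neither remark would survive and the statement would have to be reformulated. No further difficulty arises, since everything else is a direct application of the definition of $U_T$ together with Lemma \ref{lemma_ergodic}.
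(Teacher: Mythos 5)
Your proof is correct and follows essentially the same route as the paper, which presents this lemma as a direct corollary of Lemma \ref{lemma_ergodic}: you identify the $1$-eigenspace of $U_T$ with the $T$-invariant functions in $L_\mu^2$ and apply that lemma, and your indicator-function argument for the converse mirrors the paper's own proof sketch of Lemma \ref{lemma_ergodic} itself. Your added remark on why finiteness of $\mu$ is needed (constants and indicators lie in $L_\mu^2$) is a sound clarification, not a deviation.
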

For a concise treatment of the spectral theory relevant to ergodic theory, we refer to Parry's book \cite{Parry_book}.\\
All examples of measure preserving transformations considered above are also examples of ergodic transformations. In particular, it is an easy application of Lemma \ref{lemma_ergodic} to show that $T_\alpha:X\rightarrow X$ defined by $T(x)=x+\alpha$ mod 1 and $T_2:X\rightarrow X$ defined by $T_2(x)=2x$ mod 1 are ergodic. 
The proof of the ergodicity of $T_\beta$ is a bit more involved. A proof can be found in Renyi \cite{Renyi} and it makes use of the following
\begin{lemma}[Knopp's Lemma]
 Let $B$ be a Lebesgue set and $\mathcal{C}$ be a class of subintervals of $[0,1[$ such that
\begin{enumerate}
\item every open subinterval of $[0,1[$ is at most a countable union of disjoint elements from $\mathcal{C}$
\item $\forall A\in \mathcal{C}$ $\lambda(A\cap B)\geq \gamma \lambda(A)$ with $\gamma>0$ independent of $A$.
\end{enumerate}
Then $\lambda(B)=1$.
\end{lemma}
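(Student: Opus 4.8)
The plan is to argue by contradiction, assuming $\lambda(B) < 1$, and to exploit outer regularity of Lebesgue measure together with the covering hypothesis (1). Set $B^c = [0,1[\,\setminus B$ and suppose $\lambda(B^c) > 0$. First I would fix an arbitrary $\epsilon > 0$ and use outer regularity of $\lambda$ to choose an open set $O$ with $B^c \subseteq O \subseteq [0,1[$ (discarding the boundary point $0$ if necessary, which is harmless) and $\lambda(O) < \lambda(B^c) + \epsilon$.

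Next, since $O$ is open it is a countable disjoint union of its connected components, each an open subinterval of $[0,1[$, and by hypothesis (1) each of these is in turn a countable disjoint union of elements of $\mathcal{C}$. Relabelling the doubly-indexed family, I obtain a countable collection $\{A_k\} \subseteq \mathcal{C}$ of pairwise disjoint sets whose union is $O$ up to a $\lambda$-null set. Applying hypothesis (2) to each $A_k$ and summing by countable additivity gives
\begin{equation*}
\lambda(O \cap B) = \sum_k \lambda(A_k \cap B) \geq \gamma \sum_k \lambda(A_k) = \gamma\,\lambda(O).
\end{equation*}

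On the other hand, because $B^c \subseteq O$, the set $O$ splits as the disjoint union $(O \cap B) \cup B^c$, whence $\lambda(O \cap B) = \lambda(O) - \lambda(B^c) < \epsilon$. Combining this with the previous display and the inclusion $\lambda(O) \geq \lambda(B^c)$ yields
\begin{equation*}
\gamma\,\lambda(B^c) \leq \gamma\,\lambda(O) \leq \lambda(O \cap B) < \epsilon.
\end{equation*}
Since $\epsilon > 0$ is arbitrary and $\gamma > 0$ is fixed, this forces $\lambda(B^c) = 0$, contradicting the assumption $\lambda(B^c) > 0$. Hence $\lambda(B) = 1$.

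The one delicate point I would treat with care is the passage from an arbitrary open $O$ to a genuinely disjoint $\mathcal{C}$-decomposition: one must first write $O$ as the union of its connected components and only then invoke hypothesis (1) on each component, verifying that the resulting doubly-indexed family is still pairwise disjoint so that the additivity in the first display is legitimate. The mismatch between the half-open space $[0,1[$ and the open intervals produced here affects only countably many endpoints of measure zero, so it does not disturb any of the measure identities above.
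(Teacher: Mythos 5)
Your proof is correct. Note that the thesis itself states Knopp's Lemma without proof, deferring to R\'enyi's paper, so there is no in-paper argument to compare against; what you have written is the standard proof (the one found, e.g., in Dajani--Kraaikamp's \emph{Ergodic Theory of Numbers}): cover $B^c$ by an open set $O$ of nearly minimal measure via outer regularity, decompose $O$ into pairwise disjoint elements of $\mathcal{C}$ through its connected components, and play the lower bound $\lambda(O\cap B)\geq \gamma\,\lambda(O)$ against the upper bound $\lambda(O\cap B)=\lambda(O)-\lambda(B^c)<\epsilon$. All the delicate points are handled properly: the disjointness of the doubly-indexed family is justified (disjoint within each component by hypothesis (1), across components because the components are disjoint), and the endpoint/null-set discrepancies from working in $[0,1[$ are correctly dismissed as measure-zero. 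One cosmetic remark: the contradiction framing is superfluous, since your chain $\gamma\,\lambda(B^c)<\epsilon$ for every $\epsilon>0$ proves $\lambda(B^c)=0$ directly without any initial assumption that $\lambda(B^c)>0$.
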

In general, checking that a given measure-preserving transformation is ergodic is a non-trivial task. We will discuss other stronger properties that a measure-preserving transformation may enjoy, and that in some cases are easier to check.\\
If $T$ is ergodic with respect to $\mu$, then the following result due to Birkhoff \cite{Birkhoff} holds.
\begin{theorem}[Birkhoff's Theorem]\label{B}
Let $(X,\mathcal{A},\mu, T)$ be a measure theoretical dynamical system. Then, for every $f \in L_\mu^1(X)$, the limit \begin{equation*}
\lim_{N\to\infty}\frac{1}{N}\sum_{n=0}^{N-1}f(T^{n}x)
\end{equation*}
exists for $\mu$-almost every $x\in X$ (here $T^0x=x$).
If $T:X\rightarrow X$ is ergodic, then for every $f\in \mathcal{L}^{1}(X)$ we have 
\begin{equation}\label{birkhoff}
\lim_{N\to\infty}\frac{1}{N}\sum_{n=0}^{N-1}f(T^{n}x)=\int_Xf(x)d\mu(x)\ ,
\end{equation}
for $\mu$-almost every $x\in X$.
\end{theorem}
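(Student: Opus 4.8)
The plan is to reduce the entire statement to one maximal inequality and then extract pointwise convergence from it; identifying the limiting value in the ergodic case is a short epilogue using Lemma \ref{lemma_ergodic}. I would fix a real-valued $f\in L^1_\mu(X)$ (the general case following by splitting into real and imaginary parts) and introduce the ergodic averages $A_N f=\tfrac1N\sum_{n=0}^{N-1}f\circ T^n$ together with the partial sums $S_n f=\sum_{k=0}^{n-1}f\circ T^k$, with $S_0 f=0$. The existence of $\lim_N A_N f$ almost everywhere does not use ergodicity at all; only the evaluation of the limit does.

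The crux is the \emph{Maximal Ergodic Theorem}: for $g\in L^1_\mu(X)$, if $E_g=\{x:\sup_{n\ge 1}S_n g(x)>0\}$, then $\int_{E_g} g\,d\mu\ge 0$. I would prove this by Garsia's device. Set $M_N=\max(0,S_1 g,\dots,S_N g)\ge 0$. For $1\le k\le N$ one has $S_k g=g+(S_{k-1}g)\circ T\le g+M_N\circ T$, since $S_{k-1}g\le M_N$ pointwise; hence on $\{M_N>0\}$, where $M_N=\max_{1\le k\le N}S_k g$, we obtain $g\ge M_N-M_N\circ T$. Integrating over $\{M_N>0\}$ and using that $M_N=0$ off this set, that $\mu$ is $T$-invariant, and that $M_N\circ T\ge 0$, gives $\int_{\{M_N>0\}}g\,d\mu\ge \int_X M_N\,d\mu-\int_X M_N\circ T\,d\mu=0$. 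Letting $N\to\infty$, the sets $\{M_N>0\}$ increase to $E_g$, and dominated convergence (with dominating function $|g|$) yields $\int_{E_g}g\,d\mu\ge 0$. This is the main obstacle; everything afterwards is mechanical.

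With the maximal inequality available, I would establish a.e.\ convergence by a sandwiching argument. Put $\overline f=\limsup_N A_N f$ and $\underline f=\liminf_N A_N f$; both are $T$-invariant. For rationals $\alpha>\beta$ consider the $T$-invariant set $E_{\alpha,\beta}=\{\underline f<\beta<\alpha<\overline f\}$. Since $E_{\alpha,\beta}$ is invariant, applying the maximal theorem to $(f-\alpha)\mathbf{1}_{E_{\alpha,\beta}}$ (note $\overline f>\alpha$ forces $\sup_n S_n(f-\alpha)>0$ there) shows $\int_{E_{\alpha,\beta}}f\,d\mu\ge \alpha\,\mu(E_{\alpha,\beta})$, while applying it to $(\beta-f)\mathbf{1}_{E_{\alpha,\beta}}$ shows $\int_{E_{\alpha,\beta}}f\,d\mu\le \beta\,\mu(E_{\alpha,\beta})$. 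As $\alpha>\beta$, this forces $\mu(E_{\alpha,\beta})=0$; taking the countable union over all rational pairs gives $\overline f=\underline f$ a.e., so the limit $f^*(x)=\lim_N A_N f(x)$ exists for $\mu$-almost every $x$.

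To finish in the ergodic case, I would identify $f^*$. From $A_N f\circ T-A_N f=(f\circ T^N-f)/N\to 0$ it follows that $f^*$ is $T$-invariant, hence constant $\mu$-a.e.\ by Lemma \ref{lemma_ergodic}. To pin the constant, observe $\int_X A_N f\,d\mu=\int_X f\,d\mu$ for every $N$ by invariance; passing to the limit (immediate for bounded $f$ by bounded convergence, and for general $f\in L^1$ by approximating with bounded functions and controlling the tail via the maximal inequality) gives $\int_X f^*\,d\mu=\int_X f\,d\mu$. Since $\mu(X)=1$ and $f^*\equiv c$, we get $c=\int_X f\,d\mu$, which is the asserted equality. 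A secondary technical point to handle carefully is exactly this justification of $\int f^*=\int f$ for unbounded integrands, which needs either a uniform-integrability argument or a companion mean ($L^1$) ergodic theorem.
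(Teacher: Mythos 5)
You should know at the outset that the paper contains no proof of Theorem \ref{B} to compare against: it is quoted as classical background with a citation to Birkhoff, and the thesis simply uses it downstream. So your argument can only be measured against the standard literature, and by that standard it is correct: what you give is the canonical modern proof, namely Garsia's derivation of the maximal ergodic theorem (the inequality $g\ge M_N-M_N\circ T$ on $\{M_N>0\}$, integrated using $T$-invariance of $\mu$ and $M_N\ge 0$), followed by the sandwich over rational pairs $\beta<\alpha$ on the invariant sets $E_{\alpha,\beta}$, and finally identification of the limit via Lemma \ref{lemma_ergodic}. All the key steps are sound, including the observation that $E_g$ coincides with $E_{\alpha,\beta}$ when the maximal theorem is applied to $(f-\alpha)\mathbf{1}_{E_{\alpha,\beta}}$.

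Two refinements would make it airtight. First, the sandwich argument only yields $\overline{f}=\underline{f}$ almost everywhere as \emph{extended-real} functions; to conclude that the limit exists as a finite number you should add one line of Fatou: $\int_X|f^*|\,d\mu\le\liminf_{N}\int_X|A_Nf|\,d\mu\le\int_X|f|\,d\mu$, so $f^*\in L^1_\mu$ and is finite a.e. (Relatedly, the exact invariance $T^{-1}E_{\alpha,\beta}=E_{\alpha,\beta}$ that your application of the maximal theorem needs follows from $A_Nf\circ T=\frac{N+1}{N}A_{N+1}f-\frac{1}{N}f$, valid wherever $f$ is finite; worth a sentence.) Second, the technical point you flag at the end — justifying $\int_X f^*\,d\mu=\int_X f\,d\mu$ for unbounded $f$ — can be bypassed entirely in the ergodic case: for any rational $\alpha>\int_X f\,d\mu$, the set $\{\overline{f}>\alpha\}$ is invariant, hence of measure $0$ or $1$; were it of measure $1$, then $E_{f-\alpha}$ would have full measure and the maximal theorem would give $\int_X(f-\alpha)\,d\mu\ge 0$, i.e.\ $\int_X f\,d\mu\ge\alpha$, a contradiction. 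Hence $\overline{f}\le\int_X f\,d\mu$ a.e., and symmetrically $\underline{f}\ge\int_X f\,d\mu$ a.e., which identifies the constant with no $L^1$-approximation or mean ergodic theorem needed. Your epilogue as written (approximation by bounded functions plus a uniform-integrability control) also works, but this shortcut removes the one step you yourself marked as delicate.
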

In particular, an immediate consequence of the previous theorem is that the orbit $(T^nx)_{n\in\mathbb{N}}$ of $x$ under $T$ is a u.d. sequence for almost every $x \in X$ whenever T is ergodic.\\

The following lemma follows easily from Birkhoff's Theorem.
\begin{lemma}
Let $T$ be a measure-preserving transformation on the probability space $(X, \mathcal{A}, \mu)$. Then $T$ is ergodic if and only if for all $A,B \in\mathcal{A}$ we have
\begin{equation}\label{lemma14}
\lim_{N\to\infty}\frac{1}{N}\sum_{n=0}^{N-1}\mu(T^{-n}A\cap B)=\mu(A)\mu(B)\ .
\end{equation}
\end{lemma}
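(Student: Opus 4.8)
The plan is to derive both implications directly from Birkhoff's Theorem (Theorem \ref{B}), exactly as the statement preceding the lemma suggests. The key observation linking the abstract averages $\mu(T^{-n}A\cap B)$ to Birkhoff sums is the bookkeeping identity $\mathbf{1}_A(T^n x)=\mathbf{1}_{T^{-n}A}(x)$, together with the fact that $\int_X \mathbf{1}_{T^{-n}A}\,\mathbf{1}_B\,d\mu=\mu(T^{-n}A\cap B)$.

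For the forward implication, assume $T$ is ergodic and fix $A,B\in\mathcal{A}$. First I would apply Birkhoff's Theorem to the function $f=\mathbf{1}_A$, which lies in $L^1_\mu(X)$ because $\mu$ is a probability measure; this gives that the time averages $\frac{1}{N}\sum_{n=0}^{N-1}\mathbf{1}_A(T^n x)$ converge $\mu$-almost everywhere to $\int_X\mathbf{1}_A\,d\mu=\mu(A)$. Multiplying by the bounded factor $\mathbf{1}_B(x)$ preserves the almost-everywhere convergence, so $\frac{1}{N}\sum_{n=0}^{N-1}\mathbf{1}_{T^{-n}A}(x)\,\mathbf{1}_B(x)\to\mu(A)\,\mathbf{1}_B(x)$ pointwise a.e. Since each partial average is bounded in absolute value by $1$, which is integrable on the finite measure space $X$, the Dominated Convergence Theorem permits integrating termwise and passing the limit inside the integral; this yields $\lim_{N\to\infty}\frac{1}{N}\sum_{n=0}^{N-1}\mu(T^{-n}A\cap B)=\mu(A)\mu(B)$, as required.

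For the converse, I would test the hypothesis on an invariant set. Suppose the averaged identity holds for all $A,B\in\mathcal{A}$, and let $A$ satisfy $T^{-1}A=A$; by induction $T^{-n}A=A$ for every $n\geq 0$. Choosing $B=A$ collapses every summand to $\mu(A\cap A)=\mu(A)$, so the Ces\`aro average is identically $\mu(A)$, while the hypothesis forces its limit to equal $\mu(A)^2$. Hence $\mu(A)=\mu(A)^2$, so $\mu(A)\in\{0,1\}$, which is precisely the definition of ergodicity.

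The argument is essentially routine; the only point demanding care is the interchange of limit and integral in the forward direction, where one must genuinely invoke dominated convergence (using the uniform bound $1$ on the finite measure space) rather than merely the pointwise Birkhoff limit. Everything else reduces to the identity $\mathbf{1}_A\circ T^n=\mathbf{1}_{T^{-n}A}$ and the already-established ergodic theorem, so I do not expect any substantial obstacle.
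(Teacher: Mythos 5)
Your proof is correct and follows exactly the route the paper indicates: the paper offers no written proof beyond the remark that the lemma ``follows easily from Birkhoff's Theorem,'' and your argument---applying Birkhoff to $f=\mathbf{1}_A$, multiplying by $\mathbf{1}_B$, and passing the limit inside the integral via dominated convergence, then testing the converse on an invariant set $A=B$ to get $\mu(A)=\mu(A)^2$---is precisely the standard completion of that sketch. Both directions are sound, and your care about the limit--integral interchange is exactly the one nontrivial point.
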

Recall from abstract probability theory that two events $A, B$ are independent if $\mu(A \cap B) = \mu(A)\mu(B)$. Also recall that a sequence $c_n$ is said to Ces\`{a}ro converge to $a$ if
\begin{equation*}
\lim_{N\to\infty}\frac{1}{N}\sum_{j=0}^{N-1}c_n=a\ .
\end{equation*}
Thus $T$ is ergodic if and only if the Ces\`{a}ro averages of the sequence $\mu(T^{-n}A\cap B)$ converge to $\mu(A)\mu(B)$. That is, given two sets $A,B \in \mathcal{A}$, the sets $T^{-n}A, B$ approach independence as $n$ tends to infinity in some appropriate sense.
￼
\subsection{Classical constructions of dynamical systems}
In this section we discuss several standard methods for creating new measure preserving transformations from old ones. These constructions appear quite frequently in applications.\\
Before giving explicit examples of constructions of dynamical systems, we provid some useful relations between measure-preserving transformations.\\
For instance, one interesting notion is that of isomorphism for measure-preserving transformations, i.e. when two measure-preserving transformations can be considered to be the same or equivalent.
\begin{definition}
Let $(X_i,\mathcal{A}_i, \mu_i, T_i)$ for $(i = 1,2)$ be two measure-preser-\linebreak ving systems over a probability space. We say that $T_1$ is isomorphic to $T_2$ if there exist $A\in\mathcal{A}_1$ and $B\in\mathcal{A}_2$ with $\mu_1(A)=1$, $\mu_2(B)=1$ such that
\begin{enumerate}
\item $T_1A\subset A$, $T_2B\subset B$
\item there exists an invertible measure-preserving transformation
\begin{equation*}
\phi : A\rightarrow B  
\end{equation*}
with
\begin{equation*}
\phi T_1(x)=T_2(\phi(x))
\end{equation*}
for all $x\in A$, where $A$ and $B$ are assumed to be equipped with the $\sigma$-algebras $A\cap\mathcal{A}_1$ and $B\cap\mathcal{A}_2$, respectively, and the restrictions of the measures $\mu_i$ to these $\sigma$-algebras.
\end{enumerate}
In this case we write $T_1\simeq T_2$.
\end{definition}
We have already seen that the map $\phi(x) = e^{2\pi ix}$ is an isomorphism between $T_\alpha$ and the rotation by the angle $2\pi\alpha$ on the unit circle $S^1$ and that the map $\phi(x) = e^{2\pi ix}$ is an isomorphism between the doubling map $T_2$ and the map $e^{i\theta} \rightarrow e^{2i\theta}$ on $S^1$.\\

This definition will be particularly useful in Chapter 4 when we will construct new ergodic systems with a prescribed property via an isomorphism with a system that has the desired property. In particular, we will refer to the following example that can be found in \cite[Example 2.4]{GHL}.
\begin{example}\label{Zb}
Let $\mathbb{Z}_b$ be the compact group of $b$-adic
integers and $\tau:\mathbb{Z}_b\longrightarrow \mathbb{Z}_b$ the addition-by-one
map (called odometer). Our goal is to find an isomorphism $\phi_b$ between $\tau$ and a transformation $T$ on $[0,1[$, such that $T=\phi_b\circ\tau\circ \phi_b^{-1}$.\\ For an integer
$b\geq 2$, every $z \in \mathbb{Z}_b$ has a unique expansion of the form
\begin{equation*}
 z=\sum_{j\geq 0}z_jb^j
\end{equation*}
with digits $z_j\in \{0,1,\dots, b-1\}$. For $z \in \mathbb{Z}_b$ we define the
$b$-adic Monna map $\phi_b:\mathbb{Z}_b\longrightarrow [0,1[$ by
\begin{equation*}
 \phi_b\left(\sum_{j\geq 0}z_jb^j  \right)= \sum_{j\geq 0}z_jb^{-j-1}.
\end{equation*}
The restriction of $\phi_b$ to $\mathbb{N}_0$ is the radical-inverse
function in base $b$ (see Definition \ref{radical_inverse}) that gives rise to the van der Corput sequence in base $b$.\\

The Monna map is continuous and surjective but not injective. In order to make
it an isomorphism we only consider the so-called regular representations, i.e.\
representations with infinitely 
many digits $z_j$ different from $b-1$. The Monna map restricted to these
regular representations admits an inverse (called pseudo-inverse)
$\phi_b^{-1}:[0,1)\longrightarrow \mathbb{Z}_b$, defined by
\begin{equation*}
 \phi_b^{-1}\left(\sum_{j\geq 0}z_jb^{-j-1}  \right)= \sum_{j\geq 0}z_jb^j\ ,
\end{equation*}
where $\sum_{j\geq 0}z_jb^{-j-1}$ is a $b$-adic rational in $[0,1)$. \\

Moreover $\phi_b$ is measure preserving from $\mathbb{Z}_b$ onto $[0,1[$ and $T=\phi_b\circ\tau\circ \phi_b^{-1}:[0,1[\rightarrow [0,1[$. Hence $\phi_b$ and $T$ are isomorphic.
\end{example}
\begin{remark}
For an isomorphism between two measure-preserving transformations, the following properties hold.
\begin{enumerate}
\item Isomorphism is an equivalence relation;
\item If $T_1\simeq T_2$, then $T_1^n\simeq T_2^n$ for every $n>0$\ .
\end{enumerate}
\end{remark}
Isomorphism is actually a relation which is in many cases too strong. A weaker and useful condition is conjugacy.
\begin{definition}
Let $(X,\mathcal{A}, \mu)$ be a probability space. Define an equivalence relation on $\mathcal{A}$ by saying that $A$ and $B$ are equivalent ($A\sim B$) if and only if $\mu(A\bigtriangleup B)=0$. Let $\tilde{\mathcal{A}}$ denote the collection of equivalence classes. Then $\tilde{\mathcal{A}}$ is a Boolean $\sigma$-algebra under the operations of complementation, union and intersection inherited from $\mathcal{A}$. The measure $\mu$ induces a measure $\tilde{\mu}$ on $\tilde{\mathcal{A}}$ by $\tilde{\mu}(\tilde{B})=\mu(B)$. The pair $(\tilde{\mathcal{A}}, \tilde{\mu})$ is called a measure algebra.	\\
A map $\Phi : (\tilde{\mathcal{A}_2}, \tilde{\mu_2})\rightarrow (\tilde{\mathcal{A}_1}, \tilde{\mu_1})$ is called an isomorphism of measure-algebras if it is a bijection that preserves complements, countable unions and satisfies $\tilde{\mu_1}(\Phi(\tilde{B}))=\tilde{\mu_2}(\tilde{B})$ for every $\tilde{B}\in\tilde{\mathcal{A}_2}$.
\end{definition}
Let $(X_i,\mathcal{A}_i, \mu_i)$ for $(i = 1,2)$ be two probability spaces with corresponding measure algebras $(\tilde{\mathcal{A}_i}, \tilde{\mu_i})$. If $\phi :X_1\rightarrow X_2$ is measure-preserving, then we have a map $\tilde{\phi}^{-1}:(\tilde{\mathcal{A}_2}, \tilde{\mu_2})\rightarrow (\tilde{\mathcal{A}_1}, \tilde{\mu_1})$ defined by $\tilde{\phi}^{-1}(\tilde{B})=\widetilde{\phi^{-1}(B)}$. This map is well-defined since $\phi$ is measure-preserving. The map $\tilde{\phi}^{-1}$ preserves complements and countable unions (and hence countable intersections). Also $\tilde{\mu}_1(\tilde{\phi}^{-1}(\tilde{B}))=\tilde{\mu}_2(\tilde{B})$ for every $\tilde{B}\in\tilde{\mathcal{A}_2}$. Therefore $\tilde{\phi}^{-1}$ can be considered a homomorphism of measure algebras. Note that $\tilde{\phi}^{-1}$ is injective.
\begin{definition}
Let $T_i$ be a measure-preserving transformation on the probability space $(X_i,\mathcal{A}_i, \mu_i)$, $i=1,2$. We say that $T_1$ is conjugate to $T_2$ if there exists a measure-algebra isomorphism $\Phi : (\tilde{\mathcal{A}_2}, \tilde{\mu_2})\rightarrow (\tilde{\mathcal{A}_1}, \tilde{\mu_1})$ such that $\Phi\tilde{T_2}^{-1}=\tilde{T_1}^{-1}\Phi$.
\end{definition}
Conjugacy is also an equivalence relation and all isomorphic measure-preserving transformations are conjugate, as stated by the following
\begin{theorem}
Let $T_i$ be a measure-preserving transformation on the probability space $(X_i,\mathcal{A}_i, \mu_i)$, $i=1,2$. If $T_1$ is isomorphic to $T_2$, then $T_1$ is conjugate to $T_2$.
\end{theorem}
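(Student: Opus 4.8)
The plan is to take the isomorphism $\phi:A\to B$ supplied by the hypothesis and to show that the induced map on measure algebras, namely the map $\tilde{\phi}^{-1}$ discussed just before the statement, is the conjugacy we seek. First I would exploit the fact that $\mu_1(A)=1$ and $\mu_2(B)=1$: the complements $X_1\setminus A$ and $X_2\setminus B$ are null, so passing to equivalence classes modulo sets of measure zero identifies the measure algebra of $(X_i,\mathcal{A}_i,\mu_i)$ with that of the full-measure invariant piece on which $\phi$ actually lives. This is what legitimizes feeding measurable subsets of $X_2$ into $\phi^{-1}$ even though $\phi$ is only defined on $A$, and it is the reduction on which the whole argument rests.

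Next I would set $\Phi=\tilde{\phi}^{-1}:(\tilde{\mathcal{A}_2},\tilde{\mu_2})\to(\tilde{\mathcal{A}_1},\tilde{\mu_1})$ by $\Phi(\tilde B)=\widetilde{\phi^{-1}(B)}$ and verify that it is an isomorphism of measure algebras. Since $\phi$ is measure-preserving, the remarks preceding the theorem already give that $\Phi$ preserves complements and countable unions and satisfies $\tilde{\mu_1}(\Phi(\tilde B))=\tilde{\mu_2}(\tilde B)$, so $\Phi$ is an injective homomorphism of measure algebras. Bijectivity is where invertibility of $\phi$ enters: since $\phi^{-1}:B\to A$ is itself invertible and measure-preserving, it induces a measure-algebra homomorphism $\Psi:(\tilde{\mathcal{A}_1},\tilde{\mu_1})\to(\tilde{\mathcal{A}_2},\tilde{\mu_2})$ sending the class of $E$ to the class of $\phi(E)$, and the set identities $\phi^{-1}(\phi(E))=E$ and $\phi(\phi^{-1}(F))=F$, valid modulo null sets, show that $\Psi$ is a two-sided inverse of $\Phi$. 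Hence $\Phi$ is a measure-algebra isomorphism.

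The core of the argument is then the intertwining relation $\Phi\tilde{T_2}^{-1}=\tilde{T_1}^{-1}\Phi$. For a class $\tilde B\in\tilde{\mathcal{A}_2}$ I would compute both sides as subsets of $A$ modulo null sets. Unwinding the definitions, $\Phi\tilde{T_2}^{-1}(\tilde B)$ is the class of $\phi^{-1}(T_2^{-1}(B))=\{x\in A:\ T_2(\phi(x))\in B\}$, whereas $\tilde{T_1}^{-1}\Phi(\tilde B)$ is the class of $T_1^{-1}(\phi^{-1}(B))$. Intersecting the latter with the full-measure set $A$ and using the invariance $T_1A\subset A$ to discard the condition $T_1(x)\in A$, it reduces to $\{x\in A:\ \phi(T_1(x))\in B\}$. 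The two descriptions coincide precisely because the conjugacy relation $\phi\circ T_1=T_2\circ\phi$ holds on $A$, so the underlying sets agree and therefore so do their classes in $\tilde{\mathcal{A}_1}$.

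The main obstacle I anticipate is bookkeeping rather than conceptual depth: the map $\phi$ and the relation $\phi\circ T_1=T_2\circ\phi$ live only on the full-measure invariant sets $A$ and $B$, not on all of $X_1$ and $X_2$, so every preimage computation must be performed modulo null sets, leaning on $\mu_1(A)=\mu_2(B)=1$ together with $T_1A\subset A$ and $T_2B\subset B$ to ensure that the relevant sets are genuinely comparable inside the measure algebra. Once this passage to equivalence classes is handled consistently, the verification of the three defining properties of a conjugacy is routine.
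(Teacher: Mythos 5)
Your proof is correct: the paper states this theorem without giving a proof, but the paragraph immediately preceding it constructs exactly the map $\tilde{\phi}^{-1}$ you use, so your argument is the intended one, with the only nonroutine point---restricting $T_1^{-1}(\phi^{-1}(B))$ to $A$ and using $T_1A\subset A$ together with $\phi\circ T_1=T_2\circ\phi$ on $A$ to verify $\Phi\tilde{T_2}^{-1}=\tilde{T_1}^{-1}\Phi$ modulo null sets---handled correctly. Your use of the inverse $\phi^{-1}\colon B\to A$ to produce the two-sided inverse $\Psi$ and establish bijectivity of $\Phi$ is likewise the standard completion of the paper's remark that $\tilde{\phi}^{-1}$ is an injective homomorphism.
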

In some cases, conjugacy can also imply isomorphism.

We have already seen that another way to compare two measure-preser-\linebreak ving transformations is to consider the associated unitary operator. We recall briefly that $T_1$ and $T_2$ are called spectrally isomorphic if their associated
unitary operators are unitarily equivalent.\\
We briefly remind what is an eigenvalue of a measure-preserving transformation.
\begin{definition}
Let $T$ be a measure-preserving transformation $T$ on a probability space $(X, \mathcal{A}, \mu)$, and let $U_T$ be the induced linear isometry of $L^2_\mu$. The eigenvalues and eigenfunctions of $U_T$ are called the eigenvalues and eigenfunctions of $T$. So a complex number $\lambda$ is called an eigenvalue of $T$ if there exists a non-zero function $f\in L_\mu^2$, satisfying $U_Tf=\lambda f$. The function $f$ is called an eigenfunction of $T$ corresponding to the eigenvalue $\lambda$.
\end{definition}

\begin{definition}
An ergodic measure-preserving transformation $T$ on a probability space $(X, \mathcal{A}, \mu)$  is said to have discrete spectrum (or pure point spectrum) if there exists an orthonormal basis for $L^2_\mu$ consisting of eigenfunctions of $T$.
\end{definition}

The following result shows that spectral isomorphism is weaker than conjugacy.
\begin{theorem}
Let $T_i$ be a measure-preserving transformation on the probability space $(X_i,\mathcal{A}_i, \mu_i)$, $i=1,2$. If $T_1$ and $T_2$ are conjugate, then they are spectrally isomorphic.
\end{theorem}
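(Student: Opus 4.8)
The plan is to manufacture, out of the measure-algebra isomorphism $\Phi\colon(\tilde{\mathcal A_2},\tilde\mu_2)\to(\tilde{\mathcal A_1},\tilde\mu_1)$ witnessing the conjugacy, a unitary operator $W\colon L^2_{\mu_2}\to L^2_{\mu_1}$ that intertwines the two Koopman operators, i.e. $WU_{T_2}=U_{T_1}W$. Once this is in place we have $U_{T_2}=W^{-1}U_{T_1}W$, which is precisely the statement that $U_{T_1}$ and $U_{T_2}$ are unitarily equivalent, and hence by definition that $T_1$ and $T_2$ are spectrally isomorphic.

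First I would define $W$ on the dense subspace of simple functions. For $B\in\mathcal A_2$ set $W\mathbf 1_B=\mathbf 1_{\Phi(\tilde B)}$, choosing any representative of the class $\Phi(\tilde B)\in\tilde{\mathcal A_1}$ (the choice is immaterial in $L^2$), and extend by linearity. Well-definedness and the isometry property are settled simultaneously by the inner-product computation
\[
\langle \mathbf 1_B,\mathbf 1_C\rangle_{L^2_{\mu_2}}=\mu_2(B\cap C),
\]
together with the fact that $\Phi$ preserves complements and countable unions (hence intersections, by De Morgan) as well as measure, so that
\[
\langle W\mathbf 1_B,W\mathbf 1_C\rangle_{L^2_{\mu_1}}=\tilde\mu_1\!\big(\Phi(\tilde B)\cap\Phi(\tilde C)\big)=\tilde\mu_1\big(\Phi(\widetilde{B\cap C})\big)=\tilde\mu_2(\widetilde{B\cap C})=\mu_2(B\cap C).
\]
By sesquilinearity $W$ preserves the inner product on all simple functions, so it is an isometry there and extends uniquely by density to an isometry $W\colon L^2_{\mu_2}\to L^2_{\mu_1}$. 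Running the identical construction with $\Phi^{-1}$ in place of $\Phi$ (legitimate since $\Phi$ is a bijective isomorphism of measure algebras) produces an isometry that inverts $W$ on simple functions, and by density $W$ is surjective; thus $W$ is unitary.

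The heart of the argument is the intertwining relation, and it is exactly here that the conjugacy hypothesis $\Phi\tilde{T_2}^{-1}=\tilde{T_1}^{-1}\Phi$ enters. Recalling that $U_T\mathbf 1_B=\mathbf 1_B\circ T=\mathbf 1_{T^{-1}B}$, I would compute on characteristic functions
\[
WU_{T_2}\mathbf 1_B=W\mathbf 1_{T_2^{-1}B}=\mathbf 1_{\Phi\tilde{T_2}^{-1}(\tilde B)},\qquad U_{T_1}W\mathbf 1_B=U_{T_1}\mathbf 1_{\Phi(\tilde B)}=\mathbf 1_{\tilde{T_1}^{-1}\Phi(\tilde B)},
\]
and the two right-hand sides coincide precisely by the conjugacy identity. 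Extending by linearity and continuity yields $WU_{T_2}=U_{T_1}W$ on all of $L^2_{\mu_2}$, completing the verification of unitary equivalence.

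I expect the only genuine subtlety to be the careful lifting of $\Phi$ from the Boolean measure algebra to the Hilbert space, namely checking that $W$ is truly well defined on $L^2$ (independent of the chosen representatives and of how a given simple function is written as a combination of characteristic functions); this is exactly what the inner-product identity above resolves, since preservation of inner products forces $W$ to vanish on any null combination. The remaining steps — isometry, surjectivity, and the intertwining — are then routine, each reducing to an identity on characteristic functions followed by a density argument.
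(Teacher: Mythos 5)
Your proof is correct: the operator $W$ induced on characteristic functions by the measure-algebra isomorphism $\Phi$ is well defined (your inner-product identity simultaneously disposes of null combinations and establishes the isometry), surjective via the same construction run with $\Phi^{-1}$, and intertwines the Koopman operators exactly through the conjugacy relation $\Phi\tilde{T_2}^{-1}=\tilde{T_1}^{-1}\Phi$, which yields unitary equivalence of $U_{T_1}$ and $U_{T_2}$ as required. The thesis states this theorem without proof (it is the classical result quoted from Walters' book), and your argument is precisely the standard lifting of the measure-algebra isomorphism to $L^2$, with the one genuinely delicate point --- independence of representatives and of the presentation of a simple function --- handled correctly by the norm computation.
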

There are instances when spectral isomorphism implies conjugacy.\\
Then, summarising, we have the following definition.
\begin{definition}
A property $P$ of a measure-preserving transformation is an isomorphism, or
conjugacy or spectral invariant if the following holds: Given $T_1$ has $P$ and $T_2$ is isomorphic,
or conjugate or spectrally isomorphic, to $T_1$ then $T_2$ has property $P$.
\end{definition}
Now, since isomorphism implies conjugacy and conjugacy implies spectral isomorphism, a spectral invariant is a conjugacy invariant and a conjugacy invariant is an isomorphism invariant.\\

As we have already pointed out, any two measure-preserving transformations that are
conjugate are also spectrally isomorphic. Now, if two spectrally isomorphic measure-preserving transformations have discrete spectrum, then they are conjugate. Thus, the property of discrete spectrum is very important and depends upon the eigenvalues of the measure-preserving transformation. Note the if two measure-preserving transformations are spectrally isomorphic then they have the same eigenvalues.\\

The following theorem proved by Halmos and
von Neumann in 1942 shows that the eigenvalues determine completely whether two transformations with discrete spectrum are conjugate or not. 
\begin{theorem}[Discrete Spectrum Theorem]\label{Discr_spectr_thm}
Let $T_1$ and $T_2$ be ergodic measure-preserving transformations with discrete spectrum of the probability spaces $(X_i,\mathcal{A}_i, \mu_i)$ for $(i = 1,2)$. Then
the following are equivalent:
\begin{enumerate}
\item $T_1$ and $T_2$ are spectrally isomorphic;
\item $T_1$ and $T_2$ have the same eigenvalues;
\item $T_1$ and $T_2$ are conjugate.
\end{enumerate}
\end{theorem}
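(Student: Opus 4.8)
The plan is to prove the cycle of implications $(3)\Rightarrow(1)\Rightarrow(2)\Rightarrow(3)$, which makes the three conditions equivalent. The implication $(3)\Rightarrow(1)$ is already available, being exactly the theorem that conjugate transformations are spectrally isomorphic. For $(1)\Rightarrow(2)$, I would take a unitary $W$ intertwining the associated operators, $WU_{T_1}=U_{T_2}W$: if $U_{T_1}f=\lambda f$ with $f\neq 0$, then $U_{T_2}(Wf)=WU_{T_1}f=\lambda Wf$ and $Wf\neq 0$, so $\lambda$ is an eigenvalue of $T_2$, and the symmetric argument gives the reverse inclusion. Hence $T_1$ and $T_2$ share the same eigenvalues.

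The substance of the theorem lies in $(2)\Rightarrow(3)$. First I would record the structural consequences of ergodicity. If $U_Tf=\lambda f$ then $|\lambda|=1$ and $U_T|f|=|f|$, so by Lemma~\ref{lemma_ergodic} the modulus $|f|$ is a.e.\ constant and $f$ may be normalized to $|f|=1$. If $f,g$ are modulus-one eigenfunctions for the same $\lambda$, then $f\bar g$ is $T$-invariant, hence constant, so every eigenvalue is simple; and since $U_T(fg)=\lambda\mu\, fg$ and $U_T\bar f=\bar\lambda\bar f$, the eigenvalues form a subgroup $\Lambda\subseteq S^1$. Eigenfunctions attached to distinct eigenvalues are orthogonal, and because both systems have discrete spectrum the normalized eigenfunctions $\{f_\lambda\}_{\lambda\in\Lambda}$ and $\{g_\lambda\}_{\lambda\in\Lambda}$ form orthonormal bases of $L^2_{\mu_1}$ and $L^2_{\mu_2}$ respectively.

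The decisive step is to choose these eigenfunctions \emph{multiplicatively}, that is, so that $f_\lambda f_\mu=f_{\lambda\mu}$ and $g_\lambda g_\mu=g_{\lambda\mu}$ for all $\lambda,\mu\in\Lambda$. The modulus-one eigenfunctions of $T_1$ form a group $G_1$ sitting in a short exact sequence $1\to S^1\to G_1\to\Lambda\to 1$, the kernel being the constants of modulus one; since $S^1$ is a divisible, hence injective, abelian group, this sequence splits, yielding a homomorphic section $\lambda\mapsto f_\lambda$, and likewise $\lambda\mapsto g_\lambda$. With such choices I would define $W\colon L^2_{\mu_1}\to L^2_{\mu_2}$ by $Wf_\lambda=g_\lambda$ and extend linearly: it is unitary (an orthonormal basis maps to an orthonormal basis), it intertwines the Koopman operators exactly as in $(1)\Rightarrow(2)$, and multiplicativity of the two families makes $W$ multiplicative on the dense algebra generated by the eigenfunctions, with $W1=1$.

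The final step, and the main obstacle, is to upgrade this multiplicative spectral isomorphism to an honest conjugacy. A unital, integral-preserving, multiplicative bijection of the function algebras induces an isomorphism $\Phi$ of the measure algebras $(\tilde{\mathcal{A}_2},\tilde{\mu_2})$ and $(\tilde{\mathcal{A}_1},\tilde{\mu_1})$, and the intertwining relation $WU_{T_1}=U_{T_2}W$ transcribes into $\Phi\tilde{T_2}^{-1}=\tilde{T_1}^{-1}\Phi$, which is precisely the definition of conjugacy. I expect the delicate points to be the splitting that produces multiplicative eigenfunctions and the passage from the algebra isomorphism $W$ back to a measure-algebra map $\Phi$; the remaining verifications (constant modulus, simplicity, orthonormality, and the intertwining identity) are routine given the ergodicity lemma and the discrete-spectrum hypothesis.
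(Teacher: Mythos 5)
The paper offers no proof of this theorem: it is quoted as the classical 1942 result of Halmos and von Neumann, with the surrounding material taken from Walters \cite{Walters}, so there is nothing in-paper to compare against. Your sketch is essentially the classical proof, and it is correct in outline. The cycle $(3)\Rightarrow(1)\Rightarrow(2)$ is handled exactly as it should be, and for the substantive direction $(2)\Rightarrow(3)$ you identify the two genuine ideas: first, ergodicity (via Lemma \ref{lemma_ergodic}) gives unimodular eigenfunctions, simple eigenvalues, and the group structure of the eigenvalue set $\Lambda$, and the splitting of $1 \to S^1 \to G_1 \to \Lambda \to 1$ by injectivity of the divisible group $S^1$ is precisely the Halmos--von Neumann device for choosing multiplicative sections $\lambda \mapsto f_\lambda$ and $\lambda \mapsto g_\lambda$ (equivalently, it kills the symmetric $S^1$-valued $2$-cocycle $c(\lambda,\mu)$ defined by $f_\lambda f_\mu = c(\lambda,\mu) f_{\lambda\mu}$); second, the transfer from the multiplicative unitary $W$ to a measure-algebra isomorphism $\Phi$ with $\Phi\tilde{T_2}^{-1}=\tilde{T_1}^{-1}\Phi$. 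On that second point, which you rightly flag as delicate, the one step your sketch leaves implicit is that $W$ is multiplicative only on the linear span of the eigenfunctions, and indicator functions do not belong to that algebra; the standard completion is to note that $\overline{f_\lambda} = f_{\lambda^{-1}}$ makes the span a $*$-algebra, deduce $W(|h|^{2n}) = |Wh|^{2n}$ for $h$ in the algebra, so that integral preservation forces equality of all $L^{2n}$-norms and hence $\|Wh\|_\infty = \|h\|_\infty$, and then extend multiplicativity to bounded functions by approximation, after which $W$ sends idempotents to idempotents and the measure-algebra isomorphism falls out (this is the content of Walters' multiplicative-operator lemma used in his proof of the theorem). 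With that standard supplement your proposal is a faithful and essentially complete rendering of the textbook proof.
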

\begin{corollary}
If $T$ is an invertible ergodic measure-preserving transformation with discrete spectrum, then $T$ and $T^{-1}$ are conjugate.
\end{corollary}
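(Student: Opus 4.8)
The plan is to deduce everything from the Discrete Spectrum Theorem (Theorem \ref{Discr_spectr_thm}): once I know that $T$ and $T^{-1}$ are both ergodic measure-preserving transformations with discrete spectrum on the same space $(X,\mathcal{A},\mu)$, it suffices to check that they have the \emph{same} eigenvalues, and then conjugacy follows at once from the equivalence $(2)\Leftrightarrow(3)$ of that theorem.

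First I would verify the standing hypotheses for $T^{-1}$. Since $T$ is invertible and measure-preserving, $T^{-1}$ is again measure-preserving, and a set is $T^{-1}$-invariant precisely when it is $T$-invariant, so $T^{-1}$ inherits ergodicity from $T$. Moreover, if $\{f_k\}$ is an orthonormal basis of $L^2_\mu$ consisting of eigenfunctions of $T$, I would show that these very same functions are eigenfunctions of $T^{-1}$; this simultaneously establishes that $T^{-1}$ has discrete spectrum and identifies its eigenvalues. Concretely, from $U_T f = f\circ T = \lambda f$ one evaluates at $T^{-1}x$ to obtain $f = \lambda\,(f\circ T^{-1})$, hence $U_{T^{-1}}f = f\circ T^{-1} = \lambda^{-1} f$. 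As $U_T$ is unitary (by invertibility of $T$), every eigenvalue lies on the unit circle, so $\lambda^{-1}=\overline{\lambda}$, and the eigenvalue of $T^{-1}$ attached to $f$ is $\lambda^{-1}$. Running the same computation with the roles of $T$ and $T^{-1}$ reversed shows that the eigenvalue set of $T^{-1}$ is exactly $\{\lambda^{-1}:\lambda\text{ an eigenvalue of }T\}$.

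The crux is then to show that this set of inverses coincides with the original set of eigenvalues, i.e. that the eigenvalues of $T$ are closed under inversion. The key observation — and the main technical point — is that for an \emph{ergodic} transformation the set of eigenvalues is a genuine subgroup of the unit circle. Indeed, if $U_T f = \lambda f$ then $|f|\circ T = |f|$, so $|f|$ is a $T$-invariant function and is therefore constant almost everywhere by Lemma \ref{lemma_ergodic}; thus every eigenfunction is nonzero a.e. Consequently, if $f_1,f_2$ are eigenfunctions with eigenvalues $\lambda_1,\lambda_2$, then $f_1\overline{f_2}$ is a nonzero eigenfunction with eigenvalue $\lambda_1\overline{\lambda_2}=\lambda_1\lambda_2^{-1}$, which shows the eigenvalue set is closed under products and inverses (taking $f_1$ to be the constant eigenfunction with eigenvalue $1$ yields $\lambda_2^{-1}$). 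In particular it is closed under inversion, so the eigenvalues of $T^{-1}$ are exactly those of $T$.

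Having matched the eigenvalues, I would invoke Theorem \ref{Discr_spectr_thm} one last time: two ergodic measure-preserving transformations with discrete spectrum sharing the same eigenvalues are conjugate. Applied to $T$ and $T^{-1}$, this gives the claim. The only delicate step is the subgroup structure of the eigenvalues, which rests entirely on the constant-modulus property of eigenfunctions of an ergodic map; everything else is bookkeeping with the Koopman operators $U_T$ and $U_{T^{-1}}$.
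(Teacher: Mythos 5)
Your proof is correct and follows exactly the intended route: the paper presents this as an immediate consequence of the Discrete Spectrum Theorem (Theorem \ref{Discr_spectr_thm}), with the eigenvalues of $T^{-1}$ being the inverses of those of $T$ and the eigenvalue set of an ergodic transformation forming a subgroup of $S^1$ (via the constant-modulus property of eigenfunctions), hence closed under inversion. Your verification that $T^{-1}$ is ergodic with discrete spectrum and the application of the equivalence $(2)\Leftrightarrow(3)$ are precisely the standard argument (as in Walters' book, the paper's cited source), so there is nothing to add.
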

\begin{remark}
If the spaces $(X_i,\mathcal{A}_i, \mu_i)$ for $(i = 1,2)$ are both complete separable spaces, then the statements of Theorem \ref{Discr_spectr_thm} are equivalent to $T_1$ being isomorphic to $T_2$.
\end{remark}

Let us now turn our attention to a collection of ergodic measure-preserving transformations which have discrete spectrum (see \cite[\S 3.3]{Walters} for details).
\begin{example}
Let $S^1$ be the complex unit circle and suppose $T : S^1 \rightarrow S^1$ is defined by $T(z) = az$ where a
is not a root of unity. We know that $T$ is ergodic and is a rotation of a compact group. Consider
the sequence of functions $f_n : S^1 \rightarrow \mathbb{C}$ defined by $f_n(z) = z^n$. Then $f_n$ is an eigenfunction of $T$ corresponding to the eigenvalue $a^n$. Since $(f_n)$ forms a basis for $L^2_\mu(S^1)$, we see that $T$ has discrete spectrum.
\end{example}
The following two theorems  completely solve the conjugacy problem for ergodic rotations with discrete spectrum.\\
They can be found in \cite[Theorem 3.6, Theorem 3.7]{Walters} and we provide a proof for the second one since it does not appear in the reference and it seemed to be of some interest for the reader.
\begin{theorem}[Representation Theorem]
An ergodic measure-preserving transformation $T$ with discrete spectrum on a probability space $(X,\mathcal{A}, \mu)$ is conjugate to an ergodic rotation
on some compact abelian group. The group is metrisable if and only if $(X,\mathcal{A}, \mu)$ has a countable basis.
\end{theorem}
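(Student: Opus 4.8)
The plan is to follow the Halmos--von Neumann construction: I would manufacture a compact abelian group $G$ directly out of the eigenvalues of $T$, equip it with an ergodic rotation $S$ having exactly the same eigenvalues as $T$, and then let the Discrete Spectrum Theorem (Theorem \ref{Discr_spectr_thm}) supply the conjugacy. The point is that ``discrete spectrum'' means $L^2_\mu$ is spanned by eigenfunctions, so all the information about $T$ is encoded in its eigenvalues, and I want to realise that data group-theoretically.

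First I would study the set $\Lambda$ of eigenvalues of $T$. Since $U_T$ is an isometry, every eigenvalue lies on the circle: $U_T f=\lambda f$ gives $\|f\|=|\lambda|\,\|f\|$, hence $|\lambda|=1$. Then $|f|\circ T=|f|$, so $|f|$ is a $T$-invariant function and hence constant a.e.\ by Lemma \ref{lemma_ergodic}; I may therefore normalise every eigenfunction to have unit modulus. The same lemma shows each eigenvalue is simple, since the quotient of two unit-modulus eigenfunctions sharing an eigenvalue is $T$-invariant, hence constant. Moreover, if $f_1,f_2$ are unit-modulus eigenfunctions with eigenvalues $\lambda_1,\lambda_2$, then $f_1 f_2$ and $\overline{f_1}$ are again eigenfunctions with eigenvalues $\lambda_1\lambda_2$ and $\lambda_1^{-1}$. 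Consequently $\Lambda$ is a subgroup of the circle group $S^1$.

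Next I would build the group. Give $\Lambda$ the discrete topology and set $G=\widehat{\Lambda}$, its Pontryagin dual; since the dual of a discrete abelian group is compact, $G$ is a compact abelian group carrying a normalised Haar measure $m$. The inclusion $\Lambda\hookrightarrow S^1$ is itself a homomorphism, hence an element $a\in\widehat{\Lambda}=G$, and I would define the rotation $S\colon G\to G$ by $S(g)=a\,g$. By Pontryagin duality $\widehat{G}\cong\Lambda$, and the character $\chi_\lambda$ of $G$ attached to $\lambda\in\Lambda$ satisfies $\chi_\lambda(Sg)=\chi_\lambda(a)\chi_\lambda(g)=\lambda\,\chi_\lambda(g)$. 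Thus each $\chi_\lambda$ is an eigenfunction of $S$ with eigenvalue $\lambda$; since the characters form an orthonormal basis of $L^2_m(G)$, the rotation $S$ has discrete spectrum with eigenvalue group exactly $\Lambda$. Ergodicity of $S$ follows because $\chi_\lambda(a)=\lambda=1$ forces $\lambda=1$, so no non-trivial character is $S$-invariant.

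At this point both $T$ and $S$ are ergodic, have discrete spectrum, and share the same set of eigenvalues $\Lambda$, so Theorem \ref{Discr_spectr_thm} immediately yields that $T$ is conjugate to the ergodic rotation $S$ on the compact abelian group $G$. For the metrisability clause I would use that $G=\widehat{\Lambda}$ is metrisable if and only if $\Lambda$ is countable; but the normalised unit-modulus eigenfunctions form an orthonormal basis of $L^2_\mu(X)$ indexed by $\Lambda$, so $\Lambda$ is countable exactly when $L^2_\mu(X)$ is separable, i.e.\ when the measure algebra of $(X,\mathcal{A},\mu)$ is countably generated, which is the countable-basis condition. The main obstacle is not the final conjugacy, which the Discrete Spectrum Theorem hands us for free, but the clean identification $\widehat{G}\cong\Lambda$ together with the verification that $S$ is both ergodic and spectrally matched to $T$; this Pontryagin-duality bookkeeping is where all the care is genuinely needed.
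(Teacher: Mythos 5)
Your proof is correct, and it is precisely the classical Halmos--von Neumann argument: realise the eigenvalue group $\Lambda$ as a discrete abelian group, rotate its compact Pontryagin dual $G=\widehat{\Lambda}$ by the element $a$ given by the inclusion $\Lambda\hookrightarrow S^1$, verify via characters that this rotation is ergodic with discrete spectrum and eigenvalue group exactly $\Lambda$, and let the Discrete Spectrum Theorem (Theorem \ref{Discr_spectr_thm}) supply the conjugacy. The thesis itself offers no proof of this statement (it defers to \cite{Walters}, whose Theorem 3.6 is proved in exactly this way, and only proves the companion Existence Theorem in the text), so your route coincides with the proof in the cited source; the steps you leave implicit --- orthogonality of eigenfunctions for distinct eigenvalues, the Fourier-expansion check that any $S$-invariant $f\in L^2_m(G)$ is constant, and the equivalence of the countable-basis condition with separability of $L^2_\mu$ --- are all routine and close without difficulty.
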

\begin{theorem}[Existence Theorem]
Every subgroup $K$ of $S^1$ is the group of eigenvalues of an ergodic measure-preserving transformation with discrete spectrum.
\end{theorem}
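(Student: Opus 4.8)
The plan is to realize the desired transformation as a rotation on a compact abelian group built from $K$ by Pontryagin duality, exactly as the Representation Theorem suggests every such $T$ must arise. First I would forget the topology on $K$ and regard it as a \emph{discrete} abelian group; its character group $G=\widehat{K}$ (all homomorphisms $K\to S^1$, with the compact-open topology) is then a compact abelian group, which I equip with its normalized Haar measure $m$. The inclusion $\iota\colon K\hookrightarrow S^1$ is itself a homomorphism, hence a point $a:=\iota\in G$, and I define $T=T_a\colon G\to G$ by $T_a(g)=a\,g$. Being a rotation of a compact group, $T_a$ preserves $m$, so $(G,\mathcal{B}_G,m,T_a)$ is a measure-preserving system.

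Next I would identify the spectral data. By Pontryagin duality the dual of $G=\widehat{K}$ is canonically $K$ again: each $k\in K$ gives the character $\widehat{k}\colon G\to S^1$, $\widehat{k}(g)=g(k)$, and these exhaust $\widehat{G}$. For each such character one computes
\begin{equation*}
U_{T}\widehat{k}(g)=\widehat{k}(a\,g)=(a\,g)(k)=a(k)\,g(k)=k\,\widehat{k}(g),
\end{equation*}
since $a(k)=\iota(k)=k$. Thus every $\widehat{k}$ is an eigenfunction of $U_{T}$ with eigenvalue $k\in K$. Because the characters $\{\widehat{k}\}_{k\in K}$ form an orthonormal basis of $L^2_m(G)$ (Peter--Weyl for compact abelian groups), $T$ has discrete spectrum in the sense of the definition above, and every $k\in K$ is an eigenvalue.

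To see that the group of eigenvalues is \emph{exactly} $K$ and that $T$ is ergodic, I would argue with the Fourier expansion. If $U_T f=\lambda f$ with $f=\sum_k c_k\widehat{k}\neq 0$, then comparing coefficients in $U_T f=\sum_k c_k\,k\,\widehat{k}=\lambda\sum_k c_k\widehat{k}$ forces $k=\lambda$ for every index with $c_k\neq 0$; since some such $k$ exists, $\lambda\in K$. Hence the eigenvalue set is precisely $K$. Ergodicity follows from the standard criterion that a rotation $T_a$ on a compact abelian group is ergodic if and only if $\chi(a)\neq 1$ for every nontrivial character $\chi$: here $\chi=\widehat{k}$ and $\chi(a)=a(k)=k$, which equals $1$ only when $\widehat{k}$ is trivial. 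Equivalently, by Lemma \ref{lemma_ergodic} an invariant $f$ has all its nonzero Fourier coefficients supported on $\{k:k=1\}$, so $f$ is constant almost everywhere.

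The routine parts are that rotations preserve Haar measure and that eigenfunctions of an ergodic map are determined up to scalars; the substantive inputs, and the step I expect to require the most care, are the duality facts: that $\widehat{K}$ is compact with the characters $\widehat{k}$ forming an orthonormal $L^2$-basis, and that the canonical double-dual identification $\widehat{\widehat{K}}\cong K$ sends $k$ to evaluation at $k$. Once these are in hand the eigenvalue computation and the ergodicity criterion are immediate, and the theorem follows.
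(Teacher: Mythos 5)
Your proof is correct, but it takes a genuinely different route from the one in the thesis. The thesis builds the full product $G=\prod_{t\in K}G_t$ with $G_t=\overline{\{nt:n\in\mathbb{Z}\}}$, rotates by $a=(t)_{t\in K}$, and takes the coordinate characters $g_t((x_s)_{s\in K})=e^{2\pi i x_t}$ as eigenfunctions; you instead give $K$ the discrete topology and rotate its Pontryagin dual $\widehat{K}$ by the tautological element $\iota\in\widehat{K}$. The two constructions are closely related: your $\widehat{K}$ is precisely the closed subgroup of the ambient product $(S^1)^K$ consisting of \emph{multiplicative} families $(x_t)_{t\in K}$, and the thesis's rotation element lies in this subgroup, so your system is the restriction of the thesis's system to the subgroup cut out by the relations of $K$. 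That restriction is exactly what buys you ergodicity: on the full product, every nontrivial multiplicative relation in $K$ yields a nonconstant $S$-invariant function --- for instance, if $t$ and $-t$ are distinct elements of $K$, then $(x_s)_{s\in K}\mapsto e^{2\pi i(x_t+x_{-t})}$ is a nonconstant character fixed by $S$, since $S$ adds $t$ to one coordinate and $-t$ to the other. Consequently the thesis's closing claims --- that the $g_t$ alone form an orthonormal basis of $L^2(\mu)$ (one needs all finite products of coordinate characters, even for a single factor) and that $\bigcup_{n\in\mathbb{Z}}S^nU=G$ for every nonempty open $U$ --- fail on the full product whenever $K\not\subseteq\{\pm 1\}$; the product system still has an orthonormal basis of eigenfunctions with eigenvalue set $K$, but it is not ergodic, so the eigenvalue $1$ is not simple. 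Your duality argument avoids this entirely and verifies everything the statement needs in one computation: $U_T\widehat{k}=k\,\widehat{k}$, the $\widehat{k}$ exhaust $\widehat{G}$ and form an orthonormal basis by Peter--Weyl, the Fourier comparison shows the eigenvalue group is exactly $K$ with one-dimensional eigenspaces, and ergodicity follows as in Lemma \ref{lemma_ergodic} because $\widehat{k}(\iota)=k\neq 1$ for every nontrivial character. The price is the heavier input (compactness of $\widehat{K}$ and the double-dual identification $\widehat{\widehat{K}}\cong K$), but in exchange you get a complete and rigorous proof --- indeed the classical Halmos--von Neumann construction --- where the thesis's more elementary product construction, as written, has a genuine gap at the ergodicity step.
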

\begin{proof}
Let $K$ be a subgroup of $S^1$ and consider the following family indexed by $K$
\begin{equation*}
G_t=\overline{\{nt | n\in\mathbb{Z}\}}\qquad t\in K\ .
\end{equation*}
Then consider the infinite product
\begin{equation*}
G=\prod_{t\in K} G_t\ .
\end{equation*}
This is a compact group and the Haar measure $\mu$ is defined on it. The following functions $g_t:G\rightarrow \mathbb{C}$ defined by
\begin{equation*}
g_t((x_s)_{s\in K})=e^{2\pi ix_t}
\end{equation*}
are characters of $G$.\\
We want to show that there exists an ergodic measure-preserving transformation $S$ with pure discrete spectrum $K$, such that these characters are the eigenfunctions of $S$ associated to the eigenvalues in $K$.\\
Define $S:G\rightarrow G$ to be the map $(x_t)_{t\in K}\mapsto (x_t+t)_{t\in K}$. Then it follows that
\begin{eqnarray*}
g_t(S(x_s)_{s\in K})&=&g_t((x_s+s)_{s\in K})= e^{2\pi i (x_t+t)}\\
&=& e^{2\pi i t}g_t((x_s)_{s\in K})\ .
\end{eqnarray*}
Hence the characters $g_t$ are eigenfunctions of $S$. Moreover, since $G$ is compact, the characters $g_t$ form an orthonormal basis for $L^2(\mu)$. Finally, $S$ is ergodic since for every non-empty open subset $U$ of $G$ we have $\cup_{n=-\infty}^{\infty}S^n U=G$. 
\end{proof}

\subsubsection{Products}
The product of two measure spaces $(X_i,\mathcal{A}_i, \mu_i)$ for $(i = 1,2)$ is the measure space $(X_1 \times X_2,\mathcal{A}_1 \otimes \mathcal{A}_2, \mu_1 \times \mu_2)$ where $\mathcal{A}_1 \times \mathcal{A}_2$ is the smallest $\sigma$–algebra which contains all set of the form $A_1 \times A_2$ where $A_i \in \mathcal{A}_i$ , and $\mu_1 \times \mu_2$ is the unique measure such that $(\mu_1 \times \mu_2)(A_1 \times A_2) = \mu_1(A_1)\mu_2(A_2)$.
This construction captures the idea of independence from probability theory: if $(X_i,\mathcal{A}_i, \mu_i)$ are the probability models of two random experiments, and these experiments are \lq\lq independent\rq\rq, then $(X_1 \times X_2, \mathcal{A}_1 \otimes \mathcal{A}_2, \mu_1 \times \mu_2)$ is the probability model of the pair of experiments.
\begin{definition}
The product of two measure preserving systems $(X_i,\mathcal{A}_i,$\linebreak $\mu_i, T_i)$ for $(i = 1,2)$ is the measure preserving system $(X_1 \times X_2,\mathcal{A}_1 \otimes \mathcal{A}_2, \mu_1 \times \mu_2, T_1\times T_2)$, where $(T_1 \times T_2)(x_1,x_2) = (T_1x_1,T_2x_2)$.
\end{definition}
We now show that the product of two ergodic measure preserving transformations is not always ergodic.
\begin{theorem}
Let $(X_i,\mathcal{A}_i, \mu_i, T_i)$ for $(i = 1,2)$ be two ergodic systems. Then
$T_1 \times T_2 $ is ergodic if and only if $U_{T_1}$ and $U_{T_2}$ have no common eigenvalues other than 1.
\end{theorem}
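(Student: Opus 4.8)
The plan is to reduce everything to the characterization of ergodicity in Lemma \ref{lemma_ergodic}: a measure-preserving transformation is ergodic if and only if its only invariant $L^2$ functions are constant almost everywhere. Throughout I identify $L^2(\mu_1\times\mu_2)$ with $L^2(\mu_1)\otimes L^2(\mu_2)$ and observe that the operator induced by $T_1\times T_2$ is $U_{T_1}\otimes U_{T_2}$, so that $F$ is $(T_1\times T_2)$-invariant precisely when $(U_{T_1}\otimes U_{T_2})F=F$. I also use two elementary facts: the eigenvalues of the isometries $U_{T_i}$ lie on the unit circle (if $U_Tf=\lambda f$ with $f\neq 0$ then $|\lambda|\,\|f\|=\|U_Tf\|=\|f\|$), and, since each $T_i$ is ergodic, the eigenvalue $1$ is simple with eigenspace equal to the constants, again by Lemma \ref{lemma_ergodic}.

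For the easy direction I argue by contraposition. Suppose $U_{T_1}$ and $U_{T_2}$ share an eigenvalue $\lambda\neq1$, with eigenfunctions $f_1\circ T_1=\lambda f_1$ and $f_2\circ T_2=\lambda f_2$; since $\lambda\neq1$ neither $f_i$ is constant. Setting $F(x_1,x_2)=f_1(x_1)\overline{f_2(x_2)}$ and using $|\lambda|=1$, I compute
\[
F(T_1x_1,T_2x_2)=\lambda f_1(x_1)\,\overline{\lambda f_2(x_2)}=|\lambda|^2F(x_1,x_2)=F(x_1,x_2),
\]
so $F$ is a non-constant invariant function and $T_1\times T_2$ fails to be ergodic by Lemma \ref{lemma_ergodic}.

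For the converse I would establish ergodicity of $T_1\times T_2$ by verifying the mean-ergodic criterion
\[
\lim_{N\to\infty}\frac1N\sum_{n=0}^{N-1}\langle (U_{T_1}\otimes U_{T_2})^nF,G\rangle=\Big(\textstyle\int F\Big)\overline{\Big(\int G\Big)},
\]
which, through von Neumann's mean ergodic theorem, is equivalent to ergodicity. By bilinearity and density it suffices to treat $F=f_1\otimes f_2$ and $G=g_1\otimes g_2$, where the summand factors as $\langle U_{T_1}^nf_1,g_1\rangle\langle U_{T_2}^nf_2,g_2\rangle$. Applying the spectral theorem to each isometry, I write these as $\int_{S^1}z^n\,d\sigma(z)$ and $\int_{S^1}w^n\,d\rho(w)$ for finite complex spectral measures $\sigma=\sigma_{f_1,g_1}$ and $\rho=\rho_{f_2,g_2}$. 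The Cesàro averages of $(zw)^n$ converge boundedly to the indicator of $\{zw=1\}$, so dominated convergence on $S^1\times S^1$ reduces the limit to the sum over matched atoms, $\sum_z\sigma(\{z\})\,\rho(\{\bar z\})$.

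The crux, and the step I expect to be the main obstacle, is showing that only $z=1$ contributes to this sum. An atom of $\sigma$ at $z$ forces $z$ to be an eigenvalue of $U_{T_1}$, and an atom of $\rho$ at $\bar z$ forces $\bar z$ — hence also $z$, since eigenvalues of $U_{T_i}$ are closed under conjugation (from $U_T h=\lambda h$ one gets $U_T\bar h=\bar\lambda\bar h$) — to be an eigenvalue of $U_{T_2}$. Thus any surviving $z$ is a common eigenvalue, and by hypothesis the only such value is $1$. Since each $T_i$ is ergodic, the spectral projection at $1$ is projection onto the constants, giving $\sigma(\{1\})=(\int f_1)\overline{(\int g_1)}$ and $\rho(\{1\})=(\int f_2)\overline{(\int g_2)}$; the limit therefore equals $(\int f_1)(\int f_2)\overline{(\int g_1)(\int g_2)}=(\int F)\overline{(\int G)}$, which is exactly what ergodicity of $T_1\times T_2$ demands.
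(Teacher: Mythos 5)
The paper states this theorem without giving a proof (it is a classical fact), so there is no in-text argument to compare against; judged on its own merits, your proposal is the standard spectral-measure proof and is essentially correct. The contrapositive direction is fine, though you should spend one sentence verifying that $F=f_1\otimes\overline{f_2}$ is genuinely non-constant: if $F$ were a.e.\ equal to a constant $c$, then $c\neq 0$ would force $f_1$ to be constant and $c=0$ would force $f_1=0$ a.e., both impossible for a non-constant eigenfunction. The converse is also architecturally sound: the identification $U_{T_1\times T_2}=U_{T_1}\otimes U_{T_2}$, the reduction via von Neumann's mean ergodic theorem to weak convergence of Ces\`aro averages, the restriction to product functions by density and uniform boundedness of the averaging operators, the factorization of the matrix coefficients, the dominated-convergence computation identifying the limit with the matched-atom sum $\sum_z \sigma(\{z\})\rho(\{\bar z\})$, the conjugation-closure of the point spectrum, and the identification of the spectral projection at $1$ with projection onto the constants (by ergodicity of each factor) are all correct.

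The one genuine caveat concerns your phrase ``applying the spectral theorem to each isometry.'' The steps where you pass from an atom of $\sigma_{f,g}$ at $z$ to the conclusion that $z$ is an eigenvalue of $U_{T_i}$, and where you compute $\sigma(\{1\})=\langle E(\{1\})f,g\rangle$, use the projection-valued spectral measure and therefore require $U_{T_i}$ to be \emph{unitary}, i.e.\ $T_i$ invertible. For merely measure-preserving $T_i$ the Koopman operator is a non-surjective isometry; one can still write $\langle U^n f, g\rangle=\int_{S^1}z^n\,d\sigma(z)$ for $n\geq 0$ via the unitary dilation, but an atom of that measure a priori only yields an eigenvector of the \emph{dilation}, not of $U_{T_i}$ itself. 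The fix is standard but must be said: the minimal unitary dilation of a Koopman isometry is the Koopman operator of the natural extension, whose point spectrum coincides with that of the original system and whose ergodicity is equivalent to it; since the natural extension of $T_1\times T_2$ is the product of the natural extensions, the general statement reduces to the invertible case, where your proof is complete. Note that the paper itself only defines the ``associated unitary operator'' for invertible $T$, so under that reading your argument needs no patch; under the broader reading of the statement it needs this one additional reduction.
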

\subsubsection{Induced transformations}
A central problem in ergodic theory is that of recurrence, concerning how points in measurable dynamical systems return close to themselves under iteration. The first and most important result is due to Poincar\'e \cite{Poincare} in 1890 who proved it in the context of a natural invariant measure in the “three-body” problem of planetary orbits, before the creation of abstract measure theory. Poincar\'e recurrence is the pigeon-hole principle for ergodic theory; indeed on a finite measure space it is exactly the pigeon-hole principle.
\begin{theorem}
Let $T : X \rightarrow X$ be a measure-preserving transformation on a probability space $(X,\mathcal{A}, \mu)$, and let $E \subset X$ be a measurable set. Then almost every point $x \in E$ returns to $E$ infinitely often. That is, there exists a measurable set $F \subset E$ with $\mu(F) = \mu(E)$ with the property that for every $x \in F$ there exist positive integers $n_1 < n_2 < \dots$ with $T^{n_i} x \in E$ for all $i\geq1$.
\end{theorem}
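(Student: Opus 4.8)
The plan is to prove the statement in two stages: first establish the weaker \emph{return-at-least-once} assertion, then bootstrap it to infinitely many returns. The whole argument rests on the pigeon-hole phenomenon that in a finite measure space one cannot have infinitely many pairwise disjoint sets of equal positive measure.

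First I would introduce the set of points of $E$ that never come back,
\[
B = \{x \in E : T^n x \notin E \text{ for all } n \geq 1\},
\]
and show $\mu(B) = 0$. The key step is to verify that the preimages $B, T^{-1}B, T^{-2}B, \dots$ are pairwise disjoint: if $0 \le i < j$ and some $y$ lay in $T^{-i}B \cap T^{-j}B$, then $T^i y \in B \subseteq E$ while $T^j y = T^{\,j-i}(T^i y) \in E$ with $j - i \ge 1$, contradicting the defining property of $B$ (that no iterate of a point of $B$ ever re-enters $E$). Since $T$ is measure-preserving, $\mu(T^{-n}B) = \mu(B)$ for every $n$, so the disjoint union $\bigsqcup_{n \ge 0} T^{-n}B$ has measure $\sum_{n \ge 0}\mu(B)$, which must be at most $\mu(X) = 1$; this forces $\mu(B) = 0$.

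Then I would upgrade ``at least once'' to ``infinitely often''. Let $F$ be the set of $x \in E$ for which $T^n x \in E$ for infinitely many $n$. A point of $E \setminus F$ returns only finitely often, so either it never returns (hence lies in $B$) or it has a last return time $m \ge 1$, meaning $T^m x \in E$ but no later iterate lands in $E$, i.e. $T^m x \in B$ and $x \in T^{-m}B$. Consequently $E \setminus F \subseteq \bigcup_{n \ge 0} T^{-n}B$, a countable union of null sets, so $\mu(E \setminus F) = 0$ and $\mu(F) = \mu(E)$. Taking this $F$ yields the desired full-measure subset on which every point returns infinitely often.

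I expect the only genuinely delicate point to be the disjointness verification in the first stage: it is exactly where one must use that a point of $B$ has \emph{no} future return, and where the indices have to be bookkept carefully. Everything else---invariance of measure, finiteness of $\mu(X)$, and the elementary ``last return time'' dichotomy of the second stage---is routine once disjointness is in hand.
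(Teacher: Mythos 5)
Your proof is correct and complete: the disjointness of the preimages $T^{-n}B$, the pigeon-hole step forcing $\mu(B)=0$, and the reduction of \emph{finitely many returns} to the countable union $\bigcup_{n\ge 0}T^{-n}B$ are all verified accurately. The paper states this theorem without proof (remarking only that Poincar\'e recurrence \emph{is} the pigeon-hole principle for ergodic theory), and your argument is exactly that standard pigeon-hole proof, so there is nothing to reconcile.
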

In the following example we show that the Poincar\'e recurrence does not necessarily hold if the measure space is not
of finite measure.
\begin{example}
The map $T :\mathbb{R}\rightarrow \mathbb{R}$ defined by $T(x)=x+1$ preserves the Lebesgue measure $\lambda$ on $\mathbb{R}$. 
For any bounded set $E \subset \mathbb{R}$ and any $x \in E$, the set
\begin{equation*}
\{n\geq 1 : T^nx\in E\}
\end{equation*}
is finite. Thus the map $T$ exhibits no recurrence.
\end{example}
Now let $A$ be a measurable set with $\mu(A) > 0$. By the Poincar\'e recurrence, the first return time to $A$, defined by
\begin{equation*}
n_A(x) = \inf_{n\geq 1} \{n : T^n(x) \in A\}
\end{equation*}
exists, i.e. is finite, almost everywhere.
\begin{definition}
 The map $T_A:A\rightarrow A$ defined (almost everywhere) by
\begin{equation*}
T_A(x)=T^{n_A(x)}(x)
\end{equation*} 
is called the transformation induced by $T$ on the set $A$. 
\end{definition}
Observe that both $n_A : X\rightarrow \mathbb{N}$ and $T_A : A\rightarrow A$ are measurable, since for every $n \geq 1$, we can write $A_n = \{x \in A : n_a(x) = n\}$. Then the sets
\begin{eqnarray*}
A_1 &=& A\cap T^{-1}A\ ,\\
A_2 &=&A\cap T^{-2}A\setminus A_1\ ,\\
& \vdots &\\
A_n &=&A\cap T^{-n}A\setminus \bigcup_{i<n}A_i
\end{eqnarray*}
are all measurable, as it is
\begin{equation*}
T^nA_n =A\cap T^nA\setminus (TA\cup T^2A\cup\dots \cup T^{n-1}A)\ ,
\end{equation*}
since $T$ is invertible by assumption.
\begin{proposition}
The induced transformation $T_A$ is a measure-preser-\linebreak ving transformation on the space $(A,\mathcal{B}(A),\mu_A,T_A)$, where $\mathcal{B}(A)=\{E\cap A : E\in \mathcal{B}\}$, $\mu_A$ is the measure $\mu_A(E)= \mu(E|A) = \mu(E \cap A)/\mu(A)$. If T is ergodic with ￼respect to $\mu$ then $T_A$ is ergodic with respect to $\mu_A$.
\end{proposition}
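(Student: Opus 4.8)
The plan is to treat the two assertions separately, in each case reducing everything to the base map $T$ via the decomposition of $A$ by first-return time. First I would record the partition $A=\bigsqcup_{n\geq 1}A_n$ (up to a $\mu$-null set, by Poincar\'e recurrence), where $A_n=\{x\in A:n_A(x)=n\}$ are the measurable sets already built above. Since $T_A=T^{n}$ on $A_n$, for any measurable $E\subseteq A$ one has the disjoint decomposition $T_A^{-1}E=\bigsqcup_{n\geq 1}\bigl(A_n\cap T^{-n}E\bigr)$, which is automatically a measurable subset of $A$. Hence for the first claim it suffices to prove $\mu(T_A^{-1}E)=\mu(E)$ and then divide by $\mu(A)$.

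For measure preservation I would argue by a peeling (telescoping) scheme that uses only the identity $\mu(T^{-1}S)=\mu(S)$. Starting from $\mu(E)=\mu(T^{-1}E)$, I split $T^{-1}E$ into its part inside $A$, which is exactly the $n=1$ piece $A_1\cap T^{-1}E$, and its part outside $A$; pushing the outside part back one more step with $T^{-1}$ and splitting again peels off $A_2\cap T^{-2}E$, and so on. After $N$ steps this yields
\[
\mu(E)=\sum_{n=1}^{N}\mu\bigl(A_n\cap T^{-n}E\bigr)+\mu(R_N),\qquad R_N=T^{-N}E\cap\bigcap_{i=0}^{N-1}T^{-i}A^{c}.
\]
The key point is that $R_N$ sits inside the set of points whose \emph{first entrance} time into $A$ equals $N$; these first-entrance sets are pairwise disjoint, so their total mass is at most $\mu(X)=1$ and $\mu(R_N)\to 0$. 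Letting $N\to\infty$ gives $\mu(E)=\sum_{n\geq 1}\mu(A_n\cap T^{-n}E)=\mu(T_A^{-1}E)$.

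For ergodicity I would saturate an invariant set by its backward $T$-orbit to $A$. Assume $T_A^{-1}E=E$ with $E\subseteq A$, let $\tau(x)=\min\{n\geq 0:T^{n}x\in A\}$ be the first entrance time, and set
\[
\hat E=\bigsqcup_{n\geq 0}\{x:\,T^{i}x\notin A\ (0\leq i<n),\ T^{n}x\in E\},
\]
the set of points whose forward orbit first meets $A$ inside $E$. A short case analysis, separating $x\in A$ (where the first return is $T_Ax$) from $x\notin A$ (where $\tau(Tx)=\tau(x)-1$), shows, using $T_A^{-1}E=E$, that $T^{-1}\hat E=\hat E$ modulo null sets, so ergodicity of $T$ forces $\mu(\hat E)\in\{0,1\}$. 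Moreover, since $\mu(A)>0$ and the set $\bigcup_{n\geq 0}T^{-n}A$ of points eventually entering $A$ is $T$-invariant of positive measure, ergodicity makes it have full measure; hence $\hat E$ and $\widehat{A\setminus E}$ cover $X$ up to null sets and $\mu(\hat E)+\mu(\widehat{A\setminus E})=1$. Applying the dichotomy to both $E$ and its $T_A$-invariant complement $A\setminus E$ then gives $\mu(E)\in\{0,\mu(A)\}$, i.e.\ $\mu_A(E)\in\{0,1\}$.

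The main obstacle in both parts is handling orbits that fail to return or to enter $A$: the telescoping identity only closes up because $R_N$ lies in disjoint first-entrance sets of summable total mass (finiteness of $\mu$), and the saturation argument only fills $X$ because ergodicity upgrades ``positive measure of points entering $A$'' to ``full measure.'' Once these two facts are secured, the remaining steps are routine bookkeeping with the sets $A_n$.
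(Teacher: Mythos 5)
Your proof is correct, but note that the thesis itself states this proposition without proof (it is quoted as a standard fact from the ergodic-theory references, and the surrounding text only sets up the measurability of the sets $A_n$ and the skyscraper picture), so there is no line-by-line comparison to make; what can be compared is your route against the classical one the paper gestures at. The remark just before the proposition, ``since $T$ is invertible by assumption,'' and the Kakutani skyscraper figure indicate the textbook argument for invertible $T$: decompose $X=\bigsqcup_{n\geq 1}\bigsqcup_{k=0}^{n-1}T^{k}A_n$ (mod null) and read off both measure preservation and ergodicity from the tower, using forward images $T^kA_n$. Your telescoping identity $\mu(E)=\sum_{n=1}^{N}\mu(A_n\cap T^{-n}E)+\mu(R_N)$ with $\mu(R_N)\to 0$ (because $R_N$ lies in the pairwise disjoint first-entrance sets, whose masses are summable since $\mu(X)=1$), and your saturation set $\hat E$ for ergodicity, use only preimages and the identity $\mu(T^{-1}S)=\mu(S)$; this is a genuinely different and slightly stronger argument, since it never needs invertibility of $T$ and so covers the non-invertible case the paper's construction of the sets $T^nA_n$ excludes. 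Two small points worth making explicit: first, your $\hat E$ satisfies $T^{-1}\hat E=\hat E$ only modulo null sets, whereas the paper's definition of ergodicity is phrased for strictly invariant sets, so you should invoke (or prove) the standard fact that an a.e.-invariant set differs by a null set from a strictly invariant one; second, in the case $x\in A$ of your case analysis you implicitly use Poincar\'e recurrence again to know $n_A(x)<\infty$ a.e., which you did secure at the outset, so the bookkeeping closes. With those two remarks spelled out, the proof is complete and self-contained.
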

As pointed out in \cite{einsiedler_ward}, the effect of $T_A$ can be seen in the situation described by Figure \ref{Kakutani_skycraper}, called the Kakutani skyscraper. 
\begin{figure}[h!]
\begin{center}
\begin{tikzpicture}[scale=1.2]
\draw [ultra thick] (0, 0) -- (7, 0);
\draw[ultra thick, dashed] (7,0) -- (9,0) node[right, black]{$A$};
\draw [ultra thick] (1, 1) node[left, black]{$T(A)\setminus A$}-- (7, 1);
\draw[ultra thick, dashed] (7,1) -- (9,1);
\draw [ultra thick] (2, 2) node[left, black]{$T^2(A)\setminus (A\cup T(A))$} -- (7,2);
\draw[ultra thick, dashed] (7,2) -- (9,2);
\draw [ultra thick] (3, 3) -- (7, 3);
\node at (0.8,2.7) {$\vdots$};
\node at (5.7,3.6) {$\vdots$};
\draw[ultra thick, dashed] (7,3) -- (9,3);
\draw [thick,->] (1.5, 0.1) -- (1.5, 0.8);
\draw [thick,->] (2.5, 0.1) -- (2.5, 0.8);
\draw [thick,->] (3.5, 0.1) -- (3.5, 0.8);
\draw [thick,->] (5.5, 0.1) -- (5.5, 0.8);
\draw [thick,->] (2.5, 1.1) -- (2.5, 1.8);
\draw [thick,->] (3.5, 1.1) -- (3.5, 1.8);
\draw [thick,->] (5.5, 1.1) -- (5.5, 1.8);
\draw [thick,->] (5.5, 2.1) -- (5.5, 2.8);
\node at (1.3,0.4) {$T$};
\node at (2.3,0.4) {$T$};
\node at (3.3,0.4) {$T$};
\node at (2.3,1.4) {$T$};
\node at (3.3,1.4) {$T$};
\node at (5.3,0.4) {$T$};
\node at (5.3,1.4) {$T$};
\node at (5.3,2.4) {$T$};
\draw [ultra thick] (1, -0.1) -- (1, 0.1);
\draw [ultra thick] (2, -0.1) -- (2, 0.1);
\draw [ultra thick] (3, -0.1) -- (3, 0.1);
\draw [ultra thick] (4, -0.1) -- (4, 0.1);

\draw [ultra thick] (2, 0.9) -- (2, 1.1);
\draw [ultra thick] (3, 0.9) -- (3, 1.1);
\draw [ultra thick] (4, 0.9) -- (4, 1.1);
\draw [ultra thick] (3, 1.9) -- (3, 2.1);
\draw [ultra thick] (4, 1.9) -- (4, 2.1);
\draw [->] (0.2,0.3) arc[x radius=0.5cm, y radius =.5cm, start angle=30, end angle=340];
\node at (-1,0.1) {$T$};
\node at (0.5,-0.3) {$A_1$};
\node at (1.5,-0.3) {$A_2$};
\node at (2.5,-0.3) {$A_3$};
\node at (4.5,-0.4) {$\dots$};
\end{tikzpicture}
\caption{The induced transformation $T_A$}\label{Kakutani_skycraper}
\end{center}
\end{figure}
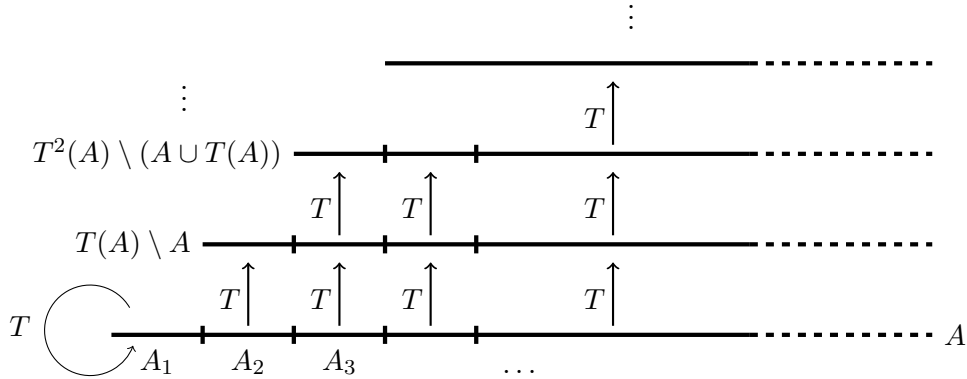

The original transformation $T$ sends any point with a floor above it to the point immediately above on the next floor, and any point on a top floor is moved somewhere to the base floor $A$. The induced transformation $T_A$ is the map defined almost everywhere on the bottom floor by sending each point to the point obtained by going through all the floors above it and returning to $A$.\\

The Poincar\'e recurrence says that for any measure-preserving system $(X, \mathcal{B}, \mu, T)$ and any set $A$ of positive measure, almost every point on the ground floor of the associated Kakutani skyscraper returns to the ground floor at some point. Ergodicity strengthens this statement to say that almost every point of the entire space $X$ lies on some floor of the skyscraper. However, the Poincar\'e recurrence does not tell us how long we should have to wait for
this to happen. One would expect that return times to sets of large measure are small, and that return times to sets of small measure are large. This is indeed the case, and forms the content of Kac's Lemma \cite{Kac}
\begin{theorem}[Kac's Lemma]
Let $(X, \mathcal{B}, \mu, T)$ be an ergodic measure-preserving probability system and let $A \in \mathcal{B}$ have strictly positive measure. Then the expected return time to $A$ is $\frac{1}{\mu(A)}$; equivalently
\begin{equation*}
\int_A n_A d\mu =1\ .
\end{equation*} 
\end{theorem}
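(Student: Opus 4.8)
The plan is to compute $\mu(X)=1$ in two different ways by slicing $X$ into the floors of the Kakutani skyscraper over $A$ depicted in Figure \ref{Kakutani_skycraper}. Writing $A_n=\{x\in A: n_A(x)=n\}$ for the level sets of the return time (these are exactly the sets $A_n$ used above to build $T_A$), Poincar\'e recurrence guarantees that $n_A<\infty$ almost everywhere on $A$, so $A=\bigsqcup_{n\geq 1}A_n$ up to a null set. The floors of the skyscraper are the sets $T^kA_n$ with $0\leq k\leq n-1$, and the whole argument rests on showing that these floors partition $X$ modulo null sets.

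First I would establish the partition. Using that $T$ is invertible (as in the induced-transformation setup above), I would assign to almost every $y\in X$ a unique floor as follows: by Poincar\'e recurrence applied to $T^{-1}$ the backward orbit of $y$ meets $A$, so one may set $k=\min\{m\geq 0: T^{-m}y\in A\}$ and $x=T^{-k}y\in A$. Minimality forces $T^jx\notin A$ for $1\leq j\leq k$, hence $n_A(x)=n>k$ and $y\in T^kA_n$ with $0\leq k<n$; this proves that the floors cover $X$. The same minimality shows that $k$, and then $x$, are determined by $y$, which gives the pairwise disjointness of the floors. Alternatively, the covering can be read off from ergodicity: the union $B=\bigcup_{n\geq 1}\bigcup_{k=0}^{n-1}T^kA_n$ satisfies $TB\subseteq B$, hence is invariant modulo null sets, and $\mu(B)\geq\mu(A)>0$ forces $\mu(B)=1$.

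With the partition in hand the computation is routine. Since $T$ is invertible and measure-preserving, $\mu(T^kA_n)=\mu(A_n)$ for all $k$, so that
\[
1=\mu(X)=\sum_{n\geq 1}\sum_{k=0}^{n-1}\mu(T^kA_n)=\sum_{n\geq 1}n\,\mu(A_n).
\]
On the other hand $n_A\equiv n$ on $A_n$, whence $\int_A n_A\,d\mu=\sum_{n\geq 1}n\,\mu(A_n)$, and comparing the two expressions yields $\int_A n_A\,d\mu=1$.

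The main obstacle is the partition claim rather than the final arithmetic. One must argue simultaneously that no point of $X$ is missed, which is where ergodicity (not merely recurrence) is essential, since recurrence alone only returns points of $A$ to $A$, and that no floor overlaps another, which is where the minimality of the first-return time and the invertibility of $T$ enter. Once disjointness and covering are secured, measure preservation converts the geometric decomposition directly into the desired integral identity.
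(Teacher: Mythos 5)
The paper itself states Kac's Lemma without proof (it is imported from \cite{Kac}), but the Kakutani-skyscraper decomposition you use is precisely the picture the surrounding text describes, and your argument is the standard proof along those lines; modulo two points it is correct. First, your primary covering argument overreaches: Poincar\'e recurrence applied to $T^{-1}$ only guarantees that almost every point \emph{of $A$} revisits $A$ under backward iteration, so it does not by itself place an arbitrary $y \in X$ on some floor --- exactly the pitfall you name in your closing paragraph. Your alternative argument is the correct one and should be taken as the proof: with $B=\bigcup_{n\geq 1}\bigcup_{k=0}^{n-1}T^k A_n$ one has $TB\subseteq B$, hence $B\subseteq T^{-1}B$, and measure preservation forces $\mu(T^{-1}B\bigtriangleup B)=0$, so $B$ is invariant mod null and ergodicity together with $\mu(B)\geq\mu(A)>0$ gives $\mu(B)=1$. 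Your disjointness verification is sound: if $y\in T^kA_n\cap T^{k'}A_{n'}$ with $k<k'$, then $T^{k'-k}\left(T^{-k'}y\right)\in A$ with $1\leq k'-k<n'$, contradicting minimality of $n_A$ on $A_{n'}$, so $k=\min\{m\geq 0: T^{-m}y\in A\}$ and then $x$ and $n$ are determined by $y$.

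Second, be aware that your proof genuinely uses invertibility of $T$, twice: in the disjointness argument just described, and in the identity $\mu(T^kA_n)=\mu(A_n)$, which can fail for forward images under a non-invertible measure-preserving map (measure preservation only controls preimages). This is consistent with the standing assumption in this section of the thesis --- the induced-transformation discussion says explicitly that $T$ is invertible --- but the statement of the lemma does not include it. In the general ergodic case one argues with preimages instead: for the hitting time $\varphi_A(x)=\inf\{n\geq 1: T^nx\in A\}$ one checks $\{\varphi_A>n+1\}=T^{-1}\{\varphi_A>n\}\setminus T^{-1}\left(A\cap\{n_A>n\}\right)$, so $\mu(\varphi_A>n+1)=\mu(\varphi_A>n)-\mu(A\cap\{n_A>n\})$; telescoping and using ergodicity to force $\mu(\varphi_A>N)\to 0$ yields $\sum_{n\geq 0}\mu(A\cap\{n_A>n\})=1$, which is $\int_A n_A\,d\mu=1$ by the layer-cake formula. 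With the covering argument replaced by your ergodicity version, and invertibility either assumed or the proof replaced by this preimage variant, your derivation $1=\mu(X)=\sum_{n\geq 1}n\,\mu(A_n)=\int_A n_A\,d\mu$ is exactly right.
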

As it will appear clear in Chapter 3, Kakutani skyscrapers are a powerful tool in ergodic theory. From the Kakutani skycraper construction we can deduce a very useful lemma of Kakutani \cite{Kakutani2} and Rokhlin \cite{Rokhlin} often called Rokhlin's lemma.
\begin{lemma}
Let $T : X \rightarrow X$ be an ergodic measure preserving transformation on a non-atomic probability space $(X, \mathcal{B}, \mu)$. Then for any $n\geq 1$ and $\epsilon >0$ there exists a measurable set $B \subset X$ such
that $B, TB,\dots , T^{n-1}B$ are pairwise disjoint and $\mu\left(\cup_{i=0}^{n-1} T^i(B)\right) > 1- \epsilon$. The
collection $\{B, TB,\dots ,$\linebreak $T^{n-1}B\}$ is referred to as a Rokhlin tower of height $n$ for the transformation $T$.
\end{lemma}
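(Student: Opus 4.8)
The plan is to manufacture the Rokhlin tower directly out of the Kakutani skyscraper just discussed, choosing its base small enough that the leftover floors carry less than $\epsilon$ of the total mass. Since $(X,\mathcal{B},\mu)$ is non-atomic, I would first fix a measurable set $A$ with $0<\mu(A)<\epsilon/n$. By Poincar\'e recurrence the first return time $n_A$ is finite almost everywhere on $A$, so I can decompose $A$ into the level sets $A_k=\{x\in A:\ n_A(x)=k\}$ for $k\ge 1$, which are pairwise disjoint and satisfy $A=\bigcup_{k\ge 1}A_k$ up to a null set. The associated skyscraper has, over each $A_k$, a column of $k$ floors $A_k, TA_k,\dots,T^{k-1}A_k$.

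The central structural fact I would record is that the floors $\{T^jA_k:\ k\ge 1,\ 0\le j<k\}$ partition $X$ modulo $\mu$-null sets. Pairwise disjointness is immediate from the definition of the return time (exactly the computation made for the sets $A_n$ above), and each floor $T^jA_k$ has measure $\mu(A_k)$ because $T$ is measure-preserving. To see that the floors cover $X$, I would invoke Kac's Lemma: the total mass of the (disjoint) floors is $\sum_{k\ge 1}k\,\mu(A_k)=\int_A n_A\,d\mu=1$, and disjoint subsets of the probability space $X$ whose measures sum to $1$ must exhaust $X$ up to a null set. This is the step where ergodicity genuinely enters (through Kac's Lemma), and I expect it to be the conceptual crux of the argument.

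With the skyscraper in hand, the tower is assembled column by column. Writing $k=q_kn+r_k$ with $0\le r_k<n$, I would, in each column of height $k\ge n$, put into the base the floors at heights $0,n,2n,\dots,(q_k-1)n$, and take nothing from columns of height $k<n$; that is,
\[
B=\bigcup_{k\ge n}\ \bigcup_{l=0}^{q_k-1}T^{ln}A_k .
\]
Applying $T^i$ for $0\le i<n$ then sweeps out exactly the heights $0,1,\dots,q_kn-1$ of each column, each height occurring precisely once. This simultaneously yields the disjointness of $B,TB,\dots,T^{n-1}B$: within a single column the occupied heights are all distinct, and distinct floors of the skyscraper are disjoint across columns.

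Finally I would estimate the uncovered mass. Over $A_k$ with $k\ge n$ the tower misses only the top $r_k<n$ floors, of total measure $r_k\,\mu(A_k)$; over $A_k$ with $k<n$ the whole column is missed, of measure $k\,\mu(A_k)\le(n-1)\mu(A_k)$. In every case the uncovered part of the column over $A_k$ has measure at most $(n-1)\mu(A_k)$, so summing gives
\[
\mu\Big(X\setminus\bigcup_{i=0}^{n-1}T^iB\Big)\le (n-1)\sum_{k\ge 1}\mu(A_k)=(n-1)\,\mu(A)<n\,\mu(A)<\epsilon .
\]
Hence $\mu\big(\bigcup_{i=0}^{n-1}T^iB\big)>1-\epsilon$, which is the desired conclusion. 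The only quantitative input is the initial choice $\mu(A)<\epsilon/n$, and non-atomicity is precisely what guarantees such a set of positive measure exists.
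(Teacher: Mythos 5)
Your proof is correct and is exactly the argument the paper itself gestures at when it says Rokhlin's lemma ``can be deduced from the Kakutani skyscraper construction'': decompose the base $A$ by first return time, use Kac's lemma (disjoint floors of total mass $\int_A n_A\,d\mu=1$ must exhaust $X$) for coverage, and take every $n$-th floor of each column of height at least $n$, giving the leftover bound $(n-1)\mu(A)<\epsilon$. The only point worth flagging is that the pairwise disjointness of the floors $T^jA_k$ and the equality $\mu(T^jA_k)=\mu(A_k)$ tacitly use invertibility of $T$, which the paper assumes throughout this section (``since $T$ is invertible by assumption''), so your argument matches the intended proof.
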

\subsubsection{Interval exchange}
The class of interval exchange transformations was introduced by Sinai \cite{Sinai}. An interval exchange transformation
is the map obtained by cutting the interval into a finite number of pieces and permuting them in such a way that the resulting map is invertible, and restricted to each interval is an order-preserving isometry. More formally, we have the following definition.

\begin{definition}
Let $d \geq 2$ be a natural number and let $\pi$ be an irreducible permutation
of $\{1, \dots, d\}$, that is, $\pi\{1, \dots , k\} \neq \{1, \dots , k\}$, for any $1 \leq k < d$. Moreover, let $\Lambda_d$ be the set of vectors $(\lambda_1,\dots,\lambda_d)$ in $\mathbb{R}^d$ such that $0\leq \lambda_i\leq 1$ for all $i$ and $\sum_{i=1}^d\lambda_i=1$.\\
An interval exchange on $[0,1[$ is a map $T_{\lambda, \pi}:[0,1[ \ \rightarrow [0,1[$ such that it is the piecewise translation defined by partitioning the interval $[0, 1[$ into $d$ sub-intervals of lengths $\lambda_1,\lambda_2,\dots , \lambda_r$ and rearranging them according to the permutation $\pi$;  formally
\begin{equation*}
T_{\lambda, \pi}(x)=x+\sum_{j<i}\lambda_{\pi(j)}-\sum_{j<i}\lambda_j\ ,
\end{equation*}
when $x$ is in the interval 
\begin{equation*}
I_i=\left[\sum_{j<i}\lambda_j, \sum_{j\leq i}\lambda_j\right[\ .
\end{equation*}
\end{definition}
It is easy to see that an interval exchange is a map of $[0,1[$ into itself which is one-to-one, preserves the Lebesgue measure $\lambda$ and is continuous $\lambda$-almost everywhere.
Masur \cite{Masur} and Veech \cite{Veech} independently showed that for almost all values of the sequence of lengths $\lambda_i$, $1\leq i\leq d$, the interval exchange transformation is ergodic. In fact they proved unique ergodicity, which we will discuss in the last part of this section.

\subsubsection{Cutting-stacking}
The cutting-stacking method is a useful tool to construct interval exchanges. Its first fomulation is due to von Neumann and Kakutani and then generalised by Friedman \cite{Friedman}.\\
We refer to the description made in \cite{GHL} where it is used to build sequences in the unit interval with good discrepancy.\\
Before starting with the description of the method, we need to fix the notation. \\
We will call columns and denote them by $C = (I_1,\dots , I_h)$ (also called
towers) a set of disjoint subintervals $I_j = [c_j , d_j[$ of $[0, 1[$ having the same length. The length of the interval $I_j$ is called the width of $C$ and denoted by $l(C)$. The interval $I_1$ is called the bottom of $C$, the interval $I_h$ is called the top of $C$, the union supp$(C)=\cup_{i=j}^h I_j$ is the support of $C$ and the integer $h$ its height. With the column $C$ is associated a translation
map 
\begin{equation*}
T_C : {\rm supp}(C) \setminus I_h \rightarrow {\rm supp}(C) \setminus I_1
\end{equation*}
defined by 
\begin{equation*}
T_C(x) = x + (c_{j+1}- c_j)
\end{equation*}
if $x \in I_j$ , $1 \leq j < h$. We represent a column $C = (I_1,\dots , I_h)$ by drawing each interval $I_{j+1}$, $1 \leq j < d$ above the interval $I_j$. \\
Consider now a given finite set of columns $S = \{C_1,\dots ,C_s\}$ with disjoint supports. We associate to $S$ the map $T_S$ which coincides with $T_{C_i}$ for $1 \leq i \leq s$. By extending the above notation, we have supp$(S)= \cup_{i=1}^s {\rm supp}(C_i)$ is the support of $S$ and $w(S)=\sum_{i=1}^s l(C_i)$ is the
width of $S$. In the sequel, we usually assume that the columns $C_i$ of $S$ are indexed according to the order of their bottoms, the one induced by the natural order of $[0, 1[$.\\ 
A cutting of a column $C = (I_1,\dots , I_h)$ in $t$ columns is the set of columns $C_i= \{I_{i,1},\dots , I_{i,h}\}$ such that $\cup_{i=1}^t I_{i,1}=I_1$ and each map $T_{|C_i}$ is
the restriction of $T_C$ on $C_i$. More generally, a cutting of a set $S$ of columns is obtained by collecting all columns resulting by cutting part or all columns from $S$
and then producing a new set of columns $S' = \{C'_1 , \dots ,C'_{s'}\}$.\\
Now, a stacking of a column $C'= (I'_1 , \dots , I'_{h'})$ above a column $C = (I_1, \dots ,$\linebreak $I_h)$ having same width and disjoint support is by definition the column $C\ast C' = (I_1, \dots, I_h, I'_1,\dots , I'_{h'})$.\\
The map $T_{C\ast C'}$ extends both $T_C$ and $T_C'$ and $T_{C\ast C'}$ translates $I_h$ onto $I'_1$.\\
One can also introduce the empty column $( )$ of height $0$ to set by definition $C \ast ( ) = ( ) \ast C = C$ for any column $C$.\\ A sequence $\Sigma = (S_m)_{m\geq 0}$ of sets $S_m$ of columns is said to be complete if supp$(S_0) = [0, 1[$, $\lim_{m\to \infty} w(S_m) = 0$ and for each $m \geq 1$, $S_{m+1}$ is built from $S_m$ by performing cutting and stacking but a finite number of times. By construction $T_{S_{m+1}}$ extends $T_{S_m}$. We denote by top$(S_m)$
(resp. bot$(S_m)$) the union of top (resp. bottom) intervals of columns in $S_m$.\\
Clearly top$(S_{m+1}) \subset {\rm top}(S_m)$, bot$(S_{m+1}) \subset {\rm bot}(S_m)$ and the intersections top$(\Sigma) = \cap_{m\geq 0}{\rm top}(S_m)$, bot$(\Sigma) = \cap_{m\geq 0} {\rm bot}(S_m)$ are at most countable and finite if the numbers of columns in infinitely many $S_n$ are bounded. Clearly, the map $T_{S_m}$ is not defined on top$(\Sigma)$ but it is easy to prove that for a complete sequence $\Sigma$ there is a unique transformation $T : [0, 1[ \setminus {\rm top}(\Sigma)\rightarrow [0, 1[$ which extends all the $T_{S_m}$'s.\\ Moreover, $T$ is a measure-preserving map of $([0, 1[, \lambda)$, well defined on $[0, 1[ \setminus$\linebreak ${\rm top}(\Sigma)$ and invertible on $[0, 1[ \setminus {\rm bot}(\Sigma)$.\\

The transformations obtained in this manner are called staircase transformations. A transformation created by a cutting and stacking with a single column resulting from each iteration is a rank-one transformation. We refer to \cite{Ferenczi} and \cite{Friedman2} for more details. Rank-one transformations are measurable and measure-preserving under Lebesgue measure.\\

We now present the simplest example of rank-one transformation obtained by means of the cutting-stacking technique.
\paragraph{The von Neumann-Kakutani odometer}
 Let us consider the unit interval as a column and let us denote it by $S_0=\{[0,1[\}$. Let us split $S_0$ into two subintervals, $[0, 1/2[$ and $[1/2, 1[$ and stack them one on the other in order to form the column 
$S_1=\{[0, 1/2[, [1/2, 1[\}.$
The {\it heigh} of $S_1$ is $h(S_1)=2$ and its {\it width} is $w(S_1)=1/2$. We define the map $T_{|_{[0,1/2[}}(x)=x+\frac{1}{2}$ as the translation of the first subinterval of $S_1$ onto the second one. At the second step we cut each interval of $S_1$ into equal parts, take the right half and stack it onto the top of the left half. So we get the column
$S_2=\{[0, 1/4[,[1/2, 3/4[, [1/4,1/2[,[3/4,1[\}$,
with $h(S_2)=4$ and $w(S_2)=1/4$. To this second step we associate the map $T_{|_{[1/2, 3/4[}}(x)=x+\frac{1}{4}$.
We can visualize the above steps of the cutting-stacking procedure in Figure 2.

The procedure goes on this way, splitting each interval of the column $S_{n-1}$ in half and stacking the right column of intervals onto the left column. So we obtain the sequence of columns
\begin{equation*}
S_n=\{[\phi_2(0),\phi_2(0)+2^{-n}[,[\phi_2(1),\phi_2(1)+2^{-n}[,\ldots,[\phi_2(n-1),1[\}\ ,
\end{equation*} 
with $h(S_n)=2^{n-1}$ and $w(S_n)=\frac{1}{2^{n-1}}$. 
It is worthwhile to note that the left endpoints of the intervals are exactly $\phi_2(n)$, the radical inverse function of $n$ in base $2$.

 Let us recall that the associated sequence $(\phi_2(n))_{n\geq 0}$ is the already described van der Corput sequence.

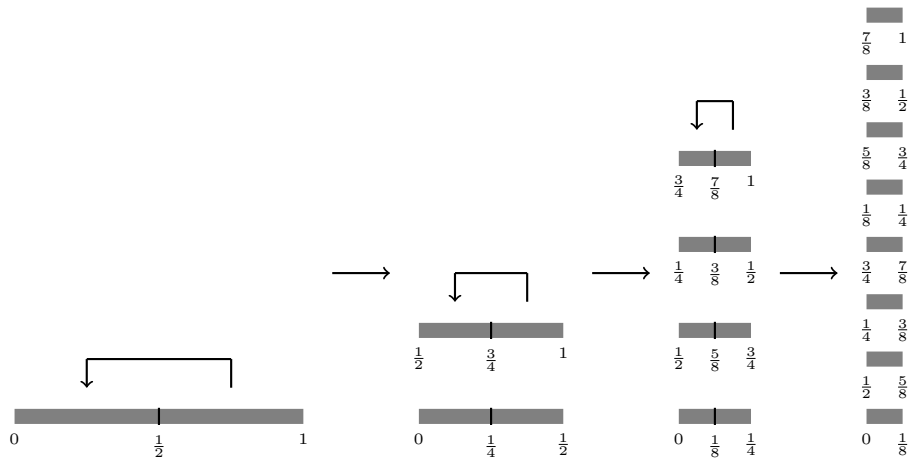
\begin{figure}[h!]
\begin{tikzpicture}[scale=3.8]
\draw [gray, line width=0.2cm] (0,0) node[below, black]{\tiny $0$} -- (1,0) node[below, black]{\tiny $1$};
\draw [thick] (0.5,-0.03) node[below, black]{\tiny $\frac{1}{2}$} -- (0.5, 0.03);
\draw [thick] (0.75, 0.1) -- (0.75, 0.2);
\draw [thick] (0.75, 0.2) -- (0.25, 0.2);
\draw [->, thick]  (0.25, 0.2) -- (0.25, 0.1);

\draw [->, thick] (1.1,0.5) -- (1.3,0.5);

\draw [gray, line width=0.2cm] (1.4,0) node[below, black]{\tiny $0$} -- (1.9,0) node[below, black]{\tiny $\frac{1}{2}$};
\draw [gray, line width=0.2cm] (1.4,0.3) node[below, black]{\tiny $\frac{1}{2}$} -- (1.9,0.3) node[below, black]{\tiny $1$};
\draw [thick] (1.65,-0.03) node[below, black]{\tiny $\frac{1}{4}$} -- (1.65, 0.03);
\draw [thick] (1.65,0.27) node[below, black]{\tiny $\frac{3}{4}$} -- (1.65, 0.33);
\draw [thick] (1.775, 0.4) -- (1.775, 0.5);
\draw [thick] (1.775, 0.5) -- (1.525, 0.5);
\draw [->, thick]  (1.525, 0.5) -- (1.525, 0.4);

\draw [->, thick] (2,0.5) -- (2.2,0.5);

\draw [gray, line width=0.2cm] (2.3,0) node[below, black]{\tiny $0$} -- (2.55,0) node[below, black]{\tiny $\frac{1}{4}$};
\draw [gray, line width=0.2cm] (2.3,0.3) node[below, black]{\tiny $\frac{1}{2}$} -- (2.55,0.3) node[below, black]{\tiny $\frac{3}{4}$};
\draw [gray, line width=0.2cm] (2.3,0.6) node[below, black]{\tiny $\frac{1}{4}$} -- (2.55,0.6) node[below, black]{\tiny $\frac{1}{2}$};
\draw [gray, line width=0.2cm] (2.3,0.9) node[below, black]{\tiny $\frac{3}{4}$} -- (2.55,0.9) node[below, black]{\tiny $1$};
\draw [thick] (2.425, -0.03) node[below, black]{\tiny $\frac{1}{8}$} -- (2.425, 0.03);
\draw [thick] (2.425, 0.27) node[below, black]{\tiny $\frac{5}{8}$} -- (2.425, 0.33);
\draw [thick] (2.425, 0.57) node[below, black]{\tiny $\frac{3}{8}$} -- (2.425, 0.63);
\draw [thick] (2.425, 0.87) node[below, black]{\tiny $\frac{7}{8}$} -- (2.425, 0.93);
\draw [thick] (2.4875, 1) -- (2.4875,1.1);
\draw [thick] (2.4875, 1.1) -- (2.3625,1.1);
\draw [->, thick] (2.3625, 1.1) -- (2.3625,1);

\draw [->, thick] (2.65,0.5) -- (2.85,0.5);

\draw [gray, line width=0.2cm] (2.95,0) node[below, black]{\tiny $0$} -- (3.075,0) node[below, black]{\tiny $\frac{1}{8}$};
\draw [gray, line width=0.2cm] (2.95,0.2) node[below, black]{\tiny $\frac{1}{2}$} -- (3.075,0.2) node[below, black]{\tiny $\frac{5}{8}$};
\draw [gray, line width=0.2cm] (2.95,0.4) node[below, black]{\tiny $\frac{1}{4}$} -- (3.075,0.4) node[below, black]{\tiny $\frac{3}{8}$};
\draw [gray, line width=0.2cm] (2.95,0.6) node[below, black]{\tiny $\frac{3}{4}$} -- (3.075,0.6) node[below, black]{\tiny $\frac{7}{8}$};
\draw [gray, line width=0.2cm] (2.95,0.8) node[below, black]{\tiny $\frac{1}{8}$} -- (3.075,0.8) node[below, black]{\tiny $\frac{1}{4}$};
\draw [gray, line width=0.2cm] (2.95,1) node[below, black]{\tiny $\frac{5}{8}$} -- (3.075,1) node[below, black]{\tiny $\frac{3}{4}$};
\draw [gray, line width=0.2cm] (2.95,1.2) node[below, black]{\tiny $\frac{3}{8}$} -- (3.075,1.2) node[below, black]{\tiny $\frac{1}{2}$};
\draw [gray, line width=0.2cm] (2.95,1.4) node[below, black]{\tiny $\frac{7}{8}$} -- (3.075,1.4) node[below, black]{\tiny $1$};
\end{tikzpicture}

\caption{Partial graph of the dyadic odometer}
\end{figure}

 Inductively, the transformation $T$ associated to this construction and called {\it Kakutani-von Neumann odometer} is defined on the countable sequence of intervals considered above by 
$$T_k(x)=x-1+2^{-k}+2^{-k-1}\qquad {\rm for}\ x\in[1-2^{-k},1-2^{-k-1}[\ {\rm and}\ k\geq 0\ .$$

Figure 3 shows the transformation $T$ on $[0,1/2[\cup [1/2,3/4[\cup [3/4,7/8[$.

\begin{figure}[h!]

\begin{center}
\begin{tikzpicture}[scale=6]
\draw (0,0) node[below, black]{\scriptsize 0} -- (1,0) node[below, black]{\scriptsize 1} -- (1,1) -- (0,1)node[left]{\scriptsize1} -- (0,0);
\draw [thick] (0,0.5) node[left, black]{\scriptsize $\frac{1}{2}$} -- (0.5,1);
\draw [thick] (0.5,0.25) -- (0.75,0.5);
\draw [thick] (0.75,0.125) -- (0.875,0.25);
\draw [dashed] (0.5,0) node[below, black]{\scriptsize $\frac{1}{2}$} -- (0.5,1); 
\draw [dashed] (0.75,0) node[below, black]{\scriptsize $\frac{3}{4}$} -- (0.75,1);
\draw [dashed] (0.875,0) node[below, black]{\scriptsize $\frac{7}{8}$} -- (0.875,1);
\draw [dashed] (0,0.5)  -- (1,0.5);
\draw [dashed] (0,0.25) node[left, black]{\scriptsize $\frac{1}{4}$}  -- (1,0.25);
\draw [dashed] (0,0.125) node[left, black]{\scriptsize $\frac{1}{8}$}  -- (1,0.125);
\end{tikzpicture}
\end{center}

\caption{Partial graph of the von Neumann-Kakutani transformation}
\end{figure}
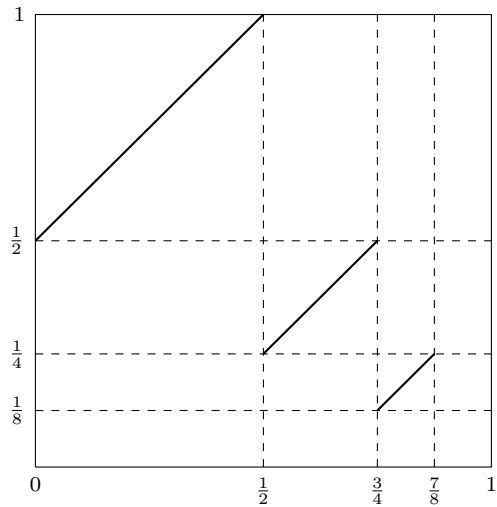
Lambert \cite{Lambert} proved that the sequence obtained as the orbit of $0$ under the von Neumann-Kakutani transformation $T$ is the van der Corput sequence, $(\phi_2(n))_{n\geq 0}$.

The von Neumann-Kakutani odometer is one of the simplest examples of a  rank one transformation  and, consequently, it is ergodic, even if ergodicity can be proved directly.
We now consider a slightly different variation of the cutting-stacking technique according to a substitution $\sigma$.\\

Let $A$ be a non empty set, called alphabet, of $s$ elements, called
letters. Usually we take $A = \{1,\dots , s\}$. A word $w$ of length $|w| = n$ on $A$ is an ordered string $w_1\dots w_n$ of $n$ letters $w_j$ in $A$. A word of length $0$ is called empty word and denoted by $\wedge$. For any letter $a$, the number of occurrences of $a$ in $w$ is denoted $|w|_a$. Hence $|w| =\sum_{a\in A}|w|_a$.\\ We denote by $A^*$ the set of words over the alphabet $A$, equipped with the concatenation law $(v_1\dots v_m) (w_1\dots w_n) =( v_1\dots v_mw_1 \dots w_n)$. $A^*$ is the free monoid generated by $A$, where the empty word is the neutral element.
\begin{definition}
A monoid endomorphism $\sigma : A^* \rightarrow A^*$ is called a substitution if $|\sigma(a)| \geq 1$ for all letters $a \in A$. If $\sigma(a) =\wedge$ for at least one letter, we say that
$\sigma$ is a pseudo-substitution.
\end{definition}
If $A = \{1,\dots , s\}$, then the following matrix $M(\sigma)$
\begin{equation*}
M(\sigma) =
 \begin{pmatrix}
  |\sigma(1)|_1 &  \cdots & |\sigma(s)|_1 \\
  \vdots  & \ddots & \vdots  \\
  |\sigma(1)|_s & \cdots & |\sigma(s)|_s
 \end{pmatrix}
\end{equation*}
is called the companion matrix of $\sigma$. It will play a fundamental role. Let $\mathcal{P}_s$ be the set of positive column vectors $l \in \mathbb{R}^s$, i.e. such that all entries $l_i$ of $l$ are
positive. For any couple $(l, l')$ of vectors in $\mathcal{P}_s$, we say that $l'$ derives from $l$ by
$\sigma$, and we write $l \sigma \rightarrow l'$, if the relation
\begin{equation*}
 l=M(\sigma)l'
\end{equation*}
holds.\\

Now let us consider the cutting-stacking process introduced in \cite{GHL}, where starting from a set of columns $S= \{C_1,\dots ,C_s\}$ we can build another set of columns $ S' = \{C'_1,\dots ,C'_s\}$, according to $\sigma$.\\
To do so, let $l$ be the column vector in $\mathcal{P}_s$ with entries $l_i = l(C_i)$ and assume that there exists $l' \in \mathcal{P}_s$ such that $l'$ derives from $l$ by
$\sigma$.  Now cut each column $C_j$ in order to create a set of $|\sigma|_j = \sum_{k=1}^s |\sigma(k)|_j$ (sum of
entries of the $j$-th line of $M(\sigma)$) sub-columns $S_j = \{C_{j,1},\dots ,C_{j,|\sigma|_j}\}$ such that $|\sigma(k)|_j$ of them have width $l'_k$. Then, for each $k$, build the column $C'_k$ by stacking $\sum_{k=1}^s|\sigma(k)|_j$
sub-columns such that
\begin{itemize}
\item $|\sigma(k)|_j$ sub-columns come from the sub-columns of width $l'_k$ in $S_j$;
\item from the bottom to the top the column $C'_k$ is built according to the word $\sigma(k) = \sigma_{k,1}\dots \sigma_{k,|\sigma(k)|}$
\begin{equation*}
C'_k=T_{\sigma_{k,1}}^{(1)}\ast T_{\sigma_{k,2}}^{(2)}\ast\dots T_{\sigma_{k,|\sigma(k)|}}^{(|\sigma(k)|)}\ ,
\end{equation*}
where $T_{\sigma_{k,j}}^{(j)}$ is a column from $S_{\sigma_{k,j}}$ not used yet. In the standard construction, we select the successive $T_{\sigma_{k,j}}^{(j)}$ in $C'_k$ from left to right.\\
This construction is not unique but at least a standard one exists due to
$l=M(\sigma)l'$. When we use a standard construction we say that the couple
$(S', l')$ derives from $(S, l)$ by taking into account the derivation $l=M(\sigma)l'$.
\end{itemize}
At this point we want to define the transformation $T_\sigma$ obtained by the iteration of the above derivation process. To make it possible we need to consider a particular class of substitutions on $A$, namely adapted substitutions and introduce some definitions.
\begin{definition}
A letter $a\in A$ is said to be expansive for the substitution $\sigma$ if the increasing sequence $n\mapsto |\sigma^n(a)|$ is unbounded.\\
We denote by $E(\sigma)$ the set of all expansive letters for $\sigma$ in $A$.
\end{definition}
\begin{definition}
A substitution $\sigma$ is called adapted if $E(\sigma) \neq \emptyset$ and for all expansive letters $a$ and all letters $x\in A$, there exists an integer
$k\geq 1$ such that $|\sigma^k(a)|_x \geq 1$.
\end{definition}
\begin{definition}
Let $\sigma$ be an adapted substitution. The period $h$ of $\sigma$ is the
period of the companion matrix of $\sigma_{E(\sigma)}$. Therefore, $h$ is given from any expansive letter $a$ by
\end{definition}
\begin{equation*}
h = {\rm gcd}\{k \geq 1 ; |\sigma^k_E(a)|_a \geq 1\}\ .
\end{equation*}
Now we can state a useful theorem that can be seen as a generalization of the Perron-Frobenius Theorem, already considered at the beginning of this section.
\begin{theorem}
Let $\sigma$ be an adapted substitution with companion matrix $M = M(\sigma)$. Then
\begin{enumerate}
\item $M$ has an eigenvalue $\theta > 1$ and $\theta \geq |\lambda|$ for all eigenvalues $\lambda$ of $M$,
\item $\theta$ has an eigenvector with positive entries,
\item $\theta$ is simple.
\end{enumerate}
\end{theorem}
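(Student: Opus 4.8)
The statement is a Perron--Frobenius theorem for the non-negative integer matrix $M=M(\sigma)$, so the plan is to reduce it to the classical Perron--Frobenius theorem for irreducible matrices by first exposing the combinatorial structure forced by the hypothesis that $\sigma$ is adapted. The first step is to split the alphabet $A$ into the set $E=E(\sigma)$ of expansive letters and its complement $N=A\setminus E$. I would show that $N$ is $\sigma$-invariant, i.e.\ $\sigma(x)\in N^{*}$ for every $x\in N$: if $\sigma(x)$ contained an expansive letter $a$, then $|\sigma^{n+1}(x)|\ge|\sigma^{n}(a)|$ would be unbounded, contradicting $x\in N$. Ordering the letters so that those of $E$ come first, this invariance makes the block of $M$ with rows in $E$ and columns in $N$ vanish (its entries are $|\sigma(j)|_i=0$ for $j\in N$, $i\in E$), so that $M$ is block lower-triangular, $M=\left(\begin{smallmatrix}M_E & 0\\ B & M_N\end{smallmatrix}\right)$, where $M_E$ is the incidence matrix of $\sigma$ on the expansive letters and $M_N$ that of its restriction to $N$. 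In particular the spectrum of $M$ is the union of the spectra of $M_E$ and $M_N$.

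Second, I would prove that $M_E$ is irreducible. Since an occurrence of an expansive letter can only descend from expansive letters (by the invariance of $N$ just established), for $a,b\in E$ one has $(M_E^{k})_{ba}=|\sigma^{k}(a)|_{b}$; the adapted hypothesis then guarantees, for every pair $a,b\in E$, some $k\ge1$ with $(M_E^{k})_{ba}\ge1$, which is exactly strong connectivity of the incidence graph of $M_E$. Classical Perron--Frobenius applied to the irreducible non-negative matrix $M_E$ then furnishes a spectral radius $\theta=\rho(M_E)$ that is an algebraically simple eigenvalue, admits a strictly positive eigenvector $u>0$, and satisfies $\theta\ge|\lambda|$ for every eigenvalue $\lambda$ of $M_E$. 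This delivers conclusions 1--3 at the level of the expansive block.

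Third, I would control the non-expansive block and assemble the global statement. For $x\in N$ the column sums $\sum_{i}(M_N^{n})_{ix}=|\sigma^{n}(x)|$ stay bounded because $x$ is non-expansive, so the powers $M_N^{n}$ are bounded and hence $\rho(M_N)\le1$. Combined with $\theta>1$, this makes $\theta$ strictly dominant over all of $M$: no eigenvalue of $M_N$ can reach $\theta$, so $\theta\ge|\lambda|$ for every eigenvalue $\lambda$ of $M$, and the multiplicity of $\theta$ in $M$ equals its multiplicity in $M_E$, namely $1$, giving simplicity. To produce a positive eigenvector of the full $M$ I would solve $Mv=\theta v$ triangularly: setting $v_E=u$ forces $v_N=(\theta I-M_N)^{-1}Bu$, and since $\theta>\rho(M_N)$ the inverse expands as the convergent Neumann series $\sum_{k\ge0}\theta^{-k-1}M_N^{k}$, which is entrywise non-negative, so $v_N\ge0$; the adapted reachability condition (every letter, in particular every $x\in N$, is reached from an expansive letter) then upgrades this to $v_N>0$.

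The step I expect to be the genuine obstacle is establishing the \emph{strict} inequality $\theta>1$. An irreducible non-negative integer matrix has spectral radius at least $1$, with equality exactly when it is a permutation matrix, so I must exclude the possibility that $M_E$ is a permutation. The delicate point is that a letter can be expansive merely by \emph{accumulating} non-expansive factors along its orbit even when $M_E$ is a permutation, so expansiveness of the letters of $E$ is not by itself enough: the argument must use the full force of the adapted hypothesis (and, if needed, the recurrence structure measured by the period $h$ of $M_E$) to force some expansive letter to spawn at least two expansive letters under iteration, i.e.\ a column sum of a power of $M_E$ strictly exceeding $1$. Pinning down precisely how ``adapted'' rules out the permutation case is where the real work lies; once $\theta>1$ and $\rho(M_N)\le1$ are in hand, the remaining assembly of conclusions 1--3 is routine.
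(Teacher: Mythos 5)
Your reduction is the right one, and your steps 1--3 are correct as written: the $\sigma$-invariance of $N=A\setminus E$, the block-triangular form $M=\left(\begin{smallmatrix}M_E&0\\ B&M_N\end{smallmatrix}\right)$ (exactly the form the text displays right after the theorem), the identity $(M_E^{k})_{ba}=|\sigma^{k}(a)|_{b}$ for $a,b\in E$ making adaptedness literally equivalent to strong connectivity of $M_E$, the bound $\rho(M_N)\leq 1$ from boundedness of $|\sigma^{n}(x)|$ for $x\in N$, and the triangular solve $v_N=(\theta I-M_N)^{-1}Bu\geq 0$, upgraded to $v_N>0$ by reachability: writing $B_m$ for the lower-left block of $M^m$ one has $(B_m u)_x=\sum_{j=0}^{m-1}\theta^{m-1-j}(M_N^{j}Bu)_x$, and choosing $m$ with $(B_m)_{xa}=|\sigma^{m}(a)|_x\geq 1$ forces some summand, hence $(v_N)_x$, to be positive. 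Note also that the thesis gives no proof of this theorem; it is quoted as background from the cutting--stacking literature, so there is no in-paper argument to compare against.

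However, the step you flagged as ``the genuine obstacle'' is worse than an obstacle: with the definitions exactly as stated here, $\theta>1$ is false, and your suspicion that a letter can be expansive merely by accumulating non-expansive factors while $M_E$ is a permutation is realized by a two-letter example. Take $A=\{a,x\}$ with $\sigma(a)=ax$, $\sigma(x)=x$. Then $\sigma^{n}(a)=ax^{n}$, so $a$ is expansive and $x$ is not, and $\sigma$ is adapted in the stated sense since $|\sigma(a)|_a=|\sigma(a)|_x=1$. Yet $M=\begin{pmatrix}1&0\\1&1\end{pmatrix}$ has $1$ as its only eigenvalue, with algebraic multiplicity two and eigenspace spanned by $(0,1)^{\mathsf T}$: all three conclusions fail simultaneously. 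So no amount of ingenuity closes your step 4; the statement needs a stronger hypothesis. The minimal repair consistent with your proof is to assume $M(\sigma_E)$ is not a permutation matrix --- equivalently, that some iterate of some expansive letter contains at least two occurrences of expansive letters --- after which your own column-sum argument (an irreducible non-negative integer matrix with a column sum $\geq 2$ has spectral radius $>1$, by pairing with the positive left Perron vector) delivers $\theta>1$, and the rest of your assembly goes through verbatim. Alternatively, if ``adapted'' is read with the reachability reversed (every letter eventually produces an expansive letter), then every letter is expansive, $M=M_E$ is irreducible, and $M$ a permutation would force $|\sigma^{n}(a)|=1$ for all $a$, contradicting $E(\sigma)\neq\emptyset$; under that reading the theorem is true and your architecture proves it. Either way, you located the unique point of failure exactly.
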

The eigenvalue $\theta$ in the theorem above is called the dominant eigenvalue of
$M(\sigma)$ or of $\sigma$ and the unique eigenvector associated with $\theta$ such that the sum of its entries is equal to $1$ is called the unitary dominant eigenvector of $M(\sigma)$.
If $\sigma$ has p expansive letters we may assume that they form the set
$E = \{1, \dots , p\}$ so that the matrix $M(\sigma)$ takes the form
\begin{equation*}
M =
 \begin{pmatrix}
 M(\sigma_E) &  0 \\
 B & C
 \end{pmatrix}\ .
\end{equation*}
The dominant eigenvalue $\theta$ of $M$ is also the dominant eigenvalue of $M(\sigma_E)$ and
the first $p$ entries of the dominant eigenvector $l$ of $M$, after normalisation is the
dominant eigenvector of $M(\sigma_E)$.\\

So with this notation at hand we can define the interval exchange $T_\sigma : [0, 1[ \rightarrow [0, 1[$ by a cutting-stacking process associated to an adapted substitution $\sigma$ with dominating eigenvalue $\theta$.\\
The positive vector $l\in \mathcal{P}_s$ will be exactly the unitary dominant eigenvector of $M(\sigma)$.\\
Then the required transformation will be 
\begin{equation*}
T_\sigma : [0,1[\setminus {\rm top}(\Sigma)\rightarrow [0,1[\ ,
\end{equation*}
where $\Sigma= (S_n)_{n\geq 0}$.\\

We now analyze the example given in \cite[\S 3.7.4]{GHL}.
\begin{example}[The Fibonacci transformation]
The Fibonacci substitution $\sigma_{Fib}$ on the alphabet $A=\{0,1\}$ is defined as
\begin{equation*}
\sigma_{Fib}(0)=01 \qquad \sigma_{Fib}(1)=0\ .
\end{equation*}
Since the iteration of each letter gives rise to an infinite sequence, the substitution is adapted and the associated companion matrix is
\begin{equation*}
M(\sigma_{Fib}) =
 \begin{pmatrix}
 1 &  1 \\
 1 & 0 
 \end{pmatrix}\ .
\end{equation*}
The dominant eigenvalue is $\theta=\frac{1+\sqrt{5}}{2}$, the golden ratio, and the corresponding normalized dominant eigenvector is $l=\binom{\alpha}{1-\alpha}$, where $\alpha=\theta^{-1}=\theta -1=$\linebreak $\frac{\sqrt{5}-1}{2}$.\\
Moreover, one can notice that the following relation
\begin{equation*}
1 = F_{n+1}\alpha^{n+1} + F_n\alpha^{n+2}  
\end{equation*}
where $(F_n)_{n\geq 0}$ is the usual Fibonacci sequence, holds. So, by induction we also have that
\begin{equation*}
\alpha^{n+1} = (−1)^n(F_n\alpha- F_{n-1}) = ||F_n\alpha ||\ .
\end{equation*}
Now we want to get a transformation $T_{Fib}$, called the Fibonacci transformation, by means of the cutting-stacking derivations
\begin{equation*}
(S_n, l^{(n)})\xrightarrow{\sigma_{Fib}} (S_{n+1}, l^{(n+1)})\ .
\end{equation*}
\end{example}
According to the general procedure defined above, we have that our first set of columns is 
\begin{equation*}
S_0=\left\{C_1^{(0)}, C_2^{(0)}\right\}= \left\{[1-\alpha, 1[, [0,1-\alpha[\right\}\ ,
\end{equation*}
with $C_1^{(0)}, C_2^{(0)}$ of width $\alpha$ and $1-\alpha$, respectively.\\
At this point, applying the cutting-stacking derivation to $l=M(\sigma)l^{(1)}$ we get $l^{(1)}= \binom{\alpha^2}{\alpha^3}$. Then each new column $C_1^{(0)'}=C_1^{(1)}, C_2^{(0)'}=C_2^{(1)}$ will be obtained by stacking respectively 
\begin{equation*}
|\sigma_{Fib}(1)|_1+|\sigma_{Fib}(2)|_1=2
\end{equation*}
and
\begin{equation*}
|\sigma_{Fib}(1)|_2+|\sigma_{Fib}(2)|_2=1
\end{equation*}
sub-columns such that $w\left(C_1^{(1)}\right)=l_1^{(1)}=\alpha^2$ and $w\left(C_2^{(1)}\right)=l_2^{(1)}=\alpha^3$. Therefore
\begin{equation*}
S_1=\left\{C_1^{(1)}, C_2^{(1)}\right\}=
\left\{\begin{matrix}
  [0,1-\alpha[ &  \\
  [1-\alpha, 2-2\alpha[ & [2-2\alpha,1[
 \end{matrix}\right\}\ .
\end{equation*}
Let us produce one more step before giving the general description of the cutting-stacking process.\\
Applying again the standard derivation we get $l^{(2)}= \binom{\alpha^3}{\alpha^4}$. Then each new column $C_1^{(1)'}=C_1^{(2)}, C_2^{(1)'}=C_2^{(2)}$ will be obtained by stacking respectively 
\begin{equation*}
|\sigma_{Fib}(1)|_1+|\sigma_{Fib}(2)|_1=2
\end{equation*}
and
\begin{equation*}
|\sigma_{Fib}(1)|_2+|\sigma_{Fib}(2)|_2=1
\end{equation*}
sub-columns such that $w\left(C_1^{(2)}\right)=l_1^{(2)}=\alpha^3$ and $w\left(C_2^{(2)}\right)=l_2^{(2)}=\alpha^4$. Therefore
\begin{equation*}
S_1=\left\{C_1^{(1)}, C_2^{(1)}\right\}=
\left\{\begin{matrix}
  [2-2\alpha,1[ &  \\
  [2-3\alpha, 1-\alpha[ & [0,2-3\alpha[\\
  [3-4\alpha, 2-2\alpha[ & [1-\alpha, 3-4\alpha[
 \end{matrix}\right\}\ .
\end{equation*}
In general, the standard derivation depends on the parity of $n$ and at every step we have that $S_n$ is composed of two columns $C_1^{(n)}$ and $C_2^{(n)}$ of height respectively $F_{n+1}$ and $F_n$ and width $w\left(C_1^{(n)}\right)=\alpha^{n+1}$ and $w\left(C_2^{(n)}\right)=\alpha^{n+2}$, respectively.\\
Since the position of the intervals in the columns depends on the parity of $n$, we have to describe top$(C_1^{(n)})$ and top$C_1^{(n)}$ for $n=2m$ and $n=2m+1$. If we denote by $\{x\}$ the fractional part of $x$, we get that
\begin{equation*}
{\rm top}\left(C_1^{(2m)}\right)= \left[\{-F_{2m}\alpha\}, 1 \right[\qquad {\rm top}\left(C_2^{(2m)}\right)= \left[0, \{-F_{2m+1}\alpha\}\right[
\end{equation*}
and
\begin{equation*}
{\rm top}\left(C_1^{(2m+1)}\right)= \left[0, \{-F_{2m+1}\alpha\} \right[\qquad {\rm top}\left(C_2^{(2m+1)}\right)= \left[\{-F_{2m+2}\alpha\},1\right[\ .
\end{equation*}
The transformation $T_{Fib}$ obtained in this way is not defined at the point $0$. Moreover it is not difficult to see that its explicit expression is defined by two families of transformations
\begin{equation*}
T_{{Fib}_{2k}}: {\rm top}\left(C_1^{(2k)}\right)\longrightarrow {\rm bot}\left(C_2^{(2k)}\right)
\end{equation*}
\begin{equation*}
T_{{Fib}_{2k}}(x)=x+\alpha
\end{equation*}
and
\begin{equation*}
T_{{Fib}_{2k+1}}: {\rm top}\left(C_2^{(2k+1)}\right)\longrightarrow {\rm bot}\left(C_1^{(2k+1)}\right)
\end{equation*}
\begin{equation*}
T_{{Fib}_{2k+1}}(x)=x-\alpha^2\ .
\end{equation*}
Extending it by continuity to $0$, the graph of $T_{Fib}$ is shown in the following picture.
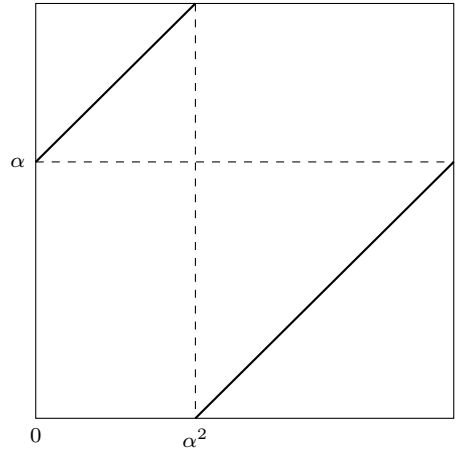
\begin{figure}[h!]
\begin{center}
\begin{tikzpicture}[scale=5.5]
\draw (0,0) node[below, black]{\scriptsize $0$} rectangle (1,1); 
\draw [thick] (0.38196,0) node[below, black]{\scriptsize $\alpha^2$} -- (1,0.618);
\draw [thick] (0,0.618)node[left, black]{\scriptsize $\alpha$} -- (0.38196,1);
\draw [dashed] (0,0.61803)  -- (1 ,0.61803); 
\draw [dashed] (0.38196,1)  -- (0.38196,0); 
\end{tikzpicture}
\end{center}
\caption{Fibonacci transformation}
\end{figure}
As we have already pointed out, this construction is not the only one possible, starting from the Fibonacci substitution, but is the standard one in the sense that it is obtained via the standard derivation.\\
It is worth considering that this transformation is nothing else but the $T_\alpha$, for $\alpha=\frac{\sqrt{5}-1}{2}$ defined after Definition \ref{rotation}.
\subsection*{Topological dynamics}
So far, we have been studying a measurable map $T$ defined on a probability space $(X,\mathcal{A}, \mu)$.
We have asked whether the given measure $\mu$ is invariant or ergodic. Now we focus on the space $\mathcal{M}^T(X)$ of all probability measures on the compact metric space $X$, which are invariant under a continuous transformation $T : X \rightarrow X$.\\
Any continuous map $T : X \rightarrow X$ induces a continuous map
\begin{equation*}
 T_* : \mathcal{M}(X) \rightarrow \mathcal{M}(X)
\end{equation*}
defined by $T_∗(\mu)(A) = \mu(T^{-1}A)$ for any Borel set $A \subset X$. Each point $x \in X$ defines a measure $\delta_x$ (called the Dirac measure) by
\begin{equation*}
\delta_x(A)=\begin{cases}
             1 & \ {\rm if\ } x\in A\\
             0 & \ {\rm if\ } x\notin A\ .
            \end{cases}
\end{equation*}
We claim that $T_∗(\delta_x) = \delta_{T(x)}$ for any $x \in X$. To see this, let $A \subset X$ be any measurable set, and notice that
\begin{equation*}
(T_∗\delta_x)(A) = \delta_x(T^{-1}A) = \delta_{T(x)}(A)\ .
\end{equation*}
This suggests that we should think of the space of measures $\mathcal{M}(X)$ as generalized points, and the transformation $T_∗ : \mathcal{M}(X) \rightarrow \mathcal{M}(X)$ as a natural extension of the map $T$ from the copy $\{\delta_x | x \in X\}$ of $X$ to the larger set $\mathcal{M}(X)$. For $f \in \mathcal{C}(X)$ and $\mu \in \mathcal{M}(X)$,
\begin{equation*}
\int_X f d(T_*\mu)=\int_X f\circ Td\mu\ ,
\end{equation*}
and this property characterizes $T_∗$.\\
The map $T_∗$ is continuous and affine, so the set $\mathcal{M}^T(X)$ of $T$-invariant measures is a closed convex subset of $\mathcal{M}(X)$.\\
Given a continuous mapping $T : X \rightarrow X$ on a compact metric space, a natural question, is whether invariant measures necessarily exist, i.e. whether $\mathcal{M}^T(X)\neq \emptyset$. The next result and its corollary show that this is indeed the case.
\begin{theorem}
Let $T : X \rightarrow X$ be a continuous map of a compact metric
space, and let $(\nu_n)_{n\in \mathbb{N}}$ be any sequence in $\mathcal{M}(X)$. Then any weak*-limit of the sequence $(\mu_n)_{n\in\mathbb{N}}$ defined by
\begin{equation*}
\mu_n= \frac{1}{n}\sum_{j=0}^{n-1}T_*^j\nu_n
\end{equation*}
is a member of $\mathcal{M}^T(X)$.
\end{theorem}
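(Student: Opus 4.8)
The plan is to use the characterization of $T$-invariance recalled just before the statement: a measure $\mu \in \mathcal{M}(X)$ belongs to $\mathcal{M}^T(X)$ precisely when $T_*\mu = \mu$, which by the identity $\int_X f\, d(T_*\mu) = \int_X f\circ T\, d\mu$ is equivalent to
$$\int_X f\circ T\, d\mu = \int_X f\, d\mu \qquad \text{for every } f \in \mathcal{C}(X).$$
So, fixing an arbitrary $f \in \mathcal{C}(X)$, it suffices to show that the quantity $\int_X (f\circ T - f)\, d\mu$ vanishes for any weak*-limit $\mu$ of the sequence $(\mu_n)_{n\in\mathbb{N}}$.

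The key computation is a telescoping cancellation in the Cesàro average. First I would rewrite
$$T_*\mu_n = \frac{1}{n}\sum_{j=0}^{n-1} T_*^{j+1}\nu_n = \frac{1}{n}\sum_{j=1}^{n} T_*^{j}\nu_n,$$
so that all but the two extreme terms cancel against $\mu_n$, giving
$$T_*\mu_n - \mu_n = \frac{1}{n}\bigl(T_*^n \nu_n - \nu_n\bigr).$$
Testing against $f$ via the defining identity of $T_*$ applied to $\mu_n$, and using that $\nu_n$ and $T_*^n\nu_n$ are probability measures, I obtain
$$\left|\int_X f\circ T\, d\mu_n - \int_X f\, d\mu_n\right| = \frac{1}{n}\left|\int_X f\, d(T_*^n\nu_n) - \int_X f\, d\nu_n\right| \leq \frac{2\|f\|_\infty}{n},$$
which tends to $0$ as $n\to\infty$, uniformly in the choice of the $\nu_n$.

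It then remains to pass to the weak*-limit. Let $\mu$ be a weak*-limit of $(\mu_n)$, say $\mu_{n_k} \to \mu$; note that $\mu$ is again a probability measure since testing against the constant function $1$ preserves total mass, so $\mu \in \mathcal{M}(X)$. Because $T$ is continuous and $f$ is continuous, $f\circ T \in \mathcal{C}(X)$ as well, so weak*-convergence yields both $\int_X f\circ T\, d\mu_{n_k} \to \int_X f\circ T\, d\mu$ and $\int_X f\, d\mu_{n_k} \to \int_X f\, d\mu$. Combining these with the $\mathcal{O}(1/n)$ bound along the subsequence $(n_k)$ gives $\int_X f\circ T\, d\mu = \int_X f\, d\mu$. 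Since $f \in \mathcal{C}(X)$ was arbitrary, $T_*\mu = \mu$ and hence $\mu \in \mathcal{M}^T(X)$.

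The step that carries the whole argument is the telescoping identity $T_*\mu_n - \mu_n = \frac{1}{n}(T_*^n\nu_n - \nu_n)$, which is exactly what turns the \emph{defect from invariance} into an $\mathcal{O}(1/n)$ boundary term; everything else is routine. The one place where the hypotheses are used essentially is the continuity of $f\circ T$, which is precisely why the construction averages pushforwards of a \emph{continuous} map rather than of an arbitrary measurable one.
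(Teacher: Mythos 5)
Your proof is correct, and it is the standard Krylov--Bogolioubov argument: the paper itself states this theorem without proof (deferring to its ergodic-theory references, where exactly this telescoping computation $T_*\mu_n - \mu_n = \frac{1}{n}\bigl(T_*^n\nu_n - \nu_n\bigr)$ followed by the $\mathcal{O}(1/n)$ bound and weak*-passage appears). All steps check out, including the two points that are easy to gloss over --- that a weak*-limit point is extracted along a subsequence, and that $f\circ T$ is again continuous so that weak*-convergence applies to it --- so there is nothing to correct.
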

\begin{corollary}[Krylov-Bogolioubov]
Under the hypotheses of the previous Theorem, $\mathcal{M}^T(X)$ is non-empty.
\end{corollary}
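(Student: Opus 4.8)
The plan is to apply the previous theorem to a concrete choice of sequence $(\nu_n)$ and then extract a convergent subsequence using compactness of the space of probability measures. Since that theorem already guarantees that \emph{any} weak*-limit of the averaged measures lies in $\mathcal{M}^T(X)$, all that remains is to exhibit one such limit.

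First I would observe that $\mathcal{M}(X)$ is itself non-empty: since $X$ is a non-empty compact metric space, we may fix any point $x_0\in X$ and take the Dirac measure $\delta_{x_0}\in\mathcal{M}(X)$. Setting $\nu_n=\delta_{x_0}$ for every $n$ (any fixed choice in $\mathcal{M}(X)$ works equally well), I form the Ces\`{a}ro averages
\[
\mu_n=\frac{1}{n}\sum_{j=0}^{n-1}T_*^j\nu_n\ .
\]
Each $\mu_n$ is again a Borel probability measure, because $T_*$ preserves $\mathcal{M}(X)$ (recall $T_*\mu(A)=\mu(T^{-1}A)$, so $T_*\mu$ is a probability measure whenever $\mu$ is) and $\mathcal{M}(X)$ is convex, so the average of probability measures is a probability measure.

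Next I would invoke the weak*-compactness of $\mathcal{M}(X)$. Identifying measures with positive, normalised linear functionals on $\mathcal{C}(X)$ via the Riesz representation theorem, $\mathcal{M}(X)$ sits inside the closed unit ball of $\mathcal{C}(X)^*$, which is weak*-compact by the Banach-Alaoglu theorem. Because $X$ is a compact \emph{metric} space, $\mathcal{C}(X)$ is separable, so this ball is weak*-metrisable and hence weak*-sequentially compact; moreover $\mathcal{M}(X)$ is a weak*-closed subset of it. Consequently the sequence $(\mu_n)$ admits a weak*-convergent subsequence $\mu_{n_k}\rightharpoonup\mu$ with $\mu\in\mathcal{M}(X)$.

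Finally, the previous theorem, applied along this subsequence, yields $\mu\in\mathcal{M}^T(X)$, so $\mathcal{M}^T(X)\neq\emptyset$. The only non-formal point is the existence of a convergent subsequence, which is precisely where the metrisability of the weak* topology on $\mathcal{M}(X)$ (equivalently, the separability of $\mathcal{C}(X)$ coming from compactness of $X$) is used; this is the step I would state most carefully, since without it one obtains only a convergent \emph{subnet} and would then have to check that the theorem's conclusion survives passage to nets rather than sequences.
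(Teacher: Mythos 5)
Your proof is correct and is exactly the argument the paper intends: the corollary follows from the preceding theorem by fixing any $\nu_n$ (e.g.\ a Dirac measure), forming the Ces\`{a}ro averages, and extracting a weak*-convergent subsequence via compactness and metrisability of $\mathcal{M}(X)$ (the latter coming from separability of $\mathcal{C}(X)$ for compact metric $X$). Your closing remark about needing sequential compactness rather than mere subnet convergence is a careful and accurate observation, but it does not change the route.
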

Thus $\mathcal{M}^T(X)$ is a non-empty compact convex set, since convex combinations of elements of $\mathcal{M}^T(X)$ belong to $\mathcal{M}^T(X)$. It follows that $\mathcal{M}^T(X)$ is an
infinite set unless it is a singleton.\\
In general, it is difficult to identify measures with specific properties, but the ergodic measures are readily characterized in terms of the geometry of the space of invariant measures.\\
We denote by $\mathcal{E}(X, T) \subset \mathcal{M}^T(X)$ the set of ergodic $T$-invariant probability measures on $X$.\\
The next result will allow us to show that ergodic measures for continuous transformations on compact metric spaces always exist.
\begin{theorem}\label{uniq_erg_thm}
Let $T$ be a continuous transformation of a compact metric space $X$ equipped with the Borel $\sigma$-algebra $\mathcal{B}$. The following are equivalent
\begin{enumerate}
\item a $T$-invariant probability measure $\mu$ is ergodic;
\item $\mu$ is an extremal point of $\mathcal{M}^T(X)$.
\end{enumerate}
\end{theorem}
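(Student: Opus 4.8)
The plan is to prove the two implications separately: the direction $(1)\Rightarrow(2)$ directly, and $(2)\Rightarrow(1)$ by contraposition. Throughout I would use that two Borel probability measures on the compact metric space $X$ coincide as soon as they agree on $\mathcal{C}(X)$ (Riesz representation), together with Birkhoff's Theorem \ref{B}, both of which are available.

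For $(2)\Rightarrow(1)$ I would argue contrapositively. Suppose $\mu$ is \emph{not} ergodic, so there is a set $E\in\mathcal{B}$ with $T^{-1}E=E$ and $0<\mu(E)<1$. Define
\begin{equation*}
\mu_1(A)=\frac{\mu(A\cap E)}{\mu(E)},\qquad \mu_2(A)=\frac{\mu(A\cap E^c)}{\mu(E^c)}.
\end{equation*}
These are probability measures, and each is $T$-invariant: since $T^{-1}E=E$ we have $T^{-1}A\cap E=T^{-1}(A\cap E)$, so $\mu_1(T^{-1}A)=\mu(T^{-1}(A\cap E))/\mu(E)=\mu(A\cap E)/\mu(E)=\mu_1(A)$, and likewise for $\mu_2$ using $T^{-1}E^c=E^c$. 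One then checks $\mu=\mu(E)\mu_1+\mu(E^c)\mu_2$ directly, while $\mu_1(E)=1\neq 0=\mu_2(E)$ shows $\mu_1\neq\mu_2$. Hence $\mu$ is a nontrivial convex combination of distinct members of $\mathcal{M}^T(X)$, i.e.\ not extremal.

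For $(1)\Rightarrow(2)$, assume $\mu$ is ergodic and $\mu=t\mu_1+(1-t)\mu_2$ with $t\in(0,1)$ and $\mu_1,\mu_2\in\mathcal{M}^T(X)$; the goal is $\mu_1=\mu_2=\mu$. First I would note $\mu_1\ll\mu$: if $\mu(A)=0$ then $t\mu_1(A)+(1-t)\mu_2(A)=0$ forces $\mu_1(A)=0$, so every $\mu$-null set is $\mu_1$-null. Fix $g\in\mathcal{C}(X)$. By ergodicity and Birkhoff's Theorem the averages $A_Ng(x)=\frac1N\sum_{n=0}^{N-1}g(T^nx)$ converge to $\int_X g\,d\mu$ for $\mu$-a.e.\ $x$, hence also for $\mu_1$-a.e.\ $x$. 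Since $|A_Ng|\leq\|g\|_\infty$, dominated convergence gives $\int_X A_Ng\,d\mu_1\to\int_X g\,d\mu$; but $T$-invariance of $\mu_1$ yields $\int_X A_Ng\,d\mu_1=\int_X g\,d\mu_1$ for every $N$. Therefore $\int_X g\,d\mu_1=\int_X g\,d\mu$ for all $g\in\mathcal{C}(X)$, whence $\mu_1=\mu$, and the same computation gives $\mu_2=\mu$.

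The only delicate point I anticipate is the limit passage in $(1)\Rightarrow(2)$: one must justify that $\mu$-almost-everywhere convergence of the Birkhoff averages transfers to $\mu_1$-almost-everywhere convergence (this is precisely where $\mu_1\ll\mu$ is used) and that dominated convergence applies with the constant bound $\|g\|_\infty$. Working with test functions in $\mathcal{C}(X)$ rather than general $L^1(\mu)$ keeps these integrability matters trivial, and Riesz representation then upgrades agreement on $\mathcal{C}(X)$ to equality of measures. Everything else reduces to the routine identities indicated above; as an alternative to Birkhoff one could instead show that the Radon–Nikodym density $d\mu_1/d\mu$ is $T$-invariant and invoke Lemma \ref{lemma_ergodic}, but that route makes the limit/invariance bookkeeping heavier, so I would favour the Birkhoff argument.
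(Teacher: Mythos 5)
Your proof is correct, but note that the paper itself states this theorem without proof (it is quoted as a standard result from the ergodic-theory references, e.g.\ Walters and Einsiedler--Ward), so there is no in-text argument to match yours against; I therefore compare your route with the classical one. Your direction $(2)\Rightarrow(1)$ by conditioning $\mu$ on an invariant set $E$ with $0<\mu(E)<1$ is exactly the standard argument and is flawless, including the check that $T^{-1}E=E$ makes $\mu_1,\mu_2$ invariant. For $(1)\Rightarrow(2)$ the textbook proof usually runs through the Radon--Nikodym derivative: from $\mu_1\ll\mu$ one shows $h=d\mu_1/d\mu$ satisfies $h\circ T=h$ $\mu$-a.e.\ (this step, which you rightly call heavier bookkeeping, requires an argument such as comparing $\mu_1$ and $\mu$ on the sets $\{h<c\}$), and then Lemma \ref{lemma_ergodic} forces $h$ to be constant, hence $\mu_1=\mu$. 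Your alternative via Birkhoff's Theorem \ref{B} is equally valid and arguably cleaner: the null set where the averages $A_Ng$ fail to converge to $\int_X g\,d\mu$ is $\mu_1$-null precisely because $\mu_1\ll\mu$, the bound $|A_Ng|\leq\|g\|_\infty$ legitimizes dominated convergence against the probability measure $\mu_1$, invariance gives $\int_X A_Ng\,d\mu_1=\int_X g\,d\mu_1$ for every $N$, and agreement of $\mu_1$ with $\mu$ on $\mathcal{C}(X)$ upgrades to equality of Borel measures by Riesz representation together with the regularity of finite Borel measures on a compact metric space. What the Birkhoff route buys is the avoidance of any measurability or invariance analysis of the density $h$, at the price of invoking the pointwise ergodic theorem rather than only the elementary Lemma \ref{lemma_ergodic}; both are available in the paper, so your choice is sound and your proof complete.
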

A natural distinguished class of transformations is formed by those for which there is
only one invariant Borel measure. This measure is automatically ergodic, and
the uniqueness of this measure has several powerful consequences.
\begin{definition}

Let $X$ be a compact metric space and let $T:X\longrightarrow X$ be a
continuous map. Then $T$ is said to be uniquely ergodic if $\mathcal{M}^T(X)=\{\mu\}$. The dynamical system $(X,\mathcal{A},\mu,T)$ is said to be uniquely ergodic.
\end{definition}
\begin{theorem}
For a continuous map $T:X\longrightarrow X$ on a compact metric
space, the following properties are equivalent
\begin{enumerate}
\item $T$ is uniquely ergodic.
\item $|\mathcal{E}(X,T)|=1$.
\item For every continuous function $f: X\longrightarrow X$,
\begin{equation}\label{unique_erg}
\lim_{N\to\infty}\frac{1}{N}\sum_{n=0}^{N-1}f(T^nx)= C_f\ ,
\end{equation}
where $C_f$ is a constant independent of $x$.
\item For every continuous function $f: X\longrightarrow X$, the convergence \eqref{unique_erg} is uniform across $X$.
\item The convergence \eqref{unique_erg} holds for every $f$ in a dense subset of $\mathcal{C}(X)$.
\end{enumerate}
Under any of these assumptions, the constant $C_f$ in \eqref{unique_erg} is $\int_X f d\mu$, where $\mu$ is the unique invariant measure.
\end{theorem}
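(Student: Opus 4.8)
The plan is to establish the cycle of implications $(1)\Rightarrow(4)\Rightarrow(3)\Rightarrow(5)\Rightarrow(1)$, to handle the equivalence $(1)\Leftrightarrow(2)$ separately, and to read off the value of $C_f$ along the way. Throughout I use that $\mathcal{M}(X)$ is weak*-compact, since $X$ is a compact metric space, and that $\mathcal{M}^T(X)$ is a nonempty compact convex set by the Krylov-Bogolioubov corollary together with the affineness of $T_*$ noted just above.

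For $(1)\Leftrightarrow(2)$ I would invoke Theorem \ref{uniq_erg_thm}, which identifies $\mathcal{E}(X,T)$ with the set of extremal points of $\mathcal{M}^T(X)$. If $T$ is uniquely ergodic then $\mathcal{M}^T(X)=\{\mu\}$, its single point is trivially extremal, hence ergodic, so $|\mathcal{E}(X,T)|=1$. Conversely, by the Krein-Milman theorem $\mathcal{M}^T(X)$ is the closed convex hull of its extremal points; if there is exactly one ergodic measure there is exactly one extremal point, and the closed convex hull of a singleton is that singleton, giving $\mathcal{M}^T(X)=\{\mu\}$.

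The heart of the argument is $(1)\Rightarrow(4)$, which I expect to be the main obstacle since it is the only step genuinely upgrading pointwise-to-a-constant convergence into \emph{uniform} convergence. I would argue by contradiction: if uniformity fails for some $f\in\mathcal{C}(X)$, extract $\varepsilon>0$, integers $N_k\to\infty$ and points $x_k\in X$ with $\bigl|\frac{1}{N_k}\sum_{n=0}^{N_k-1}f(T^n x_k)-\int_X f\,d\mu\bigr|\geq\varepsilon$. Forming the empirical measures $\nu_k=\frac{1}{N_k}\sum_{n=0}^{N_k-1}T_*^{\,n}\delta_{x_k}$ and passing to a weak*-convergent subsequence $\nu_k\to\nu$, which is possible by compactness, the Krylov-Bogolioubov construction shows $\nu\in\mathcal{M}^T(X)$; unique ergodicity then forces $\nu=\mu$. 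Since $\int_X f\,d\nu_k$ is precisely the Birkhoff average at $x_k$, weak* convergence yields $\int_X f\,d\nu_k\to\int_X f\,d\mu$, contradicting the lower bound. The implications $(4)\Rightarrow(3)$ and $(3)\Rightarrow(5)$ are then immediate, with $C_f=\int_X f\,d\mu$.

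It remains to close the loop with $(5)\Rightarrow(1)$. Let $\mathcal{D}$ be the dense subset on which the averages converge to constants $C_f$, and let $\mu_1,\mu_2\in\mathcal{M}^T(X)$ be arbitrary. For $f\in\mathcal{D}$ the $T$-invariance of $\mu_i$ gives $\int_X \frac{1}{N}\sum_{n=0}^{N-1}f\circ T^n\,d\mu_i=\int_X f\,d\mu_i$ for every $N$; letting $N\to\infty$ and applying dominated convergence, the averages being bounded by $\|f\|_\infty$, yields $\int_X f\,d\mu_i=C_f$. Thus $\mu_1$ and $\mu_2$ agree on $\mathcal{D}$, hence on all of $\mathcal{C}(X)$ by density and continuity of the integration functionals, and the Riesz representation theorem forces $\mu_1=\mu_2$. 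Together with nonemptiness this makes $\mathcal{M}^T(X)$ a singleton, so $T$ is uniquely ergodic, and the same computation confirms that the common value is $C_f=\int_X f\,d\mu$.
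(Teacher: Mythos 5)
Your proposal is correct; note that the paper itself states this theorem without proof, as part of its survey of classical ergodic theory, so there is no in-paper argument to compare against. What you give is precisely the standard textbook route (as in Walters or Einsiedler--Ward, which the chapter cites): the equivalence $(1)\Leftrightarrow(2)$ via Theorem \ref{uniq_erg_thm} together with Krein--Milman, the contradiction argument for $(1)\Rightarrow(4)$ using empirical measures, weak* compactness, and the paper's Krylov--Bogolioubov theorem applied with $\nu_{N_k}=\delta_{x_k}$, and the closing step $(5)\Rightarrow(1)$ by invariance, dominated convergence, density, and Riesz representation, which also yields $C_f=\int_X f\,d\mu$. All steps are sound, including the two points most often fumbled: extracting the subsequence in $(1)\Rightarrow(4)$ while retaining the $\varepsilon$ lower bound, and remembering nonemptiness of $\mathcal{M}^T(X)$ in $(5)\Rightarrow(1)$; the only cosmetic remark is that the theorem's hypothesis ``$f:X\longrightarrow X$'' is a typo in the paper for $f\in\mathcal{C}(X)$, and you correctly treat $f$ as scalar-valued throughout.
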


An immediate consequence of the previous theorem is that the orbit $(T^nx)_{n\in\mathbb{N}}$ of $x$ under $T$ is uniformly distributed for every $x\in X$.\\

Let $X =S^1$ and $T_\alpha :X\longrightarrow X$ be the irrational rotation on the unit circle. We have already proved that the Lebesgue measure $\lambda$ is an ergodic $T_\alpha$-invariant measure. Furthermore the following result says that the Lebesgue measure is the only invariant measure, i.e. $T_\alpha$ is uniquely ergodic.
\begin{lemma}
An irrational rotation of a circle $T_\alpha$ is uniquely ergodic and the only $T_\alpha$-invariant measure is the Lebesgue measure.
\end{lemma}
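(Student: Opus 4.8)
The plan is to exploit the characterization of unique ergodicity in terms of the Birkhoff averages on a dense subclass of $\mathcal{C}(S^1)$ (condition 5 of the equivalence theorem stated above) and to verify that condition directly on trigonometric polynomials by an explicit geometric-sum estimate. Since the earlier lemma already tells us that the Lebesgue measure $\lambda$ is $T_\alpha$-invariant, establishing unique ergodicity will automatically force $\lambda$ to be the unique invariant measure, giving both assertions at once.

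First I would reduce to a convenient dense family. The trigonometric polynomials, that is, finite linear combinations of the characters $e_k(x)=e^{2\pi i k x}$ with $k\in\mathbb{Z}$, are dense in $\mathcal{C}(S^1)$ by the Stone--Weierstrass theorem, so by linearity it suffices to verify the convergence \eqref{unique_erg} for each $e_k$. For $k=0$ the average is identically $1=\int_{S^1}e_0\,d\lambda$. For $k\neq 0$, writing $T_\alpha^n x = x+n\alpha \bmod 1$ and summing the geometric series,
\begin{equation*}
\frac{1}{N}\sum_{n=0}^{N-1} e_k(T_\alpha^n x)
= e^{2\pi i k x}\,\frac{1}{N}\sum_{n=0}^{N-1} e^{2\pi i k n\alpha}
= e^{2\pi i k x}\,\frac{1}{N}\cdot\frac{e^{2\pi i k N\alpha}-1}{e^{2\pi i k\alpha}-1}\ .
\end{equation*}
Here the irrationality of $\alpha$ enters decisively: since $k\alpha\notin\mathbb{Z}$ we have $e^{2\pi i k\alpha}\neq 1$, so the denominator is a fixed nonzero number, the modulus of the whole expression is bounded by $\tfrac{2}{N\,|e^{2\pi i k\alpha}-1|}$, and it therefore tends to $0=\int_{S^1}e_k\,d\lambda$ as $N\to\infty$, uniformly in $x$.

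Hence for every trigonometric polynomial $f$ the Birkhoff averages converge (uniformly) to the constant $\int_{S^1}f\,d\lambda$, so condition 5 of the equivalence theorem is satisfied and $T_\alpha$ is uniquely ergodic. Because the earlier lemma exhibits $\lambda$ as a $T_\alpha$-invariant measure and unique ergodicity means $\mathcal{M}^{T_\alpha}(S^1)$ is a singleton, $\lambda$ must be \emph{the} unique invariant measure.

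The step I expect to carry the real content is the density reduction rather than the computation: the geometric-sum estimate is elementary, but it is only useful because Stone--Weierstrass lets us pass from the characters $e_k$ to all of $\mathcal{C}(S^1)$, so that verifying \eqref{unique_erg} on the spanning family genuinely yields unique ergodicity. If instead one preferred to argue directly that an arbitrary invariant probability measure $\mu$ equals $\lambda$, the same computation applied to $\int e_k\,d\mu$ gives $(e^{2\pi i k\alpha}-1)\,\widehat{\mu}(k)=0$, whence $\widehat{\mu}(k)=0$ for all $k\neq 0$ and $\widehat{\mu}(0)=1$; the obstacle then shifts to invoking the uniqueness theorem for Fourier coefficients of measures (again Stone--Weierstrass together with Riesz representation) to conclude $\mu=\lambda$.
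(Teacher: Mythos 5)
Your proof is correct, and there is nothing in the paper to compare it against: the thesis states this lemma without proof, as one of several classical facts quoted from the standard references. Your route is nonetheless exactly the one the paper's own framework invites --- the equivalence theorem immediately preceding the lemma supplies condition 5 (convergence of Birkhoff averages on a dense subset of $\mathcal{C}(X)$), and your verification on the characters $e_k(x)=e^{2\pi i k x}$ via the geometric-sum bound $\tfrac{2}{N\,|e^{2\pi i k\alpha}-1|}$, together with Stone--Weierstrass, is the same elementary computation the thesis itself carries out in Chapter 1 when applying Weyl's criterion to the Kronecker sequence $(\{n\theta\})_{n\in\mathbb{N}}$. Combined with the earlier lemma exhibiting $\lambda$ as $T_\alpha$-invariant, unique ergodicity immediately identifies $\lambda$ as the unique invariant measure, so both assertions follow; your alternative closing argument via Fourier coefficients, giving $(e^{2\pi i k\alpha}-1)\,\widehat{\mu}(k)=0$ and hence $\widehat{\mu}(k)=0$ for $k\neq 0$, is equally sound once one invokes the uniqueness of Fourier coefficients of Borel measures on $S^1$.
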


One important question concerns the unique ergodicity of the cartesian product of uniquely ergodic systems, that is under which conditions the product of uniquely ergodic systems is uniquely ergodic. As for the product of ergodic systems, it turns out that the spectral analysis of the associated operators is very useful. More precisely, again we need to require that the spectra of the associated operators intersect only at 1, as explained by the following 
\begin{theorem}\label{product}
 Let $\mathcal{T}_i=(X_i,T_i), ~i=1,\ldots,s,$ be uniquely ergodic dynamical
systems. Then the dynamical system $\mathcal{T}_1 \times \ldots \times
\mathcal{T}_s$ is uniquely ergodic if and only if for all $i,j \in \{1, \ldots,
s\}, ~i \neq j,$ the discrete parts of the spectra of $T_i$ and $T_j$ intersect
only at 1.
\end{theorem}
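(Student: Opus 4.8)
The plan is to reduce the $s$-fold statement to the two-factor case and then combine two tools already available: the characterization of unique ergodicity asserting that a continuous $T$ on a compact metric space is uniquely ergodic if and only if, for every $f\in\mathcal{C}(X)$, the averages $\frac{1}{N}\sum_{n=0}^{N-1}f(T^nx)$ converge to a constant independent of $x$; and the criterion for ergodicity of a product (with respect to the product measure $\mu_1\times\cdots\times\mu_s$) in terms of common eigenvalues. For two factors I would write $S=T_1\times T_2$ and, using the characterization above together with the Stone--Weierstrass theorem, reduce the whole problem to controlling the averages $\frac{1}{N}\sum_{n=0}^{N-1}f_1(T_1^nx_1)f_2(T_2^nx_2)$ for $f_i\in\mathcal{C}(X_i)$, since finite sums of such product functions are dense in $\mathcal{C}(X_1\times X_2)$.

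For the \emph{only if} direction I would argue contrapositively. Suppose two of the factors, say $T_i$ and $T_j$, share an eigenvalue $\lambda\neq 1$; since each $U_{T_i}$ is an isometry of $L^2_{\mu_i}$ we have $|\lambda|=1$, and in a uniquely ergodic system the corresponding eigenfunctions admit continuous representatives $\phi,\psi$ of modulus one, which one obtains from the uniform convergence of the twisted averages $\frac{1}{N}\sum_{n=0}^{N-1}\bar\lambda^n f(T_i^nx)$. Tensoring with the constant function $1$ on the remaining coordinates, the function $F=\phi(x_i)\overline{\psi(x_j)}$ is continuous, non-constant, and satisfies $F\circ S=\lambda\bar\lambda\,F=F$. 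Hence $\frac{1}{N}\sum_{n=0}^{N-1}F(S^nx)=F(x)$ for all $N$, so the limit genuinely depends on $x$ and the uniform-constant criterion fails; the product is not uniquely ergodic. This shows a common eigenvalue other than $1$ is an obstruction.

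For the \emph{if} direction I would decompose, in $L^2(\mu_1)$, the function $f_1$ into its orthogonal projection onto the closed span of the eigenfunctions of $U_{T_1}$ and a remainder lying in the continuous-spectrum subspace, and similarly for the other factors. An eigenfunction component with eigenvalue $\lambda$ reduces the average to $\phi(x_1)\cdot\frac{1}{N}\sum_{n=0}^{N-1}\lambda^n f_2(T_2^nx_2)$; by a Wiener--Wintner type argument valid for uniquely ergodic systems these twisted averages converge uniformly in $x_2$, their limit being $\int f_2\,d\mu_2$ when $\lambda=1$ and $0$ whenever $\bar\lambda$ fails to be an eigenvalue of $T_2$. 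The hypothesis that the discrete spectra meet only at $1$ forces every component with $\lambda\neq1$ to contribute $0$, while the $\lambda=1$ component reproduces $\int f_1\,d\mu_1\int f_2\,d\mu_2$; the continuous-spectrum remainders have vanishing mean and their correlations average out. Thus $\frac{1}{N}\sum_{n=0}^{N-1}f_1(T_1^nx_1)f_2(T_2^nx_2)\to\int f_1\,d\mu_1\int f_2\,d\mu_2$, which by the reduction identifies $\mu_1\times\mu_2$ as the unique invariant measure.

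The routine parts are the reduction, the Stone--Weierstrass step, and the \emph{only if} direction. I expect the genuine difficulty to lie entirely in the \emph{if} direction, and precisely in upgrading the convergence of the twisted averages and of the continuous-spectrum remainder from the almost-everywhere or $L^2$ statements — which follow from the spectral theorem and the product-ergodicity criterion — to \emph{uniform} convergence at \emph{every} point, which is exactly what unique ergodicity, as opposed to mere ergodicity, demands. This is where the rigidity of uniquely ergodic systems must be exploited in full. A secondary point requiring care is the passage from two factors to general $s$: the product eigenfunctions carry eigenvalue $\prod_i\lambda_i$, so one must ensure that no nontrivial relation $\prod_i\lambda_i=1$ occurs, and I would have to check that the stated condition on the pairwise discrete spectra is read strongly enough to rule this out.
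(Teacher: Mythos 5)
First, a point of reference: the thesis states Theorem \ref{product} without proof (it is imported as a known tool for Chapter 4), so your proposal must be judged on its own merits. Your \emph{only if} direction reaches the right conclusion but leans on a false lemma: in a uniquely ergodic system a measurable eigenvalue need \emph{not} admit a continuous eigenfunction, and the uniform convergence of the twisted averages $\frac{1}{N}\sum_{n=0}^{N-1}\bar\lambda^n f(T^n x)$ that you invoke to produce one can genuinely fail (Robinson constructed uniquely ergodic counterexamples to the uniform Wiener--Wintner theorem; Toeplitz systems have measurable eigenvalues with no continuous versions). The fix is to avoid continuity altogether: a common eigenvalue $\lambda\neq 1$ with $L^2$ eigenfunctions $\phi,\psi$ of modulus one makes $\phi\otimes\bar\psi$ a non-constant invariant element of $L^2(\mu_1\times\mu_2)$, so the product measure is not ergodic, and its ergodic decomposition already yields two distinct invariant measures for the product, contradicting unique ergodicity.

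The \emph{if} direction, however, cannot be completed along your lines, and you correctly sensed that this is where the difficulty sits --- but it is not merely a matter of upgrading a.e.\ convergence to uniform convergence. Both of your key claims fail: the uniform twisted-average convergence fails for some uniquely ergodic systems (as above), and the assertion that the continuous-spectrum ``correlations average out'' is false at every diagonal point. Concretely, let $T_1=T_2=T$ be a strictly ergodic weakly mixing system (these exist, e.g.\ by the Jewett--Krieger theorem): both discrete spectra are trivial, so the hypothesis of Theorem \ref{product} holds, yet for $x_1=x_2=x$ unique ergodicity of the single system gives $\frac{1}{N}\sum_{n=0}^{N-1}f(T^nx)g(T^nx)\to\int fg\,d\mu$, not $\int f\,d\mu\int g\,d\mu$; equivalently, the diagonal measure is a second invariant measure and $T\times T$ is not uniquely ergodic. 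So the ``if'' implication is simply not provable at this level of generality: it requires the standing assumption, implicit in the Chapter 4 application (cf.\ Theorem \ref{disc_spec}), that the factors have purely discrete spectrum, in which case trivial intersection of the eigenvalue groups gives disjointness of the Kronecker systems and the argument can be run through Fourier analysis on the underlying group rotations. Finally, your closing worry about $s\geq 3$ is a genuine obstruction, not a question of reading the hypothesis strongly enough: pairwise triviality does not exclude relations $\lambda_1\lambda_2\cdots\lambda_s=1$. Rotations by $\alpha$, $\beta$, $\alpha+\beta$ with $1,\alpha,\beta$ rationally independent have pairwise trivially intersecting spectra, while $e^{2\pi i(x+y-z)}$ is a non-constant continuous invariant function of the triple product, which is therefore not even ergodic, let alone uniquely ergodic. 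Any complete proof must control joint, not merely pairwise, independence of the eigenvalue groups.
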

\subsection{Systems of numeration}
We have already considered some examples of numeration systems, such as the Ostrowski expansion or the $b$-adic and $\beta$-adic expansions. Roughly speaking, a numeration system is a coding of the elements of a set with a (finite or infinite) sequence of digits. The result of the coding, the sequence of digits, is a representation of the element.\\
In this paragraph we give a formal and general definition. For a complete survey on numeration systems from a dynamical viewpoint we refer to \cite{BBLT}.
\begin{definition}
A numeration system (resp. a finite numeration system) is a triple $(X, I,\varphi)$, where $X$ is a set, $I$ a finite or countable set, and $\varphi$ an injective map $\varphi: X \rightarrow I^{\mathbb{N}^*}$, defined by
\begin{equation*}
\varphi(x)=(\epsilon_n(x))_{n\geq 0}
\end{equation*}
The map $\varphi$ is the representation map, and $\varphi(x)$ is the representation of $x \in X$.
\end{definition}
Given a numeration system $(X, I,\varphi)$, one can define an expansion as a map $\psi : I^{\mathbb{N}^*}\rightarrow X$ such that $\psi \circ \varphi(x) = x$ for all $x \in X$. An
expansion of an element $x \in X$ is an equality $x =\psi(y)$; it is a proper expansion if $y = \varphi(x)$, and an improper expansion otherwise.\\
However this definition is still unsatisfactory since for instance it is not clear how the digits $\epsilon_i$ are produced and how the representation map is constructed. \\
The dynamical point of view on numeration systems lies precisely in these requirements. So we need to find a map whose iteration gives the sequences of digits of the representation. In order to define this map in a precise way we need to introduce the concept of fibred system introduced by Schweiger \cite{Schweiger}.
\begin{definition}
A fibred system is a set $X$ and a transformation $T : X \rightarrow X$
for which there exist a finite or countable set $I$ and a partition $X = \bigsqcup_{i\in I} X_i$ of $X$ such that the restriction $T_i$ of $T$ on $X_i$ is injective, for any $i \in I$. This yields a well defined map $\epsilon: X \rightarrow I$ that associates the index $i$ with $x \in X$ such that $x \in X_i$.
\end{definition}
Now it is easy to see that the definition of numeration system given above can have a dynamical description by means of a fibred system. The new definition of numeration system will be called fibred numeration system.\\
Briefly, assume that $(X,T)$ is a fibred system as above. Let
$\varphi: X \rightarrow I^{\mathbb{N}^*}$
be defined by $\varphi(x) = (\epsilon(T^nx))_{n\geq 0}$. We will write $\epsilon_n = \epsilon\circ T^{n-1}$ for short. Let $\sigma$ be the (right-sided) shift operator on $I^{\mathbb{N}^*}$. Then we get the following commutative diagram
\begin{center}
\begin{tikzpicture}[scale=2.5]
\node (A) at (0,1) {$X$};
\node (B) at (1,1) {$X$};
\node (C) at (0,0) {$I^{\mathbb{N}^*}$};
\node (D) at (1,0) {$I^{\mathbb{N}^*}$};
\path[->,font=\scriptsize,>=angle 90]
(A) edge node[above]{$T$} (B)
(A) edge node[left]{$\varphi$} (C)
(B) edge node[right]{$\varphi$} (D)
(C) edge node[below]{$\sigma$} (D);
\end{tikzpicture}
\end{center}
Formally, we have the following
\begin{definition}
Let $(X,T)$ be a fibred system and $\varphi: X \rightarrow I^{\mathbb{N}^*}$ be defined
by $\varphi(x) = (\epsilon(T^nx))_{n\geq 0}$. If the function $\varphi$ is injective (i.e., if $(X, I,\varphi)$ is a numeration system), we call the quadruple $\mathcal{N} = (X,T, I,\varphi)$ a fibred numeration system. Then $I$ is the set of digits of the numeration system;
the map $\varphi$ is the representation map and $\varphi(x)$ the $\mathcal{N}$-representation of $x$. In general, the representation map is not surjective. The set of prefixes of $\mathcal{N}$-representations is called the language $\mathcal{L} = \mathcal{L}(\mathcal{N})$ of the fibred numeration system, and its elements are said to be admissible. The admissible sequences
are defined as the elements $y \in I^{\mathbb{N}^*}$ for which $y = \varphi(x)$ for some $x \in X$.
\end{definition}
The representation map transports cylinders from the product space $I^{\mathbb{N}^*}$ to $X$, and for $(i_0, i_1,\dots , i_{n-1}) \in I^n$, one may define the cylinder 
\begin{equation*}
X \supset C(i_0, i_1, \dots, i_{n-1})= \bigcap_{j=0}^{n-1}T^{-j}(X_{i_j}) = \varphi^{-1}[i_0, i_1,\dots , i_{n-1}]\ .
\end{equation*}
Moreover, the assumption that the restriction of $T$ to
$X_i$ is injective says that the application $x \mapsto (\epsilon(x),T(x))$ is itself injective. It is a necessary condition for $\varphi$ to be injective, and $\mathcal{N}$ is a fibred numeration system if and only if
\begin{equation*}
\forall x\in X \qquad \bigcap_{n\geq 0}C(\epsilon_1(x),\epsilon_2(x),\dots, \epsilon_n(x))=\{x\}
\end{equation*}
In the case when $X$ is a metric space, this last condition is satisfied if for any admissible sequence $(i_1, i_2,\dots , i_n,\dots)$, the diameter of the cylinders $[i_1, i_2,\dots, i_n]$ tends to zero when $n$ tends to infinity. In this case, every closed subset $F$ of $X$ can be expressed as
\begin{equation*}
F=\bigcap_{n=1}^\infty \bigcup_{\substack{
   (i_1, \dots, i_n)\in\mathcal{L}\\
   [i_1, \dots, i_n]\cap F\neq \emptyset}} C(i_1, \dots, i_n)\ ,
\end{equation*}
which proves that the $\sigma$-algebra $\mathcal{A}$ generated by the cylinders is the Borel algebra. In general, $T$ is $\mathcal{A}$-measurable.\\
We can also define a topological structure on the fibred numeration system in the following way.
\begin{definition}
For a fibred numeration system $\mathcal{N} = (X,T, I,\varphi)$, with a Hausdorff topological space $I$ as digit set, the associated $\mathcal{N}$-compactification $X_{\mathcal{N}}$ is defined as the closure of $\varphi(X)$ in the product space $I^{\mathbb{N}^*}$.
\end{definition}
Now we can see that some of the examples of numeration systems already mentioned in the section can be obtained with a particular choice of $X, I, T$.
\begin{example}[$b$-adic representation]
Let $X = \mathbb{N}$, $I = \{0, 1,\dots , b-1\}$, $X_i = i+q\mathbb{N}$. Then, according to the definition of fibred system we have
\begin{equation*}
\epsilon(n)\equiv n({\rm mod}\ q)\ .
\end{equation*}
Let $T : X\rightarrow X$ be defined by 
\begin{equation*}
T(n) = \frac{n-\epsilon(n)}{q}\ .
\end{equation*}
So we have a fibred numeration system with language, set of representations and compactification given by
\begin{equation*}
\mathcal{L}_b=\bigcup_{n\geq 0}\{0,1,\dots, b-1\}^n\ ,
\end{equation*}
\begin{equation*}
\varphi(X) = I^{(\mathbb{N})} =\{(i_0,\dots , i_{n-1}, 0, 0,\dots); n\in \mathbb{N}, i_j \in \{0, 1, \dots , b-1\}\}\ , 
\end{equation*}
\begin{equation*}
X_{\mathcal{N}} = \{0, 1,\dots , b-1\}^{\mathbb{N}}\ .
\end{equation*}
On the same set $I$ of digits we can also consider the space $X=[0,1[$ with partition given by
\begin{equation*}
[0,1[=\bigsqcup_{i\in I} X_i=\bigsqcup_{i\in I} \left[\frac{i}{b},\frac{i+1}{b} \right[
\end{equation*}
and with transformation
\begin{equation*}
T(x) = qx-\lfloor qx\rfloor\ .
\end{equation*}
The language is again $\mathcal{L}_b$ and the compactification $\mathbb{Z}_b$, the compact group of $b$-adic integers. The set of representations is the whole product space without the sequences ultimately equal to $b-1$.
\end{example}
In the last part of this section we want to describe a popular extension of this method. It consists in changing the base $b$ at any step: this method is called Cantor expansion. \\In this way, the $b$-adic expansion can also be obtained beginning with the most significant digit, using the so called greedy algorithm.\\
This procedure still gives rise to a numeration system, but not to a fibred one. Moreover, this way of producing expansions of nonnegative integers yields a more general concept than the Cantor expansion, the $G$-scale, which is the most general possible way of representing nonnegative integers based on the greedy algorithm. As we have seen, given a fibred numeration system $\mathcal{N}$, we can consider its associated $\mathcal{N}$-compactification. For a $G$-scale, a compactification can be also built, but it is not possible in general to extend the addition from $\mathbb{N}$ to it in a reasonable way. Nevertheless, the addition by $1$ on $\mathbb{N}$ extends naturally and gives a dynamical system called odometer.

\begin{example}[$\beta$-expansions and $\beta$-adic van der Corput sequences]\label{ex_beta_adic}
The basic idea to obtain $\beta$-expansions is to replace $b$ by any real number $\beta > 1$. Then let $X = [0, 1[$ and $T_\beta:[0,1[\rightarrow [0,1[$ be the $\beta$-transformation defined by $T_\beta(x)=\beta x$ mod $1$. Moreover, let $I =\{0, 1,\dots, \lceil\beta\rceil - 1\}$. The we have the following fibred system $([0,1[, T_\beta)$
\begin{center}
\begin{tikzpicture}[scale=2.5]
\node (A) at (0,1) {$[0,1[$};
\node (B) at (1,1) {$[0,1[$};
\node (C) at (0,0) {$I^{\mathbb{N}^*}$};
\node (D) at (1,0) {$I^{\mathbb{N}^*}$};
\path[->,font=\scriptsize,>=angle 90]
(A) edge node[above]{$T_\beta$} (B)
(A) edge node[left]{$\varphi$} (C)
(B) edge node[right]{$\varphi$} (D)
(C) edge node[below]{$\sigma$} (D);
\end{tikzpicture}
\end{center}
where the representation map $\varphi$ of the fibred system is defined by
\begin{equation*}
x=\sum_{n=0}^\infty \frac{\epsilon_n}{\beta^n} \Leftrightarrow \varphi(x)=(\epsilon_0,\epsilon_1,\dots)\in I^{(\mathbb{N})}\ .
\end{equation*}

Parry \cite{Parry} proved that the set of admissible sequences $\varphi(X)$ is characterised in terms of one particular $\beta$-expansion. For $x \in [0, 1[$, set $d_\beta(x) = \varphi(X)$. In particular, let $d_\beta(1) = (t_n)_{n\geq 1}$. We then set $d^*_\beta(1) = d_\beta(1)$, if $d_\beta(1)$ is infinite, and
\begin{equation*}
d_\beta^*(1)=(t_1\dots t_{m-1},(t_m-1))^\omega\ ,
\end{equation*}
if $d_\beta(1)=t_1\dots t_{m-1}t_m0^\omega$ is finite, with $t_m\neq 0$. The set $\varphi(X)$ of $\beta$-expansions of real numbers in $[0, 1[$ is exactly 
\begin{equation*}
\varphi(X)=\{y\in I^{(\mathbb{N})} : \forall k\geq 1, \sigma^k y\prec d_\beta^*(1)\ .
\end{equation*}
where $\omega \prec \psi$ in the sense of the lexicographical order.\\
The set $X_{\mathcal{N}}=\overline{\varphi([0,1[)}$ is called one-sided $\beta$-shift. \\
Numbers $\beta$ such that $d_\beta(1)$ is ultimately periodic are called Parry numbers and those such that $d_\beta(1)$ is finite are called simple Parry numbers.\\
Then simple Parry numbers are those which produce improper expansions. To any element in $X_{\mathcal{N}}$, we can associate the number
\begin{equation*}
\psi_\beta(x)=\sum_{n=0}^\infty \epsilon_n\beta^{-n-1}\ .
\end{equation*}
Then $\psi_\beta(x)\in [0, 1[$ and numbers with two expansions are exactly those with finite expansion
\begin{equation*}
\psi_\beta(\epsilon_1 \dots \epsilon_{k-1}\epsilon_k 0^\omega) = \psi_\beta((\epsilon_1\dots \epsilon_{k-1}(\epsilon_k -1)d_\beta^*(1))\ .
\end{equation*}
If $\beta$ is a Pisot number, that is it is a real algebraic integer greater than 1 such that all its Galois conjugates are less than 1 in absolute value, then every element of $\mathbb{Q}(\beta)\cap [0, 1[$ admits
a ultimately periodic expansion (see e.g. \cite{K_Schmidt}). Hence $\beta$ is either a Parry number or a simple Parry number.\\

Now we can see how to construct $\beta$-adic van der Corput sequences. These sequences, denoted by $N_\beta$, were introduced for the first time by Ninomiya \cite{Ninomiya} in 1997. Their construction is based on the $\beta$-adic transformation and the fibred numeration system just defined.\\
In order to define them we need to consider the following sets:
\begin{itemize}
\item $X_\beta(n)$ is the set of admissible sequences of length $n$,
\item $Y_\beta(n)=\bigcup_{i=0}^nX_\beta(i)$,
\item $F_\beta(n)=\#X_\beta(n)$,
\item $G_\beta(n)=\sum_{i=0}^nF_\beta(i)=\#Y_\beta(n)$.
\end{itemize}
Now for an arbitrary positive integer $n$, define $l_n$ to satisfy $G_\beta(l_n)\leq n < G_\beta(l_n+1)$. Then, reverse the lexicographical order on the digits of $X_\beta(l_n+1)$ for every $n$. Then the sequence $N_\beta$ is defined as follows:
\begin{equation*}
N_\beta=\left\{ \psi_\beta(\omega^{l_n+1}_{n-G_\beta(l_n)+1})\right\}_{n=1}^\infty\ ,
\end{equation*}
where $\omega^{l_n+1}\in X_\beta(l_n+1)$.\\
In \cite{Ninomiya}, the following example with $\beta=\frac{\sqrt{5}+1}{2}$ is given explictly. Let us derive the first elements of the sequence $N_\beta$ by means of this method.\\
When $\beta$ is the golden mean $\frac{\sqrt{5}+1}{2}$, we have that $I=\{0,1\}$ and $d_\beta(1)=110^\omega$. Hence the set $\varphi(X)$ of admissible sequences is given by all sequences on the alphabet $I$ such that no two consecutive digits of the form $\epsilon_i\epsilon_{i+1}=11$ can exist, for every $i\geq 0$.\\
Now fix $n=4$. By writing down the sequence of admissible digits in the right-to-left order
\begin{eqnarray*}
& & 1\\
& & 01\\
& & 001\\
& & 101\\
& & 0001\\
& & 1001\\
& & 0101\\
& & 00001\\
& & \vdots\ 
\end{eqnarray*}
we can see that $G_\beta(3)\leq 4 < G_\beta(4)$. Then the first elements of $N_\beta$ with respect to $n=4$ are
\begin{eqnarray*}
& & N_\beta(1)=\beta^{-1}\\
& & N_\beta(2)=\beta^{-2}\\
& & N_\beta(3)=\beta^{-3}\\
& & N_\beta(4)=\beta^{-1}+\beta^{-3}\\
& & N_\beta(5)=\beta^{-4}\\
& & N_\beta(6)=\beta^{-1}+\beta^{-4}\\
& & N_\beta(7)=\beta^{-2}+\beta^{-4}
\end{eqnarray*}
Ninomiya \cite[Theorem 3.1]{Ninomiya} also proved that when $\beta$ is a Pisot number and all its conjugates belong to $\{z\in\mathbb{C} : |z|<1\}$, then the $\beta$-adic van der Corput sequence $N_\beta$ is low-discrepancy.\\
Steiner \cite{Steiner} gave an explicit formula for the discrepancy function of these sequences. In particular he showed that for Pisot numbers $\beta$ with irreducible polynomial, the discrepancy function $D(N,[0,y[)$ is bounded if and only if the $\beta$-expansion of $y$ is finite or its tail is the same as that of the expansion of 1. Moreover, if $\beta$ is a Parry number, then he showed that the discrepancy function is unbounded for all intervals of length $y\notin \mathbb{Q}(\beta)$. 
\end{example}

\begin{definition}
 Let $G=(G_n)_{n \geq 0}$ be an increasing sequence of positive integers with $G_0=1$. Then every positive integer can be expanded in the following way
\begin{equation*}
  n=\sum_{k=0}^{\infty}\varepsilon_k(n) G_k1 \ ,
\end{equation*}
where $\varepsilon_k(n) \in \{0, \ldots, \lceil G_{k+1}/G_k \rceil -1 \}$ and $\lceil x \rceil$ denotes the smallest integer not less than $x \in \mathbb{R}$. This expansion (called $G$-expansion) is uniquely determined and finite, provided that for every $K$
\begin{equation}\label{eq1}
\sum_{k=0}^{K-1}\varepsilon_k(n) G_k < G_K .
\end{equation}
For short we write $\varepsilon_k$ for the $k$-th digit of the $G$-expansion.
\end{definition}
$G = (G_n)_{n \geq 0}$ is called numeration system and the digits $\varepsilon_k$ can be computed by the following greedy algorithm (for details see for instance \cite{Fraenkel}).\\

Given a positive integer $n$, its greedy $G$-expansion is obtained in the following way: let $i$ be the largest index such that $G_i \leq n < G_{i+1}$. Set 
\begin{equation*}
\epsilon_i= \left\lfloor \frac{n}{G_i}\right\rfloor\ .
\end{equation*}
Then $n$ can be written as 
\begin{equation*}
n = \epsilon_i Gi + n_i\ ,
\end{equation*}
with $0 \leq n_i < G_i$. By iterating this procedure with $n_i$ the expansion we obtain that the greedy representation for $n$ is the string $\epsilon_i\epsilon_{i-1} \dots \epsilon_1\epsilon_0$ and we say $n$ is greedily representable.\\

It is easy to see that every non-negative integer is greedily representable if and only if $G_0 = 1$.
If $G_0 \neq 1$, it is possible for a number to be representable, but not greedily representable. For example, consider expressing $12$ in the numeration system $G$ where $(G_n)_{n \geq 0}$ is the sequence of prime numbers $(G_0,G_1,G_2,\dots) = (2, 3, 5, \dots)$. In this case we have that the largest index such that $G_i \leq 12 < G_{i+1}$ is 4, with corresponding $G_4=11$, but the procedure cannot go on. \\
Note that if $G_0 = 1$, then the greedy representation is in fact the lexicographically greatest representation. A desirable property of any numeration system is that the mapping that sends an integer $n$ to its representation is order-preserving. For more properties enjoyed by digits and numeration systems we refer to \cite{Allouche_Shallit, Shallit}.\\

We denote by $\mathcal{K}_G$ the subset of sequences that satisfy (\ref{eq1}), i.e. $\mathcal{K}_G$ is the set of sequences $\epsilon = \epsilon_0\epsilon_1\dots$ belonging to the infinite product
\begin{equation*}
\Pi(G)=\prod_{m=0}^\infty\{0,1,\dots ,\lfloor G_{k+1}/G_k \rfloor -1  \}\ ,
\end{equation*}
satisfying (\ref{eq1})\ . The elements of $\mathcal{K}_G$ are called $G$-admissible. \\
Although the $G$-scale is not fibred, one may consider its compactification, in the sense of the closure of the language in the product space $\Pi(G)$. The set of nonnegative integers $\mathbb{N}$ is embedded in $\mathcal{K}_G$ by the canonical injection $n\mapsto \epsilon_0(n)\epsilon_1(n)\dots \epsilon_L(n)0^\infty$.\\
Obviously $\mathcal{K}_G$ is compact and it will be called the $G$-compactification of $\mathbb{N}$.
In order to extend the addition-by-one map defined on $\mathbb{N}$ to $\mathcal{K}_G$ we introduce the set $\mathcal{K}_G^0 \subseteq \mathcal{K}_G$
\begin{equation}
 \mathcal{K}_G^0=\left\lbrace x\in \mathcal{K}_G\ : \exists M_x, \forall j\geq M_x, \quad \sum_{k=0}^{j}\varepsilon_k G_k\ < G_{j+1}-1\right\rbrace\ .
\end{equation}
Put $x(j)=\sum_{k=0}^{j}\varepsilon_k G_k$, and set
\begin{equation}\label{eq2}
 \tau(x)=(\varepsilon_0(x(j)+1)\dots \varepsilon_j(x(j)+1))\varepsilon_{j+1}(x)\varepsilon_{j+2}(x)\dots \ ,
\end{equation}
for every $x\in \mathcal{K}_G^0$ and $j\geq M_x$. This definition does not depend on the choice of $j\geq M_x$. In fact, let $l$ be the greatest integer such that $x(l-1)+1 = G_l$ provided that such an $l$ exists; otherwise there is no carry and we just add one to the first digit. Then for all $j \geq l$ we have
\begin{eqnarray*}
 (x(j)+1)&=& (\epsilon_0(x(l)+1)\dots \epsilon_l(x(l)+1))x_{l+1}\dots x_j\\
 &=& 0^l(x_l+1)x_{l+1}\dots x_j\ .
\end{eqnarray*}
So we can extended the definition of $\tau$ to sequences $x$ in $\mathcal{K}_G\setminus \mathcal{K}_G^0$ by $\tau(x)=0=(0^{\infty})$. In this way the transformation $\tau$ is defined on $\mathcal{K}_G$ and it is called $G$-odometer. We refer to \cite{glt} for a complete survey on odometers related to general numeration systems.\\
In particular, the authors in \cite{glt} focus their attention to a particular family of $G$-odometers, namely $G$-odometers where the base sequence is a
linear recurrence. In this case they can show that the map $\tau$ is continuous and $(\mathcal{K}_G, \tau)$ is uniquely ergodic.\\
In the sequel we summarise the important steps of this result, that will be necessary in the proofs of Chapter 4.\\

In the sequel we will consider sequences $(G_n)_{n\in\mathbb{N}}$ generated by a finite linear recurrence of order $d+1$, where $d+1$ is the period length:
\begin{equation}\label{rec}
 G_{n+d+1}=a_0G_{n+d} + a_{1}G_{n+d-1}+\dots + (a_d+1)G_n, \quad n\geq 0\ .
\end{equation}
and the initial values are given by $G_0=1$ and 
\begin{equation*}
G_{n+1}=\sum_{k=0}^n a_{n-k}G_k+1\ .
\end{equation*}

In \cite[Theorem 5]{glt}, the authors show that the odometer on an admissible
numeration system $G$ is uniquely ergodic, providing an explicit formula for the unique
invariant measure $\mu$.
\begin{theorem}
The odometer $\tau$ is a uniquely ergodic transformation, i.e.
there is a unique invariant measure $\mu$ given by
{ \small   
\begin{align}
 &\mu(Z) \label{mu} =\\ 
 &\frac{F_{K,0} \beta^{d-1} + (F_{K,1} - a_0 F_{K,0}) \beta^{d-2} + \ldots +
(F_{K,d-1} - a_0 F_{K,d-2} - \ldots - a_{d-2} F_{K,0})}{\beta^K (\beta^{d-1} +
\beta^{d-2} + \ldots + 1)},\notag
\end{align}}
where $F_{k,r} := \# \{ n < G_{k+r}: (\varepsilon_0(n), \varepsilon_1(n),
\ldots) \in Z \}$ and $Z$ is a cylinder with length $k$.
\end{theorem}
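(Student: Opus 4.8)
The plan is to prove unique ergodicity through the criterion recalled above, namely that a continuous map on a compact metric space is uniquely ergodic as soon as the Birkhoff averages $\frac{1}{N}\sum_{n=0}^{N-1} f(\tau^n x)$ converge, uniformly in $x$, to a constant for every $f$ in a dense subset of $\mathcal{C}(\mathcal{K}_G)$. Since each cylinder is clopen in $\mathcal{K}_G$, its indicator is continuous, and finite linear combinations of such indicators (the locally constant functions) are dense in $\mathcal{C}(\mathcal{K}_G)$ by Stone--Weierstrass. Hence it suffices to prove that for every cylinder $Z$ of length $K$ the averages $\frac{1}{N}\sum_{n=0}^{N-1}\mathbf{1}_Z(\tau^n x)$ converge, uniformly in $x$, to the number $\mu(Z)$ displayed in \eqref{mu}. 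The functional $f\mapsto\lim_N \frac1N\sum_{n=0}^{N-1} f(\tau^n x)$ is then a positive normalized linear functional, so by the Riesz representation theorem it is integration against a Borel probability measure $\mu$, which is automatically $\tau$-invariant and is the unique such measure.

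Next I would recast the Birkhoff sum as a counting problem. Because $\tau$ is addition-by-one and $\mathbb{N}$ embeds densely via $n\mapsto(\varepsilon_0(n),\varepsilon_1(n),\dots)$, the orbit of the origin satisfies $\tau^n(\mathbf{0})=(\varepsilon_0(n),\varepsilon_1(n),\dots)$, so that
\begin{equation*}
\sum_{n=0}^{N-1}\mathbf{1}_Z(\tau^n\mathbf{0})=\#\{0\le n<N:\ (\varepsilon_0(n),\varepsilon_1(n),\dots)\in Z\},
\end{equation*}
which is exactly $F_{K,r}$ when $N=G_{K+r}$. Uniformity in $x$ is the first genuine point: since $\mathbf{1}_Z$ depends only on the first $K$ coordinates, and passing from the orbit of $\mathbf{0}$ to that of an arbitrary $x$ alters only finitely many low-order digit patterns through carry propagation, I would prove the bounded-difference estimate
\begin{equation*}
\Bigl|\sum_{n=0}^{N-1}\mathbf{1}_Z(\tau^n x)-\sum_{n=0}^{N-1}\mathbf{1}_Z(\tau^n\mathbf{0})\Bigr|\le C(Z),
\end{equation*}
valid for all $x$ and all $N$ with a constant depending only on $Z$. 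Dividing by $N$ then reduces the whole problem to the single orbit of the origin, uniformly.

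The core computation is to evaluate $\lim_{r\to\infty} F_{K,r}/G_{K+r}$ and to check that the Cesàro averages over all $N$ (not only $N=G_{K+r}$) share this limit. Fixing $K$, I would establish that the sequence $r\mapsto F_{K,r}$ eventually satisfies the same order-$(d+1)$ recurrence \eqref{rec} as $(G_n)$: splitting the range $[0,G_{K+r+d+1})$ according to its most significant digit block and invoking admissibility, the count of cylinder-compatible integers reproduces $F_{K,r+d+1}=a_0F_{K,r+d}+\dots+(a_d+1)F_{K,r}$. By the Perron--Frobenius-type theorem recalled earlier the companion matrix of \eqref{rec} has a simple dominant root $\beta>1$, so both $F_{K,r}$ and $G_{K+r}$ are, up to exponentially smaller terms, constant multiples of $\beta^{r}$; the ratio therefore converges, the projection of the initial data $(F_{K,0},\dots,F_{K,d-1})$ onto the $\beta$-eigendirection being precisely the numerator $F_{K,0}\beta^{d-1}+(F_{K,1}-a_0F_{K,0})\beta^{d-2}+\dots$, while the analogous dominant component of $G_{K+r}$ yields the denominator $\beta^{K}(\beta^{d-1}+\dots+1)$. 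Matching the dominant spectral component with the stated closed form, and controlling the subdominant roots so that they wash out both in the Cesàro limit and in the interpolation between consecutive $G_{K+r}$, is the step I expect to be the main obstacle.

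Finally I would assemble the pieces. The three preceding steps produce, for every cylinder indicator, a common limit that is independent of $x$ and equal to the value in \eqref{mu}; hence the unique-ergodicity criterion applies and $\mu$ is the unique invariant measure. Consistency of these values on nested cylinders (finite additivity) guarantees, via the Carathéodory extension through the semiring of cylinders, which generates the Borel $\sigma$-algebra of $\mathcal{K}_G$ as noted above, that $\mu$ extends to a genuine Borel probability measure whose cylinder values are exactly those given by \eqref{mu}.
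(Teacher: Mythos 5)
A preliminary remark: the thesis itself does not prove this statement; it is quoted verbatim from \cite[Theorem 5]{glt} (with the misprinted formula corrected in \cite{Barat_Grabner}), so your proposal must be measured against the proof in that source. Your overall architecture does match it: unique ergodicity via uniform convergence of Birkhoff averages on cylinder indicators (cylinders are clopen, locally constant functions are dense), reduction to digit counting along orbits, the observation that $r \mapsto F_{K,r}$ satisfies the recurrence \eqref{rec}, and extraction of \eqref{mu} as the ratio of dominant spectral components of $F_{K,r}$ and $G_{K+r}$. Those parts of the plan are sound in spirit.

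The genuine gap is your bounded-difference lemma $\bigl|\sum_{n<N}\mathbf{1}_Z(\tau^n x)-\sum_{n<N}\mathbf{1}_Z(\tau^n\mathbf{0})\bigr|\le C(Z)$, which is false in the generality of the theorem. Write $F_Z(N):=\#\{n<N:(\varepsilon_0(n),\varepsilon_1(n),\dots)\in Z\}$ and $D_Z(N):=F_Z(N)-\mu(Z)N$, and take $x=\iota(v)$, the canonical embedding of an integer $v$ into $\mathcal{K}_G$; then $\tau^n x=\iota(v+n)$, so your claim reads $|D_Z(v+N)-D_Z(v)-D_Z(N)|\le C(Z)$ for all $v,N$. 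A uniformly bounded additivity defect together with $D_Z(N)=o(N)$ forces $D_Z$ itself to be bounded (a standard Hyers--Ulam argument gives $D_Z(N)=cN+O(C(Z))$, and sublinearity forces $c=0$). But the recurrences \eqref{rec} covered by the theorem include non-Pisot dominant roots --- e.g.\ $(a_0,a_1,a_2)=(1,0,c)$ with $c$ large has subdominant characteristic roots of modulus roughly $c^{1/3}>1$ --- and in such cases $D_Z$ has unbounded fluctuations, of order $N^{\log|\lambda_2|/\log\beta}$ for suitable cylinders; note that no Pisot-type assumption appears in the statement (Hypotheses \ref{hypA} and \ref{hypB} enter only for the discrete-spectrum result, Theorem \ref{disc_spec}). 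The heuristic behind your lemma, that passing from $\mathbf{0}$ to $x$ ``alters only finitely many low-order digit patterns through carry propagation'', is also incorrect: carries past any fixed level occur infinitely often along every orbit, and the evolution of the first $K$ digits is not autonomous (in the Zeckendorf system, incrementing $(0,1,0,\dots)$ resets $\varepsilon_0$ to $0$, while incrementing $(0,0,1,\dots)$ sets it to $1$). What is true, and what the theorem actually requires, is a uniform $o(N)$ estimate; proving it is the substance of the result, and it is obtained in \cite{glt} by decomposing an arbitrary orbit segment hierarchically into full blocks at all scales (the analogue of the greedy expansion of $N$), counting each block by $F$-type quantities, and summing the errors across scales via the dominant-root asymptotics --- which simultaneously handles your interpolation between the values $N=G_{K+r}$. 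This is exactly the step you flag as ``the main obstacle'' and leave unproved, so as it stands the reduction to the single orbit of $\mathbf{0}$ fails, and with it the uniformity in $x$ on which the unique-ergodicity criterion rests.
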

Note that the formula
in \cite[Theorem 5]{glt} included a misprint and was stated in corrected form in
\cite{Barat_Grabner}.\\

In \cite{glt} the author also prove that under a certain hypothesis the odometer has purely discrete spectrum. An analogous result was proved by Solo-\linebreak myak \cite{Solomyak} for linear recurrences with decreasing coefficients.
\begin{hypothesis}[Grabner, Tichy and Liardet \cite{glt}]\label{hypA}
There exists an integer $b > 0$ such that for all $k$ and 
\begin{equation*}
 N = \sum_{i = 0}^k \epsilon_i G_i + \sum_{j = k + b + 2}^\infty \epsilon_j G_j,
\end{equation*}
the addition of $G_m$ to $N$, where $m \geq k + b + 2$, does not change the
digits $\epsilon_0, \ldots, \epsilon_k,$ in the greedy representation i.e.\
\begin{equation*}
N + G_m = \sum_{i = 0}^k \epsilon_i G_i + \sum_{j = k + 1}^\infty \epsilon'_j
G_j.
\end{equation*}
\end{hypothesis}

\begin{theorem}\label{disc_spec}
$\mathcal{K}_G$ is (measure-theoretically) isomorphic to a group rotation with purely discrete spectrum given by the countable group
\begin{equation}\label{Gamma}
 \Gamma := \{z \in \mathbb{C}: \lim_{n \rightarrow \infty} z^{G_n} = 1 \}.
\end{equation}
provided that the above Hypothesis holds.
\end{theorem}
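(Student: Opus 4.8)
The plan is to invoke the Halmos--von Neumann theory (Theorem \ref{Discr_spectr_thm} together with the Representation Theorem) once we have shown that the odometer $\tau$ on $\mathcal{K}_G$, equipped with the unique invariant measure $\mu$, has purely discrete spectrum whose group of eigenvalues is exactly $\Gamma$. Unique ergodicity already yields that $\tau$ is ergodic, and $\mathcal{K}_G$ has a countable basis, so the metrisability clause of the Representation Theorem applies and, by the Remark following Theorem \ref{Discr_spectr_thm}, the resulting conjugacy with an ergodic group rotation can be upgraded to a genuine measure-theoretic isomorphism. Thus everything reduces to two tasks: identifying the eigenvalues, and proving completeness of the eigenfunctions.

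First I would determine the eigenvalues of $U_\tau$. Using the canonical embedding $\mathbb{N}\hookrightarrow\mathcal{K}_G$, $n\mapsto(\varepsilon_k(n))_{k\ge 0}$, the odometer restricted to $\mathbb{N}$ is the map $n\mapsto n+1$, so $\tau^m$ is addition by $m$ there. For $z$ with $|z|=1$, consider the exponential $n\mapsto z^n$ on $\mathbb{N}$. I claim it extends to a continuous character $\chi_z:\mathcal{K}_G\to S^1$ precisely when $z\in\Gamma$: the $G$-adic topology is generated by agreement of low-order digits, and adding a base element $G_m$ perturbs $z^n$ by the factor $z^{G_m}$, so uniform continuity of $n\mapsto z^n$ with respect to this topology is equivalent to $z^{G_m}\to 1$, i.e. to $z\in\Gamma$. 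For such $z$ one has $\chi_z(\tau x)=z\,\chi_z(x)$ on the dense set $\mathbb{N}$ and hence everywhere, so $\chi_z$ is an eigenfunction of eigenvalue $z$. Conversely, if $f\in L^2_\mu$ satisfies $U_\tau f=zf$ with $f\ne 0$, then $U_\tau^{G_n}f=z^{G_n}f$; since adding $G_n$ eventually leaves the low-order digits of $\mu$-almost every point unchanged, $U_\tau^{G_n}f\to f$ in $L^2_\mu$, forcing $(z^{G_n}-1)f\to 0$ and hence $z^{G_n}\to 1$. Therefore the group of eigenvalues is exactly $\Gamma$.

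The decisive step is to show that $\{\chi_z:z\in\Gamma\}$ is complete in $L^2_\mu$, equivalently that $\tau$ has discrete spectrum, and this is where Hypothesis \ref{hypA} enters. The hypothesis asserts that a carry produced by adding a high base element $G_m$ never propagates down past a bounded window to the low-order digits; this locality of the carry is precisely what is needed to extend the addition on $\mathbb{N}$ to a jointly continuous group operation on $\mathcal{K}_G$, turning $(\mathcal{K}_G,+)$ into a compact abelian group with $\mu$ its normalized Haar measure and $\tau$ the rotation by the element $1$. Granting this, Pontryagin duality (Peter--Weyl) guarantees that the continuous characters form an orthonormal basis of $L^2_\mu$; since every character is an eigenfunction of the rotation $\tau$ and we have already identified the eigenvalues with $\Gamma$, the dual group is $\Gamma$ and the spectrum is discrete. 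I expect the construction of this continuous group structure from Hypothesis \ref{hypA} --- verifying that the carry map is well defined and continuous on all of $\mathcal{K}_G$, not merely on $\mathcal{K}_G^0$ --- to be the main technical obstacle, since the combinatorics of the greedy $G$-expansion under the linear recurrence \eqref{rec} must be controlled uniformly.

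Finally, with discrete spectrum established and the eigenvalue group identified as $\Gamma$, the Representation Theorem presents $(\mathcal{K}_G,\tau,\mu)$ as conjugate to an ergodic rotation on a compact abelian group whose dual is $\Gamma$. By the Remark after Theorem \ref{Discr_spectr_thm}, on such a space with countable basis conjugacy coincides with isomorphism, which yields the asserted measure-theoretic isomorphism with a group rotation of purely discrete spectrum $\Gamma$.
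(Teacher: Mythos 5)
Your identification of the eigenvalue group and your use of Halmos--von Neumann at the end are in the right spirit (note, for calibration, that the thesis itself states this theorem as imported from \cite{glt} without reproducing the proof), but the decisive step of your argument --- the completeness of the eigenfunctions --- rests on a claim that is false. You assert that Hypothesis \ref{hypA} upgrades $(\mathcal{K}_G,+)$ to a compact abelian \emph{topological} group on which $\tau$ is rotation by $1$ and $\mu$ is Haar measure, and then invoke Peter--Weyl. This cannot work: $\mathcal{K}_G$ is a compact, metrizable, totally disconnected space, so any compatible topological group structure on it would make it profinite, and the dual of a profinite group is torsion; every eigenvalue of a translation on such a group is a root of unity. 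But $\Gamma$ is in general not torsion --- the paper's own Theorem \ref{101} (coefficients $(1,0,1)$, which satisfy Hypothesis \ref{hypA} by \cite{bks}) exhibits $\Gamma=\{\exp(2\pi i\,c/\beta^{l})\}$, whose elements are typically not roots of unity. So no continuous group law with the properties you need can exist on $\mathcal{K}_G$; indeed even the addition-by-one map $\tau$ itself fails to be continuous at the points of $\mathcal{K}_G\setminus\mathcal{K}_G^{0}$, where it is defined by fiat as $0^\infty$. This is exactly why the theorem asserts only a \emph{measure-theoretic} isomorphism, and why the target group is not $\mathcal{K}_G$ with some hidden group structure but a different compact group, namely the dual of the discrete group $\Gamma$.

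The actual role of Hypothesis \ref{hypA} in the proof of \cite{glt} is more modest and more pointwise than a group law: writing $x(j)=\sum_{k\le j}\varepsilon_k(x)G_k$, the bounded carry window guarantees that for each $z\in\Gamma$ the limit $f_z(x)=\lim_{j\to\infty}z^{x(j)}$ exists $\mu$-almost everywhere (not continuously on all of $\mathcal{K}_G$ --- your uniform-continuity argument for $\chi_z$ also overreaches, since two points agreeing in low-order digits differ by a sum $\sum_{k\ge K}c_kG_k$ rather than a single $G_m$, and controlling $z^{\sum c_kG_k}$ requires quantitative decay of $|z^{G_k}-1|$, not merely $z^{G_k}\to1$). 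These $f_z$ are $L^2_\mu$-eigenfunctions with eigenvalue $z$, and completeness is then proved not by Peter--Weyl on $\mathcal{K}_G$ but by showing that the equivariant map $x\mapsto(f_z(x))_{z\in\Gamma}$ into the compact dual $\hat{\Gamma}$ is injective off a $\mu$-null set --- one recovers the digits $\varepsilon_k(x)$ from the eigenfunction values using the same bounded window $b$ --- so that $(\mathcal{K}_G,\mu,\tau)$ is isomorphic to the Haar-measure rotation on $\hat{\Gamma}$ by the image of $1$. Your converse inclusion (eigenvalues $\subseteq\Gamma$ via the rigidity $U_\tau^{G_n}f\to f$) is sound in spirit and is indeed how one direction goes; it is the completeness step that must be rebuilt along the lines just described.
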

This analysis of the spectrum will be very useful in Chapter 4 when we will consider the product of systems of the form $(\mathcal{K}_{G_i}, \tau)$. We will prove that the product of these uniquely ergodic systems is uniquely ergodic, too.\\

There are only a few results concerning this hypothesis. In \cite{glt} the authors
remark that the Multinacci sequence, i.e.\ $a_0 = \ldots = a_{d-1} = 1$,
fulfills Hypothesis \ref{hypA}. Furthermore Bruin et al.\ \cite{bks} show that
the numerations systems with coefficients $(a_0, a_1, a_2) = (1,0,1)$ fulfills
Hypothesis 1.\\ 
Sometimes it seems easier to consider the following hypothesis introduced by Frougny and Solomyak \cite{frougny}.
\begin{hypothesis}[Frougny and Solomyak \cite{frougny}]\label{hypB}
 The characteristic root $\beta$ of the numeration system $G$ is a Pisot number
such that all elements of the set $\mathbb{Z}[\beta^{-1}]$ have finite
$\beta$-expansions.
\end{hypothesis}

\begin{remark}\label{solo}
Several researchers worked on algebraic characterizations of Pisot numbers
$\beta$ which satisfy Hypothesis \ref{hypB}. Solomyak \cite[Main Theorem]{Solomyak} showed that numeration systems defined by linear recurrences with decreasing coefficients fullfill Hypothesis \ref{hypB}, proving an analogous of Theorem \ref{disc_spec} for this case.
\end{remark}
Furthermore Hollander \cite{holl} states another sufficient condition for
Hypothesis \ref{hypB} and Akiyama \cite{aki} characterizes all Pisot units of
degree three satisfying Hypothesis \ref{hypB}.\\
However there exists no complete algebraic characterization for Pisot
numbers satisfying Hypothesis \ref{hypB} of degree greater than two. Note that
both hypotheses can be satisfied by the same numeration system but, to the best
of the author of this thesis knowledge, it is unknown if the two hypotheses are equivalent,
see \cite{glt}.\\

\chapter{$LS$ sequences}
 \thispagestyle{empty}
In the first chapter we mentioned that the concept of uniform distribution and discrepancy applies also to sequences of sets and partitions.\\
We presented the Kakutani splitting procedure and its generalization, the $\rho$-splitting, which produce uniformly distributed sequences of partitions.\\
In this chapter we want to introduce the constrction of $LS$-sequences of partitions. This is a countable family corresponding to pair of natural numbers $L$ and $S$ such that $S\geq 0$ and $L+S\geq 2$. For $S=0$ and $L\geq 2$ the procedure reduces to the well-known sequences of partitions into $b^n$ equal parts, with $b=L$. These sequences of partitions have been introduced by Carbone \cite{carbone}. She proved that these sequences of partitions are low-discrepancy if and only if $L\geq S$ and computed explicit asymptotic behaviour of the discrepancy of the sequences for $L<S$.\\
What is more interesting is that she presented an algorithm (improved in \cite{Carbone2}) which associates to any $LS$-sequence of partitions a sequence (called $LS$-sequence) of points and that for $L\geq S$ they are low-discrepancy. For $S=0$ and $L\geq 2$ the algorithm provides the well-known van der Corput sequence of base $b=L$.
\section{$LS$-sequences in the unit interval}
In this section we first give a description of the $LS$-sequences of partitions, obtained as subsequent $\rho$-refinements of the trivial partition $\omega$. Then we describe the $LS$-sequence of points associated to the sequences of partitions.
\begin{definition}
Let us fix two positive integers $L$ and $S$, with $L+S\geq 2$ and $S\geq 0$ and let $0 <\alpha < 1$ be the real number such that $L\alpha + S\alpha^2 = 1$. Denote by $\rho_{L,S}$ the partition defined by $L$ “long” intervals having length $\alpha$ and by $S$ “short” intervals having length $\alpha^2$. By $(\rho_{L,S}^n)_{n\in\mathbb{N}}$
we denote the sequence of successive $\rho_{L,S}$-refinements of the trivial partition $\omega$. This will be called the $LS$-sequence of partitions.
\end{definition}
\begin{example}
Let us consider $L=S=1$ and the corresponding $1,1$-sequence of partitions $(\rho_{1,1}^{n})_{n\in\mathbb{N}}$ whose characteristic polynomial is $\alpha+\alpha^{2}=1$. Its solution in $[0,1[$ is $\alpha=\frac{\sqrt{5}-1}{2}$, the inverse of the golden ratio. The first partition consists only of two intervals of length $\alpha$ and $\alpha^{2}$, respectively, i.e.
\begin{equation*}
\rho_{1,1}^{1}=\{[0,\alpha[,[\alpha,1[\}\ .
\end{equation*}
In order to construct the second partition, we divide the interval of maximal length proportionally to $\alpha$ and $\alpha^{2}$. Therefore
\begin{equation*}
\rho_{1,1}^{2}=\{[0,\alpha^{2}[,[\alpha^{2},\alpha[,[\alpha ,1[\}\ .
\end{equation*}
At this point we have two intervals of maximal length, the first and the third, and we proceed by splitting them. Then we get 
\begin{equation*}
\rho_{1,1}^{3}=\{[0,\alpha^{3}[, [\alpha^{3},\alpha^{2}[,[\alpha^{2},\alpha[,[\alpha , \alpha +\alpha^{3}[,[\alpha+\alpha^{3},1[\}\ ,
\end{equation*}
and so on.
\begin{figure}[h!]
\begin{center}
\begin{tikzpicture}[scale=12.3]
\draw (0, 0) -- (1,0);
\draw (0,-0.01) node[below, black]{0} -- (0,0.01);
\draw (1,-0.01) node[below, black]{1} -- (1,0.01);
\draw (0.61803,-0.01) node[below, black]{\begin{scriptsize} $\alpha$\end{scriptsize}}-- (0.61803,0.01);
\end{tikzpicture}
\end{center}
\end{figure}
\begin{figure}[h!]
\begin{center}
\begin{tikzpicture}[scale=12.3]
\draw (0, 0) -- (1,0);
\draw (0,-0.01) node[below, black]{0} -- (0,0.01);
\draw (1,-0.01) node[below, black]{1} -- (1,0.01);
\draw (0.61803,-0.01) node[below, black]{\begin{scriptsize} $\alpha$\end{scriptsize}}-- (0.61803,0.01) ;
\draw (0.38196,-0.01) node[below, black]{\begin{scriptsize}$\alpha^{2}$\end{scriptsize}}-- (0.38196,0.01);
\end{tikzpicture}
\end{center}
\end{figure}

\begin{figure}[h!]
\begin{center}
\begin{tikzpicture}[scale=12.3]
\draw (0, 0) -- (1,0);
\draw (0,-0.01) node[below, black]{0} -- (0,0.01);
\draw (1,-0.01) node[below, black]{1} -- (1,0.01);
\draw (0.61803,-0.01) node[below, black]{\begin{scriptsize} $\alpha$\end{scriptsize}}-- (0.61803,0.01);
\draw (0.38196,-0.01) node[below, black]{\begin{scriptsize}$\alpha^{2}$\end{scriptsize}}-- (0.38196,0.01);
\draw (0.23606,-0.01) node[below, black]{\begin{scriptsize}$\alpha^{3}$\end{scriptsize}}-- (0.23606,0.01);
\draw (0.85410,-0.01) node[below, black]{\begin{scriptsize}$\alpha +\alpha^{3}$\end{scriptsize}}-- (0.85410,0.01);
\end{tikzpicture}
\end{center}
\end{figure}
\end{example}
\begin{example}
Let us consider the sequence of partitions $(\rho_{2,1}^{n})_{n\in\mathbb{N}}$ corresponding to $2\alpha+\alpha^{2}=1$. At the first step we have a subdivision of the unit interval into two long intervals and a short one, and we get the following partition
\begin{equation*}
\rho_{2,1}^{1}=\{[0,\alpha[,[\alpha, 2\alpha[,[2\alpha,1[\}\ .
\end{equation*}
In order to obtain the second partition we split the first two intervals, as follows
\begin{eqnarray*}
\rho_{2,1}^{2}&=& \{[0,\alpha^{2}[,[\alpha^{2},2\alpha^{2}[,[2\alpha^{2},\alpha[,[\alpha,\alpha+\alpha^{2}[, \\
& & [\alpha+\alpha^{2}, \alpha+2\alpha^{2}[,[\alpha+2\alpha^{2},2\alpha[,[2\alpha,1[\}\ ,
\end{eqnarray*}
and so on.
\begin{figure}[h!]
\begin{center}
\begin{tikzpicture}[scale=12.3]
\draw (0, 0) -- (1,0);
\draw (0,-0.01) node[below, black]{0} -- (0,0.01) ;
\draw (1,-0.01) node[below, black]{1} -- (1,0.01);
\draw (0.414214,-0.01) node[below, black]{\begin{scriptsize} $\alpha$\end{scriptsize}}-- (0.414214,0.01);
\draw (0.828428,-0.01) node[below, black]{\begin{scriptsize}$2\alpha$\end{scriptsize}}-- (0.828428,0.01);
\end{tikzpicture}
\end{center}
\end{figure}

\begin{figure}[h!]
\begin{center}
\begin{tikzpicture}[scale=12.3]
\draw (0, 0) -- (1,0);
\draw (0,-0.01) node[below, black]{0} -- (0,0.01);
\draw (1,-0.01) node[below, black]{1} -- (1,0.01);
\draw (0.414214,-0.01) node[below, black]{\begin{scriptsize} $\alpha$\end{scriptsize}}-- (0.414214,0.01) ;
\draw (0.828428,-0.01) node[below, black]{\begin{scriptsize}$2\alpha$\end{scriptsize}}-- (0.828428,0.01);
\draw (0.171573,-0.01) node[below, black]{\begin{scriptsize}$\alpha^{2}$\end{scriptsize}}-- (0.171573,0.01);
\draw (0.343146,-0.01) node[below, black]{\begin{scriptsize}$2\alpha^2 $\end{scriptsize}}-- (0.343146,0.01) ;
\draw (0.585787,-0.01) node[below, black]{\begin{scriptsize}$\alpha+\alpha^{2}$\end{scriptsize}}-- (0.585787,0.01);
\draw (0.75736,-0.01) node[below, black]{\begin{scriptsize}$\alpha +2\alpha^{2}$\end{scriptsize}}-- (0.75736,0.01);
\end{tikzpicture}
\end{center}
\end{figure}
\end{example}
Let us note that every partition $\rho_{L,S}^{n}$ produces only intervals of length $\alpha^{n}$ (long) and of length $\alpha^{n+1}$ (short), respectively. If we denote by $t_n$ the total number of intervals of $\rho_{L,S}^{n}$, then the following linear recurrence holds 
\begin{equation}\label{LSrecurr}
t_n=Lt_{n-1}+St_{n-2} \ , 
\end{equation}
with initial conditions $t_0=1$ and $t_1=L+S$. \\ In fact, if we denote by $l_n$ the total number of long intervals of $\rho_{L,S}^{n}$ and by $s_n$ the total number of short intervals, then the following relations hold:
\begin{equation*}
t_n=l_n+s_n\ , \qquad l_n=Ll_{n-1}+s_{n-1}\ , \qquad s_n=Sl_{n-1} \ .
\end{equation*}
This is beacause the $n$-th partition is obtained splitting the long intervals of the $(n-1)$-th partition into $L$ long and $S$ short intervals. The short intervals of the $(n-1)$-th partition remain untouched and hence become long intervals of the $n$-th partition. Formally:
\begin{eqnarray*}
t_n&=&l_n+s_n=Ll_{n-1}+s_{n-1}+Sl_{n-1}= \\
&=& Ll_{n-1}+Sl_{n-2}+SLl_{n-2}+Ss_{n-2}=\\
&=&L(l_{n-1}+s_{n-1})+S(l_{n-2}+s_{n-2})=\\
&=&Lt_{n-1}+St_{n-2}.
\end{eqnarray*}

From \eqref{LSrecurr} immediately follows that the sequence $(t_n)_{n\in\mathbb{N}}$, in the case $L=S=1$, is the Fibonacci sequence. On the other hand we have seen that in this case $(\rho_{1,1}^{n})_{n\in\mathbb{N}}$  is a Kakutani sequence. For these reasons, the corresponding $1,1$-sequence of partitions has been called in \cite{carbone} the Kakutani-Fibonacci sequence of partitions.\\

The following theorem shows that the $LS$-sequences of partitions are low-discrepancy if $L\geq S$.
\begin{theorem}[Theorem 2.2, \cite{carbone}]\begin{itemize}
\item[i)] If $S \leq L$ there exist $c_1, c'_1 > 0$ such that 
\begin{equation*}
c'_1 \leq t_n D(\rho_{L,S}^n) \leq c_1 \quad {\rm for\ any\ }n\in\mathbb{N}\ .
\end{equation*}
\item[ii)] If $S = L+1$ there exist $c_2, c'_2 > 0$ such that 
\begin{equation*}
c'_2\log t_n \leq t_nD(\rho_{L,S}^n) \leq c_2\log t_n \quad {\rm for\ any\ } n\in\mathbb{N}.
\end{equation*}
\item[iii)] If $S \geq L+2$ there exist $c_3, c'_3 > 0$ such that
\begin{equation*}
c'_3t_n^{1-\tau} \leq t_nD(\rho_{L,S}^n) \leq c_3t_n^{1-\tau} \quad {\rm for\ any\ } n\in\mathbb{N}\ ,
\end{equation*}
where $\tau =1+\frac{\log(S\alpha)}{\log \alpha} >0$.
\end{itemize}
\end{theorem}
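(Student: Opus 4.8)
First I would reduce the whole statement to a single one-dimensional oscillation estimate. List the $t_n$ determining points of $\rho_{L,S}^n$ in increasing order $0=y_1<y_2<\dots<y_{t_n}<1$ (the left endpoints of its intervals) and set $\delta_i:=i-t_ny_i$. Applying Theorem \ref{thm6} to this ordered point set gives
\[
t_nD(\rho_{L,S}^n)=1+\max_{1\le i\le t_n}\delta_i-\min_{1\le i\le t_n}\delta_i,
\]
so it is enough to control the oscillation of the deviation $\Delta_n(y):=A_n(y)-t_ny$, where $A_n(y)$ counts the determining points below $y$. Writing $M_n=\max\Delta_n$ and $m_n=\min\Delta_n$ over these points, one has $t_nD(\rho_{L,S}^n)=1+M_n-m_n$, and since $\Delta_n(0)=0$ always $m_n\le 0\le M_n$.

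Next I would exploit the self-similar structure already visible in the worked examples: $\rho_{L,S}^n$ is the concatenation of $L$ affine copies of $\rho_{L,S}^{n-1}$ (contracted by $\alpha$) on the long blocks $[k\alpha,(k+1)\alpha)$, $0\le k\le L-1$, followed by $S$ affine copies of $\rho_{L,S}^{n-2}$ (contracted by $\alpha^2$) on the short blocks. Counting points block by block — each long block carries $t_{n-1}$ points and each short block $t_{n-2}$, consistently with \eqref{LSrecurr} — and using $L\alpha=1-S\alpha^2$, a direct computation yields, for $y=k\alpha+\alpha y'$ in the $k$-th long block,
\[
\Delta_n(y)=\Delta_{n-1}(y')-u_n(k+y'),\qquad u_n:=\alpha t_n-t_{n-1},
\]
together with an entirely analogous identity on the short blocks in which the perturbation is again bounded by a constant (depending only on $L,S$) times $|u_n|$. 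Thus one descent in the refinement tree perturbs the deviation by $O(|u_n|)$.

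The whole trichotomy is then governed by $u_n$. From \eqref{LSrecurr} and $S\alpha^2+L\alpha=1$ one gets $u_n=(L\alpha-1)t_{n-1}+S\alpha t_{n-2}=-S\alpha\,u_{n-1}$, hence $|u_n|=(S\alpha)^{n-1}|u_1|$ with $u_1=S\alpha(1-\alpha)>0$. Because $1/\alpha$ is the dominant root $\beta$ of $x^2-Lx-S$, one checks $S\alpha=1\iff S=L+1$, so that $S\alpha<1\iff S\le L$ and $S\alpha>1\iff S\ge L+2$; moreover $t_n\asymp\alpha^{-n}$ gives $t_n^{1-\tau}\asymp(S\alpha)^n$ for the exponent $\tau$ of the statement (and $0<\tau<1$ since $S\alpha^2<1$). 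Feeding $|u_n|$ into the recursion above yields $M_n-m_n\le\max(M_{n-1},M_{n-2})-\min(m_{n-1},m_{n-2})+c|u_n|$, from which the upper bounds follow at once: when $S\le L$ the series $\sum_n|u_n|$ converges and $M_n-m_n$ stays bounded, giving $t_nD\le c_1$, the lower bound $t_nD\ge 1=c_1'$ being Proposition \ref{TrivBound}; when $S=L+1$ the terms $|u_n|$ are constant and $M_n-m_n=O(n)=O(\log t_n)$; and when $S\ge L+2$ the geometric sum is dominated by its last term, giving $M_n-m_n=O\big((S\alpha)^n\big)=O\big(t_n^{1-\tau}\big)$.

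It remains to produce matching lower bounds. For $S\ge L+2$ this is immediate: evaluating the deviation at the single point $y=L\alpha$ gives $\Delta_n(L\alpha)=-Lu_n$, so $M_n-m_n\ge L|u_n|\asymp(S\alpha)^n\asymp t_n^{1-\tau}$. The genuinely delicate case is $S=L+1$, where $|u_n|$ no longer grows and every such ``grid'' point only yields a deviation of bounded size; here one must show that the constant-size perturbations accumulate to order $n$. The plan is to observe that, up to exponentially small corrections, $i\mapsto\delta_i$ is a $\pm c$ walk driven by the long/short type pattern $T_n=T_{n-1}^{\,L}T_{n-2}^{\,S}$, whose block displacements are $d_n=s_n-l_n=(-1)^{n-1}$; concatenating $L$ (resp. $S$) copies of the level-$(n-1)$ (resp. level-$(n-2)$) walk produces a staircase that advances the running maximum or minimum at each level, forcing the range $M_n-m_n$ to grow linearly in $n\asymp\log t_n$. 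Making this last step rigorous — setting up the coupled recursion for the extreme prefix sums and showing the range grows despite the alternating signs of $d_n$ — is the main obstacle I expect; all the upper bounds and the other two lower bounds follow routinely from the per-step estimate $O(|u_n|)$.
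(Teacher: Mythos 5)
Your proposal is correct in architecture, and it cannot be matched line-by-line against this text, because the thesis states the theorem without proof, importing it from \cite{carbone}; measured against that source your route is genuinely different. Carbone's argument proceeds by direct counting estimates, controlling for each test interval the numbers $l_n,s_n$ of long and short intervals it meets and using the explicit solution of the recurrence \eqref{LSrecurr}, whereas you encode everything in one renormalization identity for the deviation $\Delta_n$ and let the single scalar $u_n=\alpha t_n-t_{n-1}$ drive the trichotomy. I have checked your key computations: the long-block transfer $\Delta_n(y)=\Delta_{n-1}(y')-u_n(k+y')$ is exact; on the short blocks one gets exactly $\Delta_n(y)=\Delta_{n-2}(y'')-Lu_n-(j+y'')L\alpha u_{n-1}$, consistent with your claim; $u_n=-S\alpha\,u_{n-1}$ with $u_1=S\alpha(1-\alpha)>0$; $S\alpha<1$, $=1$, $>1$ according to $S\le L$, $S=L+1$, $S\ge L+2$; and $t_n^{1-\tau}\asymp(S\alpha)^n$. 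This makes all three upper bounds, the trivial lower bound in case i) (via Proposition \ref{TrivBound}), and the case iii) lower bound from $\Delta_n(L\alpha)=-Lu_n$ complete modulo routine constants. What your approach buys is a transparent mechanism for the trichotomy and, in particular, for the lower bounds; what the counting approach buys is explicit constants.

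The one step you flag as open --- the lower bound in case ii) --- in fact closes with the identities you already have, without any random-walk machinery. For $S=L+1$ one has $S\alpha=1$, $\alpha=\frac{1}{L+1}$, $u_n=(-1)^{n-1}(1-\alpha)$, and since $u_{n-1}=-u_n$ your short-block identity collapses to
\begin{equation*}
\Delta_n(y)=\Delta_{n-2}(y'')-u_n\bigl(L-(1-\alpha)(j+y'')\bigr),
\end{equation*}
where the bracket is positive for all $0\le j\le S-1$, $y''\in[0,1[$, because $S(1-\alpha)=L$, and for $j=0$ it is at least $L-1+\alpha$. Hence for even $n$ (where $u_n<0$), taking $j=0$ and $y''$ a maximizer of $\Delta_{n-2}$ over the determining points of $\rho_{L,S}^{n-2}$ (such points map to determining points of $\rho_{L,S}^{n}$ under the affine block map) gives $M_n\ge M_{n-2}+(1-\alpha)(L-1+\alpha)$, and for odd $n$ the same choice at a minimizer gives $m_n\le m_{n-2}-(1-\alpha)(L-1+\alpha)$. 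Since $\Delta_n(0)=0$ forces $m_n\le 0\le M_n$, the range satisfies $M_n-m_n\ge\max(M_n,-m_n)\ge c\,n\asymp\log t_n$, which is exactly the missing bound. Two bookkeeping points to fix when writing this up: your block formulas presuppose that the long blocks precede the short blocks in $\rho_{L,S}^1$ (true for the convention used in the thesis' examples; a different ordering changes the identities harmlessly); and $\delta_i$ is $\Delta_n$ evaluated as a right limit at $y_i$, so the identity $t_nD(\rho_{L,S}^n)=1+\max_i\delta_i-\min_i\delta_i$ obtained from Theorem \ref{thm6} should be stated with that convention.
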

Observe that the same estimates for the discrepancy of the $LS$-sequences of partitions have been reobtained by Drmota and Infusino \cite{Drmota_Infusino}, by applying Theorems \ref{DM1} and \ref{DM2} to this particular class of $\rho$-refinements.\\ 
Let us now explain how to get the associated sequences of points of the corresponding $LS$-sequence of partitions, also introduced in \cite{carbone}.
\begin{definition}
Given the sequence of partition $\left(\rho_{L,S}^n\right)_{n\in\mathbb{N}}$, we define the $LS$-sequence of points $(\xi_{L,S}^n)_{n\in\mathbb{N}}$
as follows. \\The first $t_1$ points are just the left endpoints of the intervals of $\rho_{L,S}^1$ ordered by magnitude. This set of points will be denoted by $\Lambda_{L,S}^1$ and its points will be denoted by $\xi_1^{(1)},\dots, \xi_{t_1}^{(1)}$. For $n > 1$ and if $\Lambda_{L,S}^n=\left(\xi_1^{(n)},\dots,\xi_{t_n}^{(n)}\right)$ is the set of the $t_n$ points (written in their order)
of $\rho_{L,S}^n$, then the $t_{n+1}$ points of $\rho_{L,S}^{n+1}$
are recursively ordered as follows:

\begin{equation*}
\begin{split}
& \Lambda_{L,S}^{n+1} = \Big(\xi_1^{(n)},\ldots, \xi_{t_n}^{(n)},\\
& \varphi_1^{(n+1)}(\xi_1^{(n)}),\ldots , \varphi_1^{(n+1)}(\xi_{l_n}^{(n)}),\ldots , \varphi_L^{(n+1)}(\xi_1^{(n)}), \ldots , \varphi_L^{(n+1)}(\xi_{l_n}^{(n)}), \\
& \varphi_{L,1}^{(n+1)}(\xi_1^{(n)}), \ldots, \varphi_{L,1}^{(n+1)}(\xi_{l_n}^{(n)}),\ldots , \varphi_{L,S-1}^{(n+1)}(\xi_1^{(n)}), \ldots , \varphi_{L,S-1}^{(n+1)}(\xi_{l_n}^{(n)})\Big)\ .
\end{split}
\end{equation*}
Here, $l_n$ is the number of long intervals of $\rho_{L,S}^n$, and the two families of functions are
\begin{equation*}
\varphi_i^{(n+1)}(x) = x +i\alpha^{n+1}\quad and \quad \varphi_{L,j}^{(n+1)}(x) = x + L\alpha^{n+1} + j\beta^{n+2}
\end{equation*}
for $1 \leq i \leq L$ and $1 \leq j \leq S -1$\ .
\end{definition}
In \cite{carbone} it has been shown that, if $L\geq S$ $D(\xi_{L,S}^{1}, \ldots, \xi_{L,S}^{N})=\mathcal{O}\left(\frac{\log N}{N}\right)$ for $n\to\infty$ and that to sequences of partitions $(\rho_{L,S}^{n})_{n\in\mathbb{N}}$ with low-discrepancy correspond sequences of points $(\xi_{L,S}^{n})_{n\in\mathbb{N}}$ with low-discrepancy.
\begin{theorem}[Theorem 3.9, \cite{carbone}]
i) If $S\leq L$ there exist $k_1, k_1'>0$ such that for every $N\in\mathbb{N}$
\begin{equation*}
k_1'\frac{1}{N}\leq D_N(\xi_{L,S}^{1}, \xi_{L,S}^{2},\ldots, \xi_{L,S}^{N})\leq k_1\frac{\log N}{N}\ .
\end{equation*}
ii) If $S=L+1$ there exist $k_2, k_2'>0$ such that for every $N\in\mathbb{N}$
\begin{equation*}
k_2'\frac{\log N}{N}\leq D_N(\xi_{L,S}^{1}, \xi_{L,S}^{2},\ldots, \xi_{L,S}^{N})\leq k_2\frac{\log^{2} N}{N}\ .
\end{equation*}
iii) If $S>L+1$ there exist $k_3,k_3'>0$ such that for every $N\in\mathbb{N}$
\begin{equation*}
k_3'\frac{1}{N^{\tau -1}}\leq D_N(\xi_{L,S}^{1}, \xi_{L,S}^{2},\ldots, \xi_{L,S}^{N})\leq k_3\frac{\log N}{N^{\tau -1}}\ ,
\end{equation*}
where $\tau=1+\frac{\log S\alpha}{\log\alpha}< 1$.
\end{theorem}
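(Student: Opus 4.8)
The plan is to transfer the discrepancy estimates for the $LS$-sequence of \emph{partitions} (Theorem 2.2 of \cite{carbone}) to the associated sequence of \emph{points}, exploiting the fact that the construction reveals the partition points one block at a time. The starting observation is that the first $t_n$ points $\xi_{L,S}^{1},\dots,\xi_{L,S}^{t_n}$ are, by construction, exactly the left endpoints of the intervals of $\rho_{L,S}^{n}$, listed without repetition. Hence the point-set discrepancy at the special index $N=t_n$ coincides with the partition discrepancy,
\begin{equation*}
D_{t_n}\!\left(\xi_{L,S}^{1},\dots,\xi_{L,S}^{t_n}\right)=D(\rho_{L,S}^{n}),
\end{equation*}
so Theorem 2.2 of \cite{carbone} pins the point discrepancy along the subsequence $(t_n)$ into the ranges asserted by the statement. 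Since $t_n$ satisfies $t_n=Lt_{n-1}+St_{n-2}$, one has $t_n\asymp\beta^{n}$ with $\beta=1/\alpha$ the dominant root of $x^{2}=Lx+S$ (obtained from $L\alpha+S\alpha^{2}=1$); in particular $t_{n+1}/t_n\to\beta$, so for every $N$ with $t_n\le N<t_{n+1}$ we have $N\asymp t_n$ and $\log N\asymp n$. This reduces the theorem to an interpolation between consecutive special indices.

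For the \textbf{upper bounds} I would expand $N$ in the numeration system attached to $(t_m)$ and split the initial segment accordingly. Writing $\{\xi_{L,S}^{1},\dots,\xi_{L,S}^{N}\}$ as the disjoint union of the complete block $\{\xi_{L,S}^{1},\dots,\xi_{L,S}^{t_n}\}$ and the partial block $\{\xi_{L,S}^{t_n+1},\dots,\xi_{L,S}^{N}\}$, Theorem \ref{ThmDiscrDecomp} gives
\begin{equation*}
D_N\le\frac{t_n}{N}\,D(\rho_{L,S}^{n})+\frac{N-t_n}{N}\,D_{N-t_n}\!\left(\xi_{L,S}^{t_n+1},\dots,\xi_{L,S}^{N}\right).
\end{equation*}
The new points are translates, by multiples of $\alpha^{n+1}$ and $\alpha^{n+2}$, of the $l_n$ long endpoints of $\rho_{L,S}^{n}$, so the partial block is a union of at most $L+S-1$ shifted copies of a well-distributed set, and its discrepancy is controlled by $D(\rho_{L,S}^{n})$ up to a translation error of order $\alpha^{n}\asymp 1/N$. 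Iterating this splitting down through the $n\asymp\log N$ refinement levels --- exactly as in the base-$b$ analysis of the van der Corput sequence, which is the case $S=0$, $L=b$ --- accumulates one extra logarithmic factor: in case (i) each level contributes a bounded amount $t_m D(\rho_{L,S}^{m})\le c_1$ and there are $\asymp\log N$ of them, giving $ND_N\lesssim\log N$; the same bookkeeping produces the $\log^{2}N$ in case (ii) and the stated negative power of $N$ in case (iii).

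For the \textbf{lower bounds}, case (i) is nothing but the universal bound $D_N\ge 1/N$ of Proposition \ref{TrivBound}, and this is sharp precisely at $N=t_n$, where the first $t_n$ points form a near-grid (for $S=0$ the regular $1/t_n$ grid). In cases (ii) and (iii) the partition $\rho_{L,S}^{n}$ is itself irregular, so I would exhibit, for each $N$ with $t_n\le N<t_{n+1}$, a fixed test interval $[0,a[$ --- built from the accumulation of short intervals of $\rho_{L,S}^{n}$, as in the proof of the lower half of Theorem 2.2 of \cite{carbone} --- on which the counting function of the first $N$ points deviates from $\lambda([0,a[)$ by a constant times $\log N/N$ (case ii), respectively the stated power of $N$ (case iii), \emph{uniformly} over the block. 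This persistence of the partition's irregularity throughout each block is what upgrades the special-index estimate to a lower bound valid for every $N$.

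The main obstacle I expect is the upper-bound interpolation: one must show that summing the contributions of the $\asymp\log N$ levels, together with the translation errors of the partial blocks, inflates the partition order by \emph{exactly} one logarithm in cases (i) and (ii) and leaves the exponent $\tau$ essentially unchanged in case (iii), rather than over- or under-counting. Making the block decomposition align cleanly with the numeration-system digits of $N$, and controlling the at most $L+S-1$ translated copies inside each partial block without losing more than a constant factor, is the delicate part; the remaining steps are routine consequences of Theorem \ref{ThmDiscrDecomp}, Theorem \ref{thm6} and the geometric growth $t_n\asymp\beta^{n}$.
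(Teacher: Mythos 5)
A preliminary remark: the thesis states this result purely as a quotation of Theorem 3.9 of \cite{carbone} and reproduces no proof, so there is no in-paper argument to compare you against; the benchmark is the cited proof, and your architecture does match it. Your starting identification is sound: the partition discrepancy \eqref{discrPart} is by definition the discrepancy of the set of left endpoints, ordering is irrelevant for a finite set, and the first $t_n$ points of the sequence are exactly those endpoints, so $D_{t_n}(\xi_{L,S}^1,\dots,\xi_{L,S}^{t_n})=D(\rho_{L,S}^n)$. Likewise $t_n\asymp\beta^n$ with $\beta=1/\alpha$, and the decomposition of an initial segment into the complete block plus $L+S-1$ translated copies of the first $l_n$ points (note $t_{n+1}-t_n=(L+S-1)l_n$), estimated via Theorem \ref{ThmDiscrDecomp}, is exactly the mechanism that produces the extra logarithm, in analogy with the van der Corput case $S=0$. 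The case (i) lower bound via Proposition \ref{TrivBound} is also correct.

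That said, the proposal has genuine gaps at precisely the two places you flag, and neither is routine. First, for the upper bound you control the partial block ``by $D(\rho_{L,S}^n)$ up to a translation error'', but the set being translated is the set of left endpoints of the \emph{long} intervals only (itself a fact needing an inductive proof), and its discrepancy is a different quantity from $D(\rho_{L,S}^n)$: from the measure identity $\#L\cdot\alpha^n+\#S\cdot\alpha^{n+1}\approx u$ and $l_n\alpha^n+s_n\alpha^{n+1}=1$ one gets that the deviation of the long-interval count from $l_n u$ equals $\alpha$ times the deviation of the short-interval count, up to $O(1)$, so a separate recursive estimate of the same strength as Theorem 2.2 of \cite{carbone} is required before your induction has anything to iterate on --- this is where the numeration-system counting in the cited proof does its real work. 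Second, the lower bounds in cases (ii) and (iii) do not interpolate naively: since $N-t_n$ can be of order $t_n$ while the deviation at $t_n$ is only $\asymp\log t_n$ (resp.\ $t_n^{1-\tau}$), the partial block could a priori cancel the deviation entirely; your ``persistence uniformly over the block'' is the hardest content of the theorem and is asserted, not proved --- one must show that the new points, which enter only the long intervals, preserve the imbalance on the chosen test interval at every intermediate $N$. Finally, you inherited a misprint uncritically: with $\tau=1+\log(S\alpha)/\log\alpha\in\,]0,1[$ in case (iii), the printed bound $1/N^{\tau-1}=N^{1-\tau}$ diverges, which is impossible for a discrepancy; any correct bookkeeping of your type yields bounds of the shape $N^{\tau-1}$ (up to the logarithmic factor), so ``the stated negative power of $N$'' is not in fact what is stated, and a complete proof would have to correct the exponent rather than reproduce it.
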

As an example we show how to get the sequence of points associated to the sequences of partitions considered in the above examples.
\begin{example}\label{example8}
We want to consider the Kakutani-Fibonacci sequence $(\xi_{1,1}^{n})_{n\in\mathbb{N}}$. The set of points corresponding to the sequence of partitions $\rho_{1,1}^{1}$ contains exactly the left endpoints of the two intervals $[0,\alpha[, [\alpha,1[$, i.e.
\begin{equation*}
\Lambda_{1,1}^{1} = \Big(0,\alpha\Big)=\Big(\xi_{1,1}^{1}, \xi_{1,1}^{2}\Big)\ .
\end{equation*}
We can observe that since we have only one short interval, then $j=0$ and therefore there are no functions of the type $\varphi_{L,j}^{n+1}$, but only the function $\varphi_1^{(n+1)}$. Since $l_1=1$, applying the function $\varphi_1^{(2)}$ to the point zero we have
\begin{equation*}
\Lambda_{1,1}^{2} = \Big(0,\alpha,\varphi_1^{(2)}(0)\Big)=\Big(0,\alpha,\alpha^{2}\Big)=\Big(\xi_{1,1}^{1}, \xi_{1,1}^{2}, \xi_{1,1}^{3}\Big)\ .
\end{equation*}and also
\begin{eqnarray*}
\Lambda_{1,1}^{3} &=& \Big(0,\alpha, \alpha^{2},\varphi_1^{(3)}(0),\varphi_1^{(3)}(\alpha)\Big)\\
&=&\Big(0,\alpha,\alpha^{2},\alpha^{3},\alpha+\alpha^{3}\Big)=\Big(\xi_{1,1}^{1}, \xi_{1,1}^{2}, \xi_{1,1}^{3}, \xi_{1,1}^{4}, \xi_{1,1}^{5}\Big)\ .
\end{eqnarray*}
Going on in this way and applying the function $\varphi_1^{(4)}$ to the first three points (since $l_3=3$), we get $\Lambda_{1,1}^{4}$ as represented in the following figure.\\
\begin{figure}[h!]

\begin{center}
\begin{tikzpicture}[scale=12.3]
\draw (0, 0) -- (1,0);
\draw (0,-0.01) node[below, black]{0} -- (0,0.01) node[above, black]{$\xi_1$};
\draw (1,-0.01) node[below, black]{1} -- (1,0.01);
\draw (0.61803,-0.01) node[below, black]{\begin{scriptsize} $\alpha$\end{scriptsize}}-- (0.61803,0.01) node[above, black]{$\xi_2$};
\draw (0.38196,-0.01) node[below, black]{\begin{scriptsize}$\alpha^{2}$\end{scriptsize}}-- (0.38196,0.01) node[above, black]{$\xi_3$};
\draw (0.23606,-0.01) node[below, black]{\begin{scriptsize}$\alpha^{3}$\end{scriptsize}}-- (0.23606,0.01) node[above, black]{$\xi_4$};
\draw (0.85410,-0.01) node[below, black]{\begin{scriptsize}$\alpha +\alpha^{3}$\end{scriptsize}}-- (0.85410,0.01) node[above, black]{$\xi_5$};
\draw (0.14589,-0.01) node[below, black]{\begin{scriptsize}$\alpha^{4}$\end{scriptsize}}-- (0.14589,0.01) node[above, black]{$\xi_6$};
\draw (0.76393,-0.01) node[below, black]{\begin{scriptsize}$\alpha +\alpha^{4}$\end{scriptsize}}-- (0.76393,0.01) node[above, black]{$\xi_7$};
\draw (0.52786,-0.01) node[below, black]{\begin{scriptsize}$\alpha^2 +\alpha^{4}$\end{scriptsize}}-- (0.52786,0.01) node[above, black]{$\xi_8$};
\end{tikzpicture}
\end{center}
\caption{First 8 points of $(\xi_{1,1}^n)_{n \ge 1}$}
\end{figure}
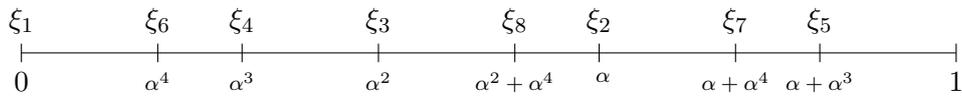
\\ By iterating this procedure we get the sequence of points $(\xi_{1,1}^{n})_{n\in\mathbb{N}}$ associated to $(\rho_{1,1}^{n})_{n\in\mathbb{N}}$. \\ This sequence has been called Kakutani-Fibonacci in \cite{carbone} and \cite{CIV} and it will be of particular interest in the next chapter, where we will consider a method to get this sequence as orbit of an ergodic transformation.
\end{example}
\begin{example}
Let us now consider the sequence of points $(\xi_{2,1}^{n})_{n\in\mathbb{N}}$. Also in this case we do not have functions of the type $\varphi_{L,j}^{(n+1)}$, but only of type $\varphi_i^{(n+1)}$, with $i=1, 2$, to apply to the first $l_n$ points of the set $\Lambda_{2,1}^{n}$. The first set of points is
\begin{equation*}
\Lambda_{2,1}^{1} = \Big(0,\alpha,2\alpha\Big)=\Big(\xi_{2,1}^{1}, \xi_{2,1}^{2}, \xi_{2,1}^{3}\Big)\ .
\end{equation*}
Then, since $l_1=2$, if we apply the two functions $\varphi_1^{(2)}$ and $\varphi_2^{(2)}$ to the first two points of $\Lambda_{2,1}^{1}$ we have
\begin{eqnarray*}
\Lambda_{2,1}^{2} &=& \Big(0,\alpha,\varphi_1^{(2)}(0),\varphi_1^{(2)}(\alpha),\varphi_2^{(2)}(0),\varphi_2^{(2)}(\alpha)\Big)\\
&=&\Big(0,\alpha,2\alpha,\alpha^{2},\alpha+\alpha^{2},2\alpha^{2},\alpha+2\alpha^{2}\Big)\\
&=&\Big(\xi_{2,1}^{1}, \xi_{2,1}^{2}, \xi_{2,1}^{3}, \xi_{2,1}^{4}, \xi_{2,1}^{5}, \xi_{2,1}^{6}, \xi_{2,1}^{7}\Big)\ .
\end{eqnarray*}
Let us now compute the set of points of $\rho_{2,1}^{3}$. \\
Applying the functions $\varphi_1^{(3)}$ and $\varphi_2^{(3)}$ to the first five points of $\Lambda_{2,1}^{2}$ (since $l_2=5$), we have
\begin{eqnarray*}
\Lambda_{2,1}^{3} &=&\Big(0,\alpha,2\alpha,\alpha^{2},\alpha+\alpha^{2},2\alpha^{2},\alpha+2\alpha^{2},\\
& & \varphi_1^{(3)}(0),\varphi_1^{(3)}(\alpha),\varphi_1^{(3)}(2\alpha),\varphi_1^{(3)}(\alpha^{2}),\varphi_1^{(3)}(\alpha+\alpha^{2}),\\
& & \varphi_2^{(3)}(0),\varphi_2^{(3)}(\alpha),\varphi_2^{(3)}(2\alpha),\varphi_2^{(3)}(\alpha^{2}),\varphi_2^{(3)}(\alpha+\alpha^{2})\Big)\\
&=& \Big(0,\alpha, 2\alpha, \alpha^{2},\alpha+\alpha^{2}, 2\alpha^{2},\alpha+2\alpha^{2}, \\
& & \alpha^{3},\alpha +\alpha^{3}, 2\alpha+\alpha^{3}, \alpha^{2}+\alpha^{3}, \alpha+\alpha^{2}+\alpha^{3}, \\
& & 2\alpha^{3},\alpha+2\alpha^{3},2\alpha+2\alpha^{3},\alpha^{2}+2\alpha^{3},\alpha+\alpha^{2}+2\alpha^{3}\Big)\\
&=&\Big(\xi_{2,1}^{1}, \xi_{2,1}^{2}, \xi_{2,1}^{3}, \xi_{2,1}^{4}, \xi_{2,1}^{5}, \xi_{2,1}^{6}, \xi_{2,1}^{7}, \xi_{2,1}^{8},\\
&=& \xi_{2,1}^{9}, \xi_{2,1}^{10}, \xi_{2,1}^{11}, \xi_{2,1}^{12}, \xi_{2,1}^{13}, \xi_{2,1}^{14}, \xi_{2,1}^{15}, \xi_{2,1}^{16},\xi_{2,1}^{17}\Big) \ .
\end{eqnarray*}
The first points of this point set are shown in the following picture
\begin{figure}[h!]

\begin{center}
\begin{tikzpicture}[scale=12.3]
\draw (0, 0) -- (1,0);
\draw (0,-0.01) node[below, black]{0} -- (0,0.01) node[above, black]{$\xi_1$};
\draw (1,-0.01) node[below, black]{1} -- (1,0.01);
\draw (0.414214,-0.01) node[below, black]{\begin{scriptsize} $\alpha$\end{scriptsize}}-- (0.414214,0.01) node[above, black]{$\xi_2$};
\draw (0.828428,-0.01) node[below, black]{\begin{scriptsize}$2\alpha$\end{scriptsize}}-- (0.828428,0.01) node[above, black]{$\xi_3$};
\draw (0.171573,-0.01) node[below, black]{\begin{scriptsize}$\alpha^{2}$\end{scriptsize}}-- (0.171573,0.01) node[above, black]{$\xi_4$};
\draw (0.585787,-0.01) node[below, black]{\begin{scriptsize}$\alpha +\alpha^{2}$\end{scriptsize}}-- (0.585787,0.01) node[above, black]{$\xi_5$};
\draw (0.343146,-0.01) node[below, black]{\begin{scriptsize}$2\alpha^{2}$\end{scriptsize}}-- (0.343146,0.01) node[above, black]{$\xi_6$};
\draw (0.75736,-0.01) node[below, black]{\begin{scriptsize}$\alpha +2\alpha^{2}$\end{scriptsize}}-- (0.75736,0.01) node[above, black]{$\xi_7$};
\draw (0.071068,-0.01) node[below, black]{\begin{scriptsize}$\alpha^3$\end{scriptsize}}-- (0.071068,0.01) node[above, black]{$\xi_8$};
\draw (0.485282,-0.01) node[below, black]{\begin{scriptsize}$\alpha +\alpha^3$\end{scriptsize}}-- (0.485282,0.01) node[above, black]{$\xi_9$};
\end{tikzpicture}
\end{center}
\caption{First 9 points of $(\xi_{2,1}^n)_{n \ge 1}$}
\end{figure}
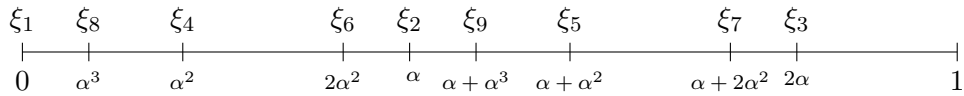
\end{example}
More efficient ways of constructing the $LS$-sequences of points have been recently introduced in \cite{AHZ} and \cite{Carbone2}.
\section{$LS$-sequence in higher dimension}
We now consider two possible bidimensional extensions of the $LS$-sequences of points, namely the $LS$-point set \`a la van der Corput and the $LS$-sequences \`a la Halton. These extensions have been proposed to me by I. Carbone and A. Vol\v{c}i\v{c} as the main object of my master thesis \cite{iaco} and are contained in the unpublished paper \cite{civ2}.
\begin{definition}
An $LS$-point set \`a la van der Corput of order $N\in\mathbb{N}$ in $[0,1[^2$ is defined as
\begin{equation}
\textbf{x}_n=\left(\frac{n}{N}, \xi_{L,S}^{n}\right)\qquad n=1,\ldots, N\ , 
\end{equation}
where $(\xi_{L,S}^{n})_{n\in\mathbb{N}}$ is an $LS$-sequence of points.
\end{definition}
\begin{remark}
By Lemma \ref{discr_point_set}, it follows immediately that if we choose a low-discrepancy $LS$-sequence $(\xi_{L,S}^{n})$, then the corresponding $LS$-point set \`a la van der Corput is low-discrepancy in $[0,1[^2$.
\end{remark}
\begin{definition}
An $LS$-sequence of points \`a la Halton  in $[0,1[^2$ is the sequence $((\xi_{L_1S_1}^{n},\xi_{L_2S_2}^{n}))_{n\in\mathbb{N}}$, where $(\xi_{L_1S_1}^{n})_{n\in\mathbb{N}}$ and $(\xi_{L_2S_2}^{n})_{n\in\mathbb{N}}$ are two distinct $LS$-sequences of points.
\end{definition}
Both definitions can be extended to an $s$-dimensional sequence in two possible ways. The first one is to consider $s$-tuples of $LS$-sequences, while the second one is to consider $s$-tuples where the first coordinate is $\left( \frac{n}{N} \right)$, and the remaining $s-1$ are $LS$-sequences.\\
Of course, like for classical sequences already encountered in the first chapter of the thesis, such as the multidimensional Kronecker sequence and the Halton sequences, the main problem is to provide a condition (often algebraic) to assure the uniform distribution property in the multidimensional setting.\\ In fact, even pairing low-discrepancy $LS$-sequences, in general, does not guarantee uniform distribution in the unit square. The following two figures show how the behaviour of the bidimensional sequence can indeed be very erratic.
\begin{figure}[h!]
\begin{minipage}{.45\textwidth}
\centering
\includegraphics[scale=0.55, keepaspectratio]{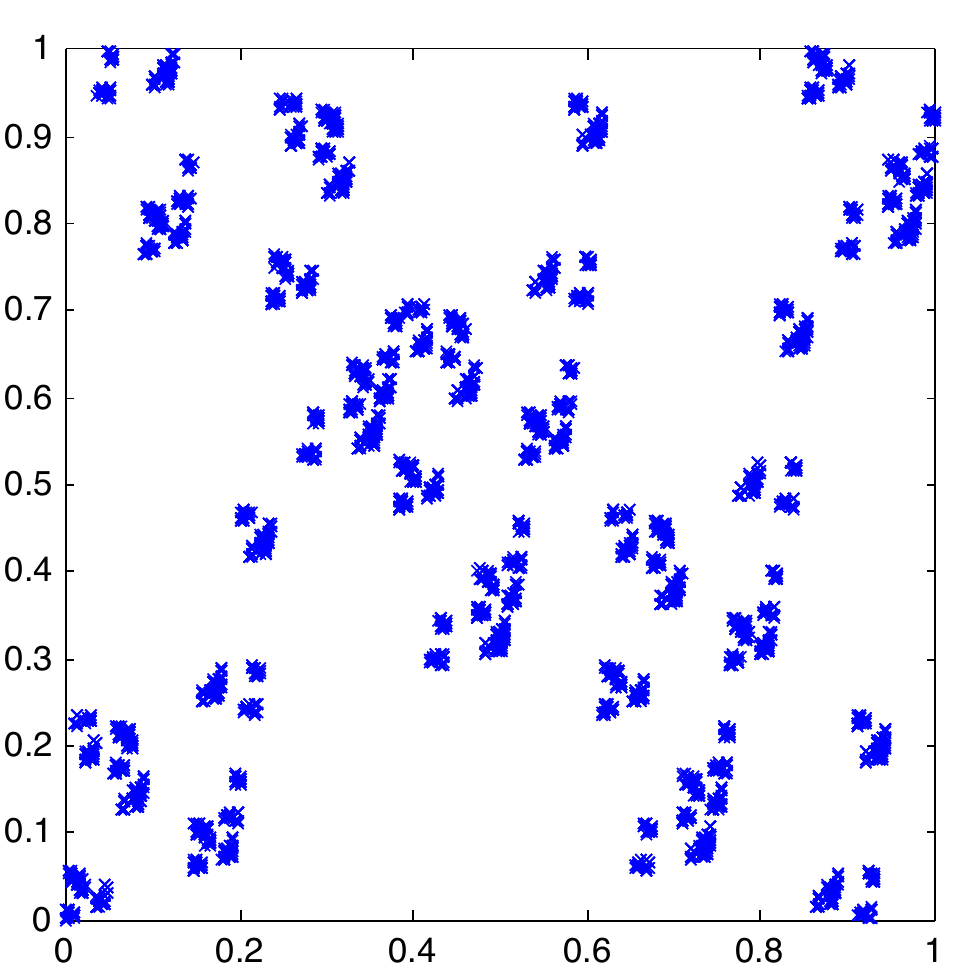}
\caption{$((\xi_{1,1}^{n},\xi_{5,1}^{n}))_{n\in\mathbb{N}}$ for $1\leq n\leq 5000$}\label{squirrel}
\end{minipage}
\hspace{7mm}
\begin{minipage}{.45\textwidth}
\centering
\includegraphics[scale=0.55, keepaspectratio]{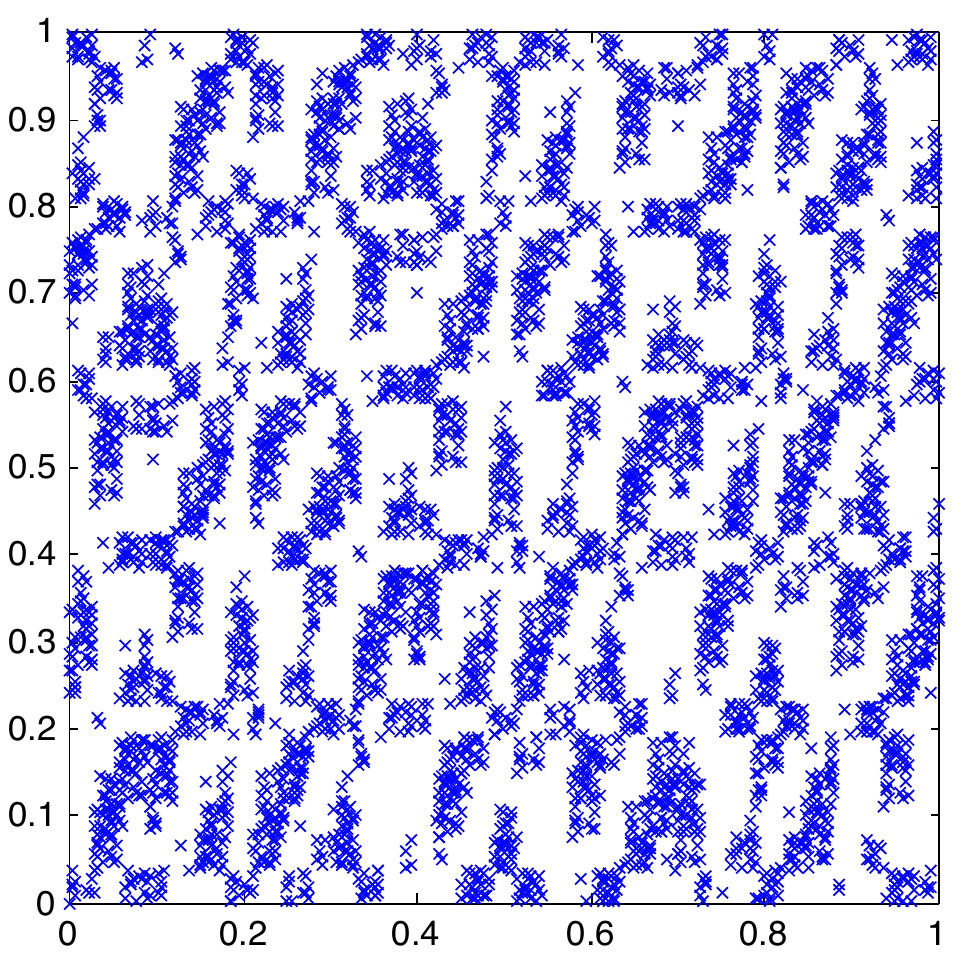}
\caption{$((\xi_{3,1}^{n},\xi_{4,1}^{n}))_{n\in\mathbb{N}}$ for $1\leq n\leq 5000$}
\end{minipage}
\end{figure}

A partial answer to this problem has been given by Aistleitner et al. in \cite{AHZ}, where the authors show that if the roots $\alpha_1$, $\alpha_2$ of the polynomials $S_1x^2+L_1x-1=0$ and $S_2x^2+L_2x-1=0$ do not fulfill a precise condition, then the sequence $((\xi_{L_1S_1}^{n},\xi_{L_2S_2}^{n}))_{n\in\mathbb{N}}$ is not dense in $[0,1[^2$. \\
In the sequel we will denote by $\mathcal B=((L_1,S_1),\dots,(L_s,S_s))$ an $s$-tuple of pairs, each of them defining an $LS$-sequence of points and by  $(\xi_{\bf{\mathcal{B}}}^n)_{n\in\mathbb{N}}$ the corresponding multidimensional $LS$-sequence, obtained by combining them componentwise. \\
More precisely, they proved the following
\begin{theorem}
Let $\mathcal B=((L_1,S_1),(L_2,S_2))$ with $L_i> S_i-1\geq 0$ and assume that there exist integers $m$ and $k$ such that $\frac{\alpha_1^{k+1}}{\alpha_2^{m+1}}\in\mathbb{Q}$. Then the two-dimensional $LS$-sequence $(\xi_{\bf\mathcal{B}}^n)_{n\in\mathbb{N}}$ is not uniformly distributed, and not even dense in $[0,1[^2$.
\end{theorem}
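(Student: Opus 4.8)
The statement is a non-density result (non-density immediately gives failure of uniform distribution), so the plan is to exhibit a nonempty open box in $[0,1[^2$ that is avoided by every term of the sequence. Everything hinges on the arithmetic of the two generating numbers, so I would begin by recording their algebraic nature and converting the hypothesis into a usable multiplicative relation.

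First I would observe that each $\alpha_i$ is the reciprocal of $\beta_i$, the dominant root of the characteristic equation $x^2-L_ix-S_i=0$ of the recurrence \eqref{LSrecurr}; since $S_i\ge 1$ and $L_i\ge S_i$, the discriminant $L_i^2+4S_i$ lies strictly between consecutive squares and is never a perfect square, so $\beta_i$ is a quadratic Pisot number and $\alpha_i$ a quadratic irrational. The hypothesis $\alpha_1^{k+1}/\alpha_2^{m+1}\in\mathbb{Q}$ reads $q\,\alpha_1^{k+1}=p\,\alpha_2^{m+1}$ for positive integers $p,q$. A short check (using that a power of a quadratic irrational cannot be rational, because $\alpha+\alpha'\ne 0$ for these roots) shows this common value is irrational; hence it cannot lie in $\mathbb{Q}(\alpha_1)\cap\mathbb{Q}(\alpha_2)$ unless the two quadratic fields coincide. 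So the first, essential step is: $\mathbb{Q}(\alpha_1)=\mathbb{Q}(\alpha_2)=:K$, a single real quadratic field, inside which $\alpha_1^{k+1}$ and $\alpha_2^{m+1}$ are \emph{rationally proportional}.

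Next I would exploit the self-similarity of the one-dimensional construction. By definition the first $t_N$ points of $(\xi^n_{L,S})$ are exactly the left endpoints of $\rho^N_{L,S}$, all of which lie in $\mathbb{Z}[\alpha]\cap[0,1[$, and each long interval of $\rho^N_{L,S}$ is an exact $\alpha^N$-scaled copy of $[0,1[$ carrying a rescaled copy of the whole sequence. I would package the attainable values as the attractor of a finite \emph{graph-directed iterated function system} built from the contractions $x\mapsto x+i\alpha^{n+1}$ and $x\mapsto x+L\alpha^{n+1}+j\alpha^{n+2}$. Because the \emph{same} index $n$ drives both coordinates, the joint attainable set is the attractor of one graph-directed system on $[0,1[^2$ whose two branch-contraction ratios are $\alpha_1^{k+1}$ and $\alpha_2^{m+1}$ along synchronised branches. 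The relation $q\alpha_1^{k+1}=p\alpha_2^{m+1}$ from the previous step makes these ratios \emph{commensurable}, so along a synchronised branch the pair transforms by a single affine map with a common contraction factor.

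From here the conclusion is topological: commensurable self-similarity confines the closure $Z$ of $\{(\xi^n_1,\xi^n_2)\}$ to a graph-directed self-affine set generated by finitely many diagonal affine contractions with commensurable ratios. Such a set is a proper self-affine set with empty interior (heuristically of dimension below $2$, confined to a finite union of graphs), hence nowhere dense, and its complement contains a nonempty open box $U$ missed by every term. I expect the \emph{main obstacle} to be exactly the passage from the algebraic relation to this topological gap: one must show the synchronisation is tight enough, controlling the mismatch between $t_N\sim c\beta^N$ and $\beta^N$ and the carries in the $\alpha$-adic digit strings, so that \emph{every} index $n$ (not only synchronised ones) yields a pair in $Z$, and then verify that $Z$ genuinely omits an open set rather than merely having measure zero. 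Making the finite graph-directed description precise and proving the empty-interior estimate for its attractor is where the real work lies.
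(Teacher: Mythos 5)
This theorem is stated in the thesis as a quoted result from \cite{AHZ} with no proof reproduced, so your proposal can only be measured against the argument in that source, which works directly on the digit expansions of $n$ in the two numeration systems (via the representation $n=\sum_i\epsilon_i t_i$, $\xi^n=\sum_i\epsilon_i\alpha^{i+1}$ of \cite[Lemma 3]{AHZ}, the same lemma invoked in Chapter 4 of this thesis). Your opening step is correct and matches that starting point: for $1\le S_i\le L_i$ the discriminant $L_i^2+4S_i$ lies strictly between $L_i^2$ and $(L_i+2)^2$ and cannot equal $(L_i+1)^2$ by parity, so each $\alpha_i$ is a quadratic irrational; no power of $\alpha_i$ is rational since its conjugate is not $-\alpha_i$; and $q\alpha_1^{k+1}=p\alpha_2^{m+1}$ forces $\mathbb{Q}(\alpha_1)=\mathbb{Q}(\alpha_2)$. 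After that, however, there are two genuine gaps. First, the synchronization on which your graph-directed system rests does not exist in the generality of the theorem: the hypothesis gives $\beta_1^{k+1}=(q/p)\,\beta_2^{m+1}$, so after $i$ synchronized blocks the contraction factors in the two coordinates differ by $(q/p)^i$. Unless $p=q$ there is no ``single affine map with a common contraction factor'': the cells' aspect ratios degenerate exponentially, and if moreover $\log\beta_1/\log\beta_2\notin\mathbb{Q}$ the two scales never realign, so no finite self-affine model with bounded distortion describes the closure --- yet the theorem covers these cases. Even when $p=q$, converting ``the same index $n$ drives both coordinates'' into a \emph{finite} graph-directed system means proving that the pair of digit strings of $n$ is generated by a finite carry automaton, which needs the Pisot estimates $t_i^{(j)}=A_j\beta_j^i+O(|\beta_j'|^i)$ together with the conjugated relation $q\,(\alpha_1')^{k+1}=p\,(\alpha_2')^{m+1}$; this is the substance of the proof, and you defer it.

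Second, even granting a finite graph-directed self-affine description, your topological conclusion does not follow: a diagonal self-affine (indeed self-similar) attractor with commensurable ratios can have nonempty interior --- the four maps $(x,y)\mapsto \frac{1}{2}(x,y)+v$ with $v\in\{0,\frac{1}{2}\}^2$ have attractor $[0,1]^2$ --- so ``commensurable self-affinity'' alone never produces an avoided open box, and the ``dimension below $2$'' heuristic is not an argument. What must be shown is that at some scale the admissible \emph{joint} digit blocks are strictly fewer than the cells they could occupy, and that a specific block pattern is forbidden, which then pins down an explicit open box missed by every $(\xi_1^n,\xi_2^n)$; this digit-correlation estimate is exactly where the published argument does its work. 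Your closing sentence concedes that both the finite-state description and the empty-interior estimate remain unproved; since those two steps \emph{are} the theorem, what you have is a plausible program (with a correct algebraic first step) rather than a proof.
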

For instance, in the case $(L_1,S_1)=(1,1)$ and $(L_2,S_2)=(4,1)$ (see Figure \ref{squirrel}) we have $\alpha_2=\alpha_1^3$ and so the sequence is not even dense in $[0,1[^2$.\\

This theorem can also be applied to the multidimensional case, since for any multidimensional sequence of points, which is uniformly distributed, all lower-dimensional projections also have to be uniformly distributed. More precisely, one has the following
\begin{corollary} \label{co1}
Let $\mathcal B=((L_1,S_1),\dots,(L_s,S_s))$ with $L_i> S_i-1\geq 0$ and assume that there exist numbers $u,w \in \{1, \dots, s\}$ and integers $m$ and $k$ such that $\frac{\alpha_u^{k+1}}{\alpha_w^{m+1}}\in\mathbb{Q}$. Then the $s$-dimensional $LS$-sequence $(\xi_{\bf{\mathcal{B}}}^n)_{n\in\mathbb{N}}$ is not uniformly distributed, and not even dense in $[0,1]^s$.
\end{corollary}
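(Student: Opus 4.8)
\emph{Approach.} The plan is to deduce the $s$-dimensional statement from the two-dimensional Theorem stated just above, using the elementary principle that both uniform distribution and topological density pass to coordinate projections. Since, by definition, $\xi_{\mathcal B}^n=(\xi_{L_1,S_1}^n,\dots,\xi_{L_s,S_s}^n)$ is built componentwise from the one-dimensional $LS$-sequences, the projection $P\colon[0,1[^s\to[0,1[^2$ onto the coordinates $u$ and $w$ sends the sequence $(\xi_{\mathcal B}^n)_{n\in\mathbb N}$ precisely to the two-dimensional $LS$-sequence $(\xi_{\mathcal B'}^n)_{n\in\mathbb N}$ associated with $\mathcal B'=((L_u,S_u),(L_w,S_w))$. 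This identification is the first thing I would record, together with the observation that the defining inequalities $L_u>S_u-1\ge0$ and $L_w>S_w-1\ge0$ are inherited from $\mathcal B$, so that $\mathcal B'$ satisfies the hypotheses of the two-dimensional Theorem.

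\emph{Projection inheritance.} Next I would prove that a uniformly distributed sequence in $[0,1[^s$ projects to a uniformly distributed sequence in $[0,1[^2$. Given a box $[\mathbf a,\mathbf b[\subseteq[0,1[^2$, let $J\subseteq[0,1[^s$ be the box equal to $[a_1,b_1[$ in coordinate $u$, to $[a_2,b_2[$ in coordinate $w$, and to $[0,1[$ in every other coordinate; then $\mathbf 1_J(\xi_{\mathcal B}^n)=\mathbf 1_{[\mathbf a,\mathbf b[}(P\xi_{\mathcal B}^n)$ and $\lambda_s(J)=\lambda_2([\mathbf a,\mathbf b[)$, so applying \eqref{multiunif} to the box $J$ yields exactly the defining limit relation for $(\xi_{\mathcal B'}^n)$ in dimension two. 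For density the argument is even shorter: $P$ is continuous and surjective, and the projected points already lie in $[0,1[^2$, so density of $(\xi_{\mathcal B}^n)$ in $[0,1]^s$ yields density of $(\xi_{\mathcal B'}^n)$ in $[0,1[^2$, since for each nonempty relatively open $U\subseteq[0,1[^2$ the preimage $P^{-1}(U)$ is open, nonempty, and therefore meets the dense set.

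\emph{Conclusion and main difficulty.} I would then finish by contraposition. By hypothesis there exist $u,w$ and integers $m,k$ with $\alpha_u^{k+1}/\alpha_w^{m+1}\in\mathbb Q$, so the two-dimensional Theorem applies to $\mathcal B'$ and asserts that $(\xi_{\mathcal B'}^n)$ is neither uniformly distributed nor dense in $[0,1[^2$. If $(\xi_{\mathcal B}^n)$ were uniformly distributed (respectively dense) in $[0,1]^s$, the projection principle of the previous paragraph would force $(\xi_{\mathcal B'}^n)$ to be uniformly distributed (respectively dense) in $[0,1[^2$, contradicting the Theorem; hence $(\xi_{\mathcal B}^n)$ is neither. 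The argument is short, and its only genuinely substantive point --- the place I would be most careful --- is the identification in the first paragraph: one must check that projecting the componentwise-combined sequence onto the coordinates $u,w$ reproduces \emph{exactly} the two-dimensional $LS$-sequence to which the Theorem applies, and that the constraints on the pairs $(L_i,S_i)$ transfer to $(u,w)$. Everything else is a formal consequence of the definition of uniform distribution and of the continuity of the projection.
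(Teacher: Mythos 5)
Your proposal is correct and matches the paper's own argument: the corollary there is justified precisely by the observation that every lower-dimensional projection of a uniformly distributed (or dense) sequence is uniformly distributed (resp.\ dense), so projecting onto coordinates $u,w$ reduces the claim to the two-dimensional theorem applied to $\mathcal B'=((L_u,S_u),(L_w,S_w))$. Your write-up simply makes explicit the box-projection computation and the transfer of the constraints $L_i>S_i-1\ge 0$, which the paper leaves implicit.
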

However, no positive results, proving uniform distribution of a bidimensional $LS$-sequence for an appropriate choice of $L_1,S_1$ and $L_2,S_2$, are known so far. 
\chapter{A dynamical system approach to the Kakutani-Fibonacci sequence}
 \thispagestyle{empty}
\addtocounter{section}{1}
\setcounter{definition}{0}
In this chapter we show how to obtain the Kakutani-Fibonacci sequence of points, introduced in Chapter 2, as orbit of an ergodic transformation.
More precisely, we consider the Kakutani-Fibonacci sequence of partitions and associate to it an ergodic interval exchange, which will be called the Kakutani-Fibonacci transformation. The construction of the interval exchange makes use of the cutting-stacking technique, that we already considered in Chapter 1. Moreover, we will prove that the orbit of the origin under this map coincides with the low discrepancy Kakutani-Fibonacci sequence of points, defined in Example \ref{example8}.\\
The content of this chapter is presented in \cite{CIV}.

\begin{definition}[Cutting-stacking technique for the Kakutani-Fibonacci sequence]\label{8}
For every fixed $n$,  the intervals of the $n$-th Kakutani-Fibonacci partition $\alpha^{n}\omega$ are represented by two columns $C_n=\{L_n,S_n \}$ constructed as follows.

Let us start with two columns $C_1=\{L_1,S_1\}$, where $L_1=[0,\alpha[$, $S_1=[\alpha,1[$. 
 If we divide $L_1$ proportionally to $\alpha$ and $\alpha^2$, we can write $L_1=\{L_1^0, L_1^1\}$, where $L_1^0=[0,\alpha^2[$ and $L_1^1=[\alpha^2,\alpha[$.
Now we stack the interval $S_1$ over $L_1^0$ (and use the common notation $L_1^0 *S_1$) as they have the same  {\it width}, namely $w(L_1^0)=w(S_1)=\alpha^2$. So we get two new columns $C_2=\{L_2,S_2\}$, 
 where $L_2=L_1^0 *S_1$ and $S_2=L_1^1$. We denote by $b(L_2)=[0, \alpha^2[$ the {\it bottom} of $L_2$ and by $h(L_2)=2=l_2$ its {\it height}. \\
 If we continue this way, at the $n$-th step we get two columns denoted by $C_n=\{L_n,S_n\},$
with $h(L_n)=l_n$ and $h(S_n)=s_n$.  We divide $L_n$ into two columns, say $L_n^0$ and $L_n^1$, where $w(L_n^0)=\alpha^{n+1}$ and $w(L_n^1)=\alpha^{n+2}$, and stack $S_n$ over $L_n^0$, obtaining 
$$C_{n+1}=\{L_{n+1},S_{n+1}\},$$ where
\begin{equation*}L_{n+1}=L_n^0*S_n \qquad {\rm and}\qquad S_{n+1}=L_n^1\ ,\end{equation*} with
$$w(L_{n+1})=\alpha^{n+1}\ ,\quad h(L_{n+1})=l_{n+1}\ , \quad b(L_{n+1})=[0,\alpha^{n+1}[$$ and
$$w(S_{n+1})=\alpha^{n+2}\ , \quad h(S_{n+1})=s_{n+1}\ ,\quad b(C_{n+1})=[\alpha^{n+1},\alpha^n[\ .$$
\end{definition}
The first steps of the procedure are visualized in Figure \ref{cut_stack}.

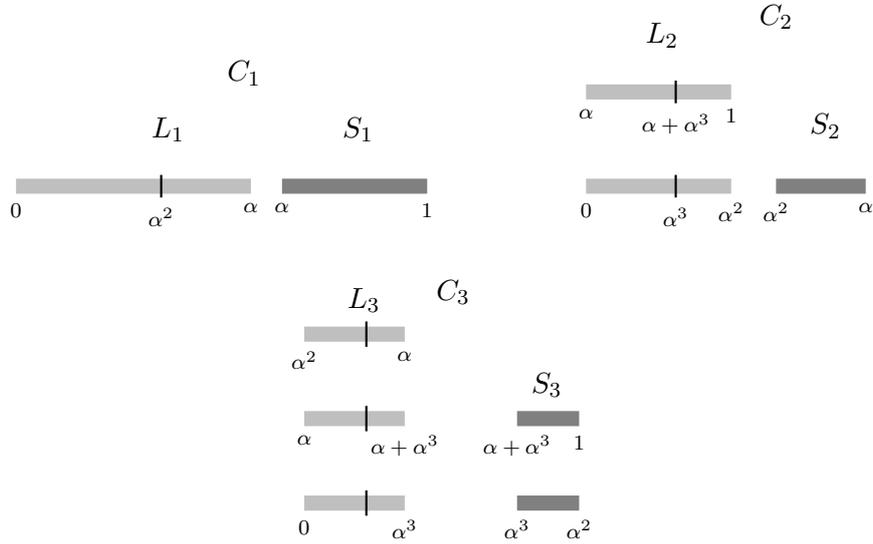
\begin{figure}[h!]
\begin{center}
\begin{tikzpicture}[scale=5]
\draw [lightgray, line width=0.2cm] (0,0) node[below, black]{\scriptsize $0$} -- (0.61803,0) node[below, black]{\scriptsize $\alpha$};
\node at (0.6,0.3) {$C_1$};
\node at (0.4,0.15) {$L_1$};
\node at (0.9,0.15) {$S_1$};
\draw [gray, line width=0.2cm] (0.7,0) node[below, black]{\scriptsize $\alpha$} -- (1.08196,0) node[below, black]{\scriptsize $1$};
\draw [thick] (0.38196,-0.03) node[below, black]{\scriptsize $\alpha^{2}$} -- (0.38196, 0.03);

\draw [lightgray, line width=0.2cm] (1.5,0) node[below, black]{\scriptsize $0$} -- (1.88196,0) node[below, black]{\scriptsize $\alpha^{2}$};
\draw [lightgray, line width=0.2cm] (1.5,0.25) node[below, black]{\scriptsize $\alpha$} -- (1.88196,0.25) node[below, black]{\scriptsize $1$};
\draw [gray, line width=0.2cm] (2,0) node[below, black]{\scriptsize $\alpha^{2}$} -- (2.23606,0) node[below, black]{\scriptsize $\alpha$};
\draw [thick] (1.73606,-0.03) node[below, black]{\scriptsize $\alpha^{3}$} -- (1.73606, 0.03);

\draw [thick,black] (1.73606, 0.22) node[below, black]{\scriptsize $\alpha+\alpha^{3}$} -- (1.73606, 0.28);

\node at (2,0.45) {$C_2$};
\node at (1.7,0.4) {$L_2$};
\node at (2.13,0.16) {$S_2$};

\end{tikzpicture}
\end{center}

\begin{center}
\begin{tikzpicture}[scale=5.6]

\draw [lightgray, line width=0.2cm] (0,0) node[below, black]{\scriptsize $0$} -- (0.23606,0) node[below, black]{\scriptsize $\alpha^{3}$};
\draw [thick,black] (0.14589, -0.03) -- (0.14589, 0.03);
\draw [lightgray, line width=0.2cm] (0,0.2) node[below, black]{\scriptsize $\alpha$} -- (0.23606,0.2) node[below, black]{\scriptsize $\alpha+\alpha^{3}$};
\draw [thick,black] (0.14589, 0.17) -- (0.14589, 0.23);
\draw [lightgray, line width=0.2cm] (0,0.4) node[below, black]{\scriptsize $\alpha^{2}$} -- (0.23606,0.4) node[below, black]{\scriptsize $\alpha$};
\draw [thick,black] (0.14589, 0.37) -- (0.14589, 0.43);

\draw [gray, line width=0.2cm] (0.5,0) node[below, black]{\scriptsize $\alpha^{3}$} -- (0.64589,0) node[below, black]{\scriptsize $\alpha^{2}$};
\draw [gray, line width=0.2cm] (0.5,0.2) node[below, black]{\scriptsize $\alpha +\alpha^{3}$} -- (0.64589,0.2) node[below, black]{\scriptsize $1$};

\node at (0.35,0.5) {$C_3$};
\node at (0.14,0.48) {$L_3$};
\node at (0.57, 0.28){$S_3$};

\end{tikzpicture}

\caption{Partial graph of the cutting-stacking method for $(\alpha^n\omega)_{n\ge 1}$}\label{cut_stack}
\end{center}
\end{figure}

To the above cutting-stacking construction it is naturally associated an interval exchange, whose explicit expression is given by the following result.

\begin{proposition}\label{9}
The interval exchange corresponding to the cutting-stacking procedure described in Definition \ref{8} is the map $T:[0,1[\rightarrow [0,1[$ whose restriction to $ I_{k}$ is  $T_k$, where
 $$T_1(x)=x+\alpha \quad {\rm if\ }\ x \in I_1=[0,\alpha^{2}[ \,$$
and, for every $k\geq 1$,
$$T_{2k}(x)=x+\alpha^{2k}-\sum_{j=0}^{k-1}\alpha^{2j+1} \quad {\rm if}\ x \in I_{2k}=\left[\sum_{j=0}^{k-1}\alpha^{2j+1}, \sum_{j=0}^{k}\alpha^{2j+1}\right[$$
and
$$T_{2k+1}(x)=x+\alpha^{2k+1}-\sum_{j=0}^{k-1}\alpha^{2(j+1)} \quad {\rm if}\ x \in I_{2k+1}= \left[\sum_{j=0}^{k-1}\alpha^{2(j+1)}, \sum_{j=0}^{k}\alpha^{2(j+1)}\right[.$$
\end{proposition}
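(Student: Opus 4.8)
The plan is to read off $T$ directly from the cut-and-stack recursion of Definition~\ref{8} by tracking, at each stage, the exact position of the single interval that gets translated. The key observation is that the interval $I_m$ of the statement is nothing but the top interval of the column $L_m$, narrowed to its left part of width $w(L_{m+1})=\alpha^{m+1}$ (that is, the top of $L_m^0$), and that the stacking $L_{m+1}=L_m^0\ast S_m$ places this interval immediately below the bottom $b(S_m)$ of the short column. Consequently $T$ translates $I_m$ onto $b(S_m)$, and since Definition~\ref{8} gives $b(S_m)=[\alpha^m,\alpha^{m-1}[$ for every $m\geq 1$, the translation sends the left endpoint of $I_m$ to $\alpha^m$. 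Writing $p_m$ for the left endpoint of $\mathrm{top}(L_m)$, the restriction of $T$ to $I_m=[p_m,p_m+\alpha^{m+1}[$ is therefore $x\mapsto x+(\alpha^m-p_m)$, and the whole problem reduces to computing $p_m$.

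First I would establish, by induction on $m$, that $p_1=0$, $p_2=\alpha$ and
\[
p_{m+1}=p_{m-1}+\alpha^{m}\qquad(m\geq 2).
\]
The base cases are immediate from $L_1=[0,\alpha[$ and $L_2=L_1^0\ast S_1=\{[0,\alpha^2[,[\alpha,1[\}$. For the inductive step I would use that $\mathrm{top}(L_{m+1})=\mathrm{top}(S_m)$ (because $S_m$ is stacked on top of $L_m^0$) together with $S_m=L_{m-1}^1$; since $L_{m-1}^1$ is the right part of $L_{m-1}$ of width $\alpha^{m+1}$, obtained by cutting each interval of $L_{m-1}$ at distance $\alpha^{m}$ from its left endpoint (here one uses $\alpha^{m}+\alpha^{m+1}=\alpha^{m-1}$), the top interval of $S_m$ has left endpoint $p_{m-1}+\alpha^{m}$, which is exactly $p_{m+1}$. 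Solving the recurrence gives $p_{2k}=\sum_{j=0}^{k-1}\alpha^{2j+1}$ and $p_{2k+1}=\sum_{j=0}^{k-1}\alpha^{2(j+1)}$, and substituting into $x\mapsto x+(\alpha^m-p_m)$ reproduces verbatim the maps $T_1,T_{2k},T_{2k+1}$ and the intervals $I_1,I_{2k},I_{2k+1}$ of the statement.

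It then remains to check that the $I_m$ exhaust $[0,1[$, so that the formulas determine $T$ almost everywhere. Using $1-\alpha^2=\alpha$ and summing geometric series, the odd intervals $I_{2k+1}$ (of length $\alpha^{2k+2}$, placed consecutively from $0$) tile $[0,\alpha[$ and the even intervals $I_{2k}$ (of length $\alpha^{2k+1}$, placed consecutively from $\alpha$) tile $[\alpha,1[$; indeed $\sum_{m\geq1}|I_m|=\sum_{m\geq1}\alpha^{m+1}=\alpha^2/(1-\alpha)=1$. On each $I_m$ the translation computed above is precisely the action of $T_{C_{m+1}}$, and since $T$ is the unique transformation extending all the $T_{C_n}$, this identifies $T$ with the stated interval exchange off the negligible set $\mathrm{top}(\Sigma)$. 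I expect the main obstacle to be the inductive bookkeeping of the interval positions under repeated cutting and stacking — in particular keeping straight how $\mathrm{top}(L_m)$ migrates into $S_m$ and hence into $L_{m+1}$ — rather than the closing geometric-series computation, which is routine once the identities $\alpha+\alpha^2=1$ and $\alpha^{m}+\alpha^{m+1}=\alpha^{m-1}$ are in hand.
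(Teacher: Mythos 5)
Your proof is correct, and it reaches the proposition by the reverse route from the paper. The paper starts from the stated formulas and \emph{verifies} them: writing $T(x)=x+c_k$ on $I_k$, it first checks that the images $I_k+c_k=[\alpha^k,\alpha^k+\alpha^{k+1}[\,=[\alpha^k,\alpha^{k-1}[$ are pairwise disjoint and of total measure one (so $T$ is an interval exchange), and then, splitting by parity, confirms $T_{2k}(I_{2k})=b(S_{2k})$ and $T_{2k+1}(I_{2k+1})=b(S_{2k+1})$, concluding that $\mathrm{top}(L_m^0)=I_m$ ``because $T_m$ is a bijection''. You instead \emph{derive} the formulas: you isolate the fact that the only new piece of $T$ created at stage $m$ translates $\mathrm{top}(L_m^0)$ onto $b(S_m)=[\alpha^m,\alpha^{m-1}[$, and then compute the left endpoint $p_m$ of $\mathrm{top}(L_m)$ through the recurrence $p_{m+1}=p_{m-1}+\alpha^m$, obtained from $\mathrm{top}(L_{m+1})=\mathrm{top}(S_m)=\mathrm{top}(L_{m-1}^1)$ together with $\alpha^m+\alpha^{m+1}=\alpha^{m-1}$. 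This recurrence is exactly the inductive bookkeeping that the published proof leaves implicit --- the paper's identification of $I_m$ with $\mathrm{top}(L_m^0)$ is asserted rather than tracked through the construction, so your induction on $p_m$ makes the argument more self-contained. Complementarily, you check that the \emph{domains} $I_m$ tile $[0,1[$ (odd intervals filling $[0,\alpha[$, even ones $[\alpha,1[$), where the paper checks that the \emph{images} do; given the translation structure these checks are equivalent, and both computations are correct. One small remark: since your tiling shows $\bigcup_{m\geq 1} I_m=[0,1[$, every point acquires a definition of $T$ at some finite stage, hence $\mathrm{top}(\Sigma)=\emptyset$ for this construction; your closing hedge that the identification holds only ``off the negligible set $\mathrm{top}(\Sigma)$'' is therefore unnecessarily cautious --- your own argument yields the identity on all of $[0,1[$, which is what the proposition asserts (and is precisely why, unlike $T_{Fib}$, this transformation is defined at $0$).
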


\begin{proof} 
If we write $T(x)=x+c_k$ whenever  $x \in I_k$ for any $k \ge 1$, we simply observe that $I_k+c_k=[\alpha^k, \alpha^k+\alpha^{k+1} [ $ for all $k \ge 1$.  Therefore, $\lambda(\cup_{k\geq 0}(I_k+c_k))=1$ and $(I_h+c_h)\cap (I_k+c_k) =\emptyset $ whenever $h \neq k$, which proves that $T$ is an interval exchange.

Now we show how the map $T$ acts in the cutting-stacking procedure, proving that in each column $C_n=\{L_n, S_n \}$ the transformation $T$ maps each interval of $L_n$ (respectively, $S_n$) onto the interval above it and the {\it top} interval of $L_n^0$, denoted as usual by $top(L_n^0)$, onto the bottom interval of $S_n$. 

 Let us start with $C_1=\{L_1, S_1 \}$. When we divide the column $L_1$ into two sub-columns $L^0_1$ and $L_1^1$, made by one interval each, we notice that $top(L^0_1)=I_1$ and $b(S_1)=T(I_1)$. Therefore, we stack $S_1$ onto $L_1^0$ using $T_1$, which maps $I_1$ onto $b(S_1)$ and gives birth to the columns $C_2=\{ L_2, S_2\}$, where $L_2=L_1^0*S_1$ and $S_2=L_1^1$.

Now we consider $n=2k$ and $C_{2k}=\{ L_{2k}, S_{2k}\}$. When we divide $L_{2k}$ proportionally to $\alpha$ and $\alpha^2$, obtaining therefore $L_{2k}^0$ and $L_{2k}^1$, we notice that $b(S_{2k})=T_{2k}(I_{2k})=[\alpha^{2k}, \alpha^{2k+1}[$ and, consequently, $top(L_{2k}^0)=I_{2k}$ because $T_{2k}$ is a bijection. In other words, $T_{2k}$ stacks the bottom of $S_{2k}$ onto the top of $L_{2k}^0$ because $T_{2k}(top(L_{2k}^0))=b(S_{2k})=[\alpha^{2k}, \alpha^{2k-1}[$\ .

A simple calculation shows that if we consider the case $n={2k+1}$ we have  $T_{2k+1}(I_{2k+1})=b(S_{2k+1})=[\alpha^{2k+1}, \alpha^{2k}[$ and, therefore,  $top(L_{2k+1}^0)=I_{2k+1}$. 

This completes the proof of the proposition.
\end{proof}

\begin{figure}[h!]
\begin{center}
\begin{tikzpicture}[scale=7]
\draw (0,0) node[below]{\scriptsize 0} -- (1,0) -- (1,1) -- (0,1)node[left]{\scriptsize1} -- (0,0);
\draw [thick] (0,0.61803) node[left]{\scriptsize $\alpha$} -- (0.38196,1)
node[midway, sloped,above] {$T_1$};;
\draw [thick] (0.38196,0.23606) -- (0.52786,0.38196)
node[midway, sloped,above] {$T_3$};;

\draw [thick] (0.61803,0.38198) -- (0.85410,0.61803)
node[midway, sloped,above] {$T_2$};
\draw [thick] (0.85410,0.14589) -- (0.94427,0.23606)
node[midway, sloped,above] {$T_4$};;
\draw [dashed] (0.61803,0) node[below]{\scriptsize $\alpha^{}$} -- (0.61803,1); 
\draw [dashed] (0.38196,0) node[below left]{\scriptsize $\alpha^{2}$} -- (0.38196,1);
\draw [dashed] (0.52786,0) node[below]{\scriptsize $\alpha^{2}+\alpha^{4}$} -- (0.52786,1);
\draw [dashed] (0.85410,0) node[below left]{\scriptsize $\alpha+\alpha^{3}$} -- (0.85410,1);
\draw [dashed] (0.94427,0) node[right, below]{\scriptsize $\alpha+\alpha^{3}+\alpha^{5}$} -- (0.94427,1);
\draw [dashed] (0,0.61803) -- (1,0.61803);
\draw [dashed] (0,0.38196)  node[left]{\scriptsize $\alpha^{2}$} -- (1,0.38196);
\draw [dashed] (0,0.23606) node[left]{\scriptsize $\alpha^{3}$}  -- (1,0.23606);
\draw [dashed] (0,0.14589) node[left]{\scriptsize $\alpha^{4}$}  -- (1,0.14589);
\end{tikzpicture}
\end{center}
\caption{Partial graph of the Kakutani-Fibonacci transformation $T$}\label{kak_fib}
\end{figure}
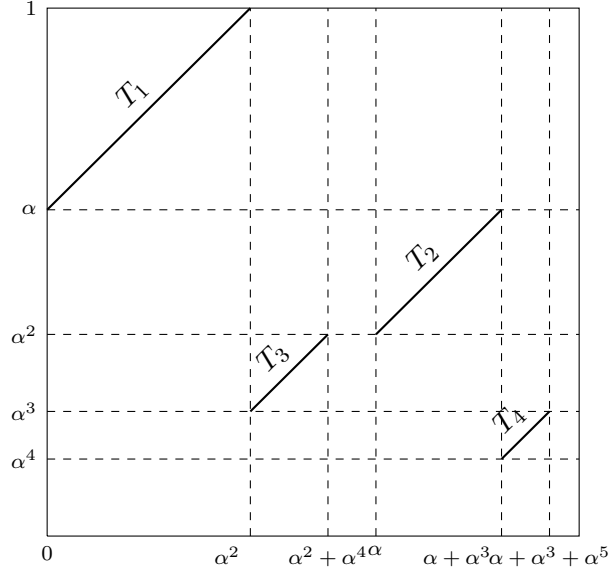

\begin{definition}\label{KF}
 The map $T:[0,1[\rightarrow [0,1[$, whose expression is given in Proposition \ref{9}, is called the \emph{Kakutani-Fibonacci transformation}. 
\end{definition}
Figure \ref{kak_fib} shows the graph of the maps $T_k$, with $1 \leq k\leq 4$.\\

It is worthwhile to compare the Fibonacci transformation $T_{Fib}$ introduced in \cite{GHL} (that we have already considered in the first chapter) and the Kakutani-Fibonacci transformation just defined. Both transformations are obtained by means of a cutting-stacking technique and at each step of the cutting-stacking procedure, the two columns obtained by means of the Fibonacci transformation have the same height and  width as the two columns obtained by means of the Kakutani-Fibonacci transformation $T$. Nevertheless, the bottoms are different as well as the order of the intervals in each column. Moreover, as we have already pointed out, the Fibonacci transformation is  not defined in $0$, while the Kakutani-Fibonacci transformation is defined in $0$ and, as we will show later, its iteration in $0$ gives rise to the Kakutani-Fibonacci sequence.\\
If we extend, as we did in Chapter 1, $T_{Fib}$ to $0$ by continuity letting $T_{Fib}(0)=\alpha$, we see that the extension of $T_{Fib}$ coincides with the rotation on $[0,1[$ mod 1 by $\frac{\sqrt{5}-1}{2}$.

\begin{theorem}\label{ergodicity}
The Kakutani-Fibonacci transformation $T$ is ergodic.
\end{theorem}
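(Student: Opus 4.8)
The plan is to exploit the cutting--stacking data of Definition~\ref{8} directly, without passing through the numeration-system machinery. First I would record the structural facts the argument rests on. By Proposition~\ref{9}, $T$ is an interval exchange, hence it preserves Lebesgue measure $\lambda$ and is a.e.\ invertible. For each $n$ the column $C_n=\{L_n,S_n\}$ consists of intervals (its levels), and the collection $\mathcal P_n$ of all levels of $L_n$ and $S_n$ is a finite partition of $[0,1[$ into intervals of lengths $\alpha^n$ and $\alpha^{n+1}$. From Definition~\ref{8} one checks that $\mathcal P_{n+1}$ refines $\mathcal P_n$: each $L_n$-level of length $\alpha^n$ is split into a piece of length $\alpha^{n+1}$ that feeds $L_{n+1}$ and a piece of length $\alpha^{n+2}$ that feeds $S_{n+1}$, while the levels of $S_n$ pass into $L_{n+1}$ uncut. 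Since $\mathrm{mesh}(\mathcal P_n)=\alpha^{n}\to 0$, the algebras $\sigma(\mathcal P_n)$ generate the Borel $\sigma$-algebra.

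Second, the key observation is that an invariant set has constant density on each column. Let $B$ be $T$-invariant, $T^{-1}B=B$ up to null sets. Within a column, consecutive levels are related by $T$: if $I$ and $I'=T(I)$ are successive levels then $T^{-1}I'=I$ and $T$ carries $I$ onto $I'$ by a measure-preserving translation, so $\lambda(B\cap I')=\lambda\big(T^{-1}(B\cap I')\big)=\lambda(B\cap I)$. Chaining over all levels of $L_n$ (resp.\ $S_n$) shows that all levels of $L_n$ carry the same value $\lambda(B\cap I)$, and as they share the length $\alpha^n$ they share one density $d_n^{L}$; likewise every level of $S_n$ has a common density $d_n^{S}$. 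Thus $E[\mathbf 1_B\mid\sigma(\mathcal P_n)]$ takes only the two values $d_n^{L},d_n^{S}$.

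Third, I would relate successive stages to turn these densities into a recursion. Because each $S_n$-level (length $\alpha^{n+1}$) is literally an $L_{n+1}$-level, one gets $d_n^{S}=d_{n+1}^{L}$; and because each $L_n$-level $I$ (length $\alpha^n$, density $d_n^{L}$) splits as $I=I^{0}\sqcup I^{1}$ with $I^{0}$ an $L_{n+1}$-level (length $\alpha^{n+1}$) and $I^{1}$ an $S_{n+1}$-level (length $\alpha^{n+2}$), measuring $B\cap I$ gives
\[
d_n^{L}\alpha^{n}=d_{n+1}^{L}\alpha^{n+1}+d_{n+1}^{S}\alpha^{n+2},\qquad\text{i.e.}\qquad d_n^{L}=\alpha\,d_{n+1}^{L}+\alpha^{2}d_{n+1}^{S}.
\]
Writing $u_n:=d_n^{L}$ and eliminating $d^{S}$ via $d_n^{S}=d_{n+1}^{L}$ yields $u_n=\alpha u_{n+1}+\alpha^{2}u_{n+2}$. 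Since $\alpha+\alpha^{2}=1$ this exhibits $u_n$ as a convex average of $u_{n+1},u_{n+2}$; subtracting and using $1-\alpha=\alpha^{2}$ gives $u_{n+1}-u_n=\alpha^{2}(u_{n+1}-u_{n+2})$, so with $v_n:=u_{n+1}-u_n$ one has $v_n=-\alpha^{2}v_{n+1}$, whence $|v_{n+1}|=\alpha^{-2}|v_n|$ with $\alpha^{-2}>1$. As all $u_n$ lie in $[0,1]$ the $v_n$ are bounded, which forces $v_n\equiv 0$; hence $u_n$ is constant and $d_n^{L}=d_n^{S}$ equals one constant $u$ for every $n$. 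Then $E[\mathbf 1_B\mid\sigma(\mathcal P_n)]\equiv u$, and the martingale convergence theorem together with the generating property of $(\mathcal P_n)$ gives $\mathbf 1_B=u$ a.e.; thus $u\in\{0,1\}$ and $\lambda(B)\in\{0,1\}$, proving ergodicity.

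The only delicate point is the cross-stage bookkeeping of the third paragraph: correctly identifying which piece of a stage-$n$ level becomes a level of $L_{n+1}$ versus $S_{n+1}$, and that $S_n$ is transferred without cutting. Everything else is routine once the two identities $d_n^{S}=d_{n+1}^{L}$ and $d_n^{L}=\alpha\,d_{n+1}^{L}+\alpha^{2}d_{n+1}^{S}$ are secured, so I expect this to be the main obstacle to phrase cleanly. As an alternative, closer to the treatment of $T_\beta$ recalled earlier, one could instead invoke Knopp's Lemma with $\mathcal C$ the family of column levels, deducing $\lambda(A\cap B)\ge\gamma\lambda(A)$ for all $A\in\mathcal C$ from a density-point estimate; but the recursion above seems both shorter and sharper, since it yields exact constancy of the level densities rather than a one-sided bound.
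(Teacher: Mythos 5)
Your proof is correct, but it takes a genuinely different route from the paper's. The paper argues à la Friedman: given an invariant set $B$ of positive measure, it picks a Lebesgue density point, finds a level $I$ of some column $C_m^*=L_m*S_m$ with $\lambda(B\cap I)>(1-\epsilon)\lambda(I)$, and then spreads this near-full interval through all $t_m$ levels using an auxiliary map $\tau_m$ (the column translations, augmented by a contraction from the top of $L_m^0$ onto the bottom of $S_m$), obtaining $\lambda(B^T)>1-\epsilon$ and hence $\lambda(B)=1$. You instead extract \emph{exact} information: invariance plus the translation structure forces the trace $\lambda(B\cap\cdot)$ to be constant on the levels of each of $L_n$ and $S_n$, and the cross-stage identities $d_n^{S}=d_{n+1}^{L}$ and $d_n^{L}=\alpha d_{n+1}^{L}+\alpha^{2}d_{n+1}^{S}$ yield the backwards recursion $u_n=\alpha u_{n+1}+\alpha^{2}u_{n+2}$, whose difference sequence expands by the factor $\alpha^{-2}>1$ and so must vanish by boundedness; martingale convergence then finishes. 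Both cross-stage identities are sound: in Definition~\ref{8} a cutting splits the bottom interval and propagates along the column translations, so every $L_n$-level is cut at the same relative position into an $L_{n+1}$-piece of length $\alpha^{n+1}$ and an $S_{n+1}$-piece of length $\alpha^{n+2}$, while $S_n$-levels pass uncut into $L_{n+1}$ — the bookkeeping you flagged as delicate is exactly this and does hold (the paper's proof of Proposition~\ref{9} confirms the within-column translation behaviour you rely on). What each approach buys: the paper's density-point argument needs only approximate fullness and ports verbatim to general cutting–stacking constructions, which is why it is the standard tool; yours is sharper and $\epsilon$-free, but leans on the specific self-similar two-column structure (the identities $\alpha+\alpha^{2}=1$ and $1-\alpha=\alpha^{2}$) to make the density recursion rigid, so it is less portable, e.g.\ to general $LS$-parameters the same scheme would require analysing the roots of the corresponding recursion. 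A small remark: you can avoid the martingale theorem entirely, since constancy of the density $u$ on all atoms of all $\mathcal P_n$ (a generating family of intervals with mesh $\alpha^n\to 0$) gives $\lambda(B\cap E)=u\,\lambda(E)$ for all Borel $E$ by approximation, and taking $E=B$ and $E=[0,1[$ yields $u=u^{2}$, hence $\lambda(B)\in\{0,1\}$.
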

\begin{proof} The arguments we use are inspired by  \cite{Friedman}.

Let us denote by $C_m^*$ the {\it stack} made by  $L_m*S_m$ and let us define the mapping $\tau_m: [0,1[\rightarrow [0,1[$ as follows.
For $1 \le i \le l_m-1$ it is the translation of the interval $J_i$ of $L_m$ onto $J_{i+1}$, the interval just above $J_i$, and hence it coincides with $T$.
If $i=l_m$,  it is a contraction by parameter $\alpha$ of $top(L_m)$ onto $b(S_m)$.
For $1 \le i \le s_m-1$ it is the translation of the interval $\tilde J_i$ of $S_m$ onto $\tilde{J}_{i+1}$, the interval just above $\tilde J_i$, and hence it coincides with $T$ again.

Now we fix a measurable set $B$ in $[0,1[$ with $\lambda(B)>0$, such that $T^{-1}(B)=B$, and we prove that  $\lambda(B^T)=1$, where $B^T=\bigcup_{i=- \infty}^{+ \infty} T^i(B)$.

From Lebesgue density theorem, there exists a point $x_0$ having density $1$ for $B$. 
This implies that for every fixed $\epsilon>0$ there exists $\delta>0$ such that if $I$ is any interval containing $x_0$ with $\lambda(I)< \delta$, we have

\begin{equation}\label{2}\lambda(B \cap I)>(1-\epsilon) \lambda(I).
\end{equation}

\smallskip
Due to the fact that the diameter of the Kakutani-Fibonacci sequence of partition $(\alpha^m \omega)_{m \ge 1}$ tends to $0$ when $m \rightarrow \infty$, there exists $m >0$ such that $\alpha^m< \delta$  and, therefore, there exists an interval of $C^*_m$ for which (\ref{2}) holds. For sake of brevity, we will denote it by $I$.
 
As $C^*_m= L_m* S_m$, the interval $I$ could belong to the column $L_m$ or to the column $S_m$. If $I \in S_m$, it is equivalent to say that $I \in {L_{m+1}}$. For this reason, without any loss of generality we may suppose that $I \in L_m$.

Suppose now that $I$ is the $r$-th interval from below in the column $L_m$ and observe that 
\begin{equation*} \lambda \left (\bigcup_{i=-r+1}^{t_m-r} \tau_m^i(I) \right)=1.  
\end{equation*}

\smallskip
Taking the above identity and (\ref{2}) into account, we have the following inequalities:
\begin{eqnarray*}\lambda(B^T)&\ge&\lambda \left((B \cap I)^T\right)  \\
&\ge&\lambda \left(  \bigcup_{i=1-r}^{l_m-r} T^i(B \cap I) \right)+\lambda \left(  \bigcup_{j=0}^{s_m-1} \left( T^j\left (B \cap b(S_m)\right)\right) \right)\\
&=&\lambda \left(  \bigcup_{i=1-r}^{l_m-r}  \tau_m^i(B \cap I) \right)+\lambda \left(  \bigcup_{j=l_m+1-r}^{t_m-r}  \tau_m^j(B \cap I) \right)\\
&=& \sum_{i=1-r}^{l_m-r} \lambda \left( \tau_m^i(B \cap I)\right)+\sum_{j=l_m+1-r}^{t_m-r} \lambda \left( \tau_m^j(B \cap I)\right)\\
&>&(1-\epsilon)\ l_m\ \alpha^{m}+(1-\epsilon)\ s_m \ \alpha^{m+1}=1-\epsilon.
\end{eqnarray*}
As $\epsilon$ is arbitrary, we conclude that $\lambda(B)=1$. Therefore, $T$ is ergodic and the theorem is proved.
\end{proof}

A direct consequence of the above theorem and Birkhoff's Theorem (Theorem \ref{B}) is the following

\begin{theorem} \label{16} The sequence $(T^{n}(x))_{n\geq 0}$ is u.d. for almost every $x \in [0,1[$.
\end{theorem}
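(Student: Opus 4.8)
The plan is to derive the statement directly from the ergodicity of $T$ established in Theorem \ref{ergodicity}, combined with Birkhoff's Theorem (Theorem \ref{B}) and Weyl's Criterion. Recall that $T$, being an interval exchange, preserves the Lebesgue measure $\lambda$ on $[0,1[$, and that $([0,1[,\mathcal{B},\lambda)$ is a probability space; by Theorem \ref{ergodicity} the transformation $T$ is ergodic with respect to $\lambda$, so Birkhoff's Theorem is available in its ergodic form.

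First I would fix a nonzero integer $h$ and apply Birkhoff's Theorem to the continuous — hence $L^1_\lambda$ — function $f_h(x)=e^{2\pi i h x}$ (applying it separately to the real and imaginary parts). Ergodicity of $T$ then yields a set $X_h\subseteq[0,1[$ with $\lambda(X_h)=1$ such that, for every $x\in X_h$,
\[
\lim_{N\to\infty}\frac{1}{N}\sum_{n=0}^{N-1}e^{2\pi i h\,T^n x}=\int_0^1 e^{2\pi i h t}\,dt=0,
\]
the integral vanishing precisely because $h\neq 0$.

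The only genuine subtlety — and the step I expect to be the main obstacle — is the exchange of quantifiers. Birkhoff's Theorem supplies, for \emph{each} individual frequency $h$, its own full-measure set of convergence, whereas uniform distribution requires the convergence to hold \emph{simultaneously} for all $h$ on one common full-measure set. This is resolved by countability: I would set $X_0=\bigcap_{h\in\mathbb{Z}\setminus\{0\}}X_h$, which, being a countable intersection of sets of full measure, again satisfies $\lambda(X_0)=1$. For every $x\in X_0$ the displayed limit then holds for all $h\neq 0$ at once, which is exactly the hypothesis of Weyl's Criterion. Consequently the orbit $(T^n x)_{n\geq 0}$ is uniformly distributed in $[0,1[$ for every $x\in X_0$, that is, for $\lambda$-almost every $x\in[0,1[$, which is the assertion of the theorem. (One could equally intersect the full-measure sets furnished by Birkhoff's Theorem over a countable dense subset of $\mathcal{C}([0,1])$ and invoke a uniform-approximation argument, but routing the proof through Weyl's Criterion keeps the countable family explicit and the argument shortest.)
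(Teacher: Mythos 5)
Your proof is correct and follows essentially the same route as the paper, which derives the theorem as a direct consequence of the ergodicity of $T$ (Theorem \ref{ergodicity}) together with Birkhoff's Theorem (Theorem \ref{B}). Your write-up merely makes explicit the standard details the paper leaves implicit — applying Birkhoff to the exponentials $e^{2\pi i h x}$, intersecting the countably many full-measure sets, and invoking Weyl's Criterion — and all of these steps are carried out correctly.
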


We now show how to get the points of the Kakutani-Fibonacci sequence by means of the  Kakutani-Fibonacci transformation.

\begin{theorem}\label{17}
The Kakutani-Fibonacci sequence of points  $(\xi_{1,1}^n)_{n \ge 1}$ coincides with $(T^{n}(0))_{n\geq 0}$.
\end{theorem}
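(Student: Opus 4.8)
The plan is to prove the equivalent statement that $T(\xi_{1,1}^{k}) = \xi_{1,1}^{k+1}$ for every $k \ge 1$; since $\xi_{1,1}^{1}=0$, iterating this identity gives $T^{n}(0)=\xi_{1,1}^{n+1}$, which is exactly the assertion. The whole argument rests on reading the cutting--stacking construction of Definition \ref{8} at the level of left endpoints. I would show two things: that the left endpoints of the intervals of the $n$-th pair of columns, listed from bottom to top, are precisely $\xi_{1,1}^{1},\dots,\xi_{1,1}^{t_n}$ in their $LS$-order; and that $T$ sends the left endpoint of each interval of a column onto the left endpoint of the interval stacked immediately above it.

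First I would fix the combinatorial bookkeeping. For $L=S=1$ the interval counts satisfy $l_n=t_{n-1}$, $s_n=t_{n-2}$ and $t_n=t_{n-1}+t_{n-2}$, so that $l_{n+1}=t_n$ and $s_{n+1}=l_n$; moreover the points appended when passing from $\Lambda_{1,1}^{n}$ to $\Lambda_{1,1}^{n+1}$ are exactly the $l_n$ values $\varphi_1^{(n+1)}(\xi_{1,1}^{i})=\xi_{1,1}^{i}+\alpha^{n+1}$ for $1\le i\le l_n$, in order. Then I would establish, by induction on $n$, the geometric dictionary: reading $L_n$ from bottom to top followed by $S_n$ from bottom to top, the sequence of left endpoints is $\xi_{1,1}^{1},\dots,\xi_{1,1}^{t_n}$, with $L_n$ supplying the first $l_n$ and $S_n$ the last $s_n$. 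The inductive step uses that cutting $L_n$ into $L_n^0$ (left part, width $\alpha^{n+1}$) and $L_n^1$ (right part, width $\alpha^{n+2}$) leaves the left endpoints of $L_n^0$ unchanged and shifts those of $L_n^1$ by $+\alpha^{n+1}$. Since $L_{n+1}=L_n^0\ast S_n$ and $S_{n+1}=L_n^1$, the left endpoints of $L_{n+1}$ are $\xi_{1,1}^{1},\dots,\xi_{1,1}^{t_n}$ while those of $S_{n+1}$ are exactly the newly appended points $\xi_{1,1}^{t_n+1},\dots,\xi_{1,1}^{t_{n+1}}$, which closes the induction.

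With this dictionary the advance $T(\xi_{1,1}^{k})=\xi_{1,1}^{k+1}$ splits into two cases. When $\xi_{1,1}^{k}$ and $\xi_{1,1}^{k+1}$ are left endpoints of two intervals lying one directly above the other inside a single column $L_m$ or $S_m$, the transformation $T$ extends the column translation $T_{C_m}$, which carries each interval onto the one above it and hence carries left endpoint to left endpoint; the identity is then immediate. The remaining transitions are those across the junction from the top of $L_p$ to the bottom of $S_p$. The key observation is that this junction is realised one level later: since $L_{p+1}=L_p^0\ast S_p$, the top interval of $L_p^0$---which shares its left endpoint with the top of $L_p$---sits immediately below the bottom of $S_p$ inside $L_{p+1}$, so $T$ (now extending $T_{L_{p+1}}$) maps this left endpoint up correctly. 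The top of each tower is resolved identically two levels later. Since every index $k$ falls into one of these cases for a suitable level $m$, the identity holds for all $k$, and the theorem follows; as a cross-check, the two transition formulas can also be verified directly against the explicit piecewise expression for $T$ in Proposition \ref{9}.

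The main obstacle is precisely this across-column junction. Because $L_n$ and $S_n$ have different widths, $\alpha^n$ against $\alpha^{n+1}$, the column $S_n$ is not stacked onto all of $L_n$ but only onto its left part $L_n^0$, so the step from the top of $L_n$ to the bottom of $S_n$ is not a within-column move at level $n$ and must be deferred. Carefully tracking which left endpoint is shared between an interval and its left sub-interval, and checking that this deferral is always available (including for the topmost interval of each tower), is the delicate part; the rest is a routine verification that the $LS$-ordering of the points coincides with the bottom-to-top ordering of the towers.
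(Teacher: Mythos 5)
Your proposal is correct and is in substance the paper's own argument: the paper likewise inducts on the cutting--stacking level, identifying the intervals of $L_{n+1}$ and $S_{n+1}$ with the $T$-iterates of the bottom interval $[0,\alpha^{n+1}[$ (equivalently, your dictionary between left endpoints in column order and the $LS$-ordering, with $S_{n+1}=L_n^1$ a shift by $\alpha^{n+1}$ of the first $l_n$ endpoints), and it uses exactly your junction observation in the form $b(S_{n+1})=T(\mathrm{top}(L_{n+1}))$. Your repackaging as the pointwise successor relation $T(\xi_{1,1}^{k})=\xi_{1,1}^{k+1}$ with the deferred junction, rather than the paper's blockwise identity $(\xi_1,\dots,\xi_{t_n})=(0,T(0),\dots,T^{t_n-1}(0))$, is a cosmetic difference only.
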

\begin{proof}  We want to show  by induction on $n\ge 1$ that  

\begin{equation}\label{3} \big( \xi_1, \ \xi_2, \dots,\ \xi_{t_n})=\big(0, T(0), T^2(0), \dots, T^{t_{n}-1}(0)\big).
\end{equation}

\bigskip
 If $n=1$,  (\ref{3}) is obviously verified.

We suppose that (\ref{3}) is true for  $n$ and prove that
\begin{eqnarray*} &&\big(\ \xi_1, \ \xi_2, \dots,\ \xi_{t_n}, \phi_{n+1}(\xi_1),\dots ,\phi_{n+1}(\xi_{l_n}))\\
=&&\big(0, T(0), T^2(0), \dots, T^{t_{n}-1}(0), T^{t_n}(0), T^{t_n+1}(0), \dots, T^{t_{n}+l_n-1}(0)\big).
\end{eqnarray*}

Due to the definition of the set $\Lambda_{1,1}^n$ of points of the $n+1$-th Kakutani-Fibonacci sequence of partitions (see Example \ref{example8}) and the inductive assumption, it is sufficient to prove that
 \begin{eqnarray*}&&\big(\phi_{n+1}(0),\phi_{n+1}(T(0)), \dots ,\phi_{n+1}(T^{t_n-1}(0))\big)\\
= &&\big(T^{t_n}(0), T^{t_n+1}(0), \dots, T^{t_{n}+l_n-1}(0)\big).\end{eqnarray*}

As $\phi_{n+1}(x)=x+\alpha^{n+1}$, the above identity  is equivalent to  
 \begin{equation*}\big(\alpha^{(n+1)},T(0)+\alpha^{(n+1)}, \dots ,T^{t_n-1}(0)+\alpha^{(n+1)}\big)\end{equation*}
\begin{equation}\label{4}=\big(T^{t_n}(0), T^{t_n+1}(0), \dots, T^{t_{n}+l_n-1}(0)\big).\end{equation}

\bigskip
In order to prove (\ref{4}), we focus our attention on the intervals of the column $S_{n+1}$ and on their left endpoints.

We note that, due to the cutting-stacking procedure described in Definition \ref{8} and to the nature of $T$ described in the proof of Proposition \ref{9}, and specifically to the fact that  $b(S_{n+1})=T(top(L_{n+1}))=T(T^{l_{n}+s_n-1}(\ [0, \alpha^{n+1}[\ ) )$,  
the columns $C_{n+1}=\{L_{n+1}, S_{n+1} \}$ can be written as follows:
\begin{eqnarray*} L_{n+1}&&=L_n^0*S_n=\Big(b(L_n^0)* \cdots *top(L_n^0)\Big)*\Big(b(S_n)*\cdots*top(S_n)\Big)\\
&&=\Big([0, \alpha^{n+1}[ \ * \cdots *\ T^{l_{n}-1}(\ [0, \alpha^{n+1}[\ )\ \Big)*\\
  &&\ * \ \Big( T^{l_{n}}(\ [0, \alpha^{n+1}[\ )\ *\cdots\ *\ T^{l_{n}+s_n-1}(\ [0, \alpha^{n+1}[\ ) \Big)
 \end{eqnarray*}
and
 \begin{eqnarray*} S_{n+1}=L_n^1&&=b(L_n^1)* \cdots *top(L_n^1)\\
 &&=\Big([\alpha^{n+1}, \alpha^{n}[\ * \cdots *\ T^{l_{n}-1}\left (\  [\alpha^{n+1}, \alpha^{n}[\ \right  ) \Big)\\
&&= \Big( T^{t_{n}}(\ [0, \alpha^{n+1}[\ )\ *\cdots\ *\ T^{t_{n}+l_n-1} \left(\ [0, \alpha^{n+1}[\ \right) \Big).
 \end{eqnarray*}

 \bigskip
 Therefore, as $T$ is an interval exchange, the $t_n$ left endpoints of  $L_{n+1}$ are $0, T(0), \cdots, T^{t_n-1}(0) $ and the left endpoints of $S_{n+1}$, which are a right shift by the constant $\alpha^{n+1}$ of the left endpoints of $L_{n+1}$, are $T^{t_n}(0), T^{t_n+1}(0),$ $ \cdots, T^{t_n+s_n-1}(0)$, which proves (\ref{4}).
 
The theorem is now completely proved.
\end{proof}
\begin{remark}
As we have already pointed out, the Kakutani-Fibonacci sequence of points $(\xi_{1,1}^n)_{n \ge 1}$ is an example of uniformly distributed sequence (in fact low-discrepancy). Unfortunately, this property can not be detected by this ergodic approach, since Birkhoff's Theorem (Theorem \ref{B}) implies the uniform distribution of the orbit $(T^nx)_{n\in\mathbb{N}}$ only for almost every point $x\in[0,1[$ and not for every $x\in [0,1[$.
\end{remark}

We will see in the next chapter that the Kakutani-Fibonacci transformation is not only ergodic, but in fact uniquely ergodic and therefore we can prove  the uniform distribution not only of the Kakutani-Fibonacci sequence, that was already known to be so, but of all orbits $(T^nx)_{n\in\mathbb{N}}$, for every $x\in [0,1[$.

\chapter{Ergodic properties of ${\boldsymbol \beta}$-adic Halton sequences}
\addtocounter{section}{1}
\setcounter{definition}{0}
 \thispagestyle{empty}
In this chapter we will construct a parametric extension of the classical $s$-dimensional Halton
sequence, where the bases are special Pisot numbers. We use methods from ergodic
theory in order to investigate the distribution behavior of such sequences constructed as orbits of uniquely ergodic transformations. As a consequence of the main result, we show that the
Kakutani-Fibonacci transformation, introduced in the previous chapter, is uniquely ergodic.\\
The content of this chapter is presented in \cite{HIT}.\\

Our first goal is to prove that the product of systems of the type $(\mathcal{K}_G, \tau)$ considered in Chapter 1  is uniquely ergodic.
\begin{theorem}\label{main4.1}
 Let $G^1, \ldots, G^s$ be numeration systems given by \eqref{rec} and let the coefficients of the linear recurrences be given as $a_j^i = b_i, ~j = 0, \ldots, (d_i - 1), ~i = 1, \ldots, s,$ with pairwise coprime positive integers $b_i, ~i = 1, \ldots, s$. Furthermore let $\frac{\beta_i^{k}}{\beta_j^l} \notin \mathbb{Q}$, for all $l, k \in \mathbb{N}$, where $\beta_1, \ldots, \beta_s$ denote the characteristic roots of the numerations systems. Then the dynamical system which is constructed as the $s$-dimensional Cartesian product of the odometers, i.e.\ $((\mathcal{K}_{G^1}, \tau_1) \times \ldots \times (\mathcal{K}_{G^s}, \tau_s))$, is uniquely ergodic.
\end{theorem}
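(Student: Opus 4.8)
The plan is to prove unique ergodicity of the product system by exploiting the spectral criterion for products of uniquely ergodic systems (Theorem \ref{product}), which reduces the problem to a statement about the eigenvalue groups of the individual odometers. First I would invoke Theorem \ref{disc_spec}: since the coefficients of each recurrence $G^i$ are constant ($a_j^i = b_i$ for all $j$), each numeration system is of Multinacci type, and as remarked after Hypothesis \ref{hypA} the Multinacci sequences satisfy that hypothesis. Consequently each $(\mathcal{K}_{G^i}, \tau_i)$ is measure-theoretically isomorphic to a group rotation with purely discrete spectrum, whose group of eigenvalues is
\begin{equation*}
\Gamma_i = \{ z \in \mathbb{C} : \lim_{n \to \infty} z^{G^i_n} = 1 \}.
\end{equation*}
In particular each odometer is uniquely ergodic, so the hypotheses of Theorem \ref{product} are met and the product is uniquely ergodic if and only if the discrete spectra $\Gamma_i$ and $\Gamma_j$ intersect only at $1$ for every pair $i \neq j$.

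The core of the argument is therefore to show $\Gamma_i \cap \Gamma_j = \{1\}$ for $i \neq j$. I would first obtain a concrete description of $\Gamma_i$ in terms of the characteristic root $\beta_i$. The key fact is that for a Multinacci-type linear recurrence the ratio $G^i_{n+1}/G^i_n \to \beta_i$, and more precisely $G^i_n$ grows like $c_i \beta_i^n$ up to the appropriate correction; the condition $\lim_{n} z^{G^i_n}=1$ can then be analyzed by writing $z = e^{2\pi i \theta}$ and studying when $\theta\, G^i_n$ converges to an integer modulo $1$. Using the recurrence relation satisfied by the $G^i_n$ together with the fact that the subdominant roots have modulus less than $1$ (Pisot property of $\beta_i$), one shows that $z \in \Gamma_i$ forces $\theta$ to lie in the additive group generated by the ``tail'' contributions, which one identifies with $\mathbb{Z}[\beta_i^{-1}]$-type data. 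The upshot I expect is that each nontrivial eigenvalue of $\Gamma_i$ is a root of unity times a power of $\beta_i$ in a suitable algebraic sense, so that membership in both $\Gamma_i$ and $\Gamma_j$ would force an algebraic relation of the form $\beta_i^{k} = \beta_j^{l}$ (up to roots of unity) for some integers $k,l$.

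The hard part will be this last step: converting a common eigenvalue $z \in \Gamma_i \cap \Gamma_j$, $z \neq 1$, into a genuine multiplicative relation between $\beta_i$ and $\beta_j$ and then deriving a contradiction. Here the two standing hypotheses enter decisively. The coprimality of the integers $b_i$ ensures the characteristic polynomials $x^{d_i} - b_i(x^{d_i-1} + \cdots + 1) - 1$ generate distinct number fields with no hidden coincidences among their Pisot roots, and the arithmetic assumption $\beta_i^k / \beta_j^l \notin \mathbb{Q}$ for all $k, l \in \mathbb{N}$ directly rules out the relation $\beta_i^k = \beta_j^l$ that a common nontrivial eigenvalue would produce. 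I would make this precise by showing that if $z \in \Gamma_i$ is not $1$, then $\lim_n z^{G^i_n} = 1$ pins down $z$ to be of the form $e^{2\pi i p/q}$ with $q$ built from $\beta_i$, and a simultaneous membership in $\Gamma_j$ would equate two such arithmetic quantities attached to $\beta_i$ and $\beta_j$, contradicting irrationality of the ratio of their powers. Having excluded common nontrivial eigenvalues, the discrete parts of the spectra intersect only at $1$, and Theorem \ref{product} yields unique ergodicity of $((\mathcal{K}_{G^1}, \tau_1) \times \cdots \times (\mathcal{K}_{G^s}, \tau_s))$, completing the proof.
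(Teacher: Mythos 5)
Your overall architecture coincides with the paper's: prove each odometer uniquely ergodic, reduce via Theorem \ref{product} to showing that the discrete spectra satisfy $\Gamma_i \cap \Gamma_j = \{1\}$ for $i \neq j$, and then feed in the two standing hypotheses. However, two steps do not go through as you state them. First, your justification of the component-wise statement is wrong whenever $b_i \geq 2$: \emph{Multinacci} means $a_0 = \cdots = a_{d-1} = 1$, so a recurrence with constant coefficients $a_j^i = b_i > 1$ is not of Multinacci type, and the remark you cite in support of Hypothesis \ref{hypA} does not apply. The paper instead invokes Remark \ref{solo}: constant coefficients are a special case of non-increasing coefficients, so Solomyak's theorem shows that Hypothesis \ref{hypB} holds, which yields unique ergodicity and purely discrete spectrum for each $(\mathcal{K}_{G^i}, \tau_i)$. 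This is repairable by swapping the citation, but as written your first step fails.

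The substantive gap is the computation of $\Gamma_i$, which you leave schematic exactly where the paper does its real work. The paper establishes the explicit identity
\[
\Gamma_i = \Bigl\{ \exp\Bigl(2\pi i\, \tfrac{c}{b_i^m \beta_i^l}\Bigr) : m,l,c \in \mathbb{N}\cup\{0\} \Bigr\}
\]
by combining two facts: (i) $G^i_n/\beta_i^n \to C_i$ (a consequence of the Pisot property), which gives $\exp(2\pi i\, G^i_n/\beta_i^l) \sim \exp(2\pi i\, G^i_{n-l}) \to 1$ and so places the $\beta_i$-power denominators inside $\Gamma_i$; and (ii) the divisibility facts that for every $k$ one has $b_i^k \mid G^i_n$ for all sufficiently large $n$, while no integer coprime to $b_i$ divides all large $G^i_n$ --- this is precisely what pins the root-of-unity part of $\Gamma_i$ to orders that are powers of $b_i$. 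With this description, $\Gamma_i \cap \Gamma_j = \{1\}$ is immediate: a common nontrivial root of unity is excluded by coprimality of $b_i$ and $b_j$, and any other common element would force $\beta_i^k/\beta_j^l \in \mathbb{Q}$, contrary to hypothesis. In your sketch the role of coprimality is misattributed to a vague ``distinct number fields'' argument, whereas it actually enters through the divisibility analysis (ii); moreover your claim that $\lim_n z^{G^i_n} = 1$ ``pins down $z$ to be of the form $e^{2\pi i p/q}$'' is incorrect as stated, since $\Gamma_i$ contains eigenvalues with irrational angle (those with $\beta_i^l$ in the denominator). Without (i) and (ii) the conversion of a common nontrivial eigenvalue into the forbidden relation $\beta_i^k = \beta_j^l$ is never actually derived, so the core of the proof is missing.
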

\begin{proof}
 It follows by Remark \ref{solo} that each numeration system $G^j, ~j=1, \ldots, s$ fulfills Hypothesis \ref{hypB}, thus the components of the $s$-dimensional dynamical system are uniquely ergodic. By Theorem \ref{product}, we derive that the Cartesian product is uniquely ergodic if and only if $\Gamma_j \cap \Gamma_k = \{1\}$ for all $1 \leq j < k \leq s$, where we denote by $\Gamma_i$ the spectrum of $\tau_i$. As noted in \cite{glt}, we have
\begin{equation}\label{approx}
 \lim_{n \rightarrow \infty} \frac{G^j_n}{\beta_j^n} = C_j,
\end{equation}
where the constant $C_j$ can be computed by residue calculus. Using the standard notation $\sim$ for asymptotic equality (if $n \rightarrow \infty$) we obtain for fixed $l \in \mathbb{N}$
\begin{align*}
 \exp \left( 2 \pi i \frac{G^j_n}{\beta_j^l} \right) &\sim \exp \left( 2 \pi i C_j \beta_i^{n - l} \right)\\
 &\sim \exp \left( 2 \pi i G^j_{n - l} \right),
\end{align*}
and thus
\begin{equation*}
 \lim_{n \rightarrow \infty} \exp \left( 2 \pi i \frac{G^j_n}{\beta_j^l} \right) = \lim_{n \rightarrow \infty} \exp \left( 2 \pi i G^j_{n - l} \right) = 1,
\end{equation*}
where $C_j$ is given by \eqref{approx}. Furthermore, it is easy to see that for every $k \in \mathbb{N}$ there exists an $n_0$ with $b_j^k \mid G^j_{n}$ for all $n \geq n_0$ and there exist no $b', n_0' \in \mathbb{N}$ with $\gcd(b', b_j) = 1$ such that $b' \mid G^j_{n}$ for all $n \geq n_0'$. Then $\Gamma_j$ can be written as
\begin{equation*}
 \Gamma_j = \left\{ \exp \left( 2 \pi i \frac{c}{b_j^m \beta_j^l} \right) \colon m,l,c \in \mathbb{N} \cup \{0\} \right\}.
\end{equation*}
This completes the proof since $\Gamma_j \cap \Gamma_k = \{1\}$.
\end{proof}

Now we want to construct an isomorphism between $(\mathcal{K}_G, \tau)$ and $([0,1[, T)$, where $T$ is conjugate to $\tau$, i.e. $T=\phi_\beta\circ \tau\circ \phi_\beta^{-1}$. The construction recalls the one considered for $(\mathbb{Z}_b,\tau)$ in Example \ref{Zb}.\\
The first step consists in extending the definition of the Monna map to irrational bases $\beta > 1$. Let
\begin{equation*}
 n = \sum_{j \geq 0} \epsilon_j(n) G_j
\end{equation*}
be the $G$-expansion of an integer $n$. For short we write $\epsilon_j$ and define the $\beta$-adic Monna map $\phi_\beta \colon \mathcal{K}_G \rightarrow \mathbb{R}^+$ as
\begin{equation*}
\phi_\beta(n) = \phi_\beta \left(\sum_{j \geq 0} \epsilon_j(n) G_j \right) = \sum_{j \geq 0} \epsilon_j(n) \beta^{-j-1}\ .
\end{equation*}
Furthermore, we define the radical inverse function as the restriction of $\phi_\beta$ on $\mathcal{K}^0_G$ and similarly we define the pseudo-inverse $\phi_{\beta}^{-1}$. In this context the $\boldsymbol \beta$-adic Halton sequence is given as $(\phi_{\boldsymbol \beta}(n))_{n \in \mathbb{N}} = (\phi_{\beta_1}(n), \ldots, \phi_{\beta_s}(n))_{n \in \mathbb{N}}$, where $\boldsymbol \beta = (\beta_1, \ldots, \beta_s)$ and the $\beta_i$ are characteristic roots of the numeration systems $G^i$, respectively.\\ 

Note that even if one of the Hypotheses \ref{hypA} or \ref{hypB} holds, this does not imply that the image of $\mathcal{K}^0_G$ under $\phi_\beta$ is contained in $[0,1[$ and dense in it.
\begin{lemma}\label{vdC}
 Let $\mathbf{a} = (a_0, \ldots, a_{d-1})$, where the integers $a_0, \ldots, a_{d-1} \geq 0$ are the coefficients defining the numeration system $G$ and assume that the corresponding characteristic root $\beta$ satisfies 
\begin{equation}\label{par}
 \beta = a_0 + \frac{a_1}{\beta} + \ldots + \frac{a_{d - 1}}{\beta^{d - 1}},
\end{equation}
where $a_0 = \lfloor \beta \rfloor$. Furthermore, assume that there is no $\mathbf{b} = (b_0, \ldots, b_{k-1})$ with $k < d$ such that $\beta$ is the characteristic root of the polynomial defined by $\mathbf{b}$. Then $\phi_\beta(\mathbb{N}) \subset [0,1[$ and $\phi_\beta(\mathbb{N}) \not\subset [0,x[$ for all $0 < x < 1$ if and only if $\mathbf{a}$ can be written either as
\begin{align}
 \mathbf{a} &= (a_0, \ldots, a_0) \text{ or as }\label{case1}\\
 \mathbf{a} &= (a_0, a_0 - 1, \ldots, a_0 - 1, a_0),\label{case2}
\end{align}
where $a_0 > 0$.
\end{lemma}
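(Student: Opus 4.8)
The plan is to reformulate both requirements on $\phi_\beta(\mathbb N)$ as a single statement about the supremum of $\psi(\epsilon):=\sum_{j\ge 0}\epsilon_j\beta^{-j-1}$ over the $G$-admissible digit strings $\epsilon\in\mathcal K_G$, and then to compute that supremum from the linear recurrence. First I would note that, since the finite greedy $G$-expansions (the images $\phi_\beta(n)$, $n\in\mathbb N$) are dense in $\mathcal K_G$ and $\psi$ is continuous on the $G$-compactification, one has $\sup_n\phi_\beta(n)=\max_{\epsilon\in\mathcal K_G}\psi(\epsilon)$. Because the weights $\beta^{-j-1}$ are strictly decreasing and admissible tails are bounded, $\psi$ is monotone for the lexicographic order on $\mathcal K_G$, so the maximum is attained at the lexicographically largest admissible sequence $m=(m_j)_{j\ge 0}$. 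Consequently the two conditions ``$\phi_\beta(\mathbb N)\subset[0,1[$'' and ``$\phi_\beta(\mathbb N)\not\subset[0,x[$ for all $x<1$'' hold simultaneously if and only if $\psi(m)=1$, the value $1$ being approached through the finite truncations of $m$ but never attained.

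Next I would determine $m$ from the admissibility condition $\sum_{k<K}\epsilon_kG_k<G_K$. Using $G_{j+1}=a_0G_j+\dots+a_{d-1}G_{j-d+1}$ together with $G_{j+1}/G_j\to\beta$ and $a_0=\lfloor\beta\rfloor$, the greedy choice forces $m_0=a_0$ and produces an (eventually) periodic string of period $d$; writing $m=(m_0,\dots,m_{d-1})^\omega$, the geometric sum gives
\[
\psi(m)=\frac{\sum_{j=0}^{d-1}m_j\beta^{\,d-1-j}}{\beta^{d}-1}.
\]
Comparing with the defining relation $\beta^{d}=\sum_{i=0}^{d-1}a_i\beta^{\,d-1-i}$ of \eqref{par}, the equality $\psi(m)=1$ becomes $\sum_{j=0}^{d-1}(m_j-a_j)\beta^{\,d-1-j}=-1$, which forces $m$ to be the quasi-greedy expansion $d_\beta^*(1)=(a_0,\dots,a_{d-2},a_{d-1}-1)^\omega$ of Example~\ref{ex_beta_adic}. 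Thus the lemma reduces to the assertion that the lexicographically maximal $G$-admissible sequence coincides with $d_\beta^*(1)$ exactly for the two listed families of coefficient vectors.

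For sufficiency I would verify directly that for $\mathbf a=(a_0,\dots,a_0)$ the greedy recursion yields $m=(a_0,\dots,a_0,a_0-1)^\omega$ and for $\mathbf a=(a_0,a_0-1,\dots,a_0-1,a_0)$ it yields $m=(a_0,a_0-1,\dots,a_0-1)^\omega$, each equal to $d_\beta^*(1)$, so that $\psi(m)=1$ by the telescoping above; the uniform inner digits are precisely what makes the greedy slack recur periodically, so no admissible string overshoots $1$. For necessity I would show that any $\mathbf a$ outside the two families makes the greedy process strictly more permissive than the $\beta$-shift at the first position where the inner pattern deviates, producing an admissible $m\succ d_\beta^*(1)$ and hence a finite $n$ with $\phi_\beta(n)\ge 1$; in the non-palindromic case one can already take $\epsilon=\mathbf a$ itself, which is admissible and satisfies $\phi_\beta=1$ by \eqref{par}.

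The step I expect to be the main obstacle is the necessity analysis: pinning down the lexicographically maximal admissible sequence $m$ for an arbitrary coefficient vector and proving that $\psi(m)=1$ forces the two special patterns. This is essentially the finiteness/confluence property of the numeration system underlying Hypotheses~\ref{hypA}--\ref{hypB}, and the delicate points are controlling the interaction of consecutive greedy digits through the recurrence (so that a single ``too small'' inner coefficient, as in an $(a_0,a_0-2,a_0)$-type vector, already leaves room for an overshoot) and handling both the junction digit (the $-1$) and the ``$+1$'' built into the initial values $G_{n+1}=\sum_k a_{n-k}G_k+1$; by contrast, the monotonicity, density and continuity facts used to reduce everything to the single equation $\psi(m)=1$ are routine.
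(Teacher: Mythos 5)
Your skeleton is recognizably the same as the paper's: reduce everything to whether the extremal admissible digit string has $\beta$-value exactly $1$, evaluate candidates by a geometric series, and match coefficients against \eqref{par} (the paper's normalization \eqref{one2} is precisely your identity $\sum_{j}(m_j-a_j)\beta^{d-1-j}=-1$, and its witness $\mathbf{c}^*$ plays the role of your $m$). The genuine gap is that the two facts you declare routine are false in general, and are in substance the lemma itself. The valuation $\psi(\epsilon)=\sum_{j\geq 0}\epsilon_j\beta^{-j-1}$ is \emph{not} monotone for the lexicographic order on $\mathcal{K}_G$: monotonicity requires every admissible tail to contribute less than the gap $\beta^{-j-1}$ created at the first differing digit, and this tail bound is equivalent to the inclusion $\phi_\beta(\mathbb{N})\subset[0,1[$ that you are trying to characterize. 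Concretely, take $\mathbf{a}=(2,1)$, so $\beta=1+\sqrt{2}$ and $G=(1,3,7,17,41,\dots)$: the strings $(2,1,0^\omega)$ and $(2,0,2,1^\omega)$ both satisfy \eqref{eq1}, the first is lexicographically larger, yet $\psi(2,1,0^\omega)=(2\beta+1)/\beta^2=1$ while $\psi(2,0,2,1^\omega)\approx 1.021>1$. For the same reason, your structural claim that the lexicographically maximal admissible sequence is purely periodic of period $d$ fails outside the two families: in this system the greedy maximal string is $(2,1,1,1,\dots)$, which carries a preperiod, so your displayed geometric-sum formula and the forcing $m=d_\beta^*(1)$ do not apply exactly in the necessity direction where you need them. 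The paper's proof is organized to avoid this: it tests periodic representations $(c_0,\dots,c_{k-1})^\infty$ of arbitrary period $k$, and its extremal witness $\mathbf{c}^*=(a_0,\dots,a_m,(a_0-1,a_0,\dots,a_m)^\infty)$ has a preperiod and period length $m+2$, where $m$ is the length of the initial run of $a_0$'s; that run-length case analysis (equality in $\phi_\beta(\mathbf{c}^*)\geq 1$ exactly for $m=0$ or $m=d-1$) is the content you would still have to supply.

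Your fallback for necessity, taking $\epsilon=\mathbf{a}$ itself, is also insufficient: read as a digit string, $\mathbf{a}$ is admissible only when $\sum_{k<d}a_kG_k<G_d$, which fails whenever $\mathbf{a}$ is at least as $G$-heavy as its own reversal. For instance $\mathbf{a}=(2,0,2,2)$ has $x^4-2x^3-2x-2$ irreducible (Eisenstein at $2$), $\lfloor\beta\rfloor=2=a_0$, and is not of the form \eqref{case1} or \eqref{case2}; here $2G_0+2G_2+2G_3=50>G_4\leq 43$, so \eqref{eq1} fails at $K=4$ and the value $1=\sum_k a_k\beta^{-k-1}$ is never attained along this string — necessity must instead be established by a different admissible witness of value at least $1$, which is exactly what the paper's $\mathbf{c}^*$ provides. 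The salvageable core of your plan is the comparison with $d_\beta^*(1)$: since the tails of the quasi-greedy expansion obey the Parry bound, an admissible string lying lexicographically above $d_\beta^*(1)$ automatically has value exceeding $1$, and this local argument can replace the false global monotonicity. But locating where greedy $G$-admissibility is strictly more permissive than the $\beta$-shift — and showing this happens precisely when $\mathbf{a}$ avoids the two listed patterns — is the heart of the proof, not a routine verification.
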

\begin{proof}

It follows from \eqref{par} that
\begin{equation}\label{eins}
 \frac{a_0}{\beta} + \ldots + \frac{a_{d-1}}{\beta^{d}} = 1.
\end{equation}
Furthermore for all admissible representations $(\varepsilon_0, \varepsilon_{1},\varepsilon_{2},\dots)$ we have
\begin{equation}\label{lex}
 (\varepsilon_k,\varepsilon_{k-1},\dots,\varepsilon_0,0^{\infty})< (a_0,a_1,\dots,a_{d-1})^{\infty}
\end{equation}
for every $k$, when $<$ denotes the lexicographic order.\\

We consider a representation given by $\mathbf{c} = (c_0, \ldots, c_{k - 1})^\infty$ for $k > 0$ and assume that there are no positive integers $c'_i$ and $m < k$ such that $\mathbf{c} = (c'_0, \ldots, c'_{m - 1})^\infty$. We obtain
\begin{align*}
 \phi_\beta(\mathbf{c}) &= \sum_{i = 0}^\infty \left( \frac{c_0}{\beta} + \ldots + \frac{c_{k - 1}}{\beta^{k}} \right) \left( \frac{1}{\beta^{k}}\right)^i\\
 &= \left( \frac{c_0}{\beta} + \ldots + \frac{c_{k - 1}}{\beta^{k}} \right) \frac{1}{1 - \frac{1}{\beta^{k}}}\\
 &= \left( \frac{c_0}{\beta} + \ldots + \frac{c_{k - 1}}{\beta^{k}} \right) \frac{\beta^{k}}{\beta^{k} - 1}.
\end{align*}
To finish the proof of the lemma, the following will be shown: the maximum of $\phi_\beta(\mathbf{c})$ (extended over all representations $\mathbf{c}$) is 1, provided that \eqref{case1} or \eqref{case2} are satisfied, i.e.\
\begin{equation}\label{one2}
 \frac{c_0}{\beta} + \ldots + \frac{c_{k - 2}}{\beta^{k-1}} + \frac{c_{k - 1} + 1}{\beta^{k}} = 1.
\end{equation}

We have that $k \geq d$ since $\mathbf{a}$ is by assumption the minimal representation of $\beta$. In the case $k = d$ we get
\begin{equation*}
 \frac{c_0}{\beta} + \ldots + \frac{c_{d - 2}}{\beta^{d-1}} + \frac{c_{d - 1} + 1}{\beta^{d}} =  \frac{a_0}{\beta} + \ldots + \frac{a_{d-1}}{\beta^{d}} = 1, 
\end{equation*}
thus $c_0 = a_0, \ldots, c_{d-2} = a_{d - 2}$ and $c_{d-1} + 1 = a_{d-1}$. Moreover, by \eqref{lex}, we have $\max_{i = 0, \ldots, d-1} c_i \leq a_0$ and thus $a_0 = \max( \max_{i = 0, \ldots, d-2} a_i, a_{d - 1} - 1)$. Let $m \geq 0$ be the maximal integer such that $a_i = a_0$ for $i \leq m$. Then, if $a_{d-1} = a_0 + 1$ we obtain that $\mathbf{c} = (a_0, a_1, \ldots, a_{d-2}, a_0)^\infty$ contains $m + 1$ successive $a_0$'s, thus $\mathbf{c}$ is not admissible and $a_0 = \max_{i = 0, \ldots, d-1} a_i$. Similar arguments yield $k \leq d$ since by assumption $\mathbf{c} \neq (c'_0, \ldots, c'_{m - 1})^\infty$ for $m < k$.\\

Assume now that $k = d$, $a_0 = \max_{i = 0, \ldots, d-1} a_i$ and let $m \geq 0$ be defined as above. By \eqref{eins} and \eqref{lex} $\mathbf{c^*} = (a_0, \ldots, a_{m}, (a_0-1, a_0, \ldots, a_{m})^\infty)$ is admissible and 
\begin{equation*}
 \phi_\beta(\mathbf{c}^*) \geq 1,
\end{equation*}
where equality holds if $m$ is either $0$ or $d - 1$ which are the cases \eqref{case1} and \eqref{case2}. This yields the assertion \eqref{one2}, thus the proof of the lemma is complete. 
\end{proof}

If we drop the condition that $d$ has to be minimal, we obtain the following additional cases for which the above lemma is satisfied:
\begin{equation}
  \mathbf{a} = (a_0, \ldots, a_0, a_0 + 1)\label{case4}
\end{equation}
and
\begin{equation}\label{case3}
 \mathbf{a} = (\mathbf{a}', \ldots, \mathbf{a}', \mathbf{a}''),
\end{equation}
where $a_0 > 0$, $\mathbf{a}', \mathbf{a}''$ are of equal length and of the form
\begin{align*}
 \mathbf{a}' &= (a_0, \ldots, a_0, a_0 - 1),\quad \mathbf{a}'' = (a_0, \ldots, a_0)\text{ or }\\
 \mathbf{a}' &= (a_0, a_0 - 1, \ldots, a_0 - 1),\quad \mathbf{a}'' = (a_0, a_0 - 1, \ldots, a_0 - 1, a_0).
\end{align*}
Note that \eqref{case4} is another way to represent the $(a_0 + 1)$-adic number system, which is a special case of \eqref{case1} and obviously fulfills the lemma. Furthermore, condition \eqref{case3} is a reformulation of \eqref{case1} and \eqref{case2}, thus in the sequel we only consider numeration systems which satisfy \eqref{case1} or \eqref{case2}.

\begin{lemma}\label{lem3}
 Let $G$ be a numeration system of the form \eqref{rec}, assume that the coefficients of the linear recurrence are given by $a_j = a, ~j = 0, \ldots, (d-1),$ for a positive integer $a$ and let $\beta$ denote the corresponding characteristic root. Then $\mu(Z) = \lambda(\phi_\beta(Z))$ for every cylinder set $Z$.
\end{lemma}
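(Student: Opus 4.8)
The plan is to deduce the cylinder identity from two ingredients that are already at our disposal: the unique ergodicity of the odometer $(\mathcal{K}_G,\tau)$ with invariant measure $\mu$, and the uniform distribution of the one–dimensional $\beta$-adic van der Corput sequence $(\phi_\beta(n))_{n\ge 0}$. Since all coefficients equal the constant $a$, we are exactly in case \eqref{case1} of Lemma \ref{vdC}, so $\phi_\beta(\mathbb{N})\subset[0,1[$ and $\phi_\beta$ sends $\mathcal{K}_G$ onto $[0,1]$. In this case the characteristic root $\beta$ is a Pisot number (the constant coefficients are in particular non-increasing, so Remark \ref{solo} applies), whence by definition its conjugates lie in the open unit disc; thus Ninomiya's theorem quoted in Example \ref{ex_beta_adic} applies and $(\phi_\beta(n))_{n\ge 0}$ is uniformly distributed with respect to $\lambda$. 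I would also record that in case \eqref{case1} the $G$-admissible strings coincide with the $\beta$-admissible ones, so that $(\phi_\beta(n))$ really is the sequence $N_\beta$ to which Ninomiya's result refers.

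First I would use the orbit of $0$ under the odometer. Since $\tau$ is addition by one, $\tau^n(0)=n$ under the canonical injection $\mathbb{N}\hookrightarrow\mathcal{K}_G$, so the Birkhoff sums of $\mathbf{1}_Z$ along this orbit are precisely $\frac1N\sum_{n=0}^{N-1}\mathbf{1}_Z(n)$. A cylinder $Z$ is clopen in the product topology, hence $\mathbf{1}_Z$ is continuous, and unique ergodicity guarantees convergence of these averages to $\int\mathbf{1}_Z\,d\mu=\mu(Z)$ for the particular starting point $0$.

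Next I would transport the same count to $[0,1[$. As $\phi_\beta$ is injective on $\mathbb{N}$ and $n\in Z\iff\phi_\beta(n)\in\phi_\beta(Z)$ except for the finitely many integers (at most two up to any $N$) whose image is an endpoint of $\phi_\beta(Z)$, we get $\mathbf{1}_Z(n)=\mathbf{1}_{\phi_\beta(Z)}(\phi_\beta(n))$ for all but finitely many $n<N$, so the two Cesàro averages share the same limit. Monotonicity of $\phi_\beta$ with respect to the lexicographic order on admissible sequences—exact in case \eqref{case1}, because Lemma \ref{vdC} forces the supremum of an admissible tail to be $1$—shows that $\phi_\beta(Z)$ differs from a genuine interval by at most its two endpoints, so it is a $\lambda$-continuity set. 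Uniform distribution then gives $\frac1N\sum_{n<N}\mathbf{1}_{\phi_\beta(Z)}(\phi_\beta(n))\to\lambda(\phi_\beta(Z))$, and comparing the two limits yields $\mu(Z)=\lambda(\phi_\beta(Z))$.

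The delicate points, and where I expect the real work to sit, are exactly the two structural facts invoked along the way: that the $G$-numeration and the Parry $\beta$-numeration produce the same admissible language in case \eqref{case1} (so Ninomiya's one–dimensional u.d.\ theorem is genuinely applicable to $(\phi_\beta(n))$), and that $\phi_\beta$ collapses to ties only on the countable set of double ($\beta$-adic rational) expansions, which is what makes each $\phi_\beta(Z)$ an interval. A fully self-contained alternative would bypass Ninomiya and instead compute $\lambda(\phi_\beta(Z))=\phi_\beta(\max Z)-\phi_\beta(\min Z)$ directly and match it against the explicit formula \eqref{mu}; there the hard part will be that the lexicographically largest admissible continuation of a prefix, and hence the right endpoint, depends on the prefix through the Parry condition, so the bookkeeping of carries would have to be organised by induction on the length $k$ of the cylinder.
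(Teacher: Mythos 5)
Your proof is correct in outline but takes a genuinely different route from the thesis. The paper's own proof is a direct, self-contained computation: it evaluates $\mu(Z)$ on each cylinder from the explicit invariant-measure formula \eqref{mu} (via the quantities $F_{k,r}$), computes the image $\phi_\beta(Z)$ as an explicit half-open interval, and verifies equality of the two numbers using the characteristic equation \eqref{eins}; the case analysis is organised by how many of the trailing fixed digits equal $a$, since a terminal block of $a$'s constrains the admissible continuations and shrinks the image interval. You instead prove the identity \emph{without computing either side}: unique ergodicity of $(\mathcal{K}_G,\tau)$ applied to the clopen cylinder $Z$ along the orbit of $0$ gives $\mu(Z)$ as a digit-counting Ces\`aro limit, and Ninomiya's theorem (Example \ref{ex_beta_adic}) gives the same limit as $\lambda(\phi_\beta(Z))$, once one knows $\phi_\beta(Z)$ is an interval up to its endpoints (so its indicator is Riemann-integrable and Corollary \ref{riemann} applies) and that collisions of $\phi_\beta$ affect at most two integers. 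What your route buys is uniformity — no case analysis at all, and the quantitative endpoint bookkeeping that dominates the paper's proof is replaced by the purely qualitative statement that $\phi_\beta(Z)$ is a $\lambda$-continuity set. What it costs is an import: Ninomiya's low-discrepancy theorem is a substantial external result, and within the thesis the logical flow runs the other way — Lemma \ref{lem3} is precisely the engine used in Theorem \ref{main4.2} to \emph{derive} uniform distribution of $(\phi_\beta(n))_{n\in\mathbb{N}}$, so your argument inverts that dependence (legitimately, since Ninomiya 1997 is independent, but it makes the one-dimensional u.d.\ an input rather than an output).

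One caveat deserves sharper attention than you give it. You flag the identification of the $G$-admissible language with the Parry $\beta$-admissible language, but the identification of $(\phi_\beta(n))_{n\in\mathbb{N}}$ with Ninomiya's $N_\beta$ must hold as \emph{sequences}, not merely as sets of values: Ninomiya enumerates admissible strings of each length in reversed lexicographic order, and uniform distribution is not invariant under rearrangement, so if the greedy enumeration of the integers only agreed with Ninomiya's enumeration up to permutation within blocks $[G_K, G_{K+1})$, the counting limit along arbitrary $N$ would not transfer (the block lengths grow geometrically, so block-rearrangements can destroy u.d.). The identification is in fact true — the example for the golden mean in Chapter 1 already matches $N_\beta(n)$ with $\phi_\beta(n)$ term by term — but it is an order-preservation statement about the greedy algorithm that needs the same kind of carry bookkeeping you hoped to avoid, so it should be proved, not recorded.
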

\begin{proof}
 Let the cylinder set $Z$ be defined by the fixed digits $\epsilon_0, \ldots, \epsilon_{k-1}$. Assume first that $\epsilon_{k-1} < a$, then $F_{k+r} = (a+1)^r$ for $0 \leq r < d$. Thus, by \eqref{mu}, we obtain that
\begin{equation*}
 \mu(Z) = \beta^{-k}.
\end{equation*}

Consider the $\beta$-adic Monna map of $n \in \mathbb{N}$, i.e.\
\begin{equation*}
 \phi_\beta(n) = \sum_{i = 0}^\infty \frac{\epsilon_i}{\beta^{i+1}}.
\end{equation*}
If $\epsilon_{k-1} < a$ we easily see that $\phi_\beta(Z)$ is dense in 
\begin{equation*}
I = \left[\sum_{i = 0}^{k-1} \frac{\epsilon_i}{\beta^{i+1}}, \sum_{i = 0}^{k-2} \frac{\epsilon_i}{\beta^{i+1}} + \frac{(\epsilon_{k-1} + 1)}{\beta^{k}} \right[
\end{equation*}
and that $\phi_\beta(x') \notin I$ if $x' \notin Z$. Thus $\phi_\beta(Z)$ is $\lambda$-measurable and $\lambda(\phi_\beta(Z)) = \lambda(I) = \beta^{-k}$.\\

Assume now that $Z$ is defined by the fixed digits $\epsilon_{0},\ldots , \epsilon_{k-2}$ and $\epsilon_{k-1} = a$. By the above argument we derive that a cylinder with fixed digits $\epsilon_{0} ,\ldots, \epsilon_{k-2}$, and $\epsilon_{k-2}<a$, has measure $\beta^{-(k-1)}$.\\

Now compute the measure of $Z$:
\begin{equation*}
 \mu(Z) = \beta^{-(k-1)} - (a - 1) \beta^{-k}\ .
\end{equation*}
Next we consider $\phi_\beta(Z)$, hence
\begin{equation*}
\phi_\beta(Z) = \left[\sum_{i = 0}^{k-1} \frac{\epsilon_i}{\beta^{i+1}}, \sum_{i = 0}^{k-3} \frac{\epsilon_i}{\beta^{i+1}} + \frac{(\epsilon_{k-2} + 1)}{\beta^{k}} \right[
\end{equation*}
and thus $\lambda(\phi_\beta(Z)) = \mu(Z)$.\\ 

Let $2 \leq h \leq \min(k, d-1)$ and consider a cylinder set $Z$ with fixed digits $\epsilon_{0},\ldots , \epsilon_{k-h-1} < a$ and $\epsilon_{k-l}=a$ for $l = 1, \ldots, h$. Then, the cylinder with fixed digits $\epsilon_{0},\ldots , \epsilon_{k-h-1}$ has measure $\beta^{-(k-h)}$ and every cylinder with digits $\epsilon_{0},\ldots , \epsilon_{k-h+1}$ has measure $\beta^{-(k-h+2)}$. Thus it follows
\begin{equation*}
 \mu(Z) = \beta^{-(k-h+1)} - (a - 1) \beta^{-(k-h+2)}.
\end{equation*}
Considering $\phi_\beta(Z)$ we have
\begin{equation*}
\phi_\beta(Z) = \left[\sum_{i = 0}^{k-h+1} \frac{\epsilon_i}{\beta^{i+1}}, \sum_{i = 0}^{k-h} \frac{\epsilon_i}{\beta^{i+1}} + \frac{(\epsilon_{k-h-1} + 1)}{\beta^{k-h+2}} \right[,
\end{equation*}
and therefore $\lambda(\phi_\beta(Z)) = \mu(Z)$.
\end{proof}

As mentioned above, a result of Frougny and Solomyak \cite[Lemma 3]{frougny} implies that the dominant root of
\begin{equation*}
 x^2 - a_0 x - a_1, \quad a_0,a_1 \geq 1,
\end{equation*}
is a Pisot number if and only if $a_0 \geq a_1$. Lemma \ref{vdC} shows that the image of $\mathcal{K}_G^0$ under $\phi_\beta$ is not a subset of $[0,1[$, when $a_0 > a_1$. It follows that Lemma \ref{lem3} characterizes all van der Corput-type constructions for $d = 2$.

\begin{theorem}\label{main4.2}
 Let $G^1, \ldots, G^s$ be numeration systems as in Theorem \ref{main4.1} and let $\beta_1, \ldots, \beta_s$ denote the roots of the corresponding characteristic equations. Then the $s$-dimensional $\boldsymbol \beta$-adic Halton sequence $(\phi_{\boldsymbol \beta}(n))_{n \in \mathbb{N}}$ is u.d.\ in $[0,1[^s$.
\end{theorem}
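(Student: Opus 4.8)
The plan is to realise the $\boldsymbol\beta$-adic Halton sequence as the continuous image of a single orbit of the product odometer, and then transport unique ergodicity through the Monna map. First I would invoke Theorem \ref{main4.1}: since the $G^1,\dots,G^s$ are precisely the numeration systems admitted there, the product system $(\mathcal{K}_{G^1}\times\cdots\times\mathcal{K}_{G^s},\ \tau_1\times\cdots\times\tau_s)$ is uniquely ergodic. Because the product measure $\mu_1\times\cdots\times\mu_s$, with each $\mu_i$ the invariant measure of $\tau_i$ given by \eqref{mu}, is invariant for $\tau_1\times\cdots\times\tau_s$, unique ergodicity forces it to be \emph{the} unique invariant measure. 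Consequently, by the characterisation of unique ergodicity recalled above, for every point $x$ of the product space the orbit $((\tau_1\times\cdots\times\tau_s)^n x)_{n\geq 0}$ is uniformly distributed with respect to $\mu_1\times\cdots\times\mu_s$, in the sense that $\frac1N\sum_{n=0}^{N-1} F\bigl((\tau_1\times\cdots\times\tau_s)^n x\bigr)\to\int F\,d(\mu_1\times\cdots\times\mu_s)$ for every continuous $F$.

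Next I would record two properties of the $\beta$-adic Monna map. Since each $G^i$ has constant recurrence coefficients $a^i_j=b_i$, Lemma \ref{lem3} gives $\mu_i(Z)=\lambda(\phi_{\beta_i}(Z))$ for every cylinder $Z\subset\mathcal{K}_{G^i}$; as the cylinders generate the Borel $\sigma$-algebra and the countable set of sequences admitting a second, improper representation has $\mu_i$-measure zero, this upgrades to the statement that $\phi_{\beta_i}$ pushes $\mu_i$ forward to Lebesgue measure $\lambda$ on $[0,1[$. Taking products, $\phi_{\boldsymbol\beta}=(\phi_{\beta_1},\dots,\phi_{\beta_s})$ pushes $\mu_1\times\cdots\times\mu_s$ forward to $\lambda_s$. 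Moreover $\phi_{\boldsymbol\beta}$ is continuous: each $\phi_{\beta_i}(\epsilon)=\sum_{j\geq 0}\epsilon_j\beta_i^{-j-1}$ is a uniform limit of its partial sums, each of which depends on only finitely many digits, the uniform convergence following from $\beta_i>1$ together with the boundedness of the admissible digits.

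I would then identify the orbit explicitly. Adding one to the origin $\mathbf 0=(0^\infty,\dots,0^\infty)$ exactly $n$ times yields the tuple whose $i$-th coordinate is the canonical $G^i$-representation of $n$, so that $\phi_{\boldsymbol\beta}\bigl((\tau_1\times\cdots\times\tau_s)^n(\mathbf 0)\bigr)=(\phi_{\beta_1}(n),\dots,\phi_{\beta_s}(n))=\phi_{\boldsymbol\beta}(n)$, the $n$-th term of the Halton sequence. Finally, for any $g\in\mathcal{C}([0,1]^s)$ the function $g\circ\phi_{\boldsymbol\beta}$ is continuous on the product space, so applying the conclusion of the previous paragraph to the orbit of $\mathbf 0$ and then the pushforward identity gives
\begin{equation*}
\lim_{N\to\infty}\frac{1}{N}\sum_{n=0}^{N-1} g\bigl(\phi_{\boldsymbol\beta}(n)\bigr)=\int (g\circ\phi_{\boldsymbol\beta})\,d(\mu_1\times\cdots\times\mu_s)=\int_{[0,1]^s} g\,d\lambda_s.
\end{equation*}
By Weyl's Theorem (Theorem \ref{multiweyl}) this is precisely the uniform distribution of $(\phi_{\boldsymbol\beta}(n))_{n\in\mathbb{N}}$ in $[0,1[^s$.

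I expect the main obstacle to be the measure-theoretic bookkeeping around the Monna map rather than the dynamics. One must check that the image of $\mathcal{K}^0_{G^i}$ genuinely lands in $[0,1[$ and is dense there (which is where the constant-coefficient hypothesis and Lemma \ref{vdC}, case \eqref{case1}, enter), and that the non-injectivity of $\phi_{\beta_i}$ on its countable exceptional set does not spoil the equality of pushforward measures. Once these points are settled, the transfer of unique ergodicity through a continuous, measure-preserving factor map is routine, and the multidimensional statement follows at once from the unique ergodicity of the product established in Theorem \ref{main4.1}.
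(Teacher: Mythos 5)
Your proposal is correct and follows essentially the same route as the paper: unique ergodicity of the product odometer from Theorem \ref{main4.1}, transport of the invariant measure to $\lambda_s$ via Lemma \ref{lem3} and the Monna map, and the identification of the Halton sequence with the orbit of the origin. If anything, your write-up is more careful than the paper's, which cites Birkhoff's ergodic theorem (an almost-everywhere statement) where the conclusion for \emph{every} starting point -- in particular for $\mathbf{0}$ -- really rests on the characterisation of unique ergodicity that you invoke explicitly, and you rightly flag the continuity of $\phi_{\boldsymbol\beta}$ and the countable exceptional set of improper representations, which the paper's appeal to an ``isometry'' leaves implicit.
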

\begin{proof}
  By Lemma \ref{lem3} and the definition of the Monna map we obtain an isometry between the dynamical systems $((\mathcal{K}_{G^1}, \tau_1) \times \ldots \times (\mathcal{K}_{G^s}, \tau_s))$ and $(([0,1[, T_1) \times \ldots \times ([0,1[, T_s))$ where
\begin{equation*}
 T_i \colon [0,1[ \rightarrow [0,1[, \quad T_i(x) := \phi_{\beta_i} \circ \tau_i \circ \phi_{\beta_i}^{-1} (x).
\end{equation*}
Let $\mathbf{T} \mathbf{x} = (T_1 x_1, \ldots, T_s x_s)$ for $\mathbf{x} = (x_1, \ldots, x_s) \in [0,1[^s$. Hence by Birkhoff's ergodic theorem, $(\mathbf{T}^n \mathbf{x})_{n \in \mathbb{N}}$ is u.d.\ in $[0,1[^s$ for all $\mathbf{x} \in [0,1[^s$. In particular, $(\phi_{\boldsymbol \beta} (n))_{n \in \mathbb{N}} = (\mathbf{T}^n \mathbf{0})_{n \in \mathbb{N}}$ is u.d.
\end{proof}

 Note that the classical $\mathbf{b}$-adic Halton sequence with pairwise coprime integer bases $b_1, \ldots, b_s \geq 2$, is a special case of Theorem \ref{main4.2}.

\begin{theorem}\label{101}
 Let the numeration system $G$ be defined by the coefficients $(a_0, a_1, a_2) = (1,0,1)$, let $\beta$ be its characteristic root and $\tau$ the odometer on $G$. Then $\mu(Z) = \lambda(\phi_{\beta}(Z))$ for all cylinder sets $Z$. Thus $T(x) = \phi_\beta \circ \tau \circ \phi_{\beta}^{-1} (x)$ is uniquely ergodic and $(T^n x)_{n \in \mathbb{N}}$ is u.d.\ for all $x$ in $[0,1[$. Furthermore the spectrum of $T$ is given by
\begin{equation}\label{spec}
  \Gamma = \left\{ \exp \left( 2 \pi i \frac{c}{\beta^l} \right) \colon m,l,c \in \mathbb{N} \cup \{0\} \right\}.
\end{equation}

\end{theorem}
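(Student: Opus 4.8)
The plan is to establish the four assertions in turn, the first being the genuine technical content and the remaining three following rather formally from results proved earlier in the chapter. The coefficient vector $(a_0,a_1,a_2)=(1,0,1)$ is exactly an instance of case \eqref{case2} of Lemma \ref{vdC} (with $a_0=1$, so that the middle digit equals $a_0-1=0$), and its characteristic relation \eqref{eins} reads $\beta^{-1}+\beta^{-3}=1$; thus $\beta$ is the Pisot root of $x^3-x^2-1$, and by Lemma \ref{vdC} the image $\phi_\beta(\mathbb{N})$ lies in $[0,1[$ and is dense there. The admissibility condition \eqref{lex} in this case has Parry word $(100)^\infty$, so it forbids the blocks $11$ and $101$: any two digits equal to $1$ must be separated by at least two $0$'s. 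This combinatorial constraint is what drives the case analysis below.

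First I would prove $\mu(Z)=\lambda(\phi_\beta(Z))$ for every cylinder $Z$, following the scheme of Lemma \ref{lem3} but adapted to the non-constant coefficients. Fixing the digits $\epsilon_0,\dots,\epsilon_{k-1}$ of $Z$, I would compute $\mu(Z)$ from \eqref{mu} — which amounts to evaluating the counting quantities $F_{k,r}$ for this recurrence — and separately identify $\phi_\beta(Z)$ as a half-open interval $\big[\sum_{i<k}\epsilon_i\beta^{-i-1},\,r\big[$ whose right endpoint $r$ depends on the trailing digit pattern, reading its $\lambda$-measure off directly. The cases are governed by how the tail $\epsilon_{k-1},\epsilon_{k-2},\dots$ sits relative to the forbidden patterns: the generic case $\epsilon_{k-1}=0$, where $Z$ extends freely, against the boundary cases where the tail consists of one of the blocks $1,\ 10,\ 100,\dots$ abutting the Parry word. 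In each case one verifies that the value of $\mu(Z)$ produced by \eqref{mu} equals the interval length, exactly as in Lemma \ref{lem3}. I expect this bookkeeping — matching \eqref{mu} against interval lengths across the admissibility-constrained tail cases — to be the main obstacle.

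Granting the cylinder identity, $\phi_\beta$ sends the generating semialgebra of cylinders of $\mathcal{K}_G$ to intervals of $[0,1[$ of equal measure, so $(\phi_\beta)_*\mu=\lambda$ on all Borel sets; since $\phi_\beta$ is a continuous surjection that is bijective on regular representations, it realises a measure isomorphism conjugating $(\mathcal{K}_G,\mu,\tau)$ to $([0,1[,\lambda,T)$ with $T=\phi_\beta\circ\tau\circ\phi_\beta^{-1}$. Now $\tau$ is uniquely ergodic with unique invariant measure $\mu$ (the theorem furnishing \eqref{mu}), and I would transfer this to $T$ via the characterisation of unique ergodicity: for $f\in\mathcal{C}([0,1])$ the function $f\circ\phi_\beta$ is continuous on the compact space $\mathcal{K}_G$, so $\tfrac1N\sum_{n<N}f(\phi_\beta(\tau^n z))\to\int f\circ\phi_\beta\,d\mu=\int_0^1 f\,d\lambda$ uniformly in $z$; using $\phi_\beta\circ\tau^n=T^n\circ\phi_\beta$ and the surjectivity of $\phi_\beta$ gives $\tfrac1N\sum_{n<N}f(T^n x)\to\int_0^1 f\,d\lambda$ for every $x\in[0,1[$. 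Taking $f(x)=e^{2\pi i h x}$ and applying Weyl's criterion shows $(T^n x)_{n\in\mathbb{N}}$ is u.d.\ for all $x$; moreover every $T$-invariant probability $\nu$ satisfies $\int f\,d\nu=\int f\,d\lambda$ (integrate the convergent bounded Birkhoff averages against $\nu$ and use $T_*\nu=\nu$), so $\lambda$ is the unique invariant measure and $T$ is uniquely ergodic.

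Finally, for the spectrum I would use that $(1,0,1)$ satisfies Hypothesis \ref{hypA} (Bruin et al.\ \cite{bks}), so Theorem \ref{disc_spec} applies and $(\mathcal{K}_G,\mu,\tau)$ — hence also the conjugate system $([0,1[,\lambda,T)$ — has purely discrete spectrum equal to $\Gamma=\{z\in\mathbb{C}:\lim_{n\to\infty}z^{G_n}=1\}$. To put $\Gamma$ in the explicit form \eqref{spec} I would repeat the computation from the proof of Theorem \ref{main4.1}: writing $z=\exp(2\pi i c\beta^{-l})$ and combining the asymptotics \eqref{approx}, $G_n\sim C\beta^n$, with the Pisot property of $\beta$ yields $z^{G_n}\sim\exp(2\pi i c\,G_{n-l})=1$; conversely these exhaust $\Gamma$ because, in contrast with the $b$-adic factor appearing in Theorem \ref{main4.1}, the sequence $(G_n)$ defined by $G_n=G_{n-1}+G_{n-3}$ has no fixed integer dividing all sufficiently large terms, so no additional integer denominator arises. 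This gives $\Gamma=\{\exp(2\pi i c/\beta^l):l,c\in\mathbb{N}\cup\{0\}\}$, which is \eqref{spec}, completing the proof.
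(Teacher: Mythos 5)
Your proposal is correct and takes essentially the same route as the paper: the paper likewise proves $\mu(Z)=\lambda(\phi_\beta(Z))$ by a case analysis on the trailing digits of the cylinder (its cases $\epsilon_{k-1}=1$; $\epsilon_{k-2}=1,\epsilon_{k-1}=0$; $\epsilon_{k-3}=1,\epsilon_{k-2}=\epsilon_{k-1}=0$; all three zero are governed exactly by your forbidden blocks $11$ and $101$, as its values $F_{k,0},F_{k,1},F_{k,2}$ confirm), then transfers unique ergodicity through $\phi_\beta$ and obtains the spectrum from Hypothesis \ref{hypA} (via \cite{bks}) together with the computation in the proof of Theorem \ref{main4.1}. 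Two minor remarks: your identification of $\beta$ as the Pisot root of $x^3-x^2-1$ is right and silently corrects a typo in the paper's proof (which writes $x^3-x-1$ even though its own reduction is $\beta^{-1}+\beta^{-3}=1$), and your uniform-convergence argument for passing unique ergodicity from $\tau$ to $T$ is in fact more careful than the paper's one-line appeal to the isomorphism and Birkhoff's theorem.
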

\begin{proof}
It is well known that $\beta$ is a Pisot number and equation \eqref{par} holds since $\lfloor \beta \rfloor = 1 = a_0$. Hypothesis \ref{hypA} was proved for this case in \cite[Theorem 4]{bks}. The proof that Hypothesis \ref{hypB} is fulfilled can be found in \cite[Theorem 3]{aki}. Equation \eqref{spec} follows from the proof of Theorem \ref{main4.1}.\\

Now we have to prove that $\phi_\beta$ transports the measure $\mu$ to the Lebesgue measure on $[0,1[$. First we assume $k \geq 3$. Let the cylinder $Z$ be defined by the fixed digits $\epsilon_0, \ldots, \epsilon_{k - 1}$. We consider four different cases. Suppose first $\epsilon_{k-3} = \epsilon_{k-2} = \epsilon_{k-1} = 0$. Then $F_{k,0} = 1, F_{k,1} = 2, F_{k,2} = 3$ and we get by \eqref{mu} that
\begin{equation*}
 \mu(Z) = \beta^{-k}.
\end{equation*}
Furthermore, by the same argument as in the first part of the proof of Theorem \ref{main4.2} we obtain
\begin{equation*}
 \phi_\beta(Z) = \left[ \sum_{i = 0}^{k - 1} \frac{\epsilon_i}{\beta^{i+1}}, \sum_{i = 0}^{k - 2} \frac{\epsilon_i}{\beta^{i+1}} + \frac{(\epsilon_{k - 1} + 1)}{\beta^k} \right[,
\end{equation*}
and thus $\lambda(\phi_\beta(Z)) = \beta^{-k}$.\\
Now let $\epsilon_{k-3} = 1, \epsilon_{k-2} = \epsilon_{k-1} = 0$. Hence $F_{k,0} = 1, F_{k,1} = 2, F_{k,2} = 3$ and $\mu(Z) = \beta^{-k}$. We have
\begin{align*}
 \phi_\beta(Z) &= \left[\sum_{i = 0}^{k - 1} \frac{\epsilon_i}{\beta^{i+1}}, \sum_{i = 0}^{k - 1} \frac{\epsilon_i}{\beta^{i+1}} + \beta^{-k} \sum_{i = 0}^\infty \beta^{-(3 i+1)} \right[\\
  &= \left[\sum_{i = 0}^{k - 1} \frac{\epsilon_i}{\beta^{i+1}}, \sum_{i = 0}^{k - 1} \frac{\epsilon_i}{\beta^{i+1}} + \beta^{-k} \right[,
\end{align*}
thus again $\lambda(\phi_\beta(Z)) = \beta^{-k}$. Now assume $\epsilon_{k-2} = 1, \epsilon_{k-1} = 0$. Hence $F_{k,0} = 1, F_{k,1} = 1, F_{k,2} = 2$ and
\begin{equation*}
 \mu(Z) = \beta^{-k} \frac{\beta^{-2} + 1}{\beta^{-2} + \beta^{-1} + 1}.
\end{equation*}
Similarly as above we obtain
\begin{align*}
 \phi_\beta(Z) &= \left[ \sum_{i = 0}^{k - 1} \frac{\epsilon_i}{ \beta^{i+1}}, \sum_{i = 0}^{k - 1} \frac{\epsilon_i}{\beta^{i+1}} + \beta^{-(k+1)} \sum_{i = 0}^\infty \beta^{-(3 i+1)} \right[\\
  &= \left[\sum_{i = 0}^{k - 1} \frac{\epsilon_i}{\beta^{i+1}}, \sum_{i = 0}^{k - 1} \frac{\epsilon_i}{\beta^{i+1}} + \beta^{-(k+1)} \right[,
\end{align*}
thus $\lambda(\phi_\beta(Z)) = \beta^{-(k+1)}$. Now we have
\begin{equation*}
 \beta^{-(k+1)} = \beta^{-k} \frac{\beta^{-2} + 1}{\beta^{-2} + \beta^{-1} + 1}
\end{equation*}
which is equivalent to $\beta^{-3} + \beta^{-2} + \beta^{-1} = \beta^{-2} + 1$ and holds since $\beta$ is the characteristic root of the polynomial $x^3-x-1=0$.\\
In the last case we assume $\epsilon_{k - 1} = 1$, thus  $F_{k,0} = F_{k,1} = F_{k,2} = 1$ and
\begin{equation*}
 \mu(Z) = \beta^{-k} \frac{1}{\beta^{-2} + \beta^{-1} + 1}.
\end{equation*}
As above we get $\lambda(\phi_\beta(Z)) = \beta^{-(k+2)}$ and the result follows since
\begin{equation*}
 \beta^{-(k+2)} = \beta^{-k} \frac{1}{\beta^{-2} + \beta^{-1} + 1}
\end{equation*}
is equivalent to $\beta^{-3} + \beta^{-1} = 1$. The cases where $k < 3$, follow by the same arguments.
\end{proof}

As a consequence of Theorem \ref{product} we can construct uniformly distributed two-dimensional sequences $(\phi_{\beta_1}(n), \phi_{\beta_2}(n))_{n \in \mathbb{N}}$, where $\beta_1$ is the characteristic root as in Theorem \ref{101}, $\beta_2$ is the characteristic root of a numeration system as in Theorem \ref{main4.1} and $\frac{\beta^k_1}{\beta^l_2} \notin \mathbb{Q}$ for all integers $k,l > 0$.\\

Theorem \ref{101} extends the examples given in \cite[Proposition 13,14]{bg}, where the authors consider $G$-additive functions which lead to u.d.\ point sequences in the unit interval. Furthermore, it is possible to show that the one-dimensional point sequence in the previous theorem is a low-discrepancy sequence by mimicking the proof for the $b$-adic van der Corput sequence, see e.g.\ \cite{bg, carbone, kuipers_niederreiter}.\\

In the last part of this chapter we want to show that the Kakutani-Fibonacci transformation (see Definition \ref{KF}) is in fact uniquely ergodic.\\
Hence, with this different approach we can show that the orbit of $x$ under the transformation is u.d.\ for every $x \in [0,1[$.\\

For proving unique ergodicity of the Kakutani-Fibonacci transformation we need the following lemma.
\begin{lemma}\label{lemma3}
 Let $\beta$ be the golden ratio $\frac{\sqrt{5} + 1}{2}$. Then we have $T x = \phi_\beta \circ \tau \circ \phi_\beta^{-1} x$ for all $\beta$-adic rationals
\begin{equation*}
 x = \sum_{i = 1}^k \frac{\epsilon_i}{\beta^i}
\end{equation*}
with coefficients $\epsilon_i \in \{0, 1\}$.
\end{lemma}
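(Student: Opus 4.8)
The plan is to prove the identity $Tx = \phi_\beta \circ \tau \circ \phi_\beta^{-1} x$ for the $\beta$-adic rationals by an explicit digit computation, where $\beta$ is the golden ratio and $G = (G_n)$ is the Fibonacci numeration system generated by the coefficients $(a_0, a_1) = (1,1)$, so that $\beta$ satisfies $\beta^2 = \beta + 1$ and the admissible sequences are exactly those with no two consecutive $1$'s. The key observation is that both sides are already known maps: the right-hand side $\phi_\beta \circ \tau \circ \phi_\beta^{-1}$ acts by first reading off the greedy $\beta$-adic (Zeckendorf-type) digit string of $x$, applying the odometer (add-one-with-carry in the Fibonacci base), and then reinterpreting the resulting digit string through the Monna map; the left-hand side $T$ is the Kakutani-Fibonacci transformation whose piecewise definition is given in Proposition \ref{9}. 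Thus the task is to check that the odometer action on digit strings matches the piecewise translations $T_k$ precisely on those $x$ of the displayed finite form.

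First I would fix $x = \sum_{i=1}^{k} \epsilon_i \beta^{-i}$ with $\epsilon_i \in \{0,1\}$ admissible (no adjacent ones), and locate which interval $I_j$ of the partition defining $T$ contains $x$. The intervals $I_k$ in Proposition \ref{9} are indexed by the length of the initial block of the form arising from the bottom-to-top reading of the cutting-stacking columns, so I would translate the interval membership condition into a condition on the leading digits $\epsilon_1, \epsilon_2, \dots$ of $x$. Concretely, the left endpoints $\sum_{j=0}^{m-1}\alpha^{2j+1}$ and $\sum_{j=0}^{m-1}\alpha^{2(j+1)}$ appearing in the definition of $T_{2k}$ and $T_{2k+1}$ (with $\alpha = \beta^{-1}$) are themselves $\beta$-adic rationals with a recognizable digit pattern, and I would identify the interval $I_k$ containing $x$ with the cylinder determined by the maximal run of digits that the odometer must carry through. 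The heart of the matching is that the odometer's carry propagation $\tau$, when $x$ ends in a specific admissible suffix, performs exactly the replacement of a block that, under $\phi_\beta$, amounts to adding the constant $c_k = \alpha^k - \sum \alpha^{\cdots}$ prescribed by $T_k$.

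The second step is the computation proper: for each parity class I would take the generic $x$ in interval $I_{2k}$ (respectively $I_{2k+1}$), write its digit string explicitly, apply $\tau$ (tracking the carries using $G_{n+2} = G_{n+1} + G_n$ and the identity $1 = F_{n+1}\alpha^{n+1} + F_n\alpha^{n+2}$ already noted for the Fibonacci substitution), apply $\phi_\beta$ to the output, and verify that the net effect on the real number is precisely the affine map $x \mapsto x + c_k$ of Proposition \ref{9}. Because $\phi_\beta$ sends the digit-string operations to genuine additions of the corresponding geometric tails $\sum_i \beta^{-i}$, this reduces to a finite geometric-series identity in $\beta$ that closes using $\beta^2 = \beta + 1$; this is the same mechanism already used in Lemma \ref{lem3} and Theorem \ref{101} to verify $\lambda(\phi_\beta(Z)) = \mu(Z)$ on cylinders.

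The main obstacle I anticipate is the bookkeeping of the carry in the odometer when $x$ has a long trailing run forcing the carry to cascade across several Fibonacci positions — this is exactly the situation encoded by the higher-index intervals $I_{2k}, I_{2k+1}$, and it is where the non-uniqueness of digit representations (finite versus eventually-periodic $\beta$-expansions, i.e.\ the distinction between $\mathcal{K}_G$ and $\mathcal{K}_G^0$) could cause the naive digit manipulation to land outside $[0,1[$ or on a non-admissible string. I would handle this by restricting, as the statement does, to $\beta$-adic rationals of finite length $k$ — these lie in $\mathcal{K}_G^0$, so $\phi_\beta^{-1}$ is well defined and single-valued on them — and by checking that after the carry the resulting string is again admissible, which is guaranteed precisely by the no-consecutive-ones admissibility condition being preserved under the Fibonacci odometer. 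Once the identity is established on this dense set of $\beta$-adic rationals, it extends by continuity; this lemma then feeds directly into the unique ergodicity of $T$, since $T$ is shown to agree with the uniquely ergodic conjugate $\phi_\beta \circ \tau \circ \phi_\beta^{-1}$.
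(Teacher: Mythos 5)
Your proposal follows essentially the same route as the paper's own proof: a case analysis over the intervals $I_1$, $I_{2k}$, $I_{2k+1}$ of Proposition \ref{9}, identifying the digit pattern of $x$ in each case, tracking the odometer's carry cascade explicitly, and verifying via $\beta^2 = \beta + 1$ that $\phi_\beta \circ \tau \circ \phi_\beta^{-1}$ reproduces the affine translation $x \mapsto x + c_k$. The only caution is your closing remark about extending ``by continuity'': $T$ is an interval exchange and not continuous, so the passage from the dense set of $\beta$-adic rationals to unique ergodicity (as in Theorem \ref{th3}) rests on density together with the unique ergodicity of the conjugate map, not on continuity of $T$.
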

\begin{proof}
 First let us observe that $\alpha=\frac{1}{\beta}$. It follows from \cite[Lemma 3]{AHZ} specializing $L=S=1$ that every positive integer $n$ has a representation of the form 
\begin{equation*}
n=\sum_{i=0}^N\epsilon_it_i\ ,
\end{equation*}
where $t_i$ is the number of total intervals of the $i$-th partition. This implies that $\xi_{1,1}^n=\sum_{i=0}^N\frac{\epsilon_i}{\beta^{i+1}}$. We want to show that $Tx=\phi_\beta\circ \tau\circ \phi_\beta^{-1}(x)$. Then we need to see if $x$ belongs to $I_1$, $I_{2k}$ or $I_{2k+1}$.\\
If $x\in I_1=\left[ 0,\frac{1}{\beta^2} \right[$, then $x=\sum_{j=2}^\infty\frac{\epsilon_j(x)}{\beta^{j+1}}$. In this case
\begin{equation*}
T_1(x)=x+\frac{1}{\beta}=\frac{1}{\beta}+\sum_{j=2}^\infty\frac{\epsilon_j(x)}{\beta^{j+1}}
\end{equation*}
and
\begin{eqnarray*}
\phi_\beta\circ \tau\circ \phi_\beta^{-1}(x)&=&\phi_\beta\circ \tau\left( 00\epsilon_2\epsilon_3\dots \right)\\
&=&\phi_\beta(10\epsilon_2\epsilon_3\dots)\\
&=&\frac{1}{\beta}+\sum_{j=2}^\infty\frac{\epsilon_j}{\beta^{j+1}}\ .
\end{eqnarray*}
Let us fix $k\geq 1$. If $x\in I_{2k}=\left[\sum_{j=0}^{k-1}\alpha^{2j+1}, \sum_{j=0}^{k}\alpha^{2j+1}\right[$, then $x=\sum_{j=0}^{k-1}\alpha^{2j+1}+\sum_{2k+2}^\infty \epsilon_j\alpha^{j+1}$. Hence
\begin{eqnarray*}
T_{2k}(x)&=&x+\alpha^{2k}-\sum_{j=0}^{k-1}\alpha^{2j+1}\\
&=& \sum_{j=0}^{k-1}\alpha^{2j+1}+\sum_{2k+2}^\infty \epsilon_j\alpha^{j+1}\alpha^{2k}-\sum_{j=0}^{k-1}\alpha^{2j+1}\\
&=& \sum_{2k+2}^\infty \epsilon_j\alpha^{j+1}\alpha^{2k}\ .
\end{eqnarray*}
and
\begin{eqnarray*}
\phi_\beta\circ\tau\circ\phi_\beta^{-1}(x)&=&\phi_\beta\circ\tau\left(1010\dots0100\epsilon_{2k+2}\epsilon_{2k+3}\dots \right)\\
&=&\phi_\beta(01101\dots 0100\epsilon_{2k+2}\epsilon_{2k+3}\dots)\\
&=&\dots = \phi_\beta(0\dots 01100\epsilon_{2k+2}\epsilon_{2k+3}\dots)\\
&=&\phi_\beta(0\dots 010\epsilon_{2k+2}\epsilon_{2k+3}\dots)\\
&=& \frac{1}{\beta^{2k}}+\sum_{j=2k+2}^\infty\frac{\epsilon_j}{\beta^{j+1}}\\
&=&\alpha^{2k}+\sum_{j=2k+2}^\infty\epsilon_j\alpha^{j+1}\ .
\end{eqnarray*}
Finally, if $x\in I_{2k+1}=\left[\sum_{j=0}^{k-1}\alpha^{2(j+1)}, \sum_{j=0}^{k}\alpha^{2(j+1)}\right[$, then $x=\sum_{j=0}^{k-1}\alpha^{2(j+1)}$\linebreak $+\sum_{2k+3}^\infty \epsilon_j\alpha^{j+1}$. Hence
\begin{eqnarray*}
T_{2k+1}(x)&=&x+\alpha^{2k+1}-\sum_{j=0}^{k-1}\alpha^{2(j+1)}\\
&=& \sum_{2k+3}^\infty \epsilon_j\alpha^{j+1}+\alpha^{2k+1}\ .
\end{eqnarray*}
and
\begin{eqnarray*}
\phi_\beta\circ\tau\circ\phi_\beta^{-1}(x)&=&\phi_\beta\circ\tau\left(0101\dots0100\epsilon_{2k+3}\epsilon_{2k+4}\dots \right)\\
&=&\phi_\beta(1101\dots 0100\epsilon_{2k+3}\epsilon_{2k+4}\dots)\\
&=&\dots = \phi_\beta(0\dots 01100\epsilon_{2k+3}\epsilon_{2k+4}\dots)\\
&=&\phi_\beta(0\dots 010\epsilon_{2k+3}\epsilon_{2k+4}\dots)\\
&=& \frac{1}{\beta^{2k+1}}+\sum_{j=2k+3}^\infty\frac{\epsilon_j}{\beta^{j+1}}\\
&=&\alpha^{2k+1}+\sum_{j=2k+3}^\infty\epsilon_j\alpha^{j+1}\ .
\end{eqnarray*}
  So we have proved that the two maps are the same on the set of all $\beta$-adic rationals. Combining this with the ergodicity of the Kakutni-Fibonacci transformation we complete the proof.
\end{proof}

\begin{theorem}\label{th3}
 The Kakutani-Fibonacci transformation is uniquely ergodic.
\end{theorem}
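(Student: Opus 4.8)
The plan is to realize the Kakutani--Fibonacci transformation $T$ of Definition \ref{KF} as a topological factor of the uniquely ergodic odometer attached to the Fibonacci numeration system, and then to transport unique ergodicity across the $\beta$-adic Monna map. The relevant base is the golden ratio $\beta=\frac{\sqrt{5}+1}{2}=1/\alpha$, which is the characteristic root of the numeration system $G$ with constant coefficients $(a_0,a_1)=(1,1)$; this is the Multinacci case for $d=2$, it is a Pisot number, and the recurrence $t_n=t_{n-1}+t_{n-2}$ of \eqref{LSrecurr} is exactly the Fibonacci recurrence governing the $LS$-partition with $L=S=1$.

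First I would record that the odometer $\tau$ on $\mathcal{K}_G$ is uniquely ergodic: the Multinacci sequence satisfies Hypothesis \ref{hypA} (and $\beta$ is Pisot), so by the results of \cite{glt} recalled in Chapter 1 the map $\tau$ is continuous and $(\mathcal{K}_G,\tau)$ is uniquely ergodic, with unique invariant measure $\mu$ given by \eqref{mu}. Next I would apply Lemma \ref{lem3} with $a=1$ and $d=2$: it gives $\mu(Z)=\lambda(\phi_\beta(Z))$ for every cylinder $Z$, so the Monna map $\phi_\beta$ pushes $\mu$ forward to Lebesgue measure $\lambda$. Hence $\widetilde{T}:=\phi_\beta\circ\tau\circ\phi_\beta^{-1}$ preserves $\lambda$ and is carried by $\phi_\beta$ from the uniquely ergodic system $(\mathcal{K}_G,\tau)$; exactly as in the conclusion of Theorem \ref{101} (and the mechanism of Theorem \ref{main4.2}), $([0,1[,\widetilde{T})$ is uniquely ergodic, its unique invariant measure being $\lambda$.

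The link with $T$ itself is Lemma \ref{lemma3}, which shows $Tx=\widetilde{T}x$ for all $\beta$-adic rationals $x$, a dense subset of $[0,1[$; inspecting its proof, the identity is obtained by a translation computation on each of the intervals $I_1,I_{2k},I_{2k+1}$ of Proposition \ref{9} with an arbitrary admissible tail, so $T$ and $\widetilde{T}$ agree at every regular point and differ at most on the countable set of $\beta$-adic rationals with a finite expansion. Since both $T$ and $\widetilde{T}$ are interval exchanges --- piecewise translations, each continuous on the interiors of its defining intervals --- agreement on a dense set forces $T=\widetilde{T}$ off a countable set of endpoints. To finish I would transfer unique ergodicity: for a $\beta$-adic rational $x$ the entire forward $T$-orbit coincides with its $\widetilde{T}$-orbit (the odometer preserves finite support, so $\widetilde{T}$ maps $\beta$-adic rationals to $\beta$-adic rationals and $T=\widetilde{T}$ there), whence by the uniquely-ergodic criterion $\frac1N\sum_{n=0}^{N-1}f(T^nx)\to\int_0^1 f\,d\lambda$ uniformly for every continuous $f$; this, together with the ergodicity of $T$ from Theorem \ref{ergodicity}, upgrades to $\mathcal{M}^{T}([0,1[)=\{\lambda\}$.

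The hard part will be precisely this last transfer: $\phi_\beta$ is continuous and surjective but not injective (the two $\beta$-expansions of a $\beta$-adic rational), and $T$ is genuinely discontinuous at the partition endpoints, so one must check that the dense, co-countable coincidence of $T$ and $\widetilde{T}$ really upgrades to an identity of their invariant-measure theory rather than merely almost-everywhere equality. The cleanest way to settle this is to verify that $\phi_\beta$ is constant on the $\tau$-images of its fibres, so that $\phi_\beta$ descends to a bona fide topological factor map $(\mathcal{K}_G,\tau)\to([0,1[,T)$; as a factor of a uniquely ergodic system, $([0,1[,T)$ is then uniquely ergodic. Either route yields that the orbit $(T^nx)_{n\ge0}$ is uniformly distributed for every $x\in[0,1[$ --- in particular for $x=0$ it is the low-discrepancy Kakutani--Fibonacci sequence --- thereby removing the almost-everywhere restriction of Theorem \ref{16}.
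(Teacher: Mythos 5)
Your proposal follows essentially the same route as the paper's own proof: the paper likewise obtains unique ergodicity of $\phi_\beta\circ\tau\circ\phi_\beta^{-1}$ from Theorem \ref{main4.2} (resting on Lemma \ref{lem3} with $a=1$, $d=2$ and the uniquely ergodic Fibonacci odometer) and then concludes for $T$ via Lemma \ref{lemma3} and the density of the $\beta$-adic rationals. The only difference is that you spell out the final transfer step, which the paper compresses into the phrase ``density of the $\beta$-adic rationals''; your factor-map justification is a welcome filling-in of that implicit detail, but the argument is the same.
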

\begin{proof}
By Theorem \ref{main4.2}, $\phi_\beta \circ \tau \circ \phi_\beta^{-1}$ is uniquely ergodic, thus by Lemma \ref{lemma3} and the density of the $\beta$-adic rationals $T$ is uniquely ergodic.
\end{proof}

\chapter{Integration with respect to copulas}

\addtocounter{section}{1}
\setcounter{definition}{0}
 \thispagestyle{empty}
As we have already highlighted in the first chapter, there is a strong connection between uniformly distributed sequences and the estimate of integrals, especially of multidimensional ones. In this last chapter we want to focus on the integration of two-dimensional functions with respect to copulas. We already know that a copula is the asymptotic distribution function of a two-dimensional sequence $(x_n, y_n)_{n\in\mathbb{N}}$, with $(x_n)_{n\in\mathbb{N}}, (y_n)_{n\in\mathbb{N}}$ are uniformly distributed sequences. We now provide the general definition and we want to give upper and lower bounds for these integrals. This can be done by drawing a connection to linear assignment problems. These are a family of problems consisting in how to assign $n$ items to other $n$ items in an optimal way. We will discuss this kind of problems and the way to solve them, showing how we can apply the solution for these problems to the original problems of finding bounds for integrals. Of course the second type of problems is discrete, thus this approach gives rise to an approximation technique.  Finally, we apply our approximation technique to problems in financial mathematics and uniform distribution theory, such as the model-independent pricing of first-to-default swaps. We refer to \cite{Joe, nelsen} for a complete introduction to copulas and to \cite{BDM} for a comprehensive treatment of assignment problems.
\begin{definition}[Copula]\label{cop}
 Let $C$ be a positive function on the unit square. Then $C$ is called (two)-copula iff for every $x,y \in [0,1[$
\begin{align*}
 C(x,0) &= C(0,y) = 0,\\
 C(x,1) &= x \text{ and } C(1,y) = y
\end{align*}
and for every $x_1,x_2,y_1,y_2 \in [0,1[$ with $x_2 \geq x_1$ and $y_2 \geq y_1$
\begin{equation}\label{two-inc}
 C(x_2,y_2) - C(x_2, y_1) - C(x_1, y_2) + C(x_1,y_1) \geq 0.
\end{equation}
A function which satisfies \eqref{two-inc} is called two-increasing or supermodular. In the sequel we denote by $\mathcal{C}$ the set of all two-copulas.
\end{definition}
The word copula was first employed in a mathematical or statistical sense by Abe Sklar (1959) in the theorem (which now bears his name) describing the functions that join together one-dimensional distribution
functions to form multivariate distribution functions.\\
Sklar's Theorem, which will be stated below, elucidates the role that copulas play in the relationship between multivariate distribution functions and their univariate margins.
\begin{theorem}[Sklar's Theorem]
Let $H$ be a joint distribution function with margins $F$ and $G$. Then there exists a copula $C$ such that for all $x,y\in \mathbb{R}$, 
\begin{equation}\label{sklar}
H(x,y)=C(F(x),G(y))\ .
\end{equation}
If $F$ and $G$ are continuous, then $C$ is unique; otherwise, $C$ is uniquely determined on Ran $F \times$ Ran $G$, where Ran $F$ is the range of $F$. Conversely, if $C$ is a copula and $F$ and $G$
are distribution functions, then the function $H$ defined by \eqref{sklar} is a joint distribution function with margins $F$ and $G$.
\end{theorem}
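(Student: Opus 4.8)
The plan is to prove the two implications separately, treating the converse (building $H$ from a given copula $C$) as the routine direction and the existence-and-uniqueness statement as the substantial one, which I would reduce to two lemmas: a well-definedness result on the ranges of the margins, and an extension result for subcopulas.

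First I would dispatch the converse. Given a copula $C$ and univariate distribution functions $F$ and $G$, set $H(x,y) = C(F(x),G(y))$. The margin conditions follow from the boundary conditions in Definition \ref{cop}: letting $y \to +\infty$ gives $H(x,+\infty) = C(F(x),1) = F(x)$, and symmetrically $H(+\infty,y) = G(y)$; letting $x \to -\infty$ or $y \to -\infty$ and invoking $C(0,y) = C(x,0) = 0$ supplies the lower normalizations. For the 2-increasing property of $H$, I would observe that for $x_1 \le x_2$ and $y_1 \le y_2$ the monotonicity of $F$ and $G$ orders the four points $(F(x_i),G(y_j))$ correctly, so the rectangle inequality for $H$ is precisely \eqref{two-inc} for $C$ evaluated at those points. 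Right-continuity and the limit normalizations transfer from $F$, $G$ and the continuity of $C$.

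For the direct statement the crucial step is to define $C'$ on $\mathrm{Ran}\,F \times \mathrm{Ran}\,G$ by $C'(F(x),G(y)) = H(x,y)$ and to show it is well defined. This rests on the Lipschitz estimate $|H(x_2,y)-H(x_1,y)| \le |F(x_2)-F(x_1)|$, itself a consequence of the 2-increasing property of $H$ combined with its margins; whenever $F(x_1)=F(x_2)$ this bound forces $H(x_1,y)=H(x_2,y)$, so $C'$ is independent of the chosen preimage, and symmetrically in the second coordinate. I would then verify that $C'$ satisfies the copula boundary and rectangle conditions on its domain, i.e.\ that it is a \emph{subcopula}: since $\{0,1\} \subseteq \mathrm{Ran}\,F$ and likewise for $G$, the boundary relations hold, while the rectangle inequality is inherited verbatim from that of $H$.

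The hard part is the extension step: every subcopula $C'$ defined on $S_1 \times S_2$, with $S_i \subseteq [0,1]$ containing $0$ and $1$, extends to a genuine copula $C$ on $[0,1]^2$. I would first extend $C'$ to the closure of its domain using the Lipschitz bound again, and then fill each remaining gap by bilinear interpolation, interpolating linearly in each variable on rectangles whose corners already lie in the domain. The delicate work is to confirm that the interpolated $C$ remains 2-increasing across the seams between interpolated cells and the original domain, and that the boundary conditions persist; preserving supermodularity on rectangles straddling such seams is exactly where the care is required. Once this is done, $C(F(x),G(y)) = C'(F(x),G(y)) = H(x,y)$, giving existence. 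For uniqueness, when $F$ and $G$ are continuous their ranges are dense in $[0,1]$ and contain the endpoints, so $C'$ is determined on a dense set and its Lipschitz continuity forces a unique continuous extension; in general $C$ is pinned down only on $\mathrm{Ran}\,F \times \mathrm{Ran}\,G$, precisely as claimed.
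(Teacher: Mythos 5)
The paper gives no proof of this statement: Sklar's theorem is quoted as classical background for Chapter 5, with the reader referred to \cite{nelsen} and \cite{Joe}, so there is no internal argument to compare yours against. What you have written is, in outline, precisely the canonical proof from that cited literature. The converse direction by direct verification of the margins and of the rectangle inequality \eqref{two-inc} from Definition \ref{cop} is correct, as is the forward direction: the 1-Lipschitz estimate $|H(x_2,y)-H(x_1,y)|\leq |F(x_2)-F(x_1)|$ does follow from applying the 2-increasing property to the rectangle $[x_1,x_2]\times[y,+\infty)$ together with the margin condition, and it yields exactly the well-definedness of the subcopula $C'$ on $\mathrm{Ran}\,F\times \mathrm{Ran}\,G$ and, later, the uniqueness claim by density when $F,G$ are continuous.

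Two remarks on the points you left implicit. First, $\{0,1\}\subseteq \mathrm{Ran}\,F$ is only true under the convention (used in \cite{nelsen}) that distribution functions are defined on the extended reals with $F(-\infty)=0$ and $F(+\infty)=1$; without it, one works with $\overline{\mathrm{Ran}}\,F$, and your Lipschitz extension to the closure already supplies this repair, so nothing breaks. Second, the one step you name but do not execute --- that the bilinear interpolation of a subcopula remains 2-increasing, including across seams --- is indeed the entire content of the extension lemma, and it does go through: the standard computation (Nelsen, Lemma 2.3.5) expresses the interpolated volume of an arbitrary rectangle as a combination, with nonnegative coefficients determined by the interpolation parameters, of volumes of rectangles with corners in the closed subcopula domain, each of which is nonnegative. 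Since you identify both the step and the method that resolves it, your proposal is a correct sketch of the standard argument rather than a flawed one; to make it a complete proof you would only need to carry out that volume decomposition explicitly.
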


\begin{theorem}
For every copula $C$ and every $(u,v) \in [0,1]^2$,
\begin{equation}\label{frechet}
W(u,v) \leq C(u, v)\leq M(u, v)\ ,
\end{equation}
where $W(x,y) = \max(x+y-1,0)$ and $M(x,y) = \min(x,y)$ are called Fr\'echet-Hoeffding lower and upper bounds, respectively.
\end{theorem}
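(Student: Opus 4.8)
The plan is to derive both inequalities directly from the three defining properties of a copula in Definition \ref{cop}, with the two-increasing condition \eqref{two-inc} doing the essential work. The key auxiliary fact I would establish first is that every copula $C$ is nondecreasing in each of its two arguments. To obtain monotonicity in the first variable, fix $y \in [0,1]$ and take $x_1 \leq x_2$; applying \eqref{two-inc} with the pair $(y_1,y_2) = (0,y)$ gives $C(x_2,y) - C(x_2,0) - C(x_1,y) + C(x_1,0) \geq 0$, and since $C(x,0) = 0$ for all $x$ this collapses to $C(x_1,y) \leq C(x_2,y)$. The same argument with the roles of the coordinates exchanged, now invoking $C(0,y) = 0$, yields monotonicity in the second variable.

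For the upper bound I would then simply combine monotonicity with the marginal conditions $C(x,1) = x$ and $C(1,y) = y$. Since $v \leq 1$ and $C$ is nondecreasing in its second argument, $C(u,v) \leq C(u,1) = u$; since $u \leq 1$ and $C$ is nondecreasing in its first argument, $C(u,v) \leq C(1,v) = v$. Hence $C(u,v) \leq \min(u,v) = M(u,v)$.

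For the lower bound I would use nonnegativity together with the two-increasing property evaluated at the top-right corner of the square. Positivity of $C$ gives $C(u,v) \geq 0$ at once. Applying \eqref{two-inc} with $(x_1,x_2) = (u,1)$ and $(y_1,y_2) = (v,1)$, and using $C(1,1) = 1$, $C(1,v) = v$, $C(u,1) = u$, produces $1 - v - u + C(u,v) \geq 0$, that is, $C(u,v) \geq u + v - 1$. Combining the two estimates yields $C(u,v) \geq \max(u+v-1,0) = W(u,v)$, completing the argument.

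Everything here is elementary, so there is no genuine obstacle — only the bookkeeping of selecting the correct corners of the unit square when invoking \eqref{two-inc}. The one step that deserves care is the monotonicity lemma, since it is the place where the two-increasing hypothesis is used in a non-obvious way: the boundary values $C(x,0) = C(0,y) = 0$ must be invoked precisely to cancel two of the four terms. Once monotonicity is in hand, each Fr\'echet--Hoeffding bound follows from a single evaluation.
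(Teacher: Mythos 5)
Your proof is correct in every step: the monotonicity lemma follows from \eqref{two-inc} applied to the rectangles $[x_1,x_2]\times[0,y]$ and $[0,x]\times[y_1,y_2]$ together with the vanishing boundary conditions, the upper bound $C(u,v)\leq\min(u,v)$ then drops out of the margins $C(u,1)=u$, $C(1,v)=v$, and the lower bound comes from positivity plus \eqref{two-inc} on $[u,1]\times[v,1]$ with $C(1,1)=1$. The thesis states this theorem without proof, treating it as a classical fact and deferring to the standard references on copulas, so there is no internal argument to compare against; what you have written is precisely the textbook proof (as in Nelsen), and it is complete as it stands.
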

It is well known that the Fr\'echet-Hoeffding lower and upper bounds are copulas in the two dimensional setting. For higher dimensions an analogon of \eqref{frechet} exists, however the lower bound is in general not a copula.\\

Our main goal in this chapter is to provide bounds for
\begin{equation*}
\int_0^1\int_0^1 f(x,y)dC(x,y)\ .
\end{equation*}
Thus, we are interested in bounds of the form
\begin{equation}\label{bounds}
  \int_{[0,1[^2} f(x,y) dC_{\min}(x,y) \leq  \int_{[0,1[^2} f(x,y) dC(x,y) \leq  \int_{[0,1[^2} f(x,y) dC_{\max}(x,y),
\end{equation}
for all $C \in \mathcal{C}$, where $C_{\min}, C_{\max}$ are copulas. \\
A particularly interesting subclass of copulas for our problems are the so-called shuffles of $M$.
\begin{definition}[Shuffles of $M$]\label{shuf}
Let $n \geq 1$, $s = (s_0, \ldots, s_n)$ be a partition of the unit interval with $0 = s_0 < s_1 < \ldots < s_n = 1$, $\sigma$ be a permutation of $S_n = \{1,\ldots,n\}$ and $\omega \colon S_n \rightarrow \{-1, 1\}$. We define the partition $t= (t_0, \ldots, t_n), ~0 = t_0 < t_1 < \ldots < t_n = 1$ such that each $[s_{i-1}, s_i[ \times [t_{\sigma(i)-1}, t_{\sigma(i)}[$ is a square. A copula $C$ is called shuffle of $M$ with parameters $\{n, s, \sigma, \omega\}$ if it is defined in the following way: for all $i \in \{1, \ldots, n\}$ if $\omega(i) = 1$, then $C$ distributes a mass of $s_i - s_{i-1}$ uniformly spread along the diagonal of $[s_{i-1}, s_i[ \times [t_{\sigma(i)-1}, t_{\sigma(i)}[$ and if $\omega(i) = -1$ then $C$ distributes a mass of $s_i - s_{i-1}$ uniformly spread along the antidiagonal of $[s_{i-1}, s_i[ \times [t_{\sigma(i)-1}, t_{\sigma(i)}[$.
\end{definition}
Note that the two Fr\'echet-Hoeffding bounds $W, M$ are trivial shuffles of $M$ with parameters $\{1, (0,1), (1), -1\}$ and $\{1, (0,1), (1), 1\}$, respectively. Furthermore, it is well-known that every copula can be approximated arbitrarily close with respect to the supremum norm by a shuffle of $M$; see e.g.\ \cite[Theorem 3.2.2]{nelsen}. In the sequel we denote by $\pi_n$ the partition of the unit interval which consists of $n$ intervals of equal length.\\

In the next paragraph we want to give a description of the linear assignment problem and of the Hungarian algorithm. Then we will illustrate the close relation of \eqref{bounds} to linear assignment problems.
\paragraph{Description of the linear assignment problem and of the Hungarian algorithm}

Assignment problems deal with the question of how to assign $n$ items (jobs, students) to $n$ other items (machines, tasks).
Since there are in general many assignments possible, we are interested in the best suitable assignment for the problem under investigation. Therefore, we must state our goal by specifying an objective function. Given an $n \times n$ cost matrix $A = (a_{ij})$, where $a_{ij}$ measures the cost of assigning $i$ to $j$, we ask for an assignment with minimum total cost, i.e., the objective function $\sum_{i=1}^n a_{i\sigma(i)}$ is to be minimized. The linear sum assignment problem (LSAP) can then be stated as 
\begin{equation}\label{lin}
\min_{\sigma\in \mathcal{P}}\sum_{i=1}^n a_{i\sigma(i)}\ ,
\end{equation}
where $\sigma$ runs through all possible permutations on $n$ elements.\\
Half a century ago Harold W. Kuhn published a famous article \cite{kuhn} presenting the Hungarian algorithm, the first polynomial-time method for the assignment problem.\\
We briefly describe how it works.
The following algorithm finds an optimal assignment to a given $n \times n$ matrix.
\begin{enumerate}
\item Subtract the smallest entry in each row from all the entries of its row
\item Subtract the smallest entry in each column from all the entries
of its column
\item Draw lines through appropriate rows and columns so that all the zero entries of the cost matrix are covered and the minimum number of such lines is used
\item Test for Optimality: $(i)$ If the minimal number of covering lines is $n$, an optimal assignment of zeros is possible and we are finished. $(ii)$ If
the minimum number of covering lines is less than $n$, an optimal
assignment of zeros is not yet possible. In that case, proceed to Step 5.
\item Determine the smallest entry not covered by any line. Subtract this entry from each uncovered row, and then add it to each covered column. Return to Step 3.
\end{enumerate}
Now we see how to apply this algorithm to our problem.

\begin{theorem}\label{main1}
Let $n \geq 1$, $A = \{a_{i,j}\}_{i,j=1, \ldots,n}$ be a real-valued $n \times n$ matrix and let the function $f$ be defined as
\begin{equation*}
 f(x,y) := a_{i,j}, \quad (x,y) \in \left[ \frac{i-1}{n}, \frac{i}{n} \right [ \times \left[ \frac{j-1}{n}, \frac{j}{n} \right[.
\end{equation*}
 Then the copula $C_{\max}$ which maximizes
\begin{equation}\label{int}
 \max_{C \in \mathcal{C}} \int_{[0,1[^2} f(x,y) dC(x,y)
\end{equation}
 is given as a shuffle of $M$ with parameters $\{n, \pi_n, \sigma^*, 1\}$, where $\sigma^*$ is the permutation which solves the assignment problem
\begin{equation*}
 \max_{\sigma \in \mathcal{P}} \sum_{i = 1}^n a_{i, \sigma(i)}.
\end{equation*}
 Moreover the maximal value of \eqref{int} is given by
\begin{equation}\label{maxval}
  \int_{[0,1[^2} f(x,y) dC_{\max}(x,y) = \frac{1}{n} \sum_{i = 1}^n a_{i, \sigma^*(i)}.
\end{equation}
\end{theorem}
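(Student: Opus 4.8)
The plan is to show that the maximization over all copulas reduces exactly to the discrete assignment problem, and that the extremal copula can be taken to be a shuffle of $M$. First I would observe that since $f$ is constant on each dyadic square $\left[\frac{i-1}{n},\frac{i}{n}\right[\times\left[\frac{j-1}{n},\frac{j}{n}\right[$, the integral $\int_{[0,1[^2} f\,dC$ depends on the copula $C$ only through the masses it assigns to these $n^2$ squares. Writing
\begin{equation*}
 m_{i,j}(C) = C\!\left(\tfrac{i}{n},\tfrac{j}{n}\right) - C\!\left(\tfrac{i-1}{n},\tfrac{j}{n}\right) - C\!\left(\tfrac{i}{n},\tfrac{j-1}{n}\right) + C\!\left(\tfrac{i-1}{n},\tfrac{j-1}{n}\right),
\end{equation*}
which is nonnegative by the two-increasing property \eqref{two-inc}, we get the exact identity $\int_{[0,1[^2} f\,dC = \sum_{i,j} a_{i,j}\, m_{i,j}(C)$. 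The boundary conditions $C(x,1)=x$ and $C(1,y)=y$ force the row and column sums of the matrix $(m_{i,j})$ to equal $\frac1n$ each; that is, $n\cdot(m_{i,j})$ is a doubly stochastic matrix.

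Next I would recast \eqref{int} as a linear program over the polytope of doubly stochastic matrices. Since $\int_{[0,1[^2} f\,dC = \frac1n\sum_{i,j} a_{i,j}\,(n\,m_{i,j})$ is a linear functional of the doubly stochastic matrix $M=(n\,m_{i,j})$, and the set of such matrices is exactly the Birkhoff polytope, the supremum over all copulas is bounded above by the maximum of $\frac1n\sum_{i,j}a_{i,j}M_{i,j}$ over the Birkhoff polytope. By the Birkhoff--von Neumann theorem, the extreme points of this polytope are the permutation matrices, and the maximum of a linear functional over a polytope is attained at an extreme point. Hence the upper bound equals $\frac1n\max_{\sigma\in\mathcal{P}}\sum_{i=1}^n a_{i,\sigma(i)} = \frac1n\sum_{i=1}^n a_{i,\sigma^*(i)}$, where $\sigma^*$ is the assignment solving the discrete problem.

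Finally I would show this upper bound is achieved by an admissible copula, namely the shuffle of $M$ with parameters $\{n,\pi_n,\sigma^*,1\}$. For this copula the mass $\frac1n$ is spread uniformly along the diagonal of each square $\left[\frac{i-1}{n},\frac{i}{n}\right[\times\left[\frac{\sigma^*(i)-1}{n},\frac{\sigma^*(i)}{n}\right[$, so $m_{i,j}=\frac1n$ exactly when $j=\sigma^*(i)$ and $m_{i,j}=0$ otherwise; the associated doubly stochastic matrix is precisely the permutation matrix of $\sigma^*$. Substituting gives $\int_{[0,1[^2} f\,dC_{\max} = \frac1n\sum_{i=1}^n a_{i,\sigma^*(i)}$, matching the bound and proving \eqref{maxval}. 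That $C_{\max}$ is genuinely a copula follows from Definition \ref{shuf} and the fact noted after it that shuffles of $M$ are copulas.

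The main obstacle is the reduction to a finite-dimensional linear program: one must argue carefully that the integral truly depends on $C$ only through the box masses $m_{i,j}$ (so that no copula can do better than a redistribution achievable inside the Birkhoff polytope) and that every permutation matrix is realized by an admissible shuffle of $M$. The Birkhoff--von Neumann step and the evaluation on the shuffle are then routine; the content lies in verifying that the row/column sum constraints coming from the copula margins coincide exactly with double stochasticity, so that there is no gap between the continuous optimization and the discrete assignment problem.
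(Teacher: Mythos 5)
Your proposal is correct and follows essentially the same route as the paper: both reduce the integral to the box masses $m_{i,j}(C)$, identify $n\,(m_{i,j})$ as a doubly stochastic matrix via the copula margins, invoke the Birkhoff--von Neumann theorem, and realize the optimal permutation matrix by the shuffle of $M$ with parameters $\{n,\pi_n,\sigma^*,1\}$. The only cosmetic difference is that the paper phrases the upper bound by decomposing $B_C$ as a convex combination of the permutation matrices $B_{C_k}$ associated to shuffles, whereas you invoke the equivalent fact that a linear functional on the Birkhoff polytope is maximized at an extreme point (and note your squares are $n$-adic, not dyadic).
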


\begin{proof}
Let $\{C_k(x,y), k = 1,\ldots, n! = N\}$ be the set of all shuffles of $M$ with parameters of the form $\{n, \pi_n, \sigma_k, 1\}$ and let $t_k \geq 0, ~k = 1, \ldots, N$, where $\sum_{k = 1}^N t_k = 1$. Then $C'(x,y) = \sum_{k = 1}^N t_k C_k(x,y)$ is always a copula satisfying 
\begin{equation*}
 \int_{[0,1[^2} f(x,y) dC'(x,y) \leq \frac{1}{n} \sum_{i = 1}^n a_{i, \sigma^*(i)},
\end{equation*}
where $\sigma^*$ is given in the statement of the theorem.\\

For an arbitrary copula $C \in \mathcal{C}$ we define the matrix $B_C$ as
\begin{equation*}
 B_C(i,j) = n \int_{\left [ \frac{i-1}{n}, \frac{i}{n} \right [ \times \left [ \frac{j-1}{n}, \frac{j}{n} \right [} dC(x,y).
\end{equation*}
It follows by Definition \ref{cop} that $B_C$ is doubly stochastic and by Definition \ref{shuf} that $B_{C_k}$ is a permutation matrix. Furthermore it follows from the Birkhoff-von Neumann Theorem that the set of doubly stochastic matrices coincides with the convex hull of the set of permutation matrices, see e.g. \cite{Mirsky}. Thus for every $C$ there exist $t_k \geq 0, ~k = 1,\ldots,N $ with $\sum_{k = 1}^N t_k = 1$ such that
\begin{equation*}
 B_C(i,j) = \sum_{k = 1}^N t_k B_{C_k}(i,j), \quad \text{ for every } i,j,
\end{equation*}
and hence
\begin{equation*}
 \int_{[0,1[^2} f(x,y) dC(x,y) = \sum_{k = 1}^N t_k \int_{[0,1[^2} f(x,y) dC_k(x,y) \leq \frac{1}{n} \sum_{i = 1}^n a_{i, \sigma^*(i)}\ .\quad \qedhere
\end{equation*}
\end{proof}

Note that the maximal copula in Theorem \ref{main1} is by no means unique, since for instance the value of the integral in \eqref{int} is independent of the choice of $\omega$.\\ 

Obviously, we can derive a lower bound in Theorem \ref{main1} by considering \linebreak$g(x,y) = -f(x,y)$. Furthermore it is easy to see that Theorem \ref{main1} applies to all functions $f$ which are constant on sets of the form 
\begin{equation*}
 I_{i,j} = \left[ s_i, s_{i+1} \right[ \times \left[ t_j, t_{j+1} \right[, \quad i = 0, \ldots, n-1, ~ j = 0, \ldots, m-1,
\end{equation*}
where $0 = s_0 < s_1 < \ldots < s_n = 1$ and $0 = t_0 < t_1 < \ldots < t_m = 1$ are rational numbers.\\

The following generalization of our approach applies to a wide class of functions on the unit square.

\begin{theorem}\label{main2}
 Let $f$ be a continuous function on $[0,1]^2$, let the sets $I^n_{i,j}$ be given as
\begin{equation*} 
 I^n_{i,j} = \left[ \frac{i-1}{2^n}, \frac{i}{2^n} \right [ \times \left[ \frac{j-1}{2^n}, \frac{j}{2^n} \right[ \quad \text{ for } i,j = 1, \ldots, 2^n,
\end{equation*}
for every $n > 1$ and define the functions $\underline{f}_n, \overline{f}_n$ as
\begin{align}
 \underline{f}_n(x,y) &= \min_{(x,y) \in I^n_{i,j}} f\left( x, y \right), \quad \text{ for all } (x,y) \in I^n_{i,j},\notag\\
 \overline{f}_n(x,y) &= \max_{(x,y) \in I^n_{i,j}} f\left( x, y \right), \quad \text{ for all } (x,y) \in I^n_{i,j}.\label{mini}
\end{align}
 Furthermore let $\underline{C}^n_{\max}, \overline{C}^n_{\max}$ be the copulas which maximize
\begin{equation*}
  \max_{C \in \mathcal{C}} \int_{[0,1[^2} \underline{f}_n(x,y) dC(x,y)\quad \text{ and }\quad \max_{C \in \mathcal{C}} \int_{[0,1[^2} \overline{f}_n(x,y) dC(x,y),
\end{equation*}
respectively. Then
\begin{align}
 \int_{[0,1[^2} \underline{f}_n(x,y) d\underline{C}^n_{\max}(x,y) &\leq \sup_{C \in \mathcal{C}} \int_{[0,1[^2} f(x,y) dC(x,y) \notag\\ 
  &\leq \int_{[0,1[^2} \overline{f}_n(x,y) d\overline{C}^n_{\max}(x,y),\label{bound}
\end{align}
for every $n$, and
\begin{align}
 \lim_{n \rightarrow \infty} \int_{[0,1[^2} \underline{f}_n(x,y) d\underline{C}^n_{\max}(x,y) &= \lim_{n \rightarrow \infty} \int_{[0,1[^2} \overline{f}_n(x,y) d\overline{C}^n_{\max}(x,y) \notag\\ 
&= \sup_{C \in \mathcal{C}} \int_{[0,1[^2} f(x,y) dC(x,y).\label{conv}
\end{align}
\end{theorem}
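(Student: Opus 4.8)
The plan is to establish the two-sided inequality \eqref{bound} first, and then derive the convergence \eqref{conv} from it by a squeezing argument combined with the uniform continuity of $f$. For the inequality, I would fix $n$ and observe that by construction $\underline{f}_n(x,y) \leq f(x,y) \leq \overline{f}_n(x,y)$ pointwise on $[0,1[^2$, since the minimum and maximum in \eqref{mini} are taken over the cell $I^n_{i,j}$ containing $(x,y)$. Because every copula $C$ induces a positive measure $dC$ on the unit square (its mass is nonnegative by the two-increasing property in Definition \ref{cop}), integration is monotone, so for each fixed $C \in \mathcal{C}$ we have
\begin{equation*}
\int_{[0,1[^2} \underline{f}_n\, dC \leq \int_{[0,1[^2} f\, dC \leq \int_{[0,1[^2} \overline{f}_n\, dC.
\end{equation*}
Taking the supremum over $C$ in the middle term and using that $\underline{C}^n_{\max}, \overline{C}^n_{\max}$ are maximizers of the outer integrals gives
\begin{equation*}
\int_{[0,1[^2} \underline{f}_n\, d\underline{C}^n_{\max} \;\leq\; \sup_{C}\int_{[0,1[^2} \underline{f}_n\, dC \;\leq\; \sup_{C}\int_{[0,1[^2} f\, dC \;\leq\; \sup_{C}\int_{[0,1[^2} \overline{f}_n\, dC \;=\; \int_{[0,1[^2} \overline{f}_n\, d\overline{C}^n_{\max},
\end{equation*}
which is exactly \eqref{bound}. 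Here I would use Theorem \ref{main1} to guarantee that the maximizers $\underline{C}^n_{\max}, \overline{C}^n_{\max}$ actually exist and are shuffles of $M$, since $\underline{f}_n$ and $\overline{f}_n$ are piecewise constant on the dyadic grid $I^n_{i,j}$.

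For the convergence \eqref{conv}, the key estimate is to control the gap $\overline{f}_n - \underline{f}_n$. Since $f$ is continuous on the compact set $[0,1]^2$, it is uniformly continuous, so for every $\varepsilon > 0$ there is $\delta > 0$ such that $|f(p) - f(q)| < \varepsilon$ whenever $\|p - q\| < \delta$. Once $2^{-n}\sqrt{2} < \delta$, the oscillation of $f$ on each cell $I^n_{i,j}$ is below $\varepsilon$, whence $0 \leq \overline{f}_n(x,y) - \underline{f}_n(x,y) < \varepsilon$ uniformly in $(x,y)$. Therefore, for any copula $C$,
\begin{equation*}
\int_{[0,1[^2} \overline{f}_n\, dC - \int_{[0,1[^2} \underline{f}_n\, dC = \int_{[0,1[^2} (\overline{f}_n - \underline{f}_n)\, dC < \varepsilon,
\end{equation*}
using that $C$ is a probability measure of total mass $1$. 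I would then show that the two outer quantities in \eqref{bound} differ by at most this same $\varepsilon$: writing $\overline{C}^n_{\max}$ as a competitor in the maximization over $\underline{f}_n$ and vice versa gives
\begin{equation*}
\int_{[0,1[^2} \overline{f}_n\, d\overline{C}^n_{\max} - \int_{[0,1[^2} \underline{f}_n\, d\underline{C}^n_{\max} \leq \int_{[0,1[^2} \overline{f}_n\, d\overline{C}^n_{\max} - \int_{[0,1[^2} \underline{f}_n\, d\overline{C}^n_{\max} < \varepsilon,
\end{equation*}
where the first inequality holds because $\underline{C}^n_{\max}$ maximizes the integral of $\underline{f}_n$. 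Hence the lower and upper bounds in \eqref{bound} converge to a common limit as $n \to \infty$, and since the target $\sup_C \int f\, dC$ is trapped between them for every $n$, the squeeze forces all three to agree in the limit, yielding \eqref{conv}.

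The main subtlety I anticipate lies in the step that compares the two distinct maximizers $\underline{C}^n_{\max}$ and $\overline{C}^n_{\max}$: the naive bound $\int \overline{f}_n\, d\overline{C}^n_{\max} - \int \underline{f}_n\, d\underline{C}^n_{\max}$ mixes integrands \emph{and} measures, so one cannot directly apply the pointwise oscillation estimate. The resolution is to insert the \emph{same} copula in both integrals as a comparison term (using the maximality of one copula to replace it by the other at no loss), reducing the difference to a single integral of $\overline{f}_n - \underline{f}_n$ against one fixed measure, which is then bounded by $\varepsilon$ by total-mass-one. Apart from this bookkeeping, every ingredient is already available: monotonicity of the integral against the nonnegative copula measure, the existence of piecewise-constant maximizers from Theorem \ref{main1}, and uniform continuity of $f$. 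I would also note in passing that the analogous lower-bound statement (minimizing rather than maximizing) follows by replacing $f$ with $-f$, exactly as remarked after Theorem \ref{main1}.
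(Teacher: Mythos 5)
Your proposal is correct and follows essentially the same route as the paper: the inequality \eqref{bound} from the pointwise bounds $\underline{f}_n \leq f \leq \overline{f}_n$ together with Theorem \ref{main1}, and the convergence \eqref{conv} from uniform continuity of $f$ plus the same maximality trick (the paper writes it as $\int \overline{f}_n\, d\overline{C}^n \leq \int (\underline{f}_n + \epsilon)\, d\underline{C}^n$, which is exactly your insertion of a common copula) followed by a squeeze. Your handling of the ``mixed integrands and measures'' subtlety matches the paper's argument, just stated more explicitly.
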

\begin{proof}
The inequalities in \eqref{bound} follow immediately from the construction of $\underline{f}_n,\overline{f}_n$ and Theorem \ref{main1}. Furthermore since $f$ is continuous on $[0,1]^2$ we have that for every $\epsilon > 0$ there exists an integer $n$ such that
\begin{equation}\label{eps}
 | \overline{f}_n(x,y) - \underline{f}_n(x,y) | < \epsilon, \quad \text{ for all } (x,y) \in [0,1]^2.
\end{equation}
Moreover it follows from Theorem \ref{main1} that for every $n$ we can write
\begin{equation*}
  \int_{[0,1[^2} \underline{f}_n(x,y) d\underline{C}^n(x,y) = \frac{1}{2^n} \sum_{i = 1}^{2^n} a_{i, \sigma^*(i)}
\end{equation*}
for a permutation $\sigma^*$ and a real valued matrix $A = \{a_{i,j}\}_{i,j=1, \ldots,n}$ with
\begin{equation*}
 a_{i,j} = \min_{(x,y) \in I^n_{i,j}} f\left( x, y \right), \quad \text{ for } i,j = 1, \ldots, 2^n.
\end{equation*}
Using \eqref{eps}, we get that
\begin{equation*}
  \int_{[0,1[^2} \overline{f}_n(x,y) d\overline{C}^n(x,y) \leq \int_{[0,1[^2} (\underline{f}_n(x,y) + \epsilon) d\underline{C}^n(x,y) = \frac{1}{2^n} \sum_{i = 1}^{2^n} (a_{i, \sigma^*(i)} + \epsilon)
\end{equation*}
and therefore
\begin{equation*}
  \left | \int_{[0,1[^2} \overline{f}_n(x,y) d\overline{C}^n(x,y) - \int_{[0,1[^2} \underline{f}_n(x,y) d\underline{C}^n(x,y) \right | < \epsilon.
\end{equation*}
Combining this with \eqref{bound}, we get \eqref{conv}.  \qedhere
\end{proof}

The assumption that $f$ is continuous can, perhaps, be relaxed to the case that $f$ is $C$-continuous a.e.\ for all $C \in \mathcal{C}$. This is required to make sure that
\begin{equation*}
 \int_{[0,1[^2} f(x,y) dC(x,y)
\end{equation*}
exists for all $C \in \mathcal{C}$.\\

By defining the functions $\underline{f}_n, \overline{f}_n$ differently, we might get an approximation technique which converges faster to the optimal value, for instance we could use 
\begin{equation*}
 f_n(x,y) = f\left(\frac{i}{2^n}, \frac{j}{2^n} \right), \quad \text{ for all } (x,y) \in I^n_{i,j}.
\end{equation*}
Furthermore the mini- and maximization steps in \eqref{mini} can be time-consu-\linebreak ming, for instance when these problems are not explicitly solvable. However the advantage of the present approach lies in the fact that we get an upper and lower bound of the optimal value for every $n$, which is obviously useful for numerical applications.\\ 

In numerical investigations where \eqref{mini} could not be solved explicitly we used mini- and maximization over a fixed grid in each $I^n_{i,j}$. This results in a fast computation, however we obviously lose the property that we get upper and lower bounds for every $n$.\\

By assuming Lipschitz-continuity of $f$, we can describe the rate of convergence of our method.

\begin{corollary}
 Let the assumptions of Theorem \ref{main2} hold and, in addition assume that $f$ is Lipschitz-continuous on $[0,1]^2$ with constant $L$. Then
\begin{equation*}
 \left | \int_{[0,1[^2} \overline{f}_n(x,y) d\overline{C}^n(x,y) -  \int_{[0,1[^2} \underline{f}(x,y) d\underline{C}(x,y) \right| \leq  L \frac{\sqrt{2}}{2^n}.
\end{equation*}
\end{corollary}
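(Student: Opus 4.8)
The plan is to turn the qualitative continuity estimate \eqref{eps} used in the proof of Theorem \ref{main2} into a quantitative one, by combining the Lipschitz constant with the geometry of the dyadic cells $I^n_{i,j}$.

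First I would observe that each $I^n_{i,j}$ is a half-open square of side length $2^{-n}$, so any two points lying in the same cell are at Euclidean distance at most its diagonal $\sqrt{2}\,2^{-n}$. Since $f$ is Lipschitz with constant $L$, for all $(x,y) \in I^n_{i,j}$ we then have
\[
\overline{f}_n(x,y) - \underline{f}_n(x,y) = \max_{I^n_{i,j}} f - \min_{I^n_{i,j}} f \leq L\,\frac{\sqrt{2}}{2^n}.
\]
As this holds on every cell, it yields the uniform bound $0 \leq \overline{f}_n - \underline{f}_n \leq L\sqrt{2}\,2^{-n}$ on all of $[0,1]^2$, which is exactly \eqref{eps} with the explicit value $\epsilon = L\sqrt{2}\,2^{-n}$.

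Next I would repeat the comparison argument from Theorem \ref{main2}, but feeding in this explicit $\epsilon$. Using $\overline{f}_n \leq \underline{f}_n + \epsilon$, the fact that each copula induces a probability measure of total mass $1$, and the maximality of $\underline{C}^n_{\max}$ for the $\underline{f}_n$-integral,
\[
\int_{[0,1[^2} \overline{f}_n \, d\overline{C}^n_{\max} \leq \int_{[0,1[^2} (\underline{f}_n + \epsilon) \, d\overline{C}^n_{\max} = \int_{[0,1[^2} \underline{f}_n \, d\overline{C}^n_{\max} + \epsilon \leq \int_{[0,1[^2} \underline{f}_n \, d\underline{C}^n_{\max} + \epsilon.
\]
Together with the reverse inequality $\int \underline{f}_n \, d\underline{C}^n_{\max} \leq \int \overline{f}_n \, d\overline{C}^n_{\max}$, which follows from $\underline{f}_n \leq \overline{f}_n$ and the maximality of $\overline{C}^n_{\max}$, I would conclude
\[
0 \leq \int_{[0,1[^2} \overline{f}_n \, d\overline{C}^n_{\max} - \int_{[0,1[^2} \underline{f}_n \, d\underline{C}^n_{\max} \leq \epsilon = L\,\frac{\sqrt{2}}{2^n},
\]
which is the asserted bound (taking absolute values is harmless, since the difference is nonnegative).

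There is essentially no serious obstacle here: the corollary is a quantitative refinement of Theorem \ref{main2}, and the only point requiring a moment's attention is the elementary geometric fact that the oscillation of a Lipschitz function over a cell is controlled by $L$ times the cell's \emph{diameter} $\sqrt{2}\,2^{-n}$ rather than its side length. I would also read the statement's $\underline{f}, \underline{C}$ as $\underline{f}_n, \underline{C}^n_{\max}$, to match the notation of Theorem \ref{main2}.
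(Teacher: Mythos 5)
Your proposal is correct and follows exactly the route of the paper's own proof: the paper likewise deduces the pointwise bound $|\overline{f}_n(x,y) - \underline{f}_n(x,y)| \leq L\sqrt{2}/2^n$ from Lipschitz continuity over the dyadic cells and then transfers it to the integrals via the comparison argument of Theorem \ref{main2}. Your write-up merely makes explicit the two maximality steps that the paper compresses into a single ``therefore,'' and your reading of $\underline{f}, \underline{C}$ as $\underline{f}_n, \underline{C}^n_{\max}$ matches the intended (mistyped) statement.
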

\begin{proof}
 Following the proof of Theorem \ref{main2} and using the Lipschitz-conti-\linebreak nuity of $f$ we get
\begin{equation*}
 | \overline{f}_n(x,y) - \underline{f}_n(x,y) | \leq L \frac{\sqrt{2}}{2^n}, \quad \text{ for all } (x,y) \in [0,1]^2,
\end{equation*}
and therefore
\begin{equation*}
  \left | \int_{[0,1[^2} \overline{f}_n(x,y) d\overline{C}^n(x,y) - \int_{[0,1[^2} \underline{f}_n(x,y) d\underline{C}^n(x,y) \right | \leq  L \frac{\sqrt{2}}{2^n}.
\end{equation*}

\end{proof}

In this section we present two numerical examples in which we apply the approximation technique presented in Theorem \ref{main2}. We use an implementation of the Hungarian Algorithm in MatLab, which makes it possible to derive the solution of the linear assignment problem \eqref{lin} for a given matrix $A$ of size $2^{10} \times 2^{10}$ within seconds. The involved mini- or maximization of the integrand function on a given grid can be done efficiently, since the integrand functions are piecewise smooth.

Now we derive upper bounds for \eqref{inte} by maximizing $g$ over the set of all copulas. This has already been done in \cite{fial} for functions $f$ such that $\frac{\partial^2 f}{\partial_x \partial_y}(x,y)$ has constant sign for all $(x,y) \in [0,1[^2$. Note that this condition is equivalent to the two-increasing property of $f$ provided that $\frac{\partial^2 f}{\partial_x \partial_y}(x,y)$ exists on the unit square.\\ 

As a numerical example, we consider
\begin{equation*}
  \limsup_{N \rightarrow \infty} \frac{1}{N} \sum_{n = 1}^N \sin(\pi (x_n + y_n)).
\end{equation*}
The numerical results are illustrated in Table \ref{tab:t2}. Note that the approximations of the lower bound can be easily computed using the symmetry of the sine function.\\

A further interesting question concerns the sequences $(x_n)_{n > 1}, (y_n)_{n > 1}$ which maximize \eqref{inte}. Let $(x_n)_{n > 1}$ be a u.d.\ sequence and $C(x,y)$ a shuffle of $M$, then it is easy to see that $(f(x_n))_{n > 1}$ is u.d., where $f$ is the support of $C$. Thus if $C$ is the shuffle of $M$ which attains the maximum in \eqref{inte}, an optimal two-dimensional sequence is given as $(x_n, f(x_n))_{n > 1}$, where $(x_n)_{n > 1}$ is an arbitrary u.d.\ sequence. In Figure \ref{fig: max1}, we present the support of the copula which attains the upper bound for the maximum in our approximation when $n = 7$.\\

We point out that in this context the support of a copula is meant as the support of the measure $\mu_C$ induced by the copula, i.e.
\begin{equation*}
{\rm Supp(C)}=\{B\in \mathcal{B}([0,1]^2) : \mu_C(B)>0\}\ .
\end{equation*}

Although we are not able to give a rigorous proof, by increasing $n$ it seems that the copula $C'$ which attains the maximum is the shuffle of $M$ with parameters $\{2, (0,0.75,1), (1), \{\omega(1) = -1, \omega(2) = 1 \}\}$. In this case we have that 
\begin{equation*}
{\rm supp(C')} = \left\{ (x,f'(x)) : x \in \left[0,\frac{3}{4}\right], f'(x) = \frac{3}{4} - x \right\}\cup \left\{ (x,x) : x \in \left[\frac{3}{4},1\right]\right\}
\end{equation*}
and then
{\small
\begin{align*}
 \int_0^1 \int_0^1 \sin(\pi (x + y)) dC'(x,y) &= \int_0^1 \sin(\pi (x + f'(x))) dx\\
&= \int_0^{\frac{3}{4}} \sin(\pi (x + 0.75 - x)) dx + \int_{\frac{3}{4}}^1 \sin(\pi 2 x) dx\\
&= \frac{3}{4 \sqrt{2}} - \frac{1}{2 \pi} \approx 0.371175\ .
\end{align*}}

\begin{table}[!ht]
\centering
\begin{tabular}[h]{|r|r|r|r|r|r|r|}
\hline

	$n$ & 5 & 6 & 7 & 8& 9 & 10\\
\hline
UB		  & 0.3933 & 0.3824 & 0.377 & 0.3741 & 0.3727 & 0.3712\\
\hline
LB 	  & 0.3482 & 0.3598 & 0.3655 & 0.3684 & 0.3698 & 0.3711\\
\hline
\end{tabular}
 \caption{Upper and lower bounds for the maximum in \eqref{inte} with respect to $n$.}
 \label{tab:t2}
\end{table}

\begin{figure}[h!]
 \centering
 \includegraphics[scale=0.7]{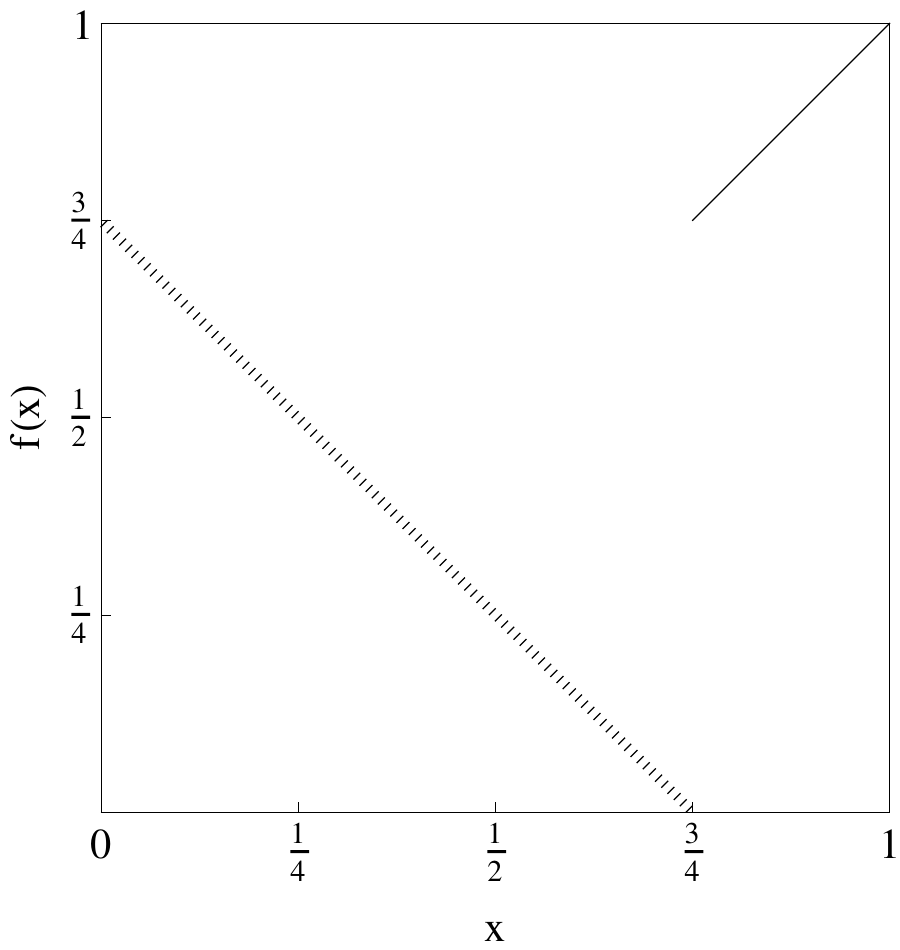}
\caption{Support of copula which attains upper bound for $\sin( \pi (X + Y))$ and $n = 7$.}
\label{fig: max1}
\end{figure}

A first-to-default swap (FTD) is a contract in which a protection seller (PS) insures a protection buyer (PB) against the loss caused by the first default event in a portfolio of risky assets. The PB pays regularly a fixed constant premium to the PS, the so-called spread, until the maturity $T$ of the contract or the first default event, whichever occurs first. In exchange, the PS compensates the loss caused by the default at the time of default.\\  

We assume that the underlying portfolio consists of two risky assets, for which the marginal default distributions are known, but the joint distribution is unknown. We want to derive a worst case bound in this setting. For the valuation of the FTD we follow the paper of Schmidt and Ward \cite{schmidt_ward}. Note that Monte Carlo methods for the evaluation of first-to-default swaps, where the dependences within the portfolio is modeled by a copula, are e.g.\ presented in Aistleitner et al.\ \cite{aht} and Packham and Schmidt \cite{packham}.\\ 

Let $\tau_1, \tau_2$ denote the random default times of the two risky assets, let the notional be equal to one for both assets and $R_i, i = 1,2,$ be the so-called recovery rates, which are the percental amounts of money that can be liquidized in case of the default of an asset. We assume that the distribution of $\tau_i$ is given as
\begin{equation*}
 \mathbb{P}(\tau_i \leq t) = 1 - e^{-\lambda_i t}, \quad t > 0,
\end{equation*}
where the intensity $\lambda_i$ can be derived from the credit default swap market as
\begin{equation*}
 \lambda_i = \frac{s_i}{1 - R_i},
\end{equation*}
and $s_i$ is the premium of an insurance against the default of asset $i$.\\

Now denote by $\tau = \min(\tau_1, \tau_2)$ the first default time in the portfolio, let $0 = t_0 < t_1 < \ldots < t_n = T$ be the payment times of the constant spread and assume that there exists a risk free interest rate $r \geq 0$. Then, to guarantee a fair spread $s$, we obtain that the expected, discounted premium and default payments are equal, i.e.\
\begin{equation*}
 s \sum_{i = 0}^n e^{- r t_i} \mathbb{P}(\tau > t_i) = \sum_{i = 1}^2 \mathbb{E}\left[ (1- R_i) e^{- r \tau} \mathbf{1}_{\{ \tau < T \wedge \tau = \tau_i\}} \right].
\end{equation*}
By the above assumptions we obtain that
\begin{align*}
 &\mathbb{P}(\tau > t_i) = \int_{[0,1[^2} \mathbf{1}_{\left\{f(x, \lambda_1) > t_i ~\wedge ~f(y, \lambda_2) > t_i \right\}} dC(x,y),\\
 &\sum_{i = 1}^2 \mathbb{E}\left[ (1- R_i) e^{- r \tau} \mathbf{1}_{\{ \tau < T ~\wedge ~\tau = \tau_i\}} \right] = \\ 
&\int_{[0,1[^2} e^{-r \min\left( f(x, \lambda_1), f(y, \lambda_2) \right)} \biggl(\mathbf{1}_{\left\{ f(x, \lambda_1) \leq \min\left(f(y, \lambda_2), T\right) \right\}} (1 - R_1) \\ 
  &+ \mathbf{1}_{\left \{f(y, \lambda_2) \leq \min\left(f(x, \lambda_1), T\right) \right\}} (1 - R_2) \biggr) dC(x,y),
\end{align*}
where $f(x, \lambda) = \frac{- \log(1-x)}{\lambda}$ is the inverse distribution function of an exponential distribution with parameter $\lambda$ and $\mathbf{1}_{\{(x,y) \in B\}}$ denotes the characteristic function of set $B \subseteq [0,1[^2$.\\

Now we want to calculate the maximal spread $s$ by maximizing over all copulas. We obtain for the spread that
\begin{align}
 s &= \int_{[0,1[^2} \frac{e^{-r \min\left( f(x, \lambda_1), f(y, \lambda_2) \right)}}{\sum_{i = 0}^n e^{- r t_i} \mathbf{1}_{\left\{f(x, \lambda_1) > t_i ~\wedge ~f(y, \lambda_2) > t_i \right\}} }\notag \\
 &\cdot \biggl(\mathbf{1}_{\left\{ f(x, \lambda_1) \leq \min\left(f(y, \lambda_2), T\right) \right\}} (1 - R_1) \notag\\ 
 &+ \mathbf{1}_{\left \{f(y, \lambda_2) \leq \min\left(f(x, \lambda_1), T\right) \right\}} (1 - R_2)\biggr) dC(x,y). \label{ftd}
\end{align}
Note that the value of the integral is finite since the first payment is made at $t_0 = 0$. Furthermore note that the integrand function in this example is not continuous, thus Theorem \ref{main2} cannot be applied. Nevertheless it is clear that our technique provides upper and lower bounds for the optimal values, and since these bounds converge to each other our approach still works.\\

In Table \ref{tab:t1} we present numerical results for a concrete example with three payment times, $t_i = 0,1,2$. One can observe that the resulting copulas (given in Figures \ref{fig: max2} and \ref{fig: max3} for $n = 7,8$, respectively) are highly irregular in the left upper quarter of the unit square. However for $n = 10$ the upper and lower bounds for the optimal values are almost equal.

\begin{table}[!ht]
\centering
\begin{tabular}[h]{|r|r|r|r|r|r|r|r|}
\hline
	$\lambda_1$ & $\lambda_2$ & $R_1$ & $R_2$ & $T$ & $r$ & $t_i$ &\\
\hline
$\frac{1}{3}$ & $\frac{1}{2}$	& 0.5	& 0.7	& 2 & 0.05 & (0, 1, 2)&\\	
\hline
	$n$ & 3 & 4 & 5 & 6 & 7 & 8& 10\\
\hline
$\overline{UB}$		& 0.3601 & 0.3355 &0.3301 &0.326 & 0.322 & 0.3202  &0.3195\\
\hline
$\overline{LB}$ 	& 0.2956  & 0.3031 & 0.314 & 0.318 & 0.3183 & 0.3189 & 0.3195\\
\hline
$\underline{UB}$		& 0.1714  & 0.1674  & 0.1567 & 0.1535 & 0.1519  &  0.1505  & 0.1498\\
\hline
$\underline{LB}$ 	& 0.1453  & 0.1456 & 0.1458 & 0.1480  & 0.1492 & 0.1492 & 0.1495 \\
\hline
\end{tabular}
 \caption{Approximation of the maximal spread of a FTD, where $\overline{UB}$ and $\overline{LB}$ and $\underline{UB}$ and $\underline{LB}$ denote the values of the upper and the lower bounds of the maximal and minimal value of the integral, and $n$ the fineness of the approximation according to Theorem \ref{main2}.}
 \label{tab:t1}
\end{table}

\begin{figure}[h!]
 \centering
 \includegraphics[scale=0.6]{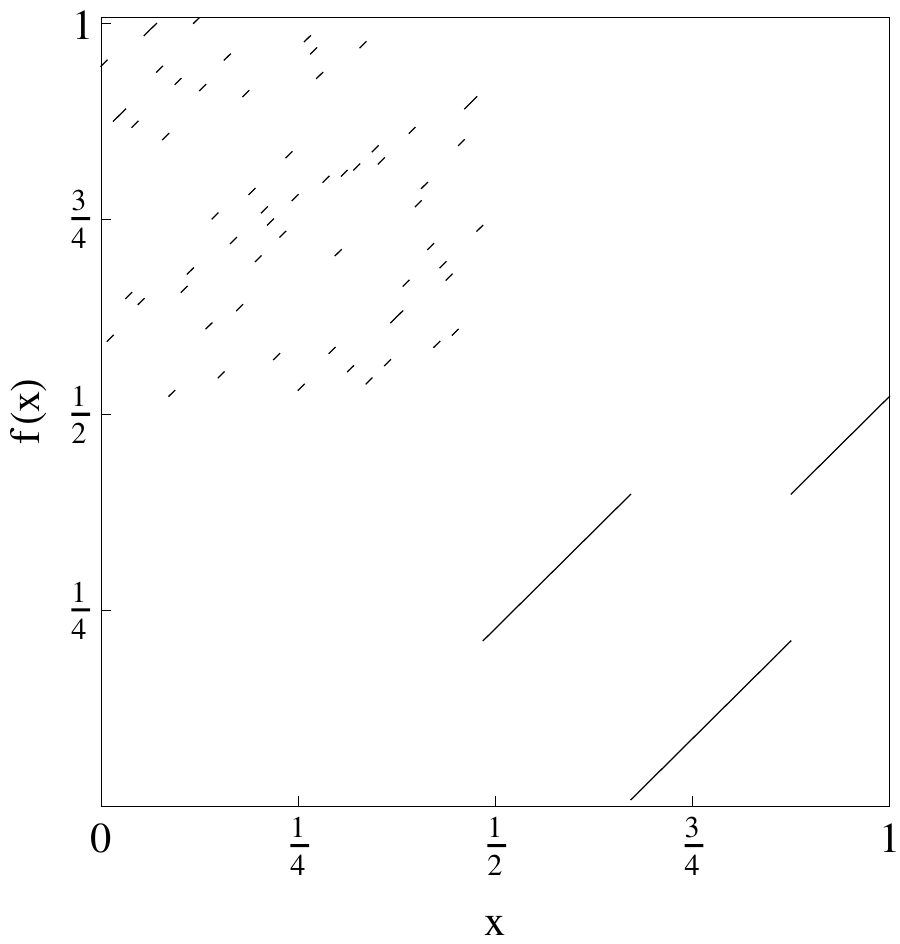}
\caption{Copula which attains the upper bound for the maximal value with $n = 7$.}
\label{fig: max2}
\end{figure}

\begin{figure}[h!]
 \centering
 \includegraphics[scale=0.6]{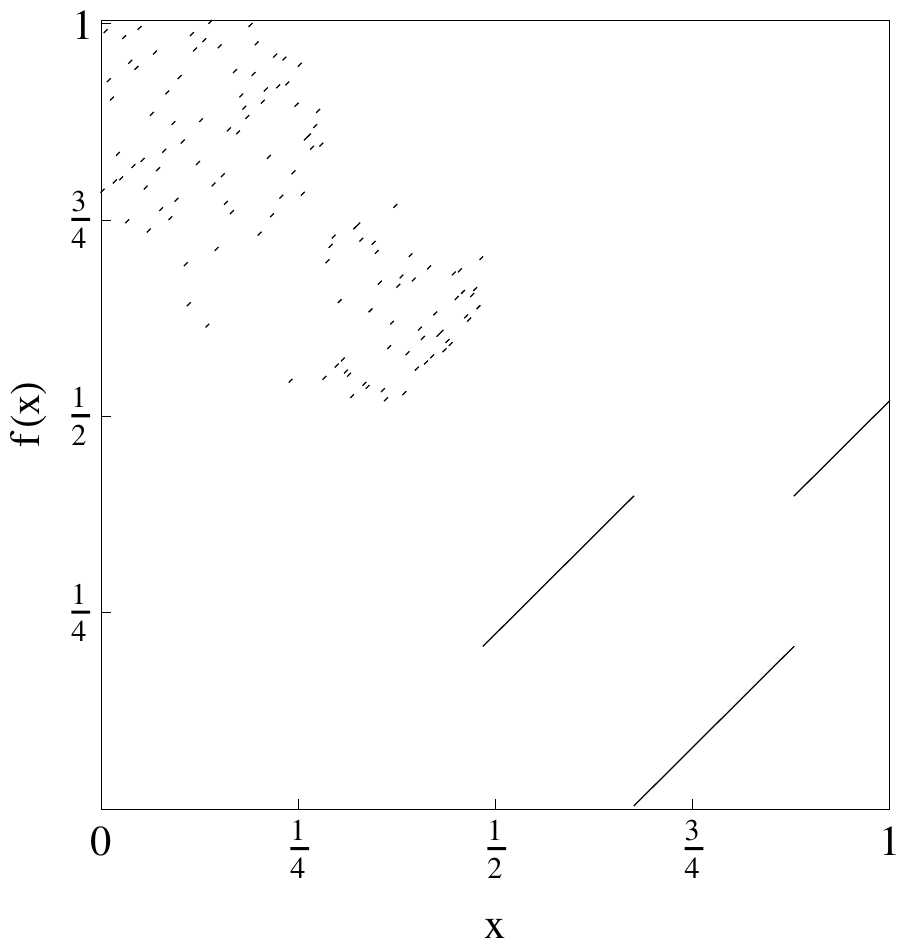}
\caption{Copula which attains the upper bound for the maximal value with $n = 8$.}
\label{fig: max3}
\end{figure}

The method just presented can be used to derive sharp bounds for integrals of piecewise constant functions with respect to copulas. This extends the scientific literature on this topic, which is in general still open. \\ The numerical effectiveness of our method was illustrated in the two numerical examples coming from different branches of applied mathematics just proposed.\\

However, as pointed out also in the previous chapters, an interesting problem concerns the extension of the method to the multidimensional case. This would be of particular interest since finding bounds for multidimensional integrals with respect to copulas has several applications in fields of mathematics such as number theory and actuarial mathematics.\\
Unfortunately, at the moment this method fails to be optimal in higher dimension, since the so-called multi-index assignment problems are in general NP-hard.
We think that one possible way to tackle this kind of problems is to investigate them from a heuristic point of view.
\bibliography{phd_bib}
\bibliographystyle{mri}
\end{document}